      \theoremstyle{plain}
      \newtheorem{theorem}{Theorem}[section]
      \newtheorem{lemma}[theorem]{Lemma}
      \newtheorem{corollary}[theorem]{Corollary}
\newtheorem{prop}[theorem]{Proposition}
\newtheorem*{con}{Conjecture}
\newtheorem*{mcon}{Markov Conjecture}
      \theoremstyle{definition}
      \newtheorem{definition}[theorem]{Definition}
      \newtheorem{ex}{Example}
      \theoremstyle{remark}
      \newtheorem{remark}[theorem]{Remark}
\DeclareMathOperator{\Hom}{Hom}
\DeclareMathOperator{\Coker}{Coker}
\DeclareMathOperator{\Ext}{Ext}
\DeclareMathOperator{\mo}{mod}
\DeclareMathOperator{\drep}{decrep}
\DeclareMathOperator{\dirr}{decIrr}
\DeclareMathOperator{\irr}{Irr}
\DeclareMathOperator{\rep}{rep}      
\DeclareMathOperator{\ext}{ext}
\DeclareMathOperator{\sr}{s.r.}
\DeclareMathOperator{\nil}{nil}
\DeclareMathOperator{\soc}{soc}
\DeclareMathOperator{\topp}{top}
\newcommand{\alg}{\mathbb{C}\langle \! \langle Q \rangle\!\rangle / I}
\newcommand{\com}{\mathbb{C}\langle \! \langle Q \rangle\!\rangle }
\newcommand{\M}{\mathcal{M}}
\newcommand{\nila}{\nil_A}
\newcommand{\Si}{\mathcal{S}}
\newcommand{\N}{\mathcal{N}}
\newcommand{\cl}{c_{A}}
\newcommand{\el}{e_{A}}
\newcommand{\El}{E_{A}}
\newcommand{\Homl}{\Hom_{A}}
\newcommand{\extl}{\ext^1_{A}}
\newcommand{\Dirr}{\dirr_{{\bf d,v}}(A)}
\newcommand{\srDirr}{\dirr_{{\bf d,v}}^{\sr}(A)}
\newcommand{\srdirr}{\dirr^{\sr}}
\newcommand{\Irr}{\irr_{{\bf d}}(A)}
\newcommand{\Drep}{\drep_{{\bf d,v}}(A)}
\newcommand{\Rep}{\rep_{{\bf d}}(A)}
\renewcommand{\d}{{\bf d}}
\renewcommand{\v}{{\bf v}}
\newcommand{\e}{{\bf e}}
\newcommand{\A}{\mathcal{A}}
\newcommand{\Pq}{\mathcal{P}(Q,W)}
\newcommand{\Pqb}{\mathcal{P}(Q,W')}
\newcommand{\srirr}{\irr^{\sr}(\Lambda)}
\newcommand{\srirrp}{\irr^{\sr}(\Lambda')}
      \def\@setcopyright{}
      \def\serieslogo@{}
\begin{document}


   \author{Charlotte Ricke}
   \address{Charlotte Ricke \newline
   Mathematisches Institut \newline
   Universit\"at Bonn \newline
   Endenicher Allee 60 \newline
   53115 Bonn \newline
   Germany }
   \email{ricke@math.uni-bonn.de}

   \title{On Jacobian Algebras associated with the once-punctured torus}

   \begin{abstract}
    We consider two non-degenerate potentials for the quiver arising from the once-punctured torus, which are a natural choice to study and compare: the first is the Labardini-potential, yielding a finite-dimensional Jacobian algebra, whereas the second potential gives rise to an infinite dimensional Jacobian algebra. In this paper we determine the graph of strongly reduced components for both Jacobian algebras. Our main result is that the graph is connected in both cases. Plamondon parametrized the strongly reduced components for finite-dimensional algebras using generic $\bf{g}$-vectors. We prove that the generic $\bf{g}$-vectors of indecomposable strongly reduced components of the finite-dimensional Jacobian algebra are precisely the universal geometric coefficients for the once-punctured torus, which were determined by Reading.  
   \end{abstract}

   \subjclass[2010]{Primary 13F60; Secondary 16G20}


   \date{16th June 2014}
   \maketitle

\setcounter{tocdepth}{1}
\tableofcontents
\section{Introduction}
Cluster algebras were first introduced by Fomin and Zelevinsky in \cite{FZ1} in 2002. One of the main problems in cluster algebra theory is to find a basis of a cluster algebra with favourable properties. It is conjectured \cite[Conjecture 1.1.]{CLS} that the indecomposable strongly reduced components of Jacobian algebras parametrize such a basis of the corresponding Caldero-Chapoton algebra which sits between the cluster algebra and the upper cluster algebra \cite[Proposition 7.1.]{CLS}. 

 Let $A=\alg$ be a (possibly infinite dimensional) basic algebra.
In \cite{CLS} the authors generalise the Derksen-Weyman-Zelevinsky $E$-invariant $E_{A}(-,?)$ and the notion of strongly reduced components to arbitrary basic algebras. Denote by $\dirr (A)$ the set of irreducible components of the varieties of finite-dimensional (decorated) representations of $A$ and let $Z_1,\ldots, Z_t$ be in $\dirr (A)$.
Then by \cite[Theorem 5.11.]{CLS} the Zariski closure $\overline{Z_1 \oplus \cdots \oplus Z_t}$ is a strongly reduced irreducible component if and only if each $Z_i$ is strongly reduced and the generic $E$-invariant $E_{A}(Z_i,Z_j)=0$ for all $i \neq j$.

 Denote by $\srdirr(A)$ the subset of $\dirr(A)$ consisting of the strongly reduced components. The graph $\Gamma(A)=\Gamma(\dirr^{\sr}(A))$ of strongly reduced components has as vertices the indecomposable components in $\dirr^{\sr}(A)$, and there is an edge between -possibly equal- vertices $Z_1$ and $Z_2$ if 
$E_{A}(Z_1,Z_2)=E_{A}(Z_2,Z_1)=0$.
A \textit{component cluster} of $A$ is the set of vertices $\mathcal{U} \subset \Gamma(A)_0$ of a maximal complete subgraph of $\Gamma(A)$. 
A component cluster $\mathcal{U}$ of $A$ is $E_A$-\textit{rigid} provided that each $Z \in \mathcal{U}$ is $E_{A}$-rigid, i.e. if we have $E_{A}(Z)=0$ for all $Z \in \mathcal{U}$. The idea behind these definitions is that the Caldero-Chapoton-functions associated with a component cluster are a generalization of the clusters of a cluster algebra. 

Now, let $Q$ be the quiver \[
\xy (0,0)*+{1}="a", (10,15)*+{2}="b", (20,0)*+{3}="c"
\ar @{>} "a";"b" < 2pt>^{\alpha_2}
\ar @{>} "a";"b" <-2pt>_{\alpha_1}
\ar @{>} "b";"c" < 2pt>_{\beta_1}
\ar @{>} "b";"c" <-2pt>^{\beta_2}
\ar @{>} "c";"a" < 2pt>^{\gamma_2}
\ar @{>} "c";"a" <-2pt>_{\gamma_1}
\endxy
\] associated with the once-punctured torus also known as the Markov quiver. There are several aspects which make this quiver an exceptional and much-studied example.
For instance, the cluster algebra $\A_Q$ is properly contained in the upper cluster algebra $\A_Q^{\text{up}}$ (\cite[Proposition 1.26]{BFZ}). Both the cluster algebra $\A_Q$ and the Jacobian algebras arising from $Q$ constitute counterexamples to many conjectures. Still, in particular its Jacobian algebras are not well understood, yet. 
  Consider the two non-degenerate potentials
\begin{align*}
W&:=\gamma_1\beta_1 \alpha_1 +\gamma_2\beta_2 \alpha_2\\
W'&:=\gamma_1\beta_1 \alpha_1 +\gamma_2\beta_2 \alpha_2- \gamma_2\beta_1\alpha_2\gamma_1\beta_2\alpha_1
\end{align*}
yielding one infinite dimensional Jacobian algebra $\Lambda=\Pq$ \cite[Example 8.6]{DWZ1} and one finite-dimensional Jacobian algebra $\Lambda'=\Pqb$  \cite[Example 8.2]{La}. 
Recently Geuenich \cite{G} proved that there are infinitely many non-degenerate potentials for $Q$ up to right equivalence. Among these, $W$ and $W'$ are a natural choice to study and compare. In this work we study the strongly reduced components of the module varieties of $\Lambda$ and $\Lambda'$. Our main tool is the truncation of basic algebras, which was introduced in \cite{CLS}. The truncation of $\Lambda$ allows us to consider $\Lambda$ as a finite-dimensional string algebra. Thus we can use the purely combinatorial description of the module categories of string algebras. Note that both algebras are tame: $\Lambda$ is gentle and $\Lambda'$ is tame by \cite[Example 6.5.3]{GLaS}.

Our first result is closely connected to $\tau$-tilting theory, which was recently introduced by Adachi, Iyama and Reiten, for finite-dimensional basic algebras. They show that $\tau$-tilting theory completes classical tilting theory from the viewpoint of mutation. However, their result does not hold for infinite dimensional algebras in general (see \cite[Example 9.3.1.]{CLS}). We show that for $\Lambda$ the mutation of support $\tau_{\Lambda}^{-1}$-tilting modules, which in fact are $E_{\Lambda}$-rigid decorated representations, is always possible and unique.  
\begin{prop}
If $Z_1,Z_2$ are indecomposable $E_{\Lambda}$-rigid components in $\srdirr(\Lambda)$ 
(resp. in $\srdirr(\Lambda')$) 
that are neighbours in $\Gamma:=\Gamma(\Lambda)$ (resp. in $\Gamma':=\Gamma(\Lambda')$), then there exist precisely two different indecomposable $E_{\Lambda}$-rigid 
components $Z_3,Z'_3 \in\srdirr(\Lambda)$ 
(resp. in $\srdirr(\Lambda')$) 
such that $\lbrace Z_1,Z_2,Z_3 \rbrace$ and $\lbrace Z_1,Z_2,Z'_3 \rbrace$ are $E_{\Lambda}$-rigid component clusters.
\end{prop}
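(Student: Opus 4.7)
The plan is to recast the proposition as a mutation statement for indecomposable $E$-rigid decorated representations. Since $Q$ has $n=3$ vertices, a maximal $E$-rigid component cluster has cardinality $3$, so the statement reduces to showing that any $E$-rigid pair $\lbrace Z_1,Z_2\rbrace$ of indecomposable components admits exactly two completions $Z_3,Z'_3$ to a maximal $E$-rigid component cluster. In both cases I intend to exploit the bijection established in \cite{CLS} between indecomposable $E$-rigid strongly reduced components and indecomposable $\tau$-rigid pairs (equivalently, indecomposable summands of support $\tau^{-1}$-tilting objects), and then appeal to a mutation theorem at the level of $\tau$-rigid pairs.

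For $\Lambda'$, which is finite-dimensional, I would invoke directly the Adachi-Iyama-Reiten mutation theorem for support $\tau_{\Lambda'}^{-1}$-tilting modules. Through the bijection above, the existence and uniqueness of the two mutations of an almost-complete support $\tau^{-1}$-tilting object translate immediately into the two completions of $\lbrace Z_1,Z_2\rbrace$ in $\srdirr(\Lambda')$. No extra argument is needed beyond checking that the dictionary preserves the $E_{\Lambda'}$-rigidity condition, which is precisely the content of \cite[Theorem~5.11]{CLS}.

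For the infinite-dimensional algebra $\Lambda$, the Adachi-Iyama-Reiten theorem does not apply directly, so my plan is to work via the truncation $\Lambda_N = \com/(I+\mathfrak{m}^N)$ for $N$ large enough, which is a finite-dimensional string algebra. A preliminary classification step, crucial to the argument, is to show that every indecomposable $E_{\Lambda}$-rigid strongly reduced component is the closure of the orbit of a \emph{string} module over $\Lambda_N$: band modules occur in $1$-parameter families and so cannot be generically $E$-rigid. Granted this, the string representatives of $Z_1,Z_2$ form an almost-complete $\tau_{\Lambda_N}^{-1}$-tilting object in $\mo(\Lambda_N)$, and the finite-dimensional mutation theorem applied to $\Lambda_N$ yields two candidate completions which can then be pulled back along $\Lambda\twoheadrightarrow\Lambda_N$ to components of $\Lambda$.

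The main obstacle is the last verification. First, I must show that the two completions produced at the $\Lambda_N$-level stabilize as $N$ grows, so that the mutation is well defined at the level of $\Lambda$; this amounts to checking that the strings supporting $Z_3,Z'_3$ do not depend on the truncation parameter once $N$ is sufficiently large relative to the string lengths of $Z_1,Z_2$. Second, I must rule out any additional completion coming from a ``genuinely infinite-dimensional'' $E_{\Lambda}$-rigid component that is invisible to every finite truncation; here the string classification together with the fact that $\Lambda$ is gentle should suffice. I expect both points to be handled by an explicit, though finite, case analysis exploiting the combinatorics of strings on the Markov quiver and the description of $E_\Lambda$ as a generic Hom-dimension over intersections of string diagrams.
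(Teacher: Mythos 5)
Your strategy diverges from the paper's (the paper proves this by an explicit combinatorial classification: it encodes every $E_{\Lambda}$-rigid string by a sequence $(x_1\colon a|k_0|\ldots|k_m)$, characterizes adjacency in Lemma \ref{grund} and Corollary \ref{edge}, and then reads off all neighbours of a given rigid vertex in Lemma \ref{shorter} and Proposition \ref{neighbours}, of which the mutation statement is a direct consequence), and in its present form it has a genuine gap precisely where the real difficulty lies. For the infinite-dimensional algebra $\Lambda$ you propose to apply the Adachi--Iyama--Reiten mutation theorem to a truncation $\Lambda_N$ and pull the two completions back to $\Lambda$. But Proposition \ref{pro} only gives the transfer in one direction: an $E_{\Lambda}$-rigid module of small nilpotency degree is $E_{\Lambda_N}$-rigid for $N$ large; an $E_{\Lambda_N}$-rigid module need not be $E_{\Lambda}$-rigid, and the third summand produced by the AIR mutation over $\Lambda_N$ is a priori a module whose nilpotency degree grows with $N$ (exactly the phenomenon behind the failure of $\tau$-tilting mutation for infinite-dimensional algebras noted in the introduction via \cite[Example 9.3.1]{CLS}). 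So both the stabilization of the two completions as $N\to\infty$ and the exclusion of ``extra'' completions visible only at finite truncation level are not side verifications; they are the content of the proposition, and your plan defers them to an unspecified ``finite case analysis with string combinatorics'' --- which is essentially the paper's actual proof (Lemma \ref{grund}, Proposition \ref{rig}, Lemma \ref{shorter}) left unexecuted.

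A second gap concerns the conclusion ``$\lbrace Z_1,Z_2,Z_3\rbrace$ is an $E_{\Lambda}$-rigid \emph{component cluster}.'' Component clusters are maximal complete subgraphs of the full graph $\Gamma$ (resp.\ $\Gamma'$), whose vertex set also contains the non-rigid band components. The AIR theorem, even granting the dictionary between $E$-rigid decorated representations and $\tau^{-1}$-rigid pairs, only yields maximality among rigid collections; you must still rule out that some band component is adjacent to both $Z_1$ and $Z_2$ (equivalently to all three vertices of the triple). In the paper this is a nontrivial combinatorial fact, Lemma \ref{bandnei}, stating that a strongly reduced band has a unique neighbour, and nothing in your proposal addresses it --- note also that your preliminary remark that rigid components are string components (Theorem \ref{bi}) does not help here, since the obstruction comes from the bands sitting inside $\Gamma$, not from rigid band components. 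For the finite-dimensional algebra $\Lambda'$ the reduction to AIR is otherwise a reasonable and genuinely different route (it would buy you the two completions without any classification of rigid strings), but it too needs this band-exclusion step, plus a precise statement of the bijection between indecomposable $E_{\Lambda'}$-rigid strongly reduced components and indecomposable $\tau_{\Lambda'}^{-1}$-rigid pairs; \cite[Theorem 5.11]{CLS} concerns direct sums of strongly reduced components and does not by itself provide that dictionary.
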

Our main result is the connectedness of the graph of strongly reduced components for both algebras. It follows from results in \cite{P} that the full subgraph of $\Gamma'=\Gamma(\Lambda')$ on the $E_{\Lambda'}$-rigid components consists of at least two components. Thus the connectedness of $\Gamma'$ is particularly surprising. 
\begin{theorem}
\begin{compactenum}[(i)]
\item The graph $\Gamma$ is connected. The full subgraph on the $E_{\Lambda}$-rigid components is also connected.
\item The graph $\Gamma'$ is connected. The graph $\Gamma$ is a full subgraph of $\Gamma'$. The full subgraph of $\Gamma'$ on the $E_{\Lambda'}$-rigid components consists of two isomorphic components.
\end{compactenum}
\end{theorem}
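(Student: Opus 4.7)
The plan is to carry out a combinatorial analysis of the indecomposable strongly reduced components of $\Lambda$ and $\Lambda'$, identify the $E$-rigid ones with tagged arcs on the once-punctured torus, and then show that the remaining band-type components act as bridges. The two main inputs will be the mutation statement of the preceding proposition and the purely combinatorial description of modules over string algebras, which applies to $\Lambda$ directly since it is gentle, and to $\Lambda'$ via its tameness \cite[Example 6.5.3]{GLaS}.

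I would first classify the indecomposable strongly reduced components. Using the truncation of \cite{CLS}, $\Lambda$ may be viewed as a finite-dimensional string algebra, so its finite-dimensional indecomposables are strings and bands; the criterion of \cite[Theorem 5.11]{CLS} then identifies the strongly reduced components as (decorated) string modules, which are $E$-rigid, together with one-parameter families of band modules, which are not. An analogous classification holds for $\Lambda'$. Combining Plamondon's parametrization with Labardini's geometric model, the $E_{\Lambda'}$-rigid components correspond bijectively to tagged arcs on the once-punctured torus, and the mutation of the preceding proposition matches the flip of tagged triangulations. Since the flip graph of tagged triangulations of the once-punctured torus is well known to split into exactly two isomorphic components, distinguished by plain versus notched tags at the puncture, the last sentence of (ii) is immediate; only the plain sector arises over $\Lambda$, yielding the last sentence of (i). To establish $\Gamma \subseteq \Gamma'$, I would match each indecomposable strongly reduced component of $\Lambda$ with one of $\Lambda'$ at the level of decorated representations, using the string description to check that the extra relations defining $\Lambda'$ are automatically satisfied on the representatives, and then verify that the $E$-invariants coincide.

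The remaining task is to bridge the non-rigid components to the rigid ones. For each band-type indecomposable strongly reduced component $Z(b,n)$, the goal is to exhibit a rigid component $Z$ with $E(Z(b,n),Z) = E(Z,Z(b,n)) = 0$; for $\Gamma'$, one such $Z$ must be chosen in each of the two tag-sectors. The ranks of $\Hom$ and $\Ext^1$ between string and band modules of a string algebra are computable purely combinatorially via admissible pairs of substrings, so these vanishing statements reduce to explicit substring checks. A natural choice is to take $Z$ corresponding to a string that appears as a substring along the cyclic word defining $b$. Combining these bridges with the connectedness of each rigid piece yields the connectedness of both $\Gamma$ and $\Gamma'$.

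The main obstacle will be verifying the vanishing of $E_{\Lambda'}$, rather than just $E_{\Lambda}$, for bands viewed over $\Lambda'$. The extra degree-six relation arising from the monomial $\gamma_2\beta_1\alpha_2\gamma_1\beta_2\alpha_1$ in $W'$ modifies the minimal projective resolutions of certain decorated string modules, and could a priori create new extensions not present over $\Lambda$. The key calculation is therefore to show that, for a carefully chosen family of bridging bands, this extra relation does not introduce additional $\Ext^1$-classes against the rigid components in either tag-sector.
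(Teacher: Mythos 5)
Your skeleton matches the paper in outline (classify indecomposable strongly reduced components into $E$-rigid string components and non-rigid band components, use the bands as bridges, and get $\Gamma\subseteq\Gamma'$ from the equality of truncations $\Lambda_5=\Lambda'_5$ together with Proposition \ref{pro}), but the two steps that carry the actual weight of the theorem are asserted rather than proved. First, your claim that the $E_{\Lambda'}$-rigid components correspond bijectively to tagged arcs, with component clusters matching tagged triangulations, is exactly the delicate point for this quiver: Plamondon's results only give that the rigid part of $\Gamma'$ has \emph{at least} two components, and the missing content is surjectivity, i.e.\ that there are no rigid components outside the two sectors $\{Z\}$ and $\{\tau_{\Lambda'}Z\}$. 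The paper has no off-the-shelf arc correspondence to invoke for the once-punctured torus (these Jacobian algebras are precisely the exceptional, badly understood case); it proves surjectivity by computing the generic $\bf{g}$-vectors of $Z$ and $\tau_{\Lambda'}Z$ (Propositions \ref{g} and \ref{gneu}), using the injective parametrization $G^{\sr}_{A}$ of Theorem \ref{gvectors}, and matching the resulting set $\mathcal{G}$ with Reading's positive $\mathbb{Z}$-basis of shear coordinates, so that every vector in $\mathbb{Z}^3$ is accounted for. Without an argument of this kind your ``immediate'' conclusion about exactly two isomorphic components is unsupported. Relatedly, your blanket statement that string components are $E$-rigid is false (the string $\beta_2\alpha_1$ has no self-extensions yet is not $E_{\Lambda}$-rigid); determining \emph{which} strings and bands give strongly reduced components, and the explicit neighbour description that makes the rigid part of $\Gamma$ connected (mutation down to the simples and negative simples), is substantive combinatorics that your plan compresses away — the mutation proposition you cite does not by itself yield connectedness of the rigid subgraph.

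Second, for the bridges into the notched/$\tau_{\Lambda'}$-sector of $\Gamma'$ you correctly identify the obstacle (the degree-six relation of $W'$ changes projective presentations and could create new extensions) but leave the ``key calculation'' open, and it is not clear it would go through by brute force. The paper avoids this computation entirely: band components are $\tau_{\Lambda'}$-invariant, so once one has the edge in $\Gamma$ between a rigid string component $Z$ and its companion band component $Z'$ (Lemma \ref{bandstring}, where the correct choice of bridging string is the symmetric sequence obtained from the band's sequence, not an arbitrary substring of the cyclic word), applying $\tau_{\Lambda'}$ and using Proposition \ref{pro} together with Plamondon's Lemma \ref{plam} gives $E_{\Lambda'}(\tau_{\Lambda'}Z,Z')=E_{\Lambda'}(Z',\tau_{\Lambda'}Z)=0$, i.e.\ the edge $\tau_{\Lambda'}Z$--$Z'$, with no analysis of the extra relation. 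Incorporating this symmetry argument, or else actually carrying out the $\Lambda'$-computations you defer, is needed before your proof of connectedness of $\Gamma'$ is complete.
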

For $A=\alg$ let $n$ be the number of vertices of $Q$ or equivalently the number of simple $A$-modules. Cerulli, Labardini and Schr\"oer conjecture that every component cluster has at most cardinality $n$ and that the $E_A$-rigid component clusters are precisely the component clusters of cardinality $n$. We show, that this is true for both Jacobian algebras associated with the once-punctured torus. 
\begin{theorem}
There is a bijection between the $E_{\Lambda}$-rigid indecomposable components $Z \in \dirr^{\sr}(\Lambda)$ and the non $E_{\Lambda}$-rigid indecomposable components $Z' \in \dirr^{\sr}(\Lambda)$, such that if $Z$ and $Z'$ are identified under this bijection then $\lbrace Z,Z' \rbrace $ is a component cluster and these are all non $E_{\Lambda}$-rigid component clusters.
\end{theorem}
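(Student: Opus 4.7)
The plan is to exploit that the truncation of $\Lambda$ is a finite-dimensional gentle string algebra, so that $\dirr^{\sr}(\Lambda)$ admits a purely combinatorial description in terms of strings and bands. The first step is therefore to establish that every $E_{\Lambda}$-rigid indecomposable component in $\dirr^{\sr}(\Lambda)$ is the closure of the $GL$-orbit of a string module, while every non $E_{\Lambda}$-rigid indecomposable component is the closure of a one-parameter family of band modules. The parametrization of irreducible components follows from the general theory of string algebras combined with the criteria of \cite{CLS}, and the dichotomy rigid/non-rigid should correspond exactly to the dichotomy string/band.

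To construct the bijection, I would associate to each band $b$ of the Markov quiver, taken up to cyclic rotation and inversion, a distinguished string $s_b$. A natural candidate is the string obtained by deleting a single carefully chosen letter from $b$; for the Markov quiver this prescription should match, at the level of generic $\mathbf{g}$-vectors, the band component $Z'$ associated to $b$ with a string component $Z$. The inverse map, sending such a distinguished string back to the unique band from which it arises, can be read off combinatorially from the explicit list of bands.

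Next, I would verify for each matched pair $(Z,Z')$ that $E_{\Lambda}(Z,Z')=E_{\Lambda}(Z',Z)=0$, using the classical descriptions of $\Hom$ and $\Ext^1$ between string and band modules. Combined with the uniqueness of mutation provided by the preceding Proposition, this should force $\{Z,Z'\}$ to be a maximal $E_{\Lambda}$-compatible set, i.e., a component cluster of cardinality two.

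The main obstacle is exhaustiveness. First I would argue that a component cluster cannot contain two distinct non $E_{\Lambda}$-rigid indecomposable components: for the Markov quiver any two distinct bands should produce a non-trivial $E$-invariant in at least one direction, so any non $E_{\Lambda}$-rigid component cluster $\mathcal{U}$ must contain a unique band component $Z'$. It then remains to show that the only $E_{\Lambda}$-rigid indecomposable component $E_{\Lambda}$-compatible with $Z'$ is $s_b$. Here I would combine the mutation Proposition, the connectedness of the $E_{\Lambda}$-rigid subgraph of $\Gamma$ asserted in the preceding main theorem, and a case analysis over bands of the Markov quiver: any extra $E_{\Lambda}$-rigid $Z_1\neq s_b$ compatible with $Z'$ would enlarge $\{Z,Z'\}$ to a clique $\{Z,Z_1,Z'\}$, and a careful tracking of the supports and overlaps of the strings involved should produce a contradiction with the string/band $\Hom$-$\Ext^1$ recipes. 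This combinatorial case analysis, rather than any of the structural arguments, is the heart of the proof.
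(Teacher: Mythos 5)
Your high-level outline does follow the same route as the paper: reduce to strings and bands, match each strongly reduced band with a single $E_{\Lambda}$-rigid string, check that the matched pair spans an edge of $\Gamma$, and then show the band has no further neighbours. But two essential pieces are wrong or missing. First, the proposed bijection cannot work: a strongly reduced band $(x_1:a_1,\ldots,a_n,)$ is a word of even length $2n+2(a_1+\cdots+a_n)$, and every $E_{\Lambda}$-rigid string $(x_1:a_1,\ldots,a_n)$ also has even length $2(n-1)+2(a_1+\cdots+a_n)$, so deleting one letter from a band leaves an odd-length word, which is never an $E_{\Lambda}$-rigid string; no ``careful choice'' of the letter repairs this. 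The correct pairing is made at the level of the combinatorial encodings: the band $(x_1:a_1,\ldots,a_{n-1},a_n+1,)$, i.e.\ the band carrying the same code $(a|k_0|\ldots|k_m)\in\Psi$ as the string, is matched with the string $(x_1:a_1,\ldots,a_n)$; as words these differ in four letters, and their generic $\mathbf{g}$-vectors differ by a cyclic permutation of $(1,-1,-1)$, so the pairing is not any one-letter operation.

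Second, the substance of the theorem --- that $E_{\Lambda}(Z,Z')=E_{\Lambda}(Z',Z)=0$ for the matched pair, and above all that the paired string is the \emph{only} neighbour of the band in $\Gamma$ (no other rigid string and no other band is compatible with it) --- is exactly what you defer to ``a careful tracking of supports and overlaps''. This is the content of the two lemmas the paper proves by explicit manipulation of the sequences: for the band and the string with the same code one verifies the compatibility criteria directly, and for any other code one exhibits a nontrivial homomorphism from the band to both the corresponding string and the corresponding band. Your intermediate inference is also not valid as stated: vanishing of the two $E$-invariants together with the mutation proposition does not force $\lbrace Z,Z'\rbrace$ to be maximal, since that proposition concerns pairs of \emph{rigid} components, while maximality (and exhaustiveness of these clusters) requires precisely the uniqueness-of-neighbour statement you have not established; the connectedness of the rigid subgraph plays no role here. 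Until that case analysis, or an equivalent argument, is actually carried out, both assertions of the theorem --- that each matched pair is a component cluster and that these are all non $E_{\Lambda}$-rigid component clusters --- remain unproved.
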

\begin{corollary}
\begin{compactenum}[(i)]
\item The $E_{\Lambda}$-rigid component clusters of $\Lambda$ (respectively of $\Lambda'$)
are exactly the component clusters of cardinality $3$.
\item The non $E_{\Lambda}$-rigid component clusters of $\Lambda$ (respectively of $\Lambda'$) are exactly the component clusters of cardinality $2$.
\end{compactenum}
\end{corollary}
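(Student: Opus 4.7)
My plan is to derive the corollary as a direct consequence of the mutation proposition and the bijection theorem above. For (ii), the bijection theorem immediately yields one inclusion: every non-$E_\Lambda$-rigid component cluster of $\Lambda$ has the form $\{Z,Z'\}$ with $Z$ rigid and $Z'$ non-rigid, so has cardinality $2$. The reverse inclusion (every component cluster of cardinality $2$ is non-$E_\Lambda$-rigid) follows from the mutation proposition: if a cardinality-$2$ component cluster $\{Z_1,Z_2\}$ were $E_\Lambda$-rigid, the proposition would extend it to a strictly larger $E_\Lambda$-rigid component cluster $\{Z_1,Z_2,Z_3\}$, contradicting maximality.

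For (i), I would prove both directions separately. To show that every $E_\Lambda$-rigid component cluster has cardinality $3$, I would rule out cardinalities $1$ and $2$ and invoke the general bound $|\mathcal{U}|\le n=3$ for strongly reduced $E$-rigid component clusters (from \cite{CLS}; for the finite-dimensional $\Lambda'$ this is classical $\tau$-tilting theory). Singleton $E_\Lambda$-rigid clusters are ruled out by the connectedness of $\Gamma$ (respectively $\Gamma'$) already established, since every vertex then has a neighbour in the graph and so cannot form a maximal complete subgraph on its own. Cardinality-$2$ $E_\Lambda$-rigid clusters are excluded by the mutation proposition. For the converse, a component cluster of cardinality $3$ must be $E_\Lambda$-rigid by the contrapositive of the bijection theorem, since any non-$E_\Lambda$-rigid component cluster has cardinality $2$.

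The main obstacle I anticipate is making the argument work uniformly for $\Lambda'$: the bijection theorem as stated applies to $\Lambda$, so for $\Lambda'$ one would need the analogous statement, which is plausible because $\Gamma$ embeds as a full subgraph of $\Gamma'$ and the extra non-$E_{\Lambda'}$-rigid components should be describable by the same combinatorial string-algebra and truncation techniques used for $\Lambda$. The cardinality bound for the infinite-dimensional algebra $\Lambda$ likewise requires the truncation technology of \cite{CLS} rather than classical $\tau$-tilting theory. With those ingredients in place, the corollary itself reduces to the short case analysis above.
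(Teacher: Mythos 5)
For $\Lambda$ your skeleton is essentially the derivation the paper intends: part (ii) follows from the bijection theorem of Section 5 together with Proposition \ref{mutation}, singletons are excluded because every vertex of the connected graph $\Gamma$ has a neighbour, and ``cardinality $3$ $\Rightarrow$ rigid'' is the contrapositive of (ii). The first genuine problem is your appeal to ``the general bound $|\mathcal{U}|\le n$ from \cite{CLS}'': in \cite{CLS} this bound is only a \emph{conjecture} (verifying it for these two algebras is precisely part of what the paper is doing), so it cannot be cited. For $E_{\Lambda}$-rigid clusters it can be repaired in two ways. Either argue as the paper implicitly does via Proposition \ref{neighbours}: the displayed full subgraph on a rigid vertex $C$ and all of its rigid neighbours shows that the neighbours form a path $C_5^{\ell}-\cdots-C_5^{2}-C_1-C_2-C_3-C_4^{0}-\cdots-C_4^{j}$ (and similarly for the simples and negative simples), which contains no triangle, so there is no complete subgraph on four rigid vertices and no external bound is needed. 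Or make your truncation remark precise: by Remark \ref{five} and Proposition \ref{pro} every $E_{\Lambda}$-rigid component is $E_{\Lambda'}$-rigid with the same (vanishing) pairwise $E$-invariants, so a rigid cluster of cardinality $k$ gives a support $\tau_{\Lambda'}$-rigid pair with $k$ pairwise non-isomorphic indecomposable summands, whence $k\le 3$ by \cite{AIR}. As it stands, the bound is asserted rather than proved.

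The second gap is the $\Lambda'$ half of the statement, which you only flag as an obstacle, and your guess about its shape is off: there are no ``extra'' non-$E_{\Lambda'}$-rigid components. By the final corollary of Section 7 every indecomposable strongly reduced component of $\Lambda'$ is $Z$ or $\tau_{\Lambda'}Z$ with $Z\in\srdirr(\Lambda)$ (Proposition \ref{X}), and the non-rigid ones are exactly the band components already in $\Gamma$; what changes is the adjacency. Using the $\mathbf{g}$-vector computations (Propositions \ref{g} and \ref{gneu}) and Reading's compatibility of allowable curves \cite{R1}, a band component of $\Lambda'$ is adjacent precisely to its string partner $Z$ and to $\tau_{\Lambda'}Z$, while $Z$ and $\tau_{\Lambda'}Z$ are \emph{not} adjacent and distinct band components are never adjacent; moreover the rigid part of $\Gamma'$ is two disjoint copies of the rigid part of $\Gamma$ (no edges between $cw$- and $ccw$-type components). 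Consequently the bijection theorem does not transfer verbatim: each non-rigid component of $\Lambda'$ lies in exactly two component clusters, $\lbrace Z,Z'\rbrace$ and $\lbrace \tau_{\Lambda'}Z,Z'\rbrace$, both of cardinality $2$, and the rigid clusters of $\Lambda'$ live inside one of the two copies, so the cardinality-$3$ statement follows from the $\Lambda$ case. Without this Section 7 input the assertions for $\Lambda'$ in both (i) and (ii) remain unproved, so your proposal covers the $\Lambda$ case correctly but not the full statement.
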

Let $Z \in \dirr^{\sr}(\Lambda)$ be a strongly reduced component. Then it is easy to see that $Z$ is also a strongly reduced component of $\Lambda'$. If $Z$ is $E_{\Lambda}$-rigid then it is also $E_{\Lambda'}$-rigid. Using a symmetry of the Auslander-Reiten translate $\tau_{\Lambda'}$ we see that there exists a unique $E_{\Lambda'}$-rigid component $\tau_{\Lambda'}Z$.  Furthermore, using the $\bf{g}$-vectors we will prove that all $E_{\Lambda'}$-rigid components arise in that manner.
The following figure shows a full subgraph in $\Gamma'$, where we left out the loops at each vertex. The full subgraph on the vertices $Z_i$ and $Z'_i$ for $i=1,2,3$ is a full subgraph of $\Gamma$.
\[
\xygraph{!
{<0cm,0cm>;<0.6cm,0cm>:<0cm,0.6cm>::}
!{(0,0)}*+{Z_1}="a"
!{(2,1)}*+{Z'_1}="d" 
!{(5,2.5)}*+{\tau_{\Lambda'}Z_1}="g"
!{(2,3)}*+{Z_2}="b", 
!{(4,4)}*+{Z'_2}="e"
!{(7,5.5)}*+{\tau_{\Lambda'}Z_2}="h"
!{(4,0)}*+{Z_3}="c"
!{(6,1)}*+{Z'_3}="f"
!{(9,2.5)}*+{\tau_{\Lambda'}Z_3}="i"
"a"-"b"
"a"-"d" 
"b"-"c"
"b"-"e" 
"c"-"a"
"c"-"f"
"g"-"h"
"h"-"i"
"i"-"g"
"d"-"g"
"e"-"h"
"f"-"i"
}
\]
The paper is organised as follows: In Section 2 we give a quick overview on the background of Caldero-Chapoton algebras, strongly reduced components and string algebras. In Sections 3-6 we make all the computations to prove our results on the infinite dimensional basic algebra $\Lambda$: Section 3 contains a detailed classification of strings and bands corresponding to indecomposable strongly reduced components. In Section 4 we explicitly describe all neighbours of a fixed vertex in the graph of strongly reduced components and in Section 5 we describe its component clusters. The generic $\bf{g}$-vectors are computed in Section 6.
 Then, in Section 7 we deduce our results for the finite-dimensional algebra $\Lambda'$. 
In this context we see that the generic $\bf{g}$-vectors of indecomposable strongly reduced components of the finite-dimensional Jacobian algebra are precisely the universal geometric coefficients for the once-punctured torus. In section 8 we mention a connection between our work and the Markov Conjecture. 
\section{Background}
\subsection{Caldero-Chapoton algebras}
In \cite{FZ4} Fomin and Zelevinsky showed that the structure of cluster algebras is strongly influenced by a family of integer vectors called $\bf{g}$-\textit{vectors}, and a family of integer polynomials called \textit{F-polynomials}. For a major class of cluster algebras Derksen, Weyman and Zelevinsky found an interpretation of \textbf{g}-vectors and $F$-polynomials in terms of decorated representations of quivers with potentials. This was the motivation for defining $\bf{g}$-vectors of decorated representations of arbitrary (possibly infinite-dimensional) basic algebras and the definition of the Caldero-Chapoton algebras associated with a basic algebra. In the following we briefly recall these definitions as recently introduced by Cerulli, Labardini and Schr\"oer. We refer to \cite{CLS} for missing definitions or unexplained notation.

A \textit{quiver} $Q=(Q_0,Q_1,s,t)$ is a directed graph, where $Q_0$ denotes the finite set of \textit{vertices} and $Q_1$ the finite set of \textit{arrows}. The maps $s,t \colon  Q_1 \to Q_0$ assign a \textit{starting vertex} and a \textit{terminating vertex} to each arrow. Throughout this chapter $Q$ will be a quiver with $Q_0=\lbrace 1,\ldots, n\rbrace$ vertices.
We associate with the quiver $Q$ the matrix $B_Q=(b_{ij})\in M_n(\mathbb{Z})$, where
\[b_{ij}:=\left|\lbrace \alpha \in Q_1\mid s(\alpha)=j, \;t(\alpha)=i  \rbrace \right|-\left|\lbrace \alpha \in Q_1\mid s(\alpha)=i, \;t(\alpha)=j  \rbrace \right|.\] By definition a \textit{path of length} $k$ in $Q$ is a sequence of arrows $p=\alpha_k\ldots \alpha_2\alpha_1$ such that $s(\alpha_{i+1})=t(\alpha_i)$ for all $1 \leq i \leq k-1$.

A \textit{representation of $Q$} is a tuple  $(M_i,M_{\alpha })_{i \in Q_0, \alpha \in Q_1}$ where $M_i$ is a finite-dimensional $\mathbb{C}$-vector space and $M_{\alpha} \colon M_{s(\alpha)} \to M_{t(\alpha)}$ is a linear map. For a path $p=\alpha_k\ldots \alpha_1$ in $Q$ let $M_p:= M_{\alpha_k}\circ \ldots \circ M_{\alpha_1}$. 
The {\itshape dimension vector} of $M$ is defined to be the tuple
\[
\underline{\dim }(M):= (\dim(M_1),\ldots, \dim(M_n)).
\]
The \textit{dimension} of $M$ is $\dim (M):= \dim(M_1) + \ldots + \dim(M_n)$.

The representation $M$ is \textit{nilpotent} if there is some $N > 0$ such that $M_p=0$ for all paths $p$ in $Q$ with length($p$) $ > N$. If $M$ is a nilpotent representation the $i$th entry $\dim(M_i)$
of its dimension vector $\underline{ \dim}(M)$ equals the Jordan-H\"older multiplicity $[M : S_i]$ of $S_i$ in $M$.

A possibly infinite dimensional algebra $A$ is called \textit{basic} if $A=\alg$ for some finite quiver $Q$ and an admissible ideal $I$ of the completed path algebra $\com$. We denote by $\mo (A)$ the category of finite-dimensional (left-)modules over $A$. We can identify $\mo (A)$ with $\rep(A)$ the category of nilpotent representations of $Q$, which are annihilated by the relations in $I$. 
For $M \in \rep (A)$ we set \[\nila(M)=\min\lbrace k \in \mathbb{N}\mid M_p=0 \text{ for all paths }p \text{ of length }k \rbrace .\]
A \textit{decorated representation} of $A$ is a pair $\M =(M,V)$ where $M \in \rep(A)$ and $V=(V_1,\ldots, V_n)$ is a tuple of finite-dimensional $\mathbb{C}$-vector spaces. The category of decorated representations of $A$ will be denoted by $\drep(A)$. For each $i \in Q_0$ the \textit{simple} representation at $i$ is $\Si_i=(S_i,0)$.  The \textit{negative simple} representation at $i$ is $\Si^-_i=(0,V)$, where $V_i=\mathbb{C}$ and $V_j=0$ for all $j \neq i$.

For $\mathcal{M}=(M,V) \in \drep(A)$ we define the \textit{g-vector} of $\mathcal{M}$ to be the integral vector $g_{A}(\mathcal{M})=(g_1,\ldots,g_n) \in \mathbb{Z}^n$ where
\[g_i:=g_i(\mathcal{M}):=-\dim \Hom _{A}(S_i,M)+ \dim \Ext^1_{A}(S_i,M) + \dim(V_i).\]
For $\mathcal{M}=(M,V) ,\mathcal{N}=(N,V) \in \drep(A)$ the \textit{E-invariant} $E_{A}(\mathcal{M},\mathcal{N})$ is defined as
\[ E_{A}(\mathcal{M},\mathcal{N}):=\dim \Hom_{A}(M,N)+ \sum_{i=1}^n g_i(\mathcal{N})\dim (M_i).\]
The \textit{E-invariant} of a decorated representation $\mathcal{M}$ of $A$ is defined by
$E_{A}(\mathcal{M}):=E_{A}(\mathcal{M},\mathcal{M}).$

To every decorated representation $\M=(M,V)$ of $A$ we associate a Laurent polynomial in $\mathbb{Z}[x_1^{\pm},\ldots,x_n^{\pm}]$, the \textit{Caldero-Chapoton function} $C_A(\M)$ given by
\[C_{A}(\M):={\bf x}^{g_{A}(\M)}\sum_{\e \in \mathbb{N}^n} \chi(Gr_{\e}(M)){\bf x}^{B_Q \e}\]
where $Gr_{\e}(M)$ denotes the \textit{quiver Grassmannian}, i.e. the set of subrepresentations $U$ of $M$ with $\underline{\dim}(U)=\e$.
We denote by 
\[\mathcal{C}_{A}:=\lbrace C_{A}(\M)|\M \in \drep(A)\rbrace\]
the set of all Caldero-Chapoton functions associated to $A$. The \textit{Caldero-Chapoton algebra} $\mathcal{A}_{A}$ associated to $A$ is the $\mathbb{C}$-subalgebra of $\mathbb{C}[x_1^{\pm},\ldots,x_n^{\pm}]$ generated by $\mathcal{C}_{A}$.
\subsection{Truncation of a basic algebra}
Let $A=\alg$ be a basic algebra. For every integer $p \geq 2$ we define $I_p \subseteq A$ to be the ideal generated by all (residue classes of) paths of length $p$ in $Q$. Then the $p$\textit{-truncation} of $A$ is the finite-dimensional algebra, defined as the factor algebra
\[A_p:= A /I_p=\com/(I+\mathfrak{m}^p)\]
where $\mathfrak{m}$ denotes the \textit{arrow ideal} in $\com$, generated by all arrows in $Q$.

For a finite-dimensional representation $M$ of $A$ denote by
\[0\longrightarrow M \overset{f}{\longrightarrow} I_0^{A}(M) \overset{g}{\longrightarrow} I_1^{A}(M)\]
the minimal injective presentation of $M$ and denote by $\tau_{A_p}$ the AR translation of $A_p$. 
\begin{prop} [\cite{CLS}]\label{pro}
Let $\mathcal{M}=(M,V)$ and $\mathcal{N}=(N,V)$ be decorated representations of a basic algebra $A$ and $p > \nila(M),\nila(N)$. Then $g_{A}(\mathcal{M})=(g_1,\ldots,g_n)$ is given by
\[g_i=-[\soc(I_0^{A_p}(M)):S_i]+[\soc(I_1^{A_p}(M)):S_i]+ \dim(V_i) \]
for all $i \in Q_0$ and
\[E_{A}(\mathcal{M},\mathcal{N})=E_{A_p}(\mathcal{M},\mathcal{N})=\dim \Hom_{A_p} (\tau^{-1}_{A_p}(N),M)+ \sum_{i=1}^n \dim(W_i)\dim(M_i).\]
In particular, we have
\[\dim \Hom_{A_p} (\tau^{-1}_{A_p}(N),M)=\dim \Hom_{A_q} (\tau^{-1}_{A_q}(N),M)\]
for all $p,q > \nila(M), \nila(N)$.
\end{prop}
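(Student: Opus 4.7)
The plan is to exploit the fact that whenever $p > \nila(M)$, the nilpotent $A$-module $M$ is annihilated by $I_p$ and therefore is already an $A_p$-module; I thus reduce the computation of all homological invariants of $\mathcal{M}$ and $\mathcal{N}$ to the finite-dimensional truncation $A_p$, where standard tools (Bass numbers, Auslander--Reiten duality) are available.

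For the $g$-vector formula, the equality $\Hom_A(S_i, M) = \Hom_{A_p}(S_i, M)$ is immediate since both sides only see the $A_p$-module structure on $M$. To handle $\Ext^1$ I would show that the natural map $\Ext^1_{A_p}(S_i, M) \to \Ext^1_A(S_i, M)$ is a bijection: any $A$-extension $0 \to M \to X \to S_i \to 0$ has $X$ nilpotent of index at most $\nila(M)+1 \le p$, so $X$ is an $A_p$-module and the class lifts to $\Ext^1_{A_p}$. Once these two equalities are established, I would invoke the standard identification of Bass numbers with socle multiplicities in a minimal injective resolution over the finite-dimensional algebra $A_p$, namely
\[
\dim\Hom_{A_p}(S_i,M) = [\soc I_0^{A_p}(M):S_i], \qquad \dim\Ext^1_{A_p}(S_i,M) = [\soc I_1^{A_p}(M):S_i],
\]
which comes from applying $\Hom_{A_p}(S_i,-)$ to the minimal injective presentation and observing that $\soc(I_j^{A_p}(M))$ records exactly which injective envelopes occur as summands. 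Plugging these into the definition of $g_A(\mathcal{M})$ yields the claimed formula.

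The identity $E_A(\mathcal{M},\mathcal{N}) = E_{A_p}(\mathcal{M},\mathcal{N})$ follows at once: $\Hom_A(M,N) = \Hom_{A_p}(M,N)$, and by the first part applied to $\mathcal{N}$ we have $g_A(\mathcal{N}) = g_{A_p}(\mathcal{N})$. For the second equality I would invoke the Auslander--Reiten type identity
\[
\dim\Hom_{A_p}(M,N) + \sum_{i=1}^n \bigl(-\dim\Hom_{A_p}(S_i,N) + \dim\Ext^1_{A_p}(S_i,N)\bigr)\dim(M_i) = \dim\Hom_{A_p}(\tau^{-1}_{A_p}N, M),
\]
valid over any finite-dimensional algebra; it is deducible by pairing $M$ with a minimal projective presentation of $N$, using AR-duality and the fact that the right-hand side is additive on direct sums in $N$ while vanishing on injective summands. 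Combined with the formula for $g_i^{A_p}(\mathcal{N})$, the decoration terms $\dim(W_i)\dim(M_i)$ survive unchanged and produce the stated identity. The last assertion is then free: the left-hand side $E_A(\mathcal{M},\mathcal{N})$ and the decoration contribution do not depend on $p$, so $\dim\Hom_{A_p}(\tau^{-1}_{A_p}N, M)$ must coincide for all admissible $p$ and $q$.

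The main obstacle is pinning down the precise Auslander--Reiten-type identity used in the $E$-invariant step and handling the decoration bookkeeping carefully; one has to check that $\tau^{-1}_{A_p}$ interacts correctly with injective summands of $N$ over $A_p$ (they vanish) and that this matches the contributions already accounted for on the $g$-vector side. All other ingredients become routine once the reduction to $A_p$ is in place.
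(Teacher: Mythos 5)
Your proposal is correct, but it is more self-contained than what the paper actually does. The paper's own proof is essentially a citation: it observes that \cite{CLS} proved this statement under the stronger hypothesis $p>\dim(M),\dim(N)$, and then merely checks that the CLS argument only uses that $M$ and $N$ (and the relevant extensions) lie in the image of $\mo(A_p)\to\mo(A)$, which already holds when $p>\nila(M),\nila(N)$ because $\Hom_{A_p}(M,N)=\Hom_A(M,N)$ for $A_p$-modules and any extension has nilpotency index bounded by the sum of the indices. Your reduction step is exactly this observation, applied to the specific groups $\Hom_A(S_i,M)$, $\Ext^1_A(S_i,M)$ and $\Hom_A(M,N)$ (and your bound $\nila(X)\le\nila(M)+1\le p$ for extensions by a simple is even sharper than the bound the paper records). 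Where you diverge is that you then re-derive the finite-dimensional identity itself -- via Bass numbers $\dim\Hom_{A_p}(S_i,M)=[\soc I_0^{A_p}(M):S_i]$, $\dim\Ext^1_{A_p}(S_i,M)=[\soc I_1^{A_p}(M):S_i]$ and the formula $\dim\Hom_{A_p}(M,N)-\dim\Hom_{A_p}(M,I_0(N))+\dim\Hom_{A_p}(M,I_1(N))=\dim\Hom_{A_p}(\tau^{-1}_{A_p}N,M)$ -- rather than quoting it from \cite{CLS}. That identity is true and standard, so your route is valid and has the merit of making the statement independent of the CLS proof; the paper's route is shorter because the finite-dimensional content is precisely what is being cited. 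One small caveat: your suggested derivation of the key identity "by pairing $M$ with a minimal projective presentation of $N$" is aimed at the wrong translate ($\tau_{A_p}N$ rather than $\tau^{-1}_{A_p}N$); the clean argument applies $\Hom_{A_p}(-,M)$ to $\nu^{-1}I_0(N)\to\nu^{-1}I_1(N)\to\tau^{-1}_{A_p}N\to 0$ obtained from the minimal injective copresentation of $N$ via the inverse Nakayama functor $\nu^{-1}$, together with the natural isomorphism $\Hom_{A_p}(\nu^{-1}I,M)\cong D\Hom_{A_p}(M,I)$; your bookkeeping of the decoration terms $\dim(W_i)\dim(M_i)$ and the final $p$-independence argument are fine as stated.
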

\begin{proof}
In \cite{CLS} this was proved with the assumption, that $p > \dim (M), \dim (N)$ instead of $p > \nila(M),\nila(N)$. Note that we always have $\dim (M) \geq \nila(M)$ (see for example the proof of \cite[Lemma 2.2.]{CLS}). However, it is easy to see, that the same proof works, using this weaker assumption. Note, that if $M \in \mo (A)$ with $p \geq \nila(M)$, then $M$ is in the image of the embedding $\mo (A_p)\rightarrow \mo (A)$. Hence, if $p \geq \nila(M),\nila(N)$ it follows that
\[\dim \Hom_{A_p}(M,N)=\dim \Hom_A(M,N).\]
Also note, that any extension of representations $M$ and $N$ of $\Lambda$ is a representation $E$ of $\Lambda$ with $\nila(E)\leq \nila(M)+\nila (N)$. Thus if $p\geq \nila(M)+\nila (N)$ we have
\[\dim \Ext^1_{A_p}(M,N)= \dim \Ext^1_A(M,N).\]
\end{proof}
Dualizing the arguments of the proof of the last proposition one obtains the following:
\begin{prop}
Let $M,N \in \mo(A)$. Then we have
\[\dim \Hom_{A_p} (N,\tau_{A_p}(M))=\dim \Hom_{A_q} (N,\tau_{A_q}(M))\]
for all $p,q > \nila(M), \nila(N)$.
\end{prop}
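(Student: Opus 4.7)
My plan is to dualize the argument of the preceding proposition, which established that $\dim \Hom_{A_p}(\tau^{-1}_{A_p}N, M)$ stabilizes for large $p$. The arguably cleanest route is to pass to the opposite algebra. Setting $B := A^{\mathrm{op}}$, we have $B = \mathbb{C}\langle\!\langle Q^{\mathrm{op}}\rangle\!\rangle/J$ for an admissible ideal $J$, and $p$-truncation commutes with forming the opposite, i.e.\ $(B)_p = (A_p)^{\mathrm{op}}$. The standard $\mathbb{C}$-linear duality $D = \Hom_\mathbb{C}(-,\mathbb{C})$ gives an exact contravariant equivalence $\mo(A_p) \simeq \mo(B_p)$ that swaps projectives and injectives, preserves nilpotency in the sense $\nila(M) = \nil_B(DM)$, and satisfies $D\tau_{A_p}M \cong \tau^{-1}_{B_p}(DM)$ for $M$ without projective direct summands. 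Thus
\[
\dim \Hom_{A_p}(N,\tau_{A_p}M) \;=\; \dim \Hom_{B_p}(D\tau_{A_p}M,\,DN) \;=\; \dim \Hom_{B_p}\!\bigl(\tau^{-1}_{B_p}(DM),\,DN\bigr),
\]
and applying the previous proposition to the algebra $B$ and the modules $DM,DN$ yields independence of $p$ for $p > \nil_B(DM),\nil_B(DN) = \nila(M),\nila(N)$.

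Alternatively, one can dualize the proof in place using the other Auslander--Reiten formula
\[
\dim \overline{\Hom}_{A_p}(N,\tau_{A_p}M) \;=\; \dim \Ext^1_{A_p}(M,N),
\]
where $\overline{\Hom}$ denotes morphisms modulo those factoring through injective $A_p$-modules. Writing
\[
\dim \Hom_{A_p}(N,\tau_{A_p}M) \;=\; \dim \Ext^1_{A_p}(M,N) \;+\; \dim \mathcal{I}_p(N,\tau_{A_p}M),
\]
with $\mathcal{I}_p$ the subspace of morphisms factoring through an injective $A_p$-module, one recovers the required stability by reusing the two key observations isolated in the proof of the previous proposition: $\dim\Ext^1_{A_p}(M,N) = \dim\Ext^1_A(M,N)$ for $p > \nila(M)+\nila(N)$, and $\dim\Hom_{A_p}(-,-) = \dim\Hom_A(-,-)$ for modules of nilpotency index below $p$. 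The second summand is controlled by the minimal injective copresentation of $\tau_{A_p}M$, which in turn stabilizes because $\tau_{A_p}M$ is read off from the minimal projective presentation of $M$ in $\mo(A_p)$, and the latter is independent of $p$ once $p>\nila(M)$.

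The one expected obstacle in either route is the routine bookkeeping around projective direct summands of $M$: such summands are annihilated by $\tau_{A_p}$ and contribute zero to $\Hom_{A_p}(N,\tau_{A_p}M)$, but one must check that they split off from $M$ compatibly with truncation (so that non-projective summands are detected uniformly for all sufficiently large $p$) and, in the opposite-algebra route, compatibly with $D$ (so that projective summands over $A_p$ correspond under $D$ to injective summands over $B_p$, which are precisely what $\tau^{-1}_{B_p}$ annihilates).
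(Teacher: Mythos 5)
Your first (opposite-algebra) route is correct and is in substance exactly what the paper does: the paper's entire proof is the remark that one dualizes the argument of the preceding proposition, and passing to $B=A^{\mathrm{op}}$, using $(A^{\mathrm{op}})_p\cong (A_p)^{\mathrm{op}}$, $\nil_B(DM)=\nila(M)$ and $D\tau_{A_p}M\cong \tau^{-1}_{(A_p)^{\mathrm{op}}}(DM)$, is a clean way to implement that remark; the projective-summand bookkeeping you worry about is harmless, since with the usual convention ($\tau$ kills projectives, $\tau^{-1}$ kills injectives) the displayed isomorphism holds for arbitrary $M$ and you apply the previous proposition directly to $DM,DN$ with no summand analysis across different $p$.

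Your alternative in-place route, however, contains a genuine error, so do not rely on it as stated. The two summands in the decomposition $\dim\Hom_{A_p}(N,\tau_{A_p}M)=\dim\Ext^1_{A_p}(M,N)+\dim\mathcal{I}_p(N,\tau_{A_p}M)$ are \emph{not} separately constant on the range $p>\nila(M),\nila(N)$: the Ext-stability you quote is only available for $p\geq \nila(M)+\nila(N)$, and below that threshold both terms really do move, only their sum being constant. Concretely, take $Q$ a single loop ($I=0$, or any relation of length $\geq 5$), $M=N=\mathbb{C}[x]/(x^2)$, so $\nila(M)=\nila(N)=2$ and $A_p=\mathbb{C}[x]/(x^p)$. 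Then $\tau_{A_p}M\cong M$ and $\dim\Hom_{A_p}(N,\tau_{A_p}M)=2$ for all $p\geq 3$, but $\dim\Ext^1_{A_3}(M,N)=1$ with $\dim\mathcal{I}_3(N,\tau_{A_3}M)=1$, while $\dim\Ext^1_{A_p}(M,N)=2$ with $\dim\mathcal{I}_p(N,\tau_{A_p}M)=0$ for $p\geq 4$. Relatedly, the minimal projective presentation of $M$ over $A_p$ is not literally independent of $p$ (the projectives $A_pe_i$ change with $p$, and $\tau_{A_p}M$ itself depends on $p$ — compare the cohooks of length $p$ in Section 2.4); what is independent of $p$ for $p>\nila(M)$ is only the multiplicity of each indecomposable projective in $P_0$ and $P_1$. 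So an in-place dual argument should mimic the $E$-invariant/$\mathbf{g}$-vector computation of the previous proposition with projective presentations, rather than splitting off the Ext term; otherwise your opposite-algebra argument already settles the statement.
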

\begin{remark}\label{proj}
If $A$ is finite-dimensional and $M$ a non-injective representation of $A$ with minimal projective presentation 
\[P_1^{A}(M)\longrightarrow P_0^{A}(M)\longrightarrow M\longrightarrow 0\]
then
\[0\longrightarrow \tau_{A}M \overset{f}{\longrightarrow} \nu(P_1^{A}(M)) \overset{g}{\longrightarrow} \nu(P_0^{A}(M))\]
is a minimal injective presentation of $\tau_{A}M$, where $\nu \colon \mo(A) \rightarrow \mo (A)$ denotes the Nakayama functor. Then $g_{A}(\tau_{A}M)=(g_1,\ldots,g_n)$ is given by
\[g_i=-[\topp(P_1^{A}(M)):S_i]+[\topp(P_0^{A}(M)):S_i]\]
for all $i \in Q_0$.
\end{remark}
The following lemma is due to Plamondon \cite{P}.
\begin{lemma}[\cite{P}]\label{plam}
Let $A$ be a finite-dimensional Jacobian algebra. Then for $M \in \mo (A)$ we have
\[\dim \Hom_A (\tau^{-1}_{A}(M),M)=\dim \Hom_A (M,\tau_{A}(M)).\]
\end{lemma}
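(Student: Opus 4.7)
The plan is to invoke the Auslander--Reiten formula to rewrite both dimensions in terms of $\Ext^1_A(M,M)$, and then to compare the two resulting correction terms, which coincide precisely because $A$ is Jacobian.

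Concretely, the Auslander--Reiten formula in the form
\[D\Ext^1_A(X,Y) \cong \overline{\Hom}_A(Y,\tau_A X) \cong \underline{\Hom}_A(\tau_A^{-1}Y, X),\]
valid for any finite-dimensional algebra $A$, gives on setting $X=Y=M$ the double identity
\[\dim \overline{\Hom}_A(M,\tau_A M) \;=\; \dim \Ext^1_A(M,M) \;=\; \dim \underline{\Hom}_A(\tau_A^{-1}M, M),\]
where $\overline{\Hom}$ and $\underline{\Hom}$ denote $\Hom$ modulo morphisms factoring through an injective, respectively a projective, $A$-module. Thus the lemma reduces to the identity
\[\dim \Hom^{\mathrm{inj}}_A(M, \tau_A M) \;=\; \dim \Hom^{\mathrm{proj}}_A(\tau_A^{-1}M, M)\]
of the two ``correction'' dimensions.

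To establish this identity I would use the specific structure of a Jacobian algebra. A morphism $M \to \tau_A M$ that factors through an injective factors already through the injective envelope $M \hookrightarrow I_0(M)$; dually, any morphism $\tau_A^{-1}M \to M$ factoring through a projective factors through the projective cover $P_0(M) \twoheadrightarrow M$. Combining this with the expressions for $\tau_A M$ and $\tau_A^{-1}M$ coming from the minimal projective presentation and the minimal injective copresentation of $M$ via the Nakayama functor (as in Remark~\ref{proj} and its dual), both dimensions can be rewritten as alternating sums of $\dim \Hom$'s between projective and injective $A$-modules. For a Jacobian algebra these two alternating sums agree, as a consequence of the self-duality of the Ginzburg--Keller bimodule resolution -- equivalently, of the symmetry $E_A(\mathcal{M},\mathcal{N})=E_A(\mathcal{N},\mathcal{M})$ of the Derksen--Weyman--Zelevinsky $E$-invariant for Jacobian algebras.

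The hard part is the last step: the Jacobian hypothesis is essential and enters only here; for an arbitrary finite-dimensional algebra the two correction dimensions need not agree, and one must invoke the Calabi--Yau-type duality coming from the potential to identify them.
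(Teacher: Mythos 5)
Your opening reduction is sound: by the Auslander--Reiten formulas one has $\dim\overline{\Hom}_A(M,\tau_A M)=\dim\Ext^1_A(M,M)=\dim\underline{\Hom}_A(\tau_A^{-1}M,M)$, so the lemma is indeed equivalent to the equality of the dimension of the space of maps $M\to\tau_A M$ factoring through an injective with that of the maps $\tau_A^{-1}M\to M$ factoring through a projective, and these do factor through $I_0(M)$, respectively through the projective cover $P_0(M)\twoheadrightarrow M$. The gap is that this equality -- which is the entire content of the lemma -- is never established. The symmetry $E_A(\mathcal{M},\mathcal{N})=E_A(\mathcal{N},\mathcal{M})$ you invoke is not a theorem about finite-dimensional Jacobian algebras: for the acyclic quiver $1\to 2$ with zero potential the Jacobian algebra is the path algebra, and with the definitions of this paper (or via Proposition \ref{pro}) one gets $E_A(S_1,S_2)=\dim\Hom_A(\tau_A^{-1}S_2,S_1)=\dim\Hom_A(S_1,S_1)=1$ while $E_A(S_2,S_1)=\dim\Hom_A(\tau_A^{-1}S_1,S_2)=0$, since $S_1$ is injective. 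Moreover, even if some symmetry of $E_A$ were available, applying it with $\mathcal{M}=\mathcal{N}$ is vacuous, and the statement one would actually need, $\dim\Hom_A(\tau_A^{-1}N,M)=\dim\Hom_A(N,\tau_A M)$, is precisely the bifunctorial form of the lemma, so quoting it would be circular. Finally, the assertion that both correction terms ``can be rewritten as alternating sums of Homs between projectives and injectives'' which ``agree by the self-duality of the Ginzburg--Keller resolution'' is a gesture, not an argument: the quantities in question are dimensions of images of $\Hom_A(I_0(M),\tau_A M)\to\Hom_A(M,\tau_A M)$ and $\Hom_A(\tau_A^{-1}M,P_0(M))\to\Hom_A(\tau_A^{-1}M,M)$, and no mechanism is given for transporting the Calabi--Yau self-duality of the completed Ginzburg dg algebra down to this module-theoretic identity.

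For comparison: the paper itself gives no proof but quotes the statement from Plamondon \cite{P}, where it is deduced from the $2$-Calabi--Yau property of Amiot's generalized cluster category $\mathcal{C}$ attached to the quiver with potential, $A$ being the endomorphism algebra of a cluster-tilting object of $\mathcal{C}$; the Serre duality of $\mathcal{C}$, transported through the projection $\mathcal{C}\to\mo(A)$, is what produces exactly the swap of $\tau_A$ and $\tau_A^{-1}$ that your last step is missing (and this is also where the failure for general finite-dimensional algebras is explained). So your instinct that the Jacobian hypothesis must enter through a Calabi--Yau-type duality is correct, but at that point you must either reproduce the categorical argument or cite it; as written, the proof is incomplete.
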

\subsection{Strongly reduced components}
Let $A$ be a basic algebra and $(\d,\v)\in \mathbb{N}^n\times \mathbb{N}^n$. We denote by 
\[\Irr \qquad \text{and} \qquad \Dirr \]
the irreducible components of the variety $\Rep$ consisting of representations of $A$ with dimension vector $\d$ and correspondingly of the variety $\Drep$. 
Furthermore we set
\[\irr(A)=\bigcup_{\d \in \mathbb{N}^n} \Irr \qquad \text{and} \qquad \dirr(A)=\bigcup_{(\d,\v) \in \mathbb{N}^n\times \mathbb{N}^n} \Dirr .\]
For $Z\in \Dirr$ we define the \textit{dimension vector} of $Z$ as $\underline{\dim}(Z):=(\d,\v) \in  \mathbb{N}^n\times \mathbb{N}^n$.

Let $Z,Z_1,Z_2$ be in $\dirr(A)$. Denote by $\mathcal{O}(\M)$ the isomorphism class (or $G_{\d}$-orbit) of $\M \in \Drep$. We define the following invariants
\begin{align*}
\cl(Z)&:= \min \lbrace \dim(Z)-\dim\mathcal{O}(\M)|\M \in Z \rbrace,\\
\el(Z)&:= \min \lbrace \dim\Ext^1_{A}(M,M)|\M =(M,V) \in Z \rbrace,\\
\extl(Z_1,Z_2)&:= \min \lbrace \dim\Ext^1_{A}(M_1,M_2)|\M_i=(M_i,V_i) \in Z_i, \: i=1,2\rbrace.
\end{align*}
One can show that there is a dense open subset $U$ of $Z$ (respectively of $Z_1\times Z_2$) such that $\El(\M)=\El(\N)$ for all $\M,\N \in U$ (respectively $\El(\M_1,\M_2)=\El(\N_1,\N_2)$ for all $(\M_1,\M_2),(\N_1,\N_2) \in U$). We define the \textit{generic} $E$\textit{-invariant} $\El(Z):=\El(\M)$ for $\M \in U$ (respectively $\El(Z_1,Z_2):=\El(\M_1,\M_2)$ for $(\M_1,\M_2) \in U$). By \cite[Lemma 5.2.]{CLS} we always have 
\[\cl(Z)\leq \el(Z) \leq \El(Z) \qquad \text{and}\qquad \extl(Z_1,Z_2)\leq \El(Z_1,Z_2)\]
for $Z,Z_1,Z_2$ in $\dirr(A)$.

An irreducible component $Z \in \dirr(A)$ is \textit{strongly reduced} if 
\[\cl(Z) = \el(Z) = \El(Z)\] and it is \textit{indecomposable} if it contains a dense subset of indecomposable representations. If $Z \in \Dirr$ is indecomposable then either $\d=0$ or $\v=0$.
The set of all strongly reduced components of $\Drep$ will be denoted by $\srDirr$ and we set
\[\dirr^{\sr}(A)=\bigcup _{(\d,\v)\in \mathbb{N}^n \times \mathbb{N}^n} \srDirr.\]  
Let 
\[G_{A}^{\sr}\colon \srdirr(A) \mapsto \mathbb{Z}^n\] be the map sending $Z \in \srdirr(A)$ to the generic $\bf{g}$-vector $g_{A}(Z)$ of $Z$. There is the following parametrization for strongly reduced components:
\begin{theorem}[\cite{CLS}]\label{gvectors}
For a basic algebra $A=\alg$ the following hold:
\begin{compactenum}[(i)]
\item The map
\[G_{A}^{\sr}\colon \srdirr(A) \mapsto \mathbb{Z}^n\]
is injective.
\item The following are equivalent:
\begin{compactenum}[(a)]
\item $G_{A}^{\sr}$ is surjective.
\item $\overline{A}:=\com / \overline{I}$ is finite-dimensional, where $\overline{I}$ is the $\mathfrak{m}$-adic closure of $I$.
\end{compactenum}
\end{compactenum}
\end{theorem}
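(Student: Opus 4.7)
The plan is to reduce everything to the finite-dimensional setting using the truncations $A_p$. Proposition~\ref{pro} (and its dual) guarantee that for any decorated representations $\M=(M,V)$ and $\N=(N,W)$ with $p>\nila(M),\nila(N)$ we have $g_{A}(\M)=g_{A_p}(\M)$ and $\El(\M,\N)=\Elp(\M,\N)$; the orbit-dimension and $\Ext^1$ quantities defining $\cl$ and $\el$ likewise stabilise. Consequently, strongly reduced components of $A$ whose generic point lies in $\mo(A_p)$ correspond bijectively to strongly reduced components of $A_p$ with matching generic $\bf{g}$-vector. This is the lever for both parts of the theorem.

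For part~(i), take $Z_1,Z_2\in\srdirr(A)$ with $g_{A}(Z_1)=g_{A}(Z_2)$. Fix dense open subsets $U_i\subset Z_i$ on which the generic invariants are attained, choose $p$ exceeding every $\nila(M)$ for $\M=(M,V)\in U_1\cup U_2$, and pass to the induced strongly reduced components $Z_1^{(p)},Z_2^{(p)}$ of $A_p$. By the truncation principle above, $g_{A_p}(Z_1^{(p)})=g_{A_p}(Z_2^{(p)})$, so it suffices to prove injectivity of $G_{A_p}^{\sr}$ for finite-dimensional $A_p$. This is the classical result of Derksen--Weyman--Zelevinsky / Plamondon: Remark~\ref{proj} expresses the $\bf{g}$-vector of $\tau_{A_p}^{-1}$-images in terms of tops of minimal projective presentations, and a strongly reduced component is generically determined by the isomorphism type of such a presentation, so the $\bf{g}$-vector recovers the component.

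For part~(ii), note first that $\overline{A}$ is finite-dimensional if and only if $\mathfrak{m}^N\subseteq \overline{I}$ for some $N$, in which case $\overline{I}=I+\mathfrak{m}^N$ and $\overline{A}=A_N$. Assuming this, every $g\in\mathbb{Z}^n$ arises as $g_{A_N}(\M)$ for the generic member of some strongly reduced component of $A_N$ (since $G_{A_N}^{\sr}$ is surjective in the finite-dimensional case: any $g$ decomposes as $g=\udim(V)-[\topp P_0:S_\bullet]+[\topp P_1:S_\bullet]$ from a suitable projective presentation, yielding an $E$-rigid representative), and this component lifts to a strongly reduced component of $A$ by the reduction above. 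Conversely, if $\overline{A}$ is infinite-dimensional, the nilpotency index of the generic module in each $A_p$-strongly reduced component must grow without bound; one produces a concrete target $g\in\mathbb{Z}^n$ arising as a limit of $\bf{g}$-vectors over increasing $p$, so that any putative realisation in $\srdirr(A)$ would require a representation in $\rep(A)$ of infinite nilpotency index, which is impossible.

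The main obstacle is the (a)~$\Rightarrow$~(b) direction of part~(ii). The injectivity part reduces cleanly to a single $A_p$, but ruling out surjectivity amounts to certifying the \emph{absence} of a strongly reduced component with a given $\bf{g}$-vector, not just matching components between $A$ and $A_p$. At every finite level $p$ surjectivity holds, so one must pinpoint a sequence of $\bf{g}$-vectors whose $A_p$-realisations have nilpotency indices tending to infinity and argue that no single finite-dimensional representation of $A$ simultaneously achieves the required orbit dimension and $E$-invariant equalities defining strong reducedness. Making this incompatibility between the filtration $\{A_p\}$ and the fixed basic algebra $A$ precise, using the $\mathfrak{m}$-adic geometry of $\overline{I}$, is the crux of the argument.
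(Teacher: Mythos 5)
The paper itself offers no proof of Theorem~\ref{gvectors}: it is imported verbatim from \cite{CLS}, so your attempt has to be judged as a proof of the CLS result, and as such it has genuine gaps. The reduction of part~(i) to a truncation $A_p$ is fine (since $\nila(M)\le\dim M$ one has $\rep_{{\bf d}}(A)=\rep_{{\bf d}}(A_p)$ for $p>|{\bf d}|$, and Proposition~\ref{pro} identifies the relevant invariants), but at that point you simply assert the finite-dimensional theorem. Injectivity is disposed of with ``a strongly reduced component is generically determined by the isomorphism type of such a presentation, so the ${\bf g}$-vector recovers the component'', which is exactly the statement to be proved: the ${\bf g}$-vector only records $-[\soc I_0^{A_p}(M):S_i]+[\soc I_1^{A_p}(M):S_i]+\dim(V_i)$, and the fact that a strongly reduced component must coincide with the closure of the set of (decorated) cokernels of generic maps in the presentation space attached to the positive and negative parts of $g$ is the substantive content of Plamondon's parametrization \cite{P} and its extension in \cite{CLS}; nothing in your write-up replaces that argument. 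The finite-dimensional surjectivity is likewise asserted, and the parenthetical ``yielding an $E$-rigid representative'' is false: the components realizing a given ${\bf g}$-vector are in general not $E$-rigid --- in this very paper the band components of $\Lambda$ are strongly reduced with $E_{\Lambda}(Z)=1$ and fill the whole plane $x+y+z=0$ of ${\bf g}$-vectors.

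The more serious gap is the direction (a)$\Rightarrow$(b) of (ii), which you yourself call the crux and then do not prove: ``a concrete target $g$ arising as a limit of ${\bf g}$-vectors over increasing $p$'' names no vector and gives no mechanism excluding a strongly reduced component of $A$ realizing it. Note the asymmetry your reduction hides: a strongly reduced component of $A_p$ need not remain an irreducible component of $\rep_{{\bf d}}(A_q)$ for $q>p$, which is precisely why surjectivity at every finite level is compatible with failure of surjectivity for $A$; a proof must exhibit, for every $A$ with $\overline{A}$ infinite-dimensional, an explicit integer vector that is not the generic ${\bf g}$-vector of any strongly reduced component. (For the algebra $\Lambda$ of this paper such an obstruction is visible from Proposition~\ref{g}: every indecomposable strongly reduced component has ${\bf g}$-vector with coordinate sum $0$ or $1$, and ${\bf g}$-vectors are additive on direct sums, so no vector of negative coordinate sum is attained.) Producing this obstruction from the $\mathfrak{m}$-adic geometry of $\overline{I}$ for a general basic algebra is exactly what \cite{CLS} supplies and what is missing here. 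By contrast, your (b)$\Rightarrow$(a) reduction (via $\overline{I}=I+\mathfrak{m}^N$, so $\overline{A}=A_N$ and every finite-dimensional nilpotent $A$-module is an $A_N$-module) is sound, but only modulo the unproven finite-dimensional surjectivity.
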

\subsection{String Algebras}
A finite-dimensional basic algebra $\com/I$ is called a \textit{string algebra} if the following hold:
\begin{compactenum}[(S1)]
\item Each vertex of $Q$ is starting point of at most two arrows and end point of at most two arrows;
\item For each arrow $\beta$ there exists at most one arrow $\alpha$ such that $\beta\alpha  \notin I$ and at most one arrow $\gamma$ such that $\gamma \beta  \notin I$.
\end{compactenum}
From now on let $A = \com/I$ be a string algebra. For any arrow $\beta \in Q_1$ we define a formal inverse $\beta^-$ and we set
$Q_1^-:=\lbrace \beta^- \mid \beta \in Q_1 \rbrace$. By definition we have $(\beta^-)^-=\beta$. A \textit{string of length} $m \in \mathbb{N}$ in $A$ is a reduced word in $Q$, i.e. a sequence $C=c_m\ldots c_1$ where $c_i \in Q_1 \cup Q_1^-$ for all $i$. Furthermore $C$ does not contain a sequence of the form $\beta \beta^-$ (or $\beta^- \beta$) or any zero relations defined by $I$. A string $C=c_m\ldots c_1$ is \textit{directed} if $c_i \in Q_1$ for all $i$ or $c_i \in Q_1^-$ for all $i$. We say two strings $C$ and $C'$ are \textit{equivalent} (write $C\sim C'$) if $C=C'$ or $C^-=C'$. We choose a complete set of representatives of the set of all strings relative to $\sim$ and denote it by $\mathscr{S}_{A}$.

A string is \textit{cyclic} if it starts and ends at the same vertex. A cyclic string $B$ is called a \textit{band} if it is not directed and each power $B^m$ is a string but $B$ itself is not the power of a shorter string. If $B=c_m\ldots c_1$ is a band, then a \textit{rotation} of $B$ is a band of the form $c_i \ldots c_1c_m \ldots c_{i+1} $ for any $1 \leq i \leq m$. We say two bands $B$ and $B'$ are \textit{rotation-equivalent} and write $B\sim_r B'$ if $B'$ is a rotation of the band $B$ or $B^-$. We choose a complete set of representatives of the set of all bands relative to $\sim_r$ and denote it by $\mathscr{S}^B_{A}$.

We envision any string $C=c_m\ldots c_1$ by drawing a diagram where we draw an arrow
\[
\xygraph{!{<0cm,0cm>;<1cm,0cm>:<0cm,1cm>::}
!{(0,0)}*+{y_i}="1"
!{(1,-1)}*+{y_{i+1}}="2"
!{(3,-0.5)}*+{\text{if } c_i \in Q_1 \text{ and }}
!{(6,0)}*+{y_{i+1}}="3"
!{(5,-1)}*+{y_i}="4"
!{(8,-0.5)}*+{\text{if } c_i \in Q_1^-}
"1":"2"_{c_i}  
"3":"4"^{c_i}
}
\]
for all $1\leq i \leq n$. The \textit{string module} $M(C)$ is then given by taking the vertices $\lbrace y_1,\ldots ,y_{m+1}\rbrace $ as a $\mathbb{C}$-basis and the arrows in the diagram show, how $A$ operates on the basis vectors: if an arrow $\alpha$ appears in $C$ as $c_i$, it sends $y_i$ to $y_{i+1}$, if $\alpha^- $ appears as $c_i$, it sends $y_{i+1}$ to $y_{i}$ and otherwise it acts as zero. 

Similarly, we can define a \textit{band module} $M(B,\lambda, k)$ for every band $B$, every $\lambda \in \mathbb{C}^*$ and every integer $k\geq 1$ (see \cite{BuRi}). By excluding directed cyclic strings we make sure, that all band modules are nilpotent $A$-modules.
\begin{theorem}[\cite{BuRi}]
The string modules $M(C)$, with $C \in \mathscr{S}_{A}$ and the band modules $M(B,\lambda,k)$ with $B \in \mathscr{S}^B_{A}$, $\lambda \in \mathbb{C}^*$ and $k \in \mathbb{N}_{\geq 1}$, form a complete list of pairwise non-isomorphic representatives of indecomposable finite-dimensional $A$-modules. 
\end{theorem}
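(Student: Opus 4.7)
The statement breaks into three parts: (a) each listed module is indecomposable, (b) modules coming from different equivalence classes are non-isomorphic, and (c) every finite-dimensional indecomposable $A$-module appears in the list. I would attack them in this order, as (c) is by far the most involved.

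For (a) and (b) I would first construct an explicit combinatorial model for $\Hom_A(M(C),M(C'))$ via so-called graph maps. Each common substring $D$ of $C$ and $C'$ satisfying appropriate left and right boundary conditions (dictated by (S1) and (S2)) gives a canonical homomorphism sending the basis vectors of $M(C)$ supported on $D$ to those of $M(C')$ supported on the matching copy $D'$ inside $C'$; conversely, every homomorphism is uniquely a linear combination of such graph maps. Specializing to $C=C'$, the identity corresponds to the factorization $D=C$, while every graph map with $D\neq C$ shifts the support strictly and is therefore nilpotent, so $\End_A(M(C))$ is local. For the band modules I would exhibit, for each band $B$, an exact functor from finite-dimensional $\mathbb{C}[T,T^{-1}]$-modules to $\mo(A)$ sending the indecomposable $\mathbb{C}[T,T^{-1}]/(T-\lambda)^k$ to $M(B,\lambda,k)$; fullness and faithfulness on indecomposables then transfer the classification of indecomposable $\mathbb{C}[T,T^{-1}]$-modules via Jordan blocks to the band modules, giving (a) in this case. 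A parallel Hom-space analysis then yields (b): $M(C)\cong M(C')$ forces $C\sim C'$; $M(B,\lambda,k)\cong M(B',\lambda',k')$ forces $B\sim_r B'$ with matching Jordan parameters; and no string module is isomorphic to any band module, as can be seen by comparing the rank data of arrow actions at the endpoints of the string against the cyclic structure of the band.

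The main obstacle is (c). Here I would follow the functorial filtration technique of Gelfand--Ponomarev, adapted to string algebras by Butler--Ringel. For each letter $c \in Q_1 \cup \Q$ one defines subfunctors $U_c$ and $V_c$ of the identity on $\mo(A)$ by iteratively applying the action of $c$ and alternating kernel and image constructions; the string algebra axioms (S1) and (S2) ensure that these functors pairwise commute modulo controllable correction terms. On any finite-dimensional module the filtration stabilizes, and its associated graded splits canonically into pieces indexed by strings and bands. One then verifies that each piece is isomorphic to a string or band module, and that any indecomposable $M$ has exactly one non-zero piece, which identifies it with some $M(C)$ or $M(B,\lambda,k)$. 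The delicate step is proving that the filtration actually splits; this amounts to showing that certain short exact sequences built from the $U_c$'s and $V_c$'s are split, which ultimately rests on the admissibility of $I$ (so that long enough paths vanish) together with the finite-dimensionality of $M$.
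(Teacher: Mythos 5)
The paper does not prove this statement at all: it is quoted verbatim from Butler--Ringel \cite{BuRi}, so there is no in-paper argument to compare yours against. Judged on its own, your outline follows the standard route from the literature: the graph-map basis of $\Hom_A(M(C),M(C'))$ (which the paper itself invokes separately, citing Crawley-Boevey \cite{CB}) for indecomposability and pairwise non-isomorphism, and the Gelfand--Ponomarev functorial filtration method for completeness. As a strategy this is sound and is essentially how the result is proved in the sources.

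Two caveats keep this at the level of a sketch rather than a proof. First, in part (a) for bands, the functor $M(B,-)\colon \mathbb{C}[T,T^{-1}]\text{-}\mathrm{mod}\to \mo(A)$ is exact and faithful but \emph{not} full in general (a band can admit graph maps between its modules that do not come from $\mathbb{C}[T,T^{-1}]$-homomorphisms), so ``fullness and faithfulness on indecomposables'' cannot simply be asserted; one either computes $\Hom$ between band modules explicitly via admissible pairs on powers of $B$ (using that $B$ is primitive and not directed) to see that $\End_A(M(B,\lambda,k))$ is local, or one proves that $M(B,-)$ is a representation embedding, which requires essentially the same computation. Similarly, the claim that every graph map with overlap $D\neq C$ ``shifts the support strictly'' is not literally true; what one shows is that the non-identity graph maps span a nilpotent ideal, whence $\End_A(M(C))$ is local. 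Second, in part (c) you correctly identify the delicate point --- the splitting of the functorial filtration --- but only name it; this is the technical heart of Butler--Ringel's (and Gelfand--Ponomarev's) argument and would need to be carried out, using (S1), (S2) and admissibility of $I$, before the proposal could count as a complete proof.
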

Let $C$ and $C'$ be two strings. We call a pair of triples of strings
\[(a,a')=((C_3,C_2,C_1),(C'_3,C'_2,C'_1))\]
such that $C=C_3C_2C_1$ and $C'=C'_3C'_2C'_1$ \textit{admissible} if $C_2 \sim C'_2$ and the diagrams of $C$ and $C'$ are given by
\[
\xygraph{!{<0cm,0cm>;<1cm,0cm>:<0cm,1cm>::}
!{(2,-1)}="5"
!{(3.8,-1)}="55"
!{(4,-1)}="6"
!{(5,0)}="7"
!{(5.1,0.5)}="777"
!{(5.1,-4.5)}="888"
!{(5.2,0)}="77"
!{(6.8,0)}="88"
!{(6.9,0.5)}="7777"
!{(6.9,-4.5)}="8888"
!{(7,0)}="8"
!{(8,-1)}="9"
!{(8.2,-1)}="99"
!{(10,-1)}="10"
!{(9,-1.5)}*+{C_3}="4"
!{(6,-1.5)}*+{C_2}="41"
!{(3,-1.5)}*+{C_1}="42"
!{(2,-3)}="51"
!{(3.8,-3)}="551"
!{(4,-3)}="61"
!{(5,-4)}="71"
!{(5.2,-4)}="771"
!{(6.8,-4)}="881"
!{(7,-4)}="81"
!{(8,-3)}="91"
!{(8.2,-3)}="991"
!{(10,-3)}="101"
!{(9,-2.5)}*+{C'_3}="441"
!{(6,-2.5)}*+{C'_2}="411"
!{(3,-2.5)}*+{C'_1}="421"
"5"-"55"
"7":"6"
"777":@{--}"888"
"7777":@{--}"8888"
"77"-"88"
"8":"9"
"99"-"10"  
"51"-"551"
"61":"71"
"771"-"881"
"91":"81"
"991"-"101"  
}
\]
Here it is possible that $C_i$ and $C'_i$ are strings of length zero. We denote the set of all admissible pairs for $C$ and $C'$ by $\mathscr{A}(C,C')$. The following theorem was proven in a more general set up by Crawley-Boevey \cite{CB}.
\begin{theorem}
Let $C$ and $C'$ be two strings. For any admissible pair $(a,a') \in \mathscr{A}(C,C')$ there is a canonical homomorphism $f_{(a,a')} \colon M(C) \to M(C')$. Moreover the set
\[\lbrace f_{(a,a')}\mid (a,a') \in \mathscr{A}(C,C') \rbrace \]
is a basis of $\Homl (M(C),M(C'))$.
\end{theorem}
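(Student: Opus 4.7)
I will proceed in three steps: construct the canonical map $f_{(a,a')}$ and check its $A$-linearity, prove linear independence, and finally show spanning — the last step being the main obstacle.

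\textbf{Construction.} Write $|C_1|=k$, $|C_2|=\ell$, $|C'_1|=k'$, and let $y_1,\dots,y_{m+1}$ and $y'_1,\dots,y'_{m'+1}$ be the canonical bases of $M(C)$ and $M(C')$. The diagrammatic shape imposed by admissibility (a ``peak'' on the $C$-side and a ``valley'' on the $C'$-side around the common middle) means that $y_{k+1},\dots,y_{k+\ell+1}$ span a quotient of $M(C)$ isomorphic to $M(C_2)$, while $y'_{k'+1},\dots,y'_{k'+\ell+1}$ span a submodule of $M(C')$ isomorphic to $M(C'_2)$. Since $C_2 \sim C'_2$, define
\[
f_{(a,a')} \colon M(C) \twoheadrightarrow M(C_2) \xrightarrow{\;\sim\;} M(C'_2) \hookrightarrow M(C'),
\]
which on basis vectors sends $y_{k+i}\mapsto y'_{k'+i}$ (with orientation depending on whether $C_2=C'_2$ or $C_2=(C'_2)^-$) and kills every other basis vector. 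The verification that $f_{(a,a')}$ is $A$-linear reduces to a boundary check for each arrow adjacent to $C_2$ inside $C$ or $C'$, where the admissibility pattern combined with (S1)--(S2) forces the two prescribed actions to agree.

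\textbf{Linear independence.} The matrix of $f_{(a,a')}$ in the chosen bases is $0$--$1$ with support precisely on the pairs $(k+i,k'+i)$ for $i=1,\dots,\ell+1$. Distinct admissible pairs $(a,a')$ produce different such supports, since they are uniquely determined by the position of $C_2$ inside $C$, the position of $C'_2$ inside $C'$ and the orientation. Hence the $f_{(a,a')}$ are linearly independent.

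\textbf{Spanning — the main obstacle.} Given $\phi \in \Homl(M(C),M(C'))$, expand $\phi(y_i)=\sum_j \mu_{ij}\,y'_j$. For every arrow $\alpha\in Q_1$ and every $i$, the identity $\phi(\alpha y_i)=\alpha\phi(y_i)$, combined with the string-algebra axioms (S1) and (S2), forces a strong local constraint at each $(i,j)$ with $\mu_{ij}\neq 0$: the arrows entering and leaving $y_i$ in the diagram of $C$ must match, under the identification $i\leftrightarrow j$, those entering and leaving $y'_j$ in the diagram of $C'$. Propagating this local matching to the left and to the right, one produces a maximal common substring $C_2$ around position $(i,j)$ and checks that the propagation is compelled to terminate with the outer arrows arranged exactly as the admissibility picture requires, yielding a unique admissible pair $(a,a')$ containing $(i,j)$ in its matched region. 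Setting its coefficient equal to $\mu_{ij}$ and subtracting $\mu_{ij}f_{(a,a')}$ from $\phi$ strictly decreases the number of nonzero entries of the matrix, so induction on that number completes the argument. The subtle point, where the string-algebra axioms are used essentially, is showing that the propagation must end in a genuine admissible configuration — it cannot be obstructed by a relation of $I$ lying across the boundary, nor by a second outgoing (resp.\ incoming) arrow at a vertex, because (S1) forbids branching and (S2) identifies the unique non-zero continuation.
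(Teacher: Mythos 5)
The paper itself gives no proof of this statement; it is quoted from Crawley-Boevey \cite{CB}. Measured against the standard direct argument, your architecture is the right one: the composite $M(C)\twoheadrightarrow M(C_2)\xrightarrow{\ \sim\ }M(C'_2)\hookrightarrow M(C')$ is exactly the canonical map (the admissibility picture makes $C_2$ a factor substring of $C$ and $C'_2$ an image substring of $C'$), and "independence plus propagate-and-subtract" is the right plan. But two of your steps rest on claims that are not correct as stated. For linear independence, "distinct admissible pairs have distinct supports" does not suffice: $0$--$1$ matrices with pairwise distinct supports can be linearly dependent (indicators of $\{1\}$, $\{2\}$, $\{1,2\}$). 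What the argument actually needs is that a fixed matrix position $(i,j)$ lies in the support of at most one canonical map (or some triangularity replacing this); that is true, but it requires an argument — an overlap of two matched regions forces either two equal letters at a junction where admissibility demands letters of opposite type, or a forbidden subword $\beta^-\beta$ in $C$ or $C'$ — and you have not supplied it.

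The more serious problem is the "strong local constraint" on which your spanning step runs. It is false that at a nonzero entry $(i,j)$ the arrows entering and leaving $y_i$ in the diagram of $C$ must match those entering and leaving $y'_j$ in the diagram of $C'$: already in this paper's algebra the inclusion $S_2\hookrightarrow M(\alpha_1)$ — itself one of the basis maps, for an admissible pair with trivial middle — has its only nonzero entry pairing a vertex with no incident arrows with a vertex carrying an incoming arrow, so your constraint would exclude it and contradict the theorem. The true constraints are asymmetric: an arrow leaving $y'_j$ must also leave $y_i$, and an arrow entering $y_i$ must also enter $y'_j$, but not conversely (each is derived from $\phi(\beta y)=\beta\phi(y)$ together with the fact that a given arrow maps at most one basis vector of $M(C')$ onto a given basis vector, which is where reducedness of the word enters). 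It is precisely this asymmetry — quotient behaviour on the $C$ side, submodule behaviour on the $C'$ side — that makes the propagation terminate in an admissible configuration rather than forcing an isomorphism of neighbourhoods. Finally, your induction measure needs the additional (true, but unproven in your sketch) fact that the forced entries along the entire matched diagonal equal $\mu_{ij}$; without it, subtracting $\mu_{ij}f_{(a,a')}$ can create new nonzero entries and the count of nonzero entries need not decrease. With these repairs your sketch becomes the standard proof, but as written the crux is misstated.
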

This description of homomorphisms can be transferred to band modules and one should keep the same picture in mind. For a band $B$ we consider admissible pairs of the string $B^k$ for any positive integer $k$. However we do not want to count the same pair twice in different copies of $B$. 

Butler and Ringel gave a description of all the Auslander-Reiten sequences containing string modules. We will now apply their results to the case where $\Lambda$ is the infinite dimensional Jacobian algebra $\Pq$ as in the introduction. However, the $p$-truncation $\Lambda_p$ is finite-dimensional and thus a string algebra for every integer $p \geq 2$. Hence, if we set
\[ \mathscr{S}_{\Lambda}=\bigcup_{p\geq 2} \mathscr{S}_{\Lambda_p} \qquad \text{and} \qquad \mathscr{S}^B_{\Lambda}=\bigcup_{p\geq 2} \mathscr{S}^B_{\Lambda_p}\]
then the string modules $M(C)$ with $C \in \mathscr{S}_{\Lambda}$ and the band modules $M(B,\lambda,k)$ with $B \in \mathscr{S}^B_{\Lambda}$, $\lambda \in \mathbb{C}^*$ and $k \in \mathbb{N}_{\geq 1}$, form a complete list of pairwise non-isomorphic representatives of indecomposable $\Lambda$-modules. 
\begin{theorem}
Let $C$ be a string of length $m$ in  $\mathscr{S}_{\Lambda}$ and fix $p > m$. Then we have:
\begin{compactenum}[(i)]
\item There are unique arrows $\beta_1, \beta_2$ and directed strings $D_1,D_2$ of length $p$ such that
$\tau_{\Lambda_p}^{-1}(C):= {}_hC_h:=D_2\beta_2^-C\beta_1D_1^-$
is a string in $\mathscr{S}_{\Lambda_p}$ and 
\[\tau_{\Lambda_p}^{-1}(M(C)) \cong M(\tau_{\Lambda_p}^{-1}(C))\]
and $D_2\beta_2^-$ and $\beta_1D_1^-$ will be referred to as hooks.
\item There are unique arrows $\gamma_1, \gamma_2$ and directed strings $E_1,E_2$ of length $p$ such that
$\tau_{\Lambda_p}(C):= {}_cC_c:=E_2^-\gamma_2C\gamma_1^-E_1$
is a string in $\mathscr{S}_{\Lambda_p}$ and 
\[\tau_{\Lambda_p}(M(C)) \cong M(\tau_{\Lambda_p}(C))\]
and $E_2^-\gamma_2$ and $\gamma_1^-E_1$ will be referred to as cohooks.
\item If $C$ is in fact a band $B$, then
\[\tau_{\Lambda_p}^{-1}(M(B,\lambda,k)) \cong M(B,\lambda,k)\] for all $\lambda \in \mathbb{C}^*$ and all $k\in \mathbb{N}_{\geq 1}$. 
\end{compactenum}
\end{theorem}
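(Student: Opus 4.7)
The plan is to derive this theorem from the Butler--Ringel description of Auslander--Reiten translates in a (finite-dimensional) string algebra \cite{BuRi}, applied to the truncation $\Lambda_p$. So the first task is to check that $\Lambda_p$ actually is a string algebra for every $p\geq 2$. Condition (S1) is immediate from the Markov quiver: each of the three vertices has exactly two incoming and two outgoing arrows. For (S2), the Jacobian relations $\partial W = 0$ give precisely the six length-two zero relations $\beta_i\alpha_i$, $\gamma_i\beta_i$ and $\alpha_i\gamma_i$ (for $i=1,2$); a direct inspection shows that for each arrow $\beta$ there is then a unique arrow $\alpha$ with $\beta\alpha \notin I$ and a unique $\gamma$ with $\gamma\beta \notin I$. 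Passing to $\Lambda_p = \Lambda/\mathfrak{m}^p$ adds only the relations ``every path of length $p$ vanishes'', which does not destroy (S1) or (S2), so $\Lambda_p$ is a finite-dimensional string algebra.

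For parts (i) and (ii) I would then invoke Butler--Ringel's combinatorial algorithm that describes $\tau_{\Lambda_p}^{\pm 1} M(C)$ by, at each end of $C$, either \emph{adding a hook} or \emph{deleting a cohook}. The central claim to check is that for our $\Lambda_p$ and any string $C$ of length $m<p$, both ends admit hook addition. Consider the right-hand end of $C$ with last letter $c_1$ at the vertex $y_1$. Among the two arrows of $Q$ starting at $y_1$, exactly one -- call it $\beta_1$ -- satisfies both that $\beta_1 \neq c_1$ (no formal inverse cancellation, in case $c_1\in Q_1^-$) and that the juxtaposition $c_1\beta_1$ does not lie in the ideal generated by the relations of $W$; this is forced by (S2) plus the explicit list of cubic summands in $W$. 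Given $\beta_1$, the tail $D_1^-$ is then the unique maximal directed sequence of formal inverses one can append, whose length is controlled precisely by the $p$-truncation (no $\partial W$-relation can cut it short, since all $W$-relations are compositions that mix $Q_1$ with $Q_1^-$-avoiding partners at each vertex). The hypothesis $p>m$ guarantees that $C$ itself contains no maximal-length directed substring, so no relation inside $C$ interferes with the construction. The left-hand end is handled symmetrically, producing $\beta_2$ and $D_2$. Part (ii) follows either by the dual algorithm (deleting from a projective instead of an injective presentation, cf.\ Remark~\ref{proj}) or by observing that $\tau_{\Lambda_p}$ on string modules is described by the mirror-image ``cohook'' procedure with the same uniqueness argument.

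For part (iii) I would argue that when the input to the hook algorithm is a cyclic string (band) $B$, the two hooks $\beta_1 D_1^-$ and $D_2\beta_2^-$ that would be glued onto an opened-up copy of $B$ are identified with each other under the cyclic closure: running the extension forward from one end and backward from the other end lands in the same ``hook position'' inside $B$ itself, because there is no genuine end-vertex to obstruct the continuation. Consequently the effect of $\tau_{\Lambda_p}^{-1}$ on the band $B$ is a rotation, and since band modules depend only on the rotation class of $B$ (together with $\lambda$ and $k$), we obtain $\tau_{\Lambda_p}^{-1}M(B,\lambda,k)\cong M(B,\lambda,k)$, i.e.\ the band modules lie in homogeneous tubes.

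The main technical obstacle, I expect, is showing that the hook variant (rather than the cohook-deletion variant) of the BR algorithm genuinely applies at \emph{both} ends of \emph{every} string in $\mathscr{S}_{\Lambda_p}$. This is where the specific combinatorics of the Markov quiver with potential $W$ enters: one has to verify a small case analysis on whether $c_1$ and $c_m$ are in $Q_1$ or in $Q_1^-$ and check in each case that the prescribed $\beta_i$ exists and is uniquely determined, and that the directed continuation reaches the length asserted in the statement before being cut off by the truncation $\mathfrak{m}^p$. Once this is done, the uniqueness of $\beta_i$ and $D_i$ follows from (S2), and the isomorphism $\tau_{\Lambda_p}^{-1}M(C)\cong M({}_hC_h)$ is precisely the content of Butler--Ringel applied to this single-summand case.
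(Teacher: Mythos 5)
Your proposal takes essentially the same route as the paper: the paper states this theorem without any proof of its own, presenting it as a direct application of Butler--Ringel's hook/cohook description of Auslander--Reiten sequences over string algebras to the truncation $\Lambda_p$, and your verification that $\Lambda_p$ is a string algebra, that at each end exactly one arrow can be attached and the directed tail is forced (so hooks exist and are unique, bounded only by the truncation), and that band modules lie in homogeneous tubes is precisely the intended justification. The only imprecision, which is inherited from the statement itself (the paper claims $D_1,D_2$ have length $p$, although directed strings in $\Lambda_p$ have length at most $p-1$), is the borderline case $p=m+1$ with $C$ directed, where appending the direct arrow of the hook would create a path of length $p$; your blanket claim that ``no relation inside $C$ interferes'' silently skips this case, but it never occurs in the paper's applications, where $p$ is always taken large enough.
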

\begin{remark}
Let $C,C'$ be strings of length $m$ and $m'$ respectively. Set $E_{\Lambda}(C,C'):=E_{\Lambda}(M(C),M(C'))$. If we fix $p> \max \lbrace m+1, m+1'\rbrace$, then we can compute
\[E_{\Lambda}(C,C')=\dim \Hom_{\Lambda_p}(M(C'),\tau_{\Lambda_p}^{-1}(M(C)))=\left|\mathscr{A}(\tau_{\Lambda_p}^{-1}(C'),C)\right|.\]
\end{remark}

We call a string $E_{\Lambda}$\textit{-rigid} if the corresponding string module is $E_{\Lambda}$-rigid.
We call a band $B$ \textit{strongly reduced} if the band module $M:=M(B,\lambda,1)$ satisfies $E_{\Lambda}(M)=1$.

Let $A$ be a tame algebra and $Z$ an indecomposable irreducible component. Then it is known, that $Z$ is either the closure of the orbit of an indecomposable module or the closure of the union of the orbits of a dense 1-parameter family of indecomposable modules. By \cite[Lemma 5.6.]{CLS}, we know that it is enough to classify the strongly reduced components of $\drep(\Lambda_p)$ for all $p\geq 2$. Hence we can always choose $p$ big enough and assume that $\Lambda=\Lambda_p$ is a string algebra. This yields the following theorem.
\begin{theorem}\label{bi}
Let $C$ be an $E_{\Lambda}$-rigid string and $B$ a strongly reduced band. Then \[Z= \overline{\mathcal{O}(M(C))}\qquad \text{and} \qquad Z'=\overline{\bigcup_{\lambda \in \mathbb{C}^*}\mathcal{O}(M(B,\lambda,1))}\]
 are in $\srirr$ indecomposable, where $Z$ is $E_{\Lambda}$-rigid and $Z'$ is not. On the other hand any indecomposable strongly reduced component in $\srirr$ arises as one of the above.
\end{theorem}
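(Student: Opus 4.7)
My plan is to combine the Butler--Ringel classification for the string-algebra truncation $\Lambda_p$ with the chain of inequalities $c_\Lambda \leq e_\Lambda \leq E_\Lambda$ and the semicontinuity of $E_\Lambda$ recalled in Section~2.3. By \cite[Lemma 5.6]{CLS} and the discussion preceding the theorem it suffices to fix $p$ large enough that the given string or band fits in $\Lambda_p$ and to work in $\rep_{\mathbf{d}}(\Lambda_p)$. Since $\Lambda_p$ is tame, each indecomposable irreducible component is either the closure of a single $G_{\mathbf{d}}$-orbit of an indecomposable module, or the closure of a dense $1$-parameter family of indecomposables. Combined with Butler--Ringel, the only candidates are $Z = \overline{\mathcal{O}(M(C))}$ for a string $C$, or $Z' = \overline{\bigcup_{\lambda \in \mathbb{C}^*}\mathcal{O}(M(B,\lambda,k))}$ for a band $B$ and an integer $k \geq 1$.

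For the orbit closure $Z = \overline{\mathcal{O}(M(C))}$ one has $c_\Lambda(Z) = \dim Z - \dim \mathcal{O}(M(C)) = 0$. The chain $c_\Lambda(Z) \leq e_\Lambda(Z) \leq E_\Lambda(Z)$ then forces $Z$ to be strongly reduced iff $E_\Lambda(Z) = 0$, i.e.\ iff $E_\Lambda(M(C)) = 0$, which is by definition the condition that $C$ is $E_\Lambda$-rigid. In that case Proposition~\ref{pro} together with the Crawley--Boevey description of $\Hom_{\Lambda_p}(\tau_{\Lambda_p}^{-1}M(C), M(C))$ via admissible pairs gives a tangent-space estimate showing that the orbit of $M(C)$ is open in $\rep_{\mathbf{d}}(\Lambda_p)$, so $Z$ is a full component; indecomposability and $E_\Lambda$-rigidity of $Z$ are then immediate.

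For the band case, distinct $\lambda \in \mathbb{C}^*$ yield non-isomorphic modules $M(B,\lambda,k)$, so $c_\Lambda(Z') = 1$. The same chain of inequalities gives strong reducedness iff $E_\Lambda(Z') = 1$. Using Proposition~\ref{pro} together with the AR-isomorphism $\tau_{\Lambda_p}^{-1}M(B,\lambda,k) \cong M(B,\lambda,k)$ one computes $E_\Lambda(M(B,\lambda,k))$ as the number of admissible pairs in $B^k$; this value equals $1$ precisely when $k = 1$ and $B$ is strongly reduced in the sense defined just before the theorem, and is strictly larger for $k \geq 2$. Reading these equivalences backwards gives the converse: any indecomposable $Z \in \srirr$ arises from one of these two constructions.

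The main obstacle is the maximality step: showing that $\mathcal{O}(M(C))$ (respectively the $1$-parameter family for $B$) already fills its ambient irreducible component, rather than being a proper closed subset of a larger component supported on decomposable modules. This is precisely where the admissible-pair combinatorics is needed --- both to express $\Hom_{\Lambda_p}(M(C), M(C))$ and $\Hom_{\Lambda_p}(\tau_{\Lambda_p}^{-1}M(C), M(C))$ concretely, and to translate a tangent-space codimension count at $M(C)$ into the equality $c_\Lambda = E_\Lambda$. The same combinatorial control handles the rotation-equivalence redundancy in the band family and rules out $k \geq 2$.
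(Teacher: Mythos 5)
Your proposal is correct and follows essentially the same route as the paper, which derives the theorem from exactly the ingredients you list: reduction to a truncation $\Lambda_p$ via \cite[Lemma 5.6]{CLS}, the dichotomy for indecomposable components of tame algebras, the Butler--Ringel classification, and the computations $c_\Lambda=0$ (orbit closures, forcing $E_\Lambda=0$) and $c_\Lambda=1$ (band families, forcing $E_\Lambda=1$, which rules out $k\geq 2$ since $E_\Lambda(M(B,\lambda,k))=\dim\End(M(B,\lambda,k))\geq k$). The maximality step you flag is precisely what the paper leaves implicit, and your tangent-space argument is the right way to close it, provided you phrase it as Voigt's lemma combined with $\dim\Ext^1_{\Lambda_p}(M,M)\leq E_{\Lambda}(M)$ (from $e_\Lambda\leq E_\Lambda$, i.e.\ Auslander--Reiten duality) rather than attributing it to the admissible-pair combinatorics.
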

\section{Classification of indecomposable strongly reduced components}
\subsection{The $E_{\Lambda}$-rigid components}
By Theorem \ref{bi} it is enough to give a complete description of the $E_{\Lambda}$-rigid strings in order to classify the indecomposable $E_{\Lambda}$-rigid components in $\srdirr(\Lambda)$. In order to do so we first need to introduce some notation.

For $x_1 \in \lbrace \alpha_1, \beta_1, \gamma_1\rbrace $ we denote by $(x_1:0)$ the string of length 0 at the vertex $s(x_1)$. Now let $x_1 \in \lbrace \alpha_1^{\pm}, \beta_1^{\pm}, \gamma_1^{\pm}\rbrace $. By a sequence  $(x_1:a_1,a_2,\ldots ,a_n)$
with $a_i \in \mathbb{N}_{\geq 1}$ for all $1\leq i\leq n$ we denote the string 
\[C=(x_2^- x_1)^{a_n} y_2 y_1^-(x_2^- x_1)^{a_{n-1}} \ldots (x_2^- x_1)^{a_2}y_2y_1^-(x_2^- x_1)^{a_1}\]
where $x_2\in \lbrace \alpha_2^{\pm}, \beta_2^{\pm}, \gamma_2^{\pm}\rbrace$ such that $s(x_2)=s(x_1)$ and $y_1\in \lbrace \alpha_1^{\pm}, \beta_1^{\pm}, \gamma_1^{\pm}\rbrace\setminus \lbrace x_1 \rbrace$.
If $x_1$ is in $Q_1$ the diagram of $C$ is given by:
\[
\xygraph{!{<0cm,0cm>;<0.56cm,0cm>:<0cm,0.8cm>::}
!{(0,0)}*+{\bullet}="1"
!{(1,-1)}*+{\bullet}="2"
!{(2,0)}*+{\bullet}="3"
!{(3,-1)}*+{\bullet}="32"
!{(4,0)}*+{\bullet}="33"
!{(5,0)}*+{\bullet}="4"
!{(6,-1)}*+{\bullet}="5"
!{(7,0)}*+{\bullet}="6"
!{(8,1)}*+{\bullet}="7"
!{(9,0)}*+{\bullet}="8"
!{(10,-1)}*+{\bullet}="9"
!{(11,0)}*+{\bullet}="10"
!{(12,0)}*+{\bullet}="11"
!{(13,-1)}*+{\bullet}="12"
!{(14,0)}*+{\bullet}="13"
!{(15,1)}*+{\bullet}="14"
!{(17,1)}*+{\bullet}="17"
!{(18,0)}*+{\bullet}="18"
!{(19,-1)}*+{\bullet}="19"
!{(20,0)}*+{\bullet}="20"
!{(21,0)}*+{\bullet}="21"
!{(22,-1)}*+{\bullet}="22"
!{(23,0)}*+{\bullet}="23"
"1"-"2"|-{x_1}
"2"-"3"|-{x_2}
"3"-"32"
"32"-"33"
"2":@/_0.5cm/@{.}_{a_1 \text{ times}}"5"
"33":@{.}"4"
"4"-"5"
"5"-"6"
"6"-"7"
"7"-"8"
"8"-"9"
"9":@/_0.5cm/@{.}_{a_2 \text{ times}}"12"
"9"-"10"
"10":@{.}"11"
"11"-"12"
"12"-"13"
"13"-"14"
"14":@{.}"17"
"17"-"18"
"18"-"19"
"19"-"20"
"19":@/_0.5cm/@{.}_{a_n \text{ times}}"22"
"20":@{.}"21"
"21"-"22"
"22"-"23"
}
\]
If $x_1$ is in $Q_1^-$ the diagram of $C$ is given by:
\[
\xygraph{!{<0cm,0cm>;<0.56cm,0cm>:<0cm,-0.8cm>::}
!{(0,0)}*+{\bullet}="1"
!{(1,-1)}*+{\bullet}="2"
!{(2,0)}*+{\bullet}="3"
!{(3,-1)}*+{\bullet}="32"
!{(4,0)}*+{\bullet}="33"
!{(5,0)}*+{\bullet}="4"
!{(6,-1)}*+{\bullet}="5"
!{(7,0)}*+{\bullet}="6"
!{(8,1)}*+{\bullet}="7"
!{(9,0)}*+{\bullet}="8"
!{(10,-1)}*+{\bullet}="9"
!{(11,0)}*+{\bullet}="10"
!{(12,0)}*+{\bullet}="11"
!{(13,-1)}*+{\bullet}="12"
!{(14,0)}*+{\bullet}="13"
!{(15,1)}*+{\bullet}="14"
!{(17,1)}*+{\bullet}="17"
!{(18,0)}*+{\bullet}="18"
!{(19,-1)}*+{\bullet}="19"
!{(20,0)}*+{\bullet}="20"
!{(21,0)}*+{\bullet}="21"
!{(22,-1)}*+{\bullet}="22"
!{(23,0)}*+{\bullet}="23"
"1"-"2"|-{x_1}
"2"-"3"|-{x_2}
"3"-"32"
"32"-"33"
"2":@/^0.5cm/@{.}^{a_1 \text{ times}}"5"
"33":@{.}"4"
"4"-"5"
"5"-"6"
"6"-"7"
"7"-"8"
"8"-"9"
"9":@/^0.5cm/@{.}^{a_2 \text{ times}}"12"
"9"-"10"
"10":@{.}"11"
"11"-"12"
"12"-"13"
"13"-"14"
"14":@{.}"17"
"17"-"18"
"18"-"19"
"19"-"20"
"19":@/^0.5cm/@{.}^{a_n \text{ times}}"22"
"20":@{.}"21"
"21"-"22"
"22"-"23"
}
\]
Note that identifying a sequence $(x_1:a_1,\ldots a_n)$ with a string $C$ is an injective mapping. Furthermore if $C$ is given by the sequence $(x_1:a_1,\ldots ,a_n)$ then the inverse string $C^{-}$ is given by the sequence $(x_2:a_n,\ldots,a_1)$. Since we are only interested in the strings up to the equivalence relation $\sim$ we restrict to sequences with $x_1\in \lbrace \alpha_1^{\pm}, \beta_1^{\pm}, \gamma_1^{\pm}\rbrace$.
\begin{ex}
We would like to clarify the above notation with two examples. The sequence $(\beta_1:3)$ corresponds to the string with diagram 
\[
\xygraph{!{<0cm,0cm>;<0.54cm,0cm>:<0cm,1cm>::}
!{(0,0)}*+{2}="1"
!{(1,-1)}*+{3}="2"
!{(2,0)}*+{2}="3"
!{(3,-1)}*+{3}="32"
!{(4,0)}*+{2}="33"
!{(5,-1)}*+{3}="5"
!{(6,0)}*+{2}="6"
"1"-"2"|-{\beta_1}
"3"-"2"|-{\beta_2}
"3"-"32"|-{\beta_1}
"33"-"32"|-{\beta_2}
"33"-"5"|-{\beta_1}
"5"-"6"|-{\beta_2}
}
\]
and the string given by the sequence $(\beta_1:3,3,3)$ has diagram
\[
\xygraph{!{<0cm,0cm>;<0.6cm,0cm>:<0cm,1cm>::}
!{(0,0)}*+{2}="1"
!{(1,-1)}*+{3}="2"
!{(2,0)}*+{2}="3"
!{(3,-1)}*+{3}="32"
!{(4,0)}*+{2}="33"
!{(5,-1)}*+{3}="5"
!{(6,0)}*+{2}="6"
!{(7,1)}*+{1}="7"
!{(8,0)}*+{2}="8"
!{(9,-1)}*+{3}="9"
!{(10,0)}*+{2}="10"
!{(11,-1)}*+{3}="30"
!{(12,0)}*+{2}="11"
!{(13,-1)}*+{3}="12"
!{(14,0)}*+{2}="13"
!{(15,1)}*+{1}="14"
!{(16,0)}*+{2}="18"
!{(17,-1)}*+{3}="19"
!{(18,0)}*+{2}="20"
!{(19,-1)}*+{3}="31"
!{(20,0)}*+{2}="21"
!{(21,-1)}*+{3}="22"
!{(22,0)}*+{2}="23"
"1"-"2"|-{\beta_1}
"3"-"2"|-{\beta_2}
"3"-"32"|-{\beta_1}
"33"-"32"|-{\beta_2}
"33"-"5"|-{\beta_1}
"5"-"6"|-{\beta_2}
"6"-"7"|-{\alpha_1}
"7"-"8"|-{\alpha_2}
"8"-"9"|-{\beta_1}
"9"-"10"|-{\beta_2}
"10"-"30"|-{\beta_1}
"11"-"30"|-{\beta_2}
"11"-"12"|-{\beta_1}
"12"-"13"|-{\beta_2}
"13"-"14"|-{\alpha_1}
"14"-"18"|-{\alpha_2}
"18"-"19"|-{\beta_1}
"19"-"20"|-{\beta_2}
"20"-"31"|-{\beta_1}
"21"-"31"|-{\beta_2}
"21"-"22"|-{\beta_1}
"22"-"23"|-{\beta_2}
}
\]
\end{ex}
In the proofs of this chapter we will often work with diagrams of strings.  Let $C$ be a string and $\tau_{\Lambda}^{-1}(C)={}_h C_h$. The hooks of ${}_h C_h$ will be displayed in green. If there is an admissible pair $((C'_3,C'_2,C'_1),(C_3,C_2,C_1))$ in $\mathscr{A}(\tau_{\Lambda}^{-1}(C),C)$ we distinguish the members of the middle terms $C'_2$ and $C_2$ with arrows of the type $\Longrightarrow $.
\begin{lemma}\label{form}
Up to equivalence any $E_{\Lambda}$-rigid string $C$ is of the form $(x_1:a_1,\ldots ,a_n)$.
\end{lemma}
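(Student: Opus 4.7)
The plan is to argue by contrapositive. We may assume $C$ has length at least two, since strings of length zero or one are trivially of the claimed form (taking $n=0$). We show that if $C$ cannot be written as $(x_1: a_1, \ldots, a_n)$ up to the equivalence $\sim$, then there exists a non-trivial admissible pair in $\mathscr{A}(\tau_{\Lambda_p}^{-1}(C), C)$ for sufficiently large $p$; by the remark preceding the lemma this forces $E_{\Lambda}(C)\geq 1$, contradicting $E_{\Lambda}$-rigidity.

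The first step is to exploit the cyclic structure of the Markov quiver: any directed path of length three in $Q$ traverses the underlying $3$-cycle once, forcing the subscripts along it to alternate. Using this together with the explicit description of the hooks in the Auslander-Reiten theorem stated above, I would show that any string containing a directed substring of length at least three (in $Q_1$, or dually in $\Q$) fails to be $E_{\Lambda}$-rigid: because the uniqueness condition (S2) for the string algebra $\Lambda_p$ forces each directed path to continue along the alternating-subscript pattern, the hooks of $\tau_{\Lambda_p}^{-1}(C)$ reproduce the cyclic pattern already present in $C$, and one locates a common substring $C_2\sim C_2'$ satisfying the Crawley-Boevey admissibility conditions.

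Once every directed substring of $C$ has length at most two, I would proceed by a local analysis at each interior vertex of $C$. At each vertex of $Q$ the allowed transitions split into three types (\emph{source}, \emph{sink}, and \emph{through}), with two options each by the obvious index-swap symmetry; the combination of these with the length-two bound on directed substrings forces $C$ to consist of V-blocks $(x_2^- x_1)^{a_k}$ separated by single twist pairs $y_2 y_1^-$, which is exactly the shape drawn in the diagram above the lemma statement. Any deviation -- two twists in a row, a twist of the wrong subscript, or a V-block built from non-parallel arrows -- is ruled out by exhibiting an explicit admissible pair using the hook description, and a straightforward induction (following the pyramid from one end to the other) assembles these local reductions into the claimed global form $(x_1:a_1,\dots,a_n)$.

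The main obstacle is the first step: verifying that a length-three directed substring always produces a nontrivial admissible pair requires careful bookkeeping with the hooks $D_2\beta_2^- C \beta_1 D_1^-$ and with the admissibility diagram, so as to make sure the common substring one exhibits really meets the boundary conditions at both ends. The remaining case analysis is finite and is considerably shortened by the full cyclic and index-swap symmetry of $Q$ together with the duality $C\leftrightarrow C^{-}$, so that only a small number of representative local configurations have to be checked by hand.
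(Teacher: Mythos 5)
Your overall strategy coincides with the paper's: argue by contrapositive, compute $E_{\Lambda}(C)$ as the number of admissible pairs in $\mathscr{A}(\tau_{\Lambda_p}^{-1}(C),C)$ via the hook description, first eliminate long directed substrings, then force the V-block/twist-pair shape by a case analysis using the symmetries of $Q$ and the duality $C\leftrightarrow C^-$. However, there are two concrete problems. First, your base case is false: strings of length one are \emph{not} of the form $(x_1:a_1,\ldots,a_n)$ --- these sequences produce strings of length $0$ (the case $(x_1:0)$) or of even length $2(a_1+\cdots+a_n)+2(n-1)\geq 2$ --- so a single arrow such as $\alpha_1$ cannot be dismissed ``taking $n=0$''. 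For the lemma to hold such strings must be shown to be non-rigid (they are: a direct computation gives $g_{\Lambda}(M(\alpha_1))=(1,-1,1)$ and hence $E_{\Lambda}(M(\alpha_1))=1$), and your contrapositive, as set up, never produces this argument because it treats these strings as already being of the required form.

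Second, you locate the difficulty in the wrong place. Excluding directed substrings of length $\geq 3$ is the comparatively routine part; the delicate points in the paper's proof are (a) that at the two \emph{ends} of the string even a directed substring of length $2$ is forbidden, i.e.\ $c_1=x_1$ forces $c_2=x_2^-$ (this is the content of the mod-$3$ case analysis at the start of the paper's proof, and it cannot be read off from naive invariants: the string $\beta_2\alpha_1$ has trivial endomorphism ring and no self-extensions, yet is not $E_{\Lambda}$-rigid), and (b) that after bounding interior directed substrings by $2$ one must still exclude valleys at the wrong vertices (``two twist pairs in a row'', e.g.\ an occurrence of vertex $1$ in the socle when $c_1=\alpha_1$); the admissible pair the paper exhibits for this uses the hooks at the far ends of $\tau_{\Lambda_p}^{-1}(C)$ together with the already established shape of the ends, so it is not a purely local check at interior vertices and does not follow from the ``straightforward induction'' you invoke. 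With the length-one case repaired and with explicit admissible pairs supplied for the boundary configurations in (a) and the non-local configuration in (b), your plan becomes essentially the proof given in the paper.
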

\begin{proof}
It is easy to see that the strings of length zero corresponding to the simple representations are $E_{\Lambda}$-rigid and these correspond to sequences $(x_1:0)$. Now assume that $C=c_m\ldots c_1$ with $m\geq 1$. 
 We consider the case that $c_1$ is in $Q_1$ (the case $c_1 \in Q_1^-$ is similar) and because of the symmetry of $Q$ we can restrict to the case $c_1=\alpha_1$. 

Let $1 \leq i \leq m$ be maximal such that $c_i\ldots c_{1} $ is in $ Q_1$. We have to show that $i=1$:

Case 1: Let $i \equiv 0 \text{ mod }  3$.
If $m>i$ we know that $c_{i+1}$ is in $Q_1^-$. Thus the left side of the diagram of $\tau_{\Lambda}^{-1}(C)$ is 
\[
\xygraph{!{<0cm,0cm>;<0.8cm,0cm>:<0cm,0.7cm>::}
!{(8,-2)}="8"
!{(9,-1)}*+{1}="9"
!{(10,0)}*+{3}="10"
!{(11,-1)}*+{1}="12"
!{(12,-2)}*+{2}="13"
!{(13,-3)}*+{3}="14"
!{(14,-4)}*+{1}="15"
!{(15,-3)}="16"
"9":@{-->}@[green]"8"
"10":@{=>}@[green]"9"_{\gamma_1}
"10":@{=>}@[green]"12"^{\gamma_2}
"12":"13"_{\alpha_1}
"13":@{.>}"14"
"14":@{=>}"15"_{\gamma_1 \text{ or } \gamma_2}
"16":@{-->}"15"
}
\] and we see there is an admissible pair in $\mathscr{A}(\tau_{\Lambda}^{-1}(C),C)$.

Case 2: Let $i \equiv 1 \text{ mod }  3$ and $ i \neq 1$. Then the left side of the diagram of $\tau_{\Lambda}^{-1}(C)$ is 
\[
\xygraph{!{<0cm,0cm>;<0.8cm,0cm>:<0cm,0.7cm>::}
!{(7,-3)}="20"
!{(8,-2)}*+{2}="8"
!{(9,-1)}*+{1}="9"
!{(10,0)}*+{3}="10"
!{(11,-1)}*+{1}="12"
!{(12,-2)}*+{2}="13"
!{(13,-3)}*+{3}="14"
!{(14,-4)}*+{3}="15"
!{(15,-5)}*+{1}="16"
!{(16,-6)}*+{2}="17"
!{(17,-5)}="18"
"8":@{-->}@[green]"20"
"9":@{=>}@[green]"8"_{\alpha_2}
"10":@{=>}@[green]"9"_{\gamma_1}
"10":@{=>}@[green]"12"^{\gamma_2}
"12":@{=>}"13"^{\alpha_1}
"13":"14"^{\beta_2}
"14":@{.>}"15"
"15":@{=>}"16"_{\gamma_1 \text{ or } \gamma_2}
"16":@{=>}"17"_{\alpha_2 \text{ or } \alpha_1}
"18":@{-->}"17"
}
\]and we see there is a non-trivial homomorphism from $\tau^{-1}_{\Lambda}(M(C))$ to $M(C)$. 

Case 3: Let $i \equiv 2 \text{ mod }  3$. Then  the left side of the diagram of $\tau_{\Lambda}^{-1}(C)$ is
\[
\xygraph{!{<0cm,0cm>;<0.8cm,0cm>:<0cm,0.7cm>::}
!{(8,-2)}="8"
!{(9,-1)}*+{1}="9"
!{(10,0)}*+{3}="10" *\cir{} 
!{(11,-1)}*+{1}="12"
!{(12,-2)}*+{2}="13"
!{(13,-3)}*+{2}="14"
!{(14,-4)}*+{3}="15" *\cir{}
!{(15,-3)}="16"
"9":@[green]@{-->}"8"
"10":@[green]"9"_{\gamma_1}
"10":@[green]"12"^{\gamma_2}
"12":"13"_{\alpha_1}
"13":@{.>}"14"
"14":"15"_{\beta_1 \text{ or } \beta_2}
"16":@{-->}"15"
}
\]and we see there is a non-trivial homomorphism from $\tau^{-1}_{\Lambda}(M(C))$ to $M(C)$. Also note, that if $c_1= \alpha_1$ then the vertex $3$ may not appear in the socle of the diagram of $C$.
 
So now we know that if $C=c_m\ldots c_1$ is $E_{\Lambda}$-rigid and $c_1=\alpha_1$, then we must have  $c_2=\alpha_2^-$ (or more general, if $c_1=x_1$ then $c_2=x_2^-$). Note that since $C^{-}=c^-_1\ldots c^-_m$ is also $E_{\Lambda}$-rigid, we find that if $c_m$ is in $Q_1$ then $c_{m-1}$ is in $Q_1^-$ and vice versa. Another easy case-by-case study shows that $c_m=\alpha_2^-$ (or more general, if $c_1=x_1$ then $c_m=x_2^-$).

Next we show the following: If $C=c_m\ldots c_{j+i}\ldots c_{j+1}c_{j}\ldots c_1$ such that $c_{j+i},\ldots,c_{j+1}$ are in $Q_1$ (or in $Q_1^-$ respectively) and such that $c_{j+i+1}$ is in $Q_1^-$ (or in $Q_1$ respectively) then $i\leq 2$. 
We assume the above claim does not hold and again show by a case-by-case study, that there is an admissible pair for $\tau_{\Lambda}^{-1}(C)$ and $C$. Consider the case that $c_{j+i}\ldots c_{j+1}$ are in $Q_1$ (the case $c_{j+i}\ldots c_{j+1}$ in $Q_1^-$ is symmetric)  with $i\geq 3$ and such that ...

Case 1: ... $t(c_{j+i})=1$. Hence $c_{i+j}=\gamma_1$ or $c_{i+j}=\gamma_2$ and we find a non-trivial homomorphism depicted in the diagram of $\tau_{\Lambda}^{-1}(C)$:
\[
\xygraph{!{<0cm,0cm>;<0.6cm,0cm>:<0cm,0.9cm>::}
!{(0,-3)}="2"
!{(1,-2)}*+{2}="3"
!{(2,-1)}*+{1}="4"
!{(3,0)}*+{3}="5"
!{(4,-1)}*+{1}="6"
!{(5,-2)}*+{2}="7"
!{(6,-1)}*+{1}="8"
!{(8,-1)}="9"
!{(8.5,0)}*+{1}="10"  
!{(9.5,-1)}*+{2}="12"
!{(10.5,-2)}*+{3}="13"
!{(11.5,-3)}*+{1}="14"
!{(12.5,-2)}="15"
!{(13,-1)}="30"
!{(15,-1)}*+{1}="22"
!{(16,-2)}*+{2}="23"
!{(17,-1)}*+{1}="24"
!{(18,0)}*+{3}="25"
!{(19,-1)}*+{1}="26"
!{(20,-2)}*+{2}="27"
!{(21,-3)}="28"
"3":@[green]@{-->}"2"
"4":@[green]"3"_{\alpha_2}
"5":@[green]@{=>}"4"_{\gamma_1}
"5":@[green]"6"^{\gamma_2}
"6":"7"^{\alpha_1}
"8":"7"^{\alpha_2}
"8":@{.}"9"
"10":"12"
"12":"13"
"13":@{=>}"14"_{\gamma_1 \text{ or } \gamma_2}
"15":@{-->}"14"
"30":@{.}"22"
"22":"23"_{\alpha_1}
"24":"23"_{\alpha_2}
"25":@[green]"24"_{\gamma_1}
"25":@[green]@{=>}"26"^{\gamma_2}
"26":@[green]"27"^{\alpha_1}
"27":@[green]@{-->}"28"
}
\]

Case 2: ... $t(c_{j+i})=2$. Hence  $c_{j+i}c_{j+i-1}=\alpha_2\gamma_1$ or $c_{j+i}c_{j+i-1}=\gamma_2,\alpha_1$ and we find a non-trivial homomorphism depicted in the diagram of $\tau_{\Lambda}^{-1}(C)$:
\[
\xygraph{!{<0cm,0cm>;<0.6cm,0cm>:<0cm,0.9cm>::}
!{(0,-3)}="2"
!{(1,-2)}*+{2}="3"
!{(2,-1)}*+{1}="4"
!{(3,0)}*+{3}="5"
!{(4,-1)}*+{1}="6"
!{(5,-2)}*+{2}="7"
!{(6,-1)}*+{1}="8"
!{(8,-1)}="9"
!{(8.5,0)}*+{2}="10"  
!{(9.5,-1)}*+{3}="12"
!{(10.5,-2)}*+{1}="13"
!{(11.5,-3)}*+{2}="14"
!{(12.5,-2)}="15"
!{(13,-1)}="30"
!{(15,-1)}*+{1}="22"
!{(16,-2)}*+{2}="23"
!{(17,-1)}*+{1}="24"
!{(18,0)}*+{3}="25"
!{(19,-1)}*+{1}="26"
!{(20,-2)}*+{2}="27"
!{(21,-3)}="28"
"3":@[green]@{-->}"2"
"4":@[green]@{=>}"3"_{\alpha_2}
"5":@[green]@{=>}"4"_{\gamma_1}
"5":@[green]"6"^{\gamma_2}
"6":"7"^{\alpha_1}
"8":"7"^{\alpha_2}
"8":@{.}"9"
"10":"12"
"12":@{=>}"13"_{\gamma_1 \text{ or } \gamma_2}
"13":@{=>}"14"_{\alpha_2 \text{ or } \alpha_1}
"15":@{-->}"14"
"30":@{.}"22"
"22":"23"_{\alpha_1}
"24":"23"_{\alpha_2}
"25":@[green]"24"_{\gamma_1}
"25":@[green]@{=>}"26"^{\gamma_2}
"26":@[green]@{=>}"27"^{\alpha_1}
"27":@[green]@{-->}"28"
}
\]

Case 3: ... $t(c_{j+i})=3$. Then the vertex $3$ is in the socle of the diagram of $C$ which was excluded before.
 
Now assume that $C=\alpha_2^-\alpha_1c_{n-2}\ldots c_3\alpha_2^- \alpha_1$ and that the vertex $1$ appears in the socle of the diagram of $C$. We look at the part of the diagram, where this happens for the first time. Since the vertex $3$ cannot appear in the socle and the vertex $2$ cannot appear in the top of the diagram of $C$, the only possibility for this yields a non-trivial homomorphism, depicted below in the diagram of $\tau_{\Lambda}^{-1}(C)$
\[
\xygraph{!{<0cm,0cm>;<0.6cm,0cm>:<0cm,0.9cm>::}
!{(0,-3)}="2"
!{(1,-2)}*+{2}="3"
!{(2,-1)}*+{1}="4"
!{(3,0)}*+{3}="5"
!{(4,-1)}*+{1}="6"
!{(5,-2)}*+{2}="7"
!{(6,-1)}*+{1}="8"
!{(7,-1)}="9"
!{(8.5,-1)}="10"  
!{(9.5,-2)}*+{2}="12"
!{(10.5,-1)}*+{1}="13"
!{(11.5,0)}*+{3}="14"
!{(12.5,-1)}*+{1}="15"
!{(13.5,0)}="16"
!{(14,-1)}="30"
!{(15,-1)}*+{1}="22"
!{(16,-2)}*+{2}="23"
!{(17,-1)}*+{1}="24"
!{(18,0)}*+{3}="25"
!{(19,-1)}*+{1}="26"
!{(20,-2)}*+{2}="27"
!{(21,-3)}="28"
"3":@[green]@{-->}"2"
"4":@[green]@{=>}"3"_{\alpha_2}
"5":@[green]@{=>}"4"_{\gamma_1}
"5":@[green]@{=>}"6"^{\gamma_2}
"6":"7"^{\alpha_1}
"8":"7"^{\alpha_2}
"8":@{.}"9"
"10":"12"
"13":@{=>}"12"_{\alpha_i}
"14":@{=>}"13"_{\gamma_j}
"14":@{=>}"15"^{\gamma_i}
"16":"15"
"30":@{.}"22"
"22":"23"_{\alpha_1}
"24":"23"_{\alpha_2}
"25":@[green]@{=>}"24"_{\gamma_1}
"25":@[green]@{=>}"26"^{\gamma_2}
"26":@[green]@{=>}"27"^{\alpha_1}
"27":@[green]@{-->}"28"
}
\]
Hence the vertex $1$ cannot appear in the socle of the diagram of $C$.
\end{proof}
\begin{remark}\label{five}
Let $\Lambda'=\Pqb$ be the finite-dimensional Jacobian algebra. Then it is easy to see, that its $5$-truncation $\Lambda'_5$ equals the $5$-truncation of the infinite dimensional algebra $\Lambda=\Pq$. By Lemma \ref{form} all $E_{\Lambda}$-rigid strings are $E_{\Lambda_5}$-rigid and hence $E_{\Lambda'_5}$-rigid. Thus by Prop \ref{pro} any $E_{\Lambda}$-rigid module is also $E_{\Lambda'}$-rigid. 
\end{remark}
\begin{lemma}
Let $C$ and $C'$ be given by sequences $(x_1:a_1,\ldots,a_n)$ and $(x_1:a'_1,\ldots,a'_m)$ respectively and denote by $\overline{C}$ (resp. $\overline{C'}$) the string given by $(x^-_1:a_1,\ldots,a_n)$ (resp. $(x^-_1:a'_1,\ldots,a'_m)$. Then the following hold:
\begin{compactenum}[(i)]
\item $C$ is $E_{\Lambda}$-rigid if and only if $\overline{C}$ is $E_{\Lambda}$-rigid.
\item $M(C)\oplus M(C')$ is $E_{\Lambda}$-rigid if and only if $M(\overline{C})\oplus M(\overline{C'})$ is $E_{\Lambda}$-rigid.
\end{compactenum}
\end{lemma}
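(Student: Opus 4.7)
The plan is to reduce the statement to a compatibility between the bar operation and the Auslander-Reiten translate. Working in a sufficiently large truncation $\Lambda_p$ (so that $M(C), M(C')$ and their $\tau$-translates all live inside $\mo(\Lambda_p)$), I would first establish two combinatorial observations.

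First, the bar operation commutes with $\tau$ in the sense that $\tau^{-1}_{\Lambda_p}(\overline{X}) = \overline{\tau_{\Lambda_p}(X)}$ for any string $X$ of the sequence form, where on the right the bar is extended naturally by vertical reflection of the zig-zag diagram. This is verified by inspecting the Butler-Ringel hook/cohook formulas and checking that the arrows and directed strings added to the endpoints of $X$ are exchanged under the bar. Second, the bar operation induces a bijection $\mathscr{A}(A, B) \longleftrightarrow \mathscr{A}(\overline{B}, \overline{A})$ for any pair of strings $A, B$. This is immediate from the definition of admissible pairs, since flipping the diagrams exchanges the roles of top and bottom of the common middle substring.

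Combining these observations with the equality $E_\Lambda(X, Y) = \lvert \mathscr{A}(\tau^{-1}_{\Lambda_p}(Y), X) \rvert$ from the remark following the Butler-Ringel theorem, and with the generalization of Plamondon's Lemma \ref{plam} to distinct modules, $\dim \Hom_{\Lambda_p}(X, \tau_{\Lambda_p} Y) = \dim \Hom_{\Lambda_p}(\tau^{-1}_{\Lambda_p} X, Y)$ (which follows from the Auslander-Reiten formula for the 2-Calabi-Yau tilted framework of \cite{P}), I would derive the key identity
\[ E_\Lambda(\overline{C}, \overline{C'}) = E_\Lambda(C', C). \]
Taking $C = C'$ yields (i) directly. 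For (ii), note that $E_\Lambda$-rigidity of $M(C) \oplus M(C')$ is equivalent to the simultaneous vanishing of the four quantities $E_\Lambda(C), E_\Lambda(C'), E_\Lambda(C,C'), E_\Lambda(C',C)$; the identity above permutes these with $E_\Lambda(\overline{C}), E_\Lambda(\overline{C'}), E_\Lambda(\overline{C'},\overline{C}), E_\Lambda(\overline{C},\overline{C'})$, so one direct sum is $E_\Lambda$-rigid iff the other is.

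The main obstacle I expect is the detailed verification of the first observation. The hook/cohook formulas pick out specific arrows depending on the local combinatorics at the endpoints of $X$, and one must check that these choices are consistently swapped under bar. This requires a short case analysis on the first and last letters of $X$, in the spirit of the case analysis carried out in the proof of Lemma \ref{form}.
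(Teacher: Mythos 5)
Your overall route coincides with the paper's: the bar operation flips the string diagram, hence exchanges hooks with cohooks (so that $\tau^{-1}$ of a barred string is the bar of the $\tau$-translate) and reverses the direction of admissible pairs, and one concludes with a Plamondon-type $\tau$-symmetry; the paper's proof is precisely the chain $\left|\mathscr{A}(\tau^{-}C,C)\right|=\left|\mathscr{A}(C,\tau C)\right|=\left|\mathscr{A}(C,{}_cC_c)\right|=\left|\mathscr{A}({}_h\overline{C}_h,\overline{C})\right|=\left|\mathscr{A}(\tau^{-}\overline{C},\overline{C})\right|$, with (ii) obtained by the same argument and additivity of the Auslander--Reiten translate.

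There is, however, a gap in how you justify the Plamondon step. You invoke $\dim \Hom_{\Lambda_p}(X,\tau_{\Lambda_p}Y)=\dim \Hom_{\Lambda_p}(\tau^{-1}_{\Lambda_p}X,Y)$ for distinct modules inside a truncation $\Lambda_p$, citing the 2-Calabi--Yau tilted framework of \cite{P}. But Lemma \ref{plam} and that framework apply to finite-dimensional Jacobian algebras, and neither $\Lambda$ (infinite-dimensional, not Jacobi-finite) nor its truncations $\Lambda_p$ (finite-dimensional but not Jacobian algebras) is of this type, so as written your key identity is unjustified exactly where you use it. The bridge the paper uses is Remark \ref{five}: the modules $M(C)$, $M(C')$ satisfy $\nil_{\Lambda}\leq 3$, one has $\Lambda_5=\Lambda'_5$, and Proposition \ref{pro} transports the computation to the finite-dimensional Jacobian algebra $\Lambda'=\Pqb$, where Lemma \ref{plam} is available; your argument needs this reduction inserted. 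Note also that the distinct-module strengthening is more than the lemma requires: applying Lemma \ref{plam} as stated to the single module $M(C)\oplus M(C')$ over $\Lambda'$ and using additivity of $\tau$ and of $\Hom$ already yields $E_{\Lambda}(M(\overline{C})\oplus M(\overline{C'}))=E_{\Lambda}(M(C)\oplus M(C'))$, which suffices for (ii) since rigidity is the vanishing of a sum of nonnegative terms. If you do want the termwise identity $E_{\Lambda}(\overline{C},\overline{C'})=E_{\Lambda}(C',C)$, it can be deduced over $\Lambda'$, but only by combining the direct-sum trick with the symmetry $E_{\Lambda'}(M,N)=E_{\Lambda'}(N,M)$, i.e.\ it needs an argument beyond the cited lemma.
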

\begin{proof}
The diagram of $\overline{C}$ without labelling is obtained by mirroring the diagram of $C$ horizontally. Hence there is a bijection between $\mathscr{A}(C, _c C_c)$ and $\mathscr{A}(_h \overline{C}_h,\overline{C})$. By the above remark we can consider $C$ and $\overline{C}$ as strings over $\Lambda'_5$ and use Lemma \ref{plam} to obtain
\[\left|\mathscr{A}(\tau_{\Lambda'_5}^-C,C)\right|= \left|\mathscr{A}(C,\tau_{\Lambda'_5} C)\right|= \left|\mathscr{A}(C, _c C_c)\right|=\left|\mathscr{A}(_h \overline{C}_h,\overline{C})\right|=\left|\mathscr{A}(\tau_{\Lambda'_5}^-\overline{C},\overline{C})\right| \]
and this proves (i). Now the second statement follows using the same arguments and additivity of the Auslander-Reiten translate.
\end{proof}
\begin{remark}
By the last lemma we can now restrict our investigations to strings starting with $x_1 \in Q_1$.
Because of the symmetry of $Q$ and of the relations defined in $I$, the properties of a string $C$ given by $(x_1:a_1,\ldots,a_n)$ we are interested in, do not depend on its starting arrow $x_1$. Hence we often write $(a_1,\ldots,a_n)$ instead of $(x_1:a_1,\ldots,a_n)$. If $s_1$ and $s_2$ are subsequences of $(a_1,\ldots,a_n)$ say $s_1=(a_i,a_{i+1},\ldots,a_j)$ for $1\leq i\leq j\leq n$ and $s_2=(a_k,a_{k+1},\ldots,a_l)$ for $1\leq k\leq l\leq n$ we write
\[(a_1,\ldots,a_n)=(\ldots,s_1,\ldots,s_2,\ldots).\]
This does not necessarily imply that the subsequences appear in this order in the sequence $(a_1,\ldots,a_n)$ and they may overlap. If a subsequence $s$ of $(a_1,\ldots,a_n)$ satisfies $s=a_1,\ldots,a_i$ for some $1\leq i \leq n$ we denote it by
$(a_1,\ldots,a_n)=(s,\ldots)$.
If we want to imply, that a subsequence $s$ can be a subsequence at the end of $(a_1,\ldots,a_n)$, i.e. that $s$ is not necessarily followed by a comma, we write
$(a_1,\ldots,a_n)=(\ldots ,s\ldots)$.
\end{remark}
\begin{lemma}\label{grund}
Let $C$ and $C'$ be given by $(x_1:a_1,a_2,\ldots ,a_n)$ and $(x_1:a_1',a_2',\ldots ,a_m')$ respectively with $x_1 \in Q_1$. Then $E_{\Lambda}(C',C)=0$ is equivalent to the conditions:
\begin{compactenum}[(I)]
\item Let $a:=\max \lbrace a_i \mid 1 \leq i \leq n \rbrace$. Then $ a-1\leq a_j' $ for all $1 \leq j \leq m$.
\item a)If there is a subsequence $s$ of $(a_1,a_2,\ldots ,a_n)$ such that
\[(a_1,a_2,\ldots ,a_n)=(s,a_i\ldots)\quad\text{ and }\quad(a_1',a_2',\ldots ,a_m')=(\ldots ,s,a_j'\ldots)\]
then $a_i \leq a_j'$ (in particular $a_1 \leq a_j'$ for all $1<j \leq m$).

b)If there is a subsequence $s$ of $(a_1,a_2,\ldots ,a_n)$ such that
\[(a_1,a_2,\ldots ,a_n)=(\ldots a_i,s)\quad\text{ and }\quad(a_1',a_2',\ldots ,a_m')=(\ldots a_j',s,\ldots)\]
then $a_i \leq a_j'$ (in particular $a_n \leq a_j'$ for all $1\leq j < m$).
\item If there is a subsequence $s$ of $(a_1,a_2,\ldots ,a_n)$ such that
\[(a_1,a_2,\ldots ,a_n)=(\ldots a_i,s,a_{i+t}\ldots)\text{ and } (a_1',a_2',\ldots ,a_m')=(\ldots  a_j',s,a_{j+t}'\ldots)\]
then $a_i \leq a_j'$ or $a_{i+t}\leq a_{j+t}'$.
\item If there exists $1 \leq i \leq m$ such that $(a_i',a_{i+1}',\ldots ,a_{i+n}')=(a_1,a_2,\ldots ,a_n)$ then $i=1$ or $i+n=m$.  
\end{compactenum}
\end{lemma}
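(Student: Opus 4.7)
The plan is to apply the formula from the Remark preceding Lemma~\ref{form}: for $p$ large enough,
\[
E_{\Lambda}(C',C) \;=\; \bigl|\mathscr{A}(\tau_{\Lambda_p}^{-1}(C),\,C')\bigr|,
\]
so $E_{\Lambda}(C',C)=0$ is equivalent to the non-existence of any admissible pair between $\tau_{\Lambda_p}^{-1}(C)$ and $C'$. The four conditions (I)--(IV) will be shown to rule out precisely the admissible pairs that can occur in each of the possible relative positions of the common middle $C_2\sim C'_2$.

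First, I would write down $\tau_{\Lambda_p}^{-1}(C)$ explicitly using the hook description: since $C=(x_1:a_1,\dots,a_n)$ starts with some $x_1\in Q_1$ and, by the argument in the proof of Lemma~\ref{form}, is forced to end with the corresponding $x_2^-$, the Butler--Ringel description gives
\[
\tau_{\Lambda_p}^{-1}(C) \;=\; D_2\beta_2^{-}\,C\,\beta_1 D_1^{-},
\]
where $D_1,D_2$ are the unique maximal directed strings of length $p$ and $\beta_1,\beta_2$ are the unique arrows compatible with the string algebra rules. Crucially, the hook $\beta_1 D_1^-$ begins as a ``$+1$ extension'' of the zig-zag block $(x_2^{-}x_1)$ that appears $a_1$ times at the right end of $C$, and symmetrically at the left; this is the combinatorial input that drives condition~(I).

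Next, I would classify admissible pairs $\bigl((C_3,C_2,C_1),(C'_3,C'_2,C'_1)\bigr)\in\mathscr{A}(\tau_{\Lambda_p}^{-1}(C),C')$ by where the shared middle $C_2\sim C'_2$ sits inside $\tau_{\Lambda_p}^{-1}(C)$, splitting into two cases.

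\begin{compactenum}[(a)]
\item $C_2$ lies entirely in the central copy of $C$ inside $\tau_{\Lambda_p}^{-1}(C)$. Then $C_2$ is a common substring of $C$ and $C'$, and by the zig-zag shape of strings of the form $(x_1:a_1,\dots,a_n)$ such a common substring is described by a matching subsequence $s$ of the numerical data $(a_i)$ and $(a'_j)$. The admissibility of the pair translates into a pair of strict transition conditions at the two ends of $C_2$; these turn into inequalities on the $a$'s adjacent to $s$. Treating the four geometric sub-cases (match touches the left end of $C$; match touches the right end of $C$; match is strictly interior in $C$; $C$ is a substring of $C'$) yields exactly the obstructions (II)(a), (II)(b), (III) and (IV) respectively.
\item $C_2$ enters one of the two hooks $D_2\beta_2^{-}$ or $\beta_1 D_1^{-}$. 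Because the hooks are maximal directed strings, the match then forces a directed subword of length $\geq a-1$ in $C'$, where $a=\max\{a_i\}$ is the longest zig-zag block of $C$; this is only possible if some $a'_j<a-1$. Thus an admissible pair of this type exists exactly when~(I) fails.
\end{compactenum}

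For each of these cases I would argue both directions: if the relevant condition fails one exhibits an admissible pair (using the explicit diagrams in the style of those in the proof of Lemma~\ref{form}), and if the condition holds one checks that no admissible pair with $C_2$ in that location can satisfy the boundary arrow orientations required by admissibility. Combining both cases gives the equivalence.

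The main obstacle is the bookkeeping in case (a): one must carefully match the orientation of the two ``boundary arrows'' of the admissible diagram with the local shape of $C$ and $C'$ at the ends of $C_2\sim C'_2$, and verify that each failure mode corresponds to exactly one of (II)(a), (II)(b), (III), (IV) and no condition is redundant. The rigid periodic structure of $(x_1:a_1,\dots,a_n)$-strings means that the alignment of the common $C_2$ forces the surrounding $a_i$ and $a'_j$ to occupy predictable positions, so the case analysis, while somewhat lengthy, is finite and entirely mechanical once (a) and (b) are set up.
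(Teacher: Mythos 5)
Your overall strategy coincides with the paper's: compute $E_{\Lambda}(C',C)$ as $\left|\mathscr{A}(\tau_{\Lambda_p}^{-1}(C),C')\right|$ and classify the possible middle terms $C_2\sim C'_2$ by their position in the diagram of $\tau_{\Lambda_p}^{-1}(C)$. However, your dictionary between positions of $C_2$ and the conditions (I)--(IV) is wrong, and the proof would break if carried out as you describe. First, condition (I) does \emph{not} come from matches entering the hooks. A violation $a'_j\leq a-2$ is realized by a middle-to-middle match in which a \emph{full} block $(x_2^-x_1)^{a'_j}$ of $C'$ (which is forced to be a full block by submodule-compatibility at its two ends) is matched with a strictly interior piece of a block of size $a$ inside the central copy of $C$; no hook is involved. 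Your justification via ``a directed subword of length $\geq a-1$ in $C'$'' is a non sequitur: $C'$ never contains directed subwords of length $\geq 3$ regardless of the $a'_j$, and the integers $a'_j$ count zig-zag copies, not lengths of directed paths, so entering the hooks cannot produce condition (I).

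Second, and more seriously, the matches you place ``entirely in the central copy of $C$, touching its left end / right end / containing all of $C$'' (your sources for (II)(a), (II)(b), (IV)) are not admissible at all: at the first and last vertices of $C$ the hook arrows of $\tau_{\Lambda_p}^{-1}(C)$ point \emph{into} those vertices, so a quotient-compatible $C_2$ cannot terminate there, and it cannot terminate at a socle vertex strictly inside the central copy either (both adjacent arrows point into such a vertex). The admissible pairs that actually realize violations of (II)(a), (II)(b) and (IV) are precisely the ones that cross a hook: $C_2$ must run through the top hook vertex and exactly two steps down the directed tail (deeper is impossible because socle vertices of $C'$ are sinks), because the hook locally reproduces the block-boundary pattern ``top vertex with two descending arrows followed by a further descent'' of $C'$. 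This is exactly why applying $\tau^{-1}$ matters, and it is the content of the paper's socle-endpoint cases; in your setup these homomorphisms are never found, while the non-existent ``end-of-$C$'' matches are counted instead. You would also need to separate, inside the middle-to-middle case, matches confined to one block (giving (I)) from matches spanning at least one top vertex (giving (III)); your sub-case list conflates these. So the skeleton is right, but the boundary-orientation bookkeeping you defer to is precisely where the argument currently fails.
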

\begin{proof}
%
The diagram of $\tau_{\Lambda}^{-1}(C)$ is of the form:
\[
\xygraph{!{<0cm,0cm>;<0.5cm,0cm>:<0cm,0.75cm>::}
!{(-5,1)}*+{\text{top}}
!{(-5,0)}*+{\text{mid}}
!{(-5,-1)}*+{\text{soc}}
!{(-4,-2)}="30"
!{(-3,-1)}*+{\bullet}="31"
!{(-2,0)}*+{\bullet}="32"
!{(-1,1)}*+{\bullet}="33"
!{(0,0)}*+{\bullet}="1"
!{(1,-1)}*+{\bullet}="2"
!{(2,0)}*+{\bullet}="3"
!{(3,0)}*+{\bullet}="4"
!{(4,-1)}*+{\bullet}="5"
!{(5,0)}*+{\bullet}="6"
!{(6,1)}*+{\bullet}="7"
!{(7,0)}*+{\bullet}="9"
!{(8,-1)}*+{\bullet}="10"
!{(10,-1)}*+{\bullet}="13"
!{(11,0)}*+{\bullet}="14"
!{(12,1)}*+{\bullet}="17"
!{(13,0)}*+{\bullet}="18"
!{(14,-1)}*+{\bullet}="19"
!{(15,0)}*+{\bullet}="20"
!{(16,0)}*+{\bullet}="21"
!{(17,-1)}*+{\bullet}="22"
!{(18,0)}*+{\bullet}="23"
!{(19,1)}*+{\bullet}="40"
!{(20,0)}*+{\bullet}="41"
!{(21,-1)}*+{\bullet}="42"
!{(23,1)}*+{\text{top}}
!{(23,0)}*+{\text{mid}}
!{(23,-1)}*+{\text{soc}}
!{(22,-2)}="43"
"33"-@[green]"32"
"32"-@[green]"31"
"31":@[green]@{-->}"30"
"33"-@[green]"1"
"1"-"2"
"2"-"3"
"2":@/_0.4cm/@{.}_{a_1 \text{ times}}"5"
"3":@{.}"4"
"4"-"5"
"5"-"6"
"6"-"7"
"7"-"9"
"10":@{.}"13"
"9"-"10"
"13"-"14"
"14"-"17"
"17"-"18"
"18"-"19"
"19"-"20"
"19":@/_0.4cm/@{.}_{a_n \text{ times}}"22"
"20":@{.}"21"
"21"-"22"
"22"-"23"
"40"-@[green]"23"
"40"-@[green]"41"
"41"-@[green]"42"
"42":@[green]@{-->}"43"
}
\]
In the diagrams of $C'$ and $\tau_{\Lambda}^{-1}(C)$ we have three levels. We call them the top, the middle and the socle. If $((C_1,C_2,C_3),(C'_1,C'_2,C'_3))$ is an admissible pair of $\tau_{\Lambda}^{-1}(C)$ and $C'$, then the middle terms $C_2$ and $C'_2$ have to start at the same level and end at the same level. Obviously $C'_2$ cannot begin or end at the top level. We consider the other possibilities case by case:
\begin{compactenum}[(i)]
\item If $C_2$ begins at socle and ends at middle, then 
\[ C_2 =    (x_2^- x_1)^b y_2 y_1^- \underbrace{x_2^- \ldots  x_1}_{=: \text{s}}y_2y_1^- x_2^-\]
and it has subdiagram in $\tau_{\Lambda}^{-1}(C)$
\[
\xygraph{!{<0cm,0cm>;<0.6cm,0cm>:<0cm,0.8cm>::}
!{(-1,-1)}="0"
!{(0,0)}*+{\bullet}="1"
!{(1,1)}*+{\bullet}="2"
!{(2,2)}*+{\bullet}="3"
!{(3,1)}*+{\bullet}="4"
!{(4,0)}*+{\bullet}="5"
!{(6,0)}*+{\bullet}="6"
!{(7,1)}*+{\bullet}="7"
!{(8,2)}*+{\bullet}="8"
!{(9,1)}*+{\bullet}="9"
!{(10,0)}*+{\bullet}="10"
!{(11,1)}*+{\bullet}="11"
!{(12,1)}*+{\bullet}="12"
!{(13,0)}*+{\bullet}="13"
!{(14,1)}*+{\bullet}="14"
!{(15,0)}*+{\bullet}="15"
!{(16,0)}*+{\bullet}="16"
!{(17,1)}*+{\bullet}="17"
"1":@[green]@{--}"0"
"1"-@[green]"2"
"3"-@[green]"2"
"3"-@[green]"4"
"4"-"5"
"5":@{.}"6"
"4":@/^0.5cm/@{--}"7"|-{\textbf{s}}
"6"-"7"
"7"-"8"
"8"-"9"
"9"-"10"
"10"-"11"
"10":@/_0.45cm/@{.}_(0.4){b \text{ times}}"13"
"10":@/_0.6cm/@{.}_(0.7){a_i \text{ times}}"16"
"11":@{.}"12"
"12"-"13"
"13"-"14"
"14":@{--}"15"
"15":@{.}"16"
"16":@{--}"17"
}
\]
where $b < a_i$ and the broken lines are not part of the diagram of $C_2$ but of the diagram of $\tau_{\Lambda}^{-1}(C)$. Hence in $C'$, $C'_2$ corresponds to a subsequence $(\ldots ,s,b,\ldots)$. This is eliminated by condition (II)(a). On the other hand if $E_{\Lambda}(C',C)=0$, we see that $C$ and $C'$ satisfy (II)(a).
\item If $C_2$ begins at middle and ends at socle we use the same argument as above and condition (II)(b). And if $E_{\Lambda}(C',C)=0$ holds, we see that $C$ and $C'$ satisfy condition (II)(b).
\item If $C_2$ begins and ends at middle then either
$ C_2 = (x_2^- x_1)^b$ with $b<a-1$ and with subdiagram in $\tau_{\Lambda}^{-1}(C)$ given by:
\[
\xygraph{!{<0cm,0cm>;<0.51cm,0cm>:<0cm,0.75cm>::}
!{(-1,-1)}*+{\bullet}="0"
!{(0,0)}*+{\bullet}="1"
!{(1,-1)}*+{\bullet}="2"
!{(2,0)}*+{\bullet}="3"
!{(3,-1)}*+{\bullet}="7"
!{(4,0)}*+{\bullet}="4"
!{(6,0)}*+{\bullet}="5"
!{(7,-1)}*+{\bullet}="8"
!{(8,0)}*+{\bullet}="6"
!{(9,-1)}*+{\bullet}="11"
!{(4,-0.5)}="9"
!{(6,-0.5)}="10"
"1":@{--}"0"
"1"-"2"
"2"-"3"
"7"-"3"
"7"-"4"
"2":@/_0.5cm/@{.}_{b \text{ times}}"8"
"5"-"8"
"8"-"6"
"9":@{.}"10"
"6":@{--}"11"
}
\]
Then $C'_2$ corresponds to a subsequence $(\ldots ,b,\ldots)$ in $C'$. Since $b < a-1$ this is ruled out by condition (I). On the other hand if $a_j'=b < a-1$ for one $1\leq j \leq m$ we see that there is a non-trivial homomorphism from $\tau_{\Lambda}^{-1}(C)$ to $C'$. Hence if $E_{\Lambda}(C',C)=0$, condition (I) is satisfied. Otherwise $ C_2=   (x_2^-x_1 )^{b_2}y_2 y_1^-\; s \;y_2y_1^-(x_2^-x_1)^{b_1}$ where $(a_1,\ldots,a_n)=(\ldots a_i,s,a_{i+t} \ldots)$ 
has subdiagram
\[
\xygraph{!{<0cm,0cm>;<0.51cm,0cm>:<0cm,0.8cm>::}
!{(-7,1)}*+{\bullet}="32"
!{(-6,0)}*+{\bullet}="33"
!{(-5,0)}*+{\bullet}="30"
!{(-4,1)}*+{\bullet}="29"
!{(-3,0)}*+{\bullet}="210"
!{(-2,1)}*+{\bullet}="211"
!{(-1,1)}*+{\bullet}="212"
!{(0,0)}*+{\bullet}="1"
!{(1,1)}*+{\bullet}="2"
!{(2,2)}*+{\bullet}="3"
!{(3,1)}*+{\bullet}="4"
!{(4,0)}*+{\bullet}="5"
!{(6,0)}*+{\bullet}="6"
!{(7,1)}*+{\bullet}="7"
!{(8,2)}*+{\bullet}="8"
!{(9,1)}*+{\bullet}="9"
!{(10,0)}*+{\bullet}="10"
!{(11,1)}*+{\bullet}="11"
!{(12,1)}*+{\bullet}="12"
!{(13,0)}*+{\bullet}="13"
!{(14,1)}*+{\bullet}="14"
!{(15,0)}*+{\bullet}="31"
!{(16,0)}*+{\bullet}="40"
!{(17,1)}*+{\bullet}="41"
"32":@{--}"33"
"30":@{.}"33"
"29":@{--}"30"
"29"-"210"
"210"-"211"
"33":@/_0.5cm/@{.}_(0.3){a_i \text{ times}}"1"
"210":@/_0.4cm/@{.}_(0.6){b_1 \text{ times}}"1"
"211":@{.}"212"
"212"-"1"
"1"-"2"
"3"-"2"
"3"-"4"
"4"-"5"
"5":@{.}"6"
"4":@/^0.5cm/@{--}"7"|-{\textbf{s}}
"6"-"7"
"7"-"8"
"8"-"9"
"9"-"10"
"10"-"11"
"10":@/_0.4cm/@{.}_(0.4){b_2 \text{ times}}"13"
"10":@/_0.5cm/@{.}_(0.7){a_{i+t} \text{ times}}"40"
"11":@{.}"12"
"12"-"13"
"13"-"14"
"14":@{--}"31"
"31":@{.}"40"
"40":@{--}"41"
}
\]
Then $C'_2$ corresponds to a subsequence $(\ldots ,b_1,s,b_2,\ldots)$ in $C'$. Since $b_1 < a_i$ and $b_2 < a_{i+t}$ this case is excluded by condition (III). Again assuming $E_{\Lambda}(C',C)=0$ this shows that $C$ and $C'$ satisfy condition (III)
\item If $C_2$ starts and ends at socle, then $C'_2$ corresponds to $( ,a_1,a_2,\ldots,a_n,)$. This is no subsequence of $C'$ by condition (IV). On the other hand if it was a subsequence then $E_{\Lambda}(C',C)=0$ could not be true.
\end{compactenum}
\end{proof}
Let $C$ be a string given by the sequence $(x_1:a_1,a_2,\ldots ,a_n)$. We set
\[(x_1:a_1,a_2,\ldots ,a_n)^{\sigma}:=(x_1:a_n,\ldots ,a_2,a_1)\]
and say $C$ is \textit{symmetric} if $(x_1:a_1,a_2,\ldots ,a_n)^{\sigma}=(x_1:a_1,a_2\ldots ,a_n)$.  
\begin{prop}\label{rig}
A string $C$ of the form $(x_1:a_1,a_2,\ldots ,a_n)$ is $E_{\Lambda}$-rigid if and only if the following hold:
\begin{compactenum}[(i)]
\item $C$ is symmetric.
\item Let $a:=a_1$. Then $a_i \in \lbrace a, a+1\rbrace$ for all $1\leq i\leq n$.
\item There is no subsequence $s$ of $(a_1,a_2,\ldots ,a_n)$ such that
\[(a_1,a_2,\ldots ,a_n)=(s,a+1\ldots ,s,a\ldots).\]
\item There is no subsequence $s$ of $(a_1,a_2,\ldots ,a_n)$ such that
\[(a_1,a_2,\ldots ,a_n)=(\ldots a+1,s,a+1\ldots a,s,a\ldots).\]
\end{compactenum}
\end{prop}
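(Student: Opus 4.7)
The plan is to specialize Lemma \ref{grund} to the case $C' = C$, so that $E_{\Lambda}(C,C) = 0$ becomes equivalent to conditions (I)--(IV) of that lemma applied to the pair $(C,C)$. I would then translate these four conditions into the four conditions (i)--(iv) of the proposition.

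The easy extraction is (ii): condition (II)(a) of the lemma with the empty subsequence forces $a_1 \leq a_j$ for all $j$, condition (II)(b) with empty $s$ forces $a_n \leq a_j$ for all $j$, and condition (I) says the maximum exceeds the minimum by at most $1$. Setting $a := a_1 = a_n$, this gives $a_j \in \{a, a+1\}$ for all $j$. Once (ii) is in hand, conditions (II)(a) and (III) of Lemma \ref{grund}, read for $C' = C$ under the binary-valued constraint, become exactly conditions (iii) and (iv) of the proposition: the inequality $a_i \leq a_j$ can only fail when $a_i = a+1$ and $a_j = a$, so the forbidden patterns in (iii) and (iv) match the patterns detected by (II)(a) and (III) respectively. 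Condition (IV) of the lemma is vacuous since with $m = n$ no subsequence of length $n+1$ fits inside a sequence of length $n$.

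The main obstacle is extracting the symmetry (i), which is not itself one of the lemma's conditions but must be teased out of the interaction between (II)(a) and (II)(b). My plan is a proof by contradiction: if $C$ is not symmetric, let $k$ be minimal with $a_k \neq a_{n+1-k}$, which by (ii) means one of them equals $a$ and the other $a+1$. By the minimality of $k$, the prefix $s := (a_1, \ldots, a_{k-1})$ coincides with the reversed suffix $(a_{n-k+2}, \ldots, a_n)$. The heart of the argument is to identify an appropriate extension of $s$ which occurs both as a prefix and as a proper interior factor of $C$ (or dually, as a suffix and a proper interior factor), so that this pair of occurrences triggers the inequality in (II)(a) (respectively (II)(b)) and produces the required contradiction. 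Once symmetry is available, the converse implication --- assuming (i)--(iv) and deriving (I)--(IV) with $C' = C$ --- is routine bookkeeping: (II)(b) reduces to (II)(a) via (i), (II)(a) reduces to (iii), and (III) reduces to (iv), all using (ii) to restrict attention to the two-valued case.
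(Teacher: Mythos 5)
Your reductions in both directions of everything except the symmetry statement match the paper: the paper likewise derives (ii) from (I) and the ``in particular'' parts of (II), observes that under the two-valued constraint (II)(a) and (III) become (iii) and (iv), notes (IV) is trivially satisfied for $C'=C$, and in the converse direction uses symmetry to reduce (II)(b) to (iii). The problem is the one step you yourself flag as the heart of the matter: the derivation of (i). You leave it as a plan (``identify an appropriate extension of $s$ which occurs both as a prefix and as a proper interior factor \ldots so that this pair of occurrences triggers the inequality in (II)(a) (respectively (II)(b))''), and this plan cannot work as stated, because conditions (I), (II)(a), (II)(b) and (IV) alone do not force symmetry. Concretely, the sequence $(a,a,a{+}1,a,a{+}1,a{+}1,a,a)$ is not symmetric, yet one checks directly that every prefix followed by $a{+}1$ (namely $(a,a)$, $(a,a,a{+}1,a)$, $(a,a,a{+}1,a,a{+}1)$) has no other occurrence followed by $a$, and dually for suffixes preceded by $a{+}1$; it only violates (III)/(iv), since both $a,a$ and $a{+}1,a{+}1$ occur as factors. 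Hence any correct proof of (i) must invoke condition (iv)/(III), not merely the (II)-type prefix/suffix comparisons your sketch relies on.

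This is exactly how the paper closes the argument, and it is the genuinely nontrivial part you have not supplied: assuming $C$ not symmetric, take the maximal $s$ with $C=(s,\ldots,s^{\sigma})$ and (after using reversal-invariance to normalize) $C=(s,a{+}1,\ldots,a,s^{\sigma})$; compare $s$ with the equal-length factor $s'=(a,a_k,\ldots,a_2)$ beginning at the mirrored break. Condition (iii) is used twice, first to see $s\neq s'$ and then to see that at their first disagreement $s$ carries $a$ and $s'$ carries $a{+}1$, which forces $s=(a,Q,a,\ldots,a{+}1,Q^{\sigma})$ and $s^{\sigma}=(Q,a{+}1,\ldots,a,Q^{\sigma},a)$ for some $Q$; then $C$ contains both the factor $a{+}1,Q^{\sigma},a{+}1$ and the factor $a,Q^{\sigma},a$, contradicting (iv). Without this (or an equivalent argument making essential use of (iv)), your proposal does not establish the ``only if'' part of condition (i), so the proof is incomplete.
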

\begin{proof}
Let $C=(x_1:a_1,a_2,\ldots ,a_n)$ satisfy conditions $(i)-(iv)$. We use  Lemma \ref{grund} to show that
$C$ is $E_{\Lambda}$-rigid. Since $a_i \in \lbrace a, a+1\rbrace$ for all $1\leq i\leq n$ condition (I) holds. The symmetry of $C$ and (iii) imply Lemma \ref{grund} (II). Condition (iv) guarantees Lemma \ref{grund} (III). Lemma \ref{grund} (IV) is obviously satisfied.

Now assume that $C$ is $E_{\Lambda}$-rigid. Then $C$ satisfies the conditions on $C$ and $C'=C$ in Lemma \ref{grund}. Let $b:=\max \lbrace a_i \mid 1\leq i\leq n\rbrace $. Then by condition Lemma \ref{grund}(I) we have $b-1 \leq a_i \leq b$ for all $1 \leq i \leq n$. By Lemma \ref{grund} (II) we have  $a_1\leq a_i$ for all $1 \leq i \leq n$ and this shows (ii). Conditions (iii) and (iv) follow directly from Lemma \ref{grund} (II) and (III). 

Finally we prove that $C$ is symmetric, that is 
$(a_1,a_2,\ldots,a_n)=(a_n,\ldots,a_2,a_1)$.
Assume that this is not the case and choose a subsequence $s$ maximal such that
\[C=(\underbrace {a_1,a_2,\ldots,a_k}_s,a_{k+1},\ldots,a_{n-k+1},\underbrace{a_k,\ldots,a_2,a_1}_{s^{\sigma}})\]
with $a_{k+1} \neq a_{n-k+1}$. We already know that $C$ satisfies (ii), hence $\lbrace a_{k+1},a_{n-k+1}\rbrace=\lbrace a,a+1\rbrace$, where $a:=a_1$. Without loss of generality assume $a_{n-k+1}=a=a_1$ and $a_{k+1}=a+1$. 
Define $s'$ to be the subsequence of $C$ of same length as $s$ starting at $a_{n-k+1}$, hence $s'=(a,a_k,\ldots,a_2)$. Since we have already shown that $C$ satisfies condition (iii) we know that $s \neq s'$. Choose $P_1$ maximal such that $s=(P_1,P_2)$ and $s'=(P_1,P'_2)$. Since $C$ satisfies (iii) we know that $P_2=(a,\ldots)$ and $P'_2=(a+1,\ldots)$ and hence
\[ C=(\underbrace{\underbrace{a,Q}_{P_1},\underbrace{a,\ldots}_{P_2}}_s ,a+1,\ldots,\underbrace{\underbrace{a,Q}_{P_1},\underbrace{a+1,\ldots}_{P'_2}}_{s'},a)\]
for some subsequence $Q$. In particular we know that 
\[  s^{\sigma}=(Q,a+1,\ldots,a, Q^{\sigma},a)\quad \text{and}\quad s=(s^{\sigma})^{\sigma}=(a,Q,a,\ldots,a+1, Q^{\sigma}).\] 
So eventually we have 
\[ C=(s,a+1,\ldots,a, s^{\sigma})=(a,Q,a ,\ldots,  \boldsymbol{a+1 , Q^{\sigma} ,a+1},\ldots ,a,Q,a+1,\ldots,\boldsymbol{a,Q^{\sigma},a}) \] 
which contradicts condition (iv).
\end{proof}
\begin{remark}
It is easy to see, slightly altering the proofs of Lemma \ref{grund} and Proposition \ref{rig}, that any $E_{\Lambda}$-rigid module has trivial endomorphism ring. However, the module corresponding to the string $\beta_2 \alpha_1$ has trivial endomorphism ring and no selfextensions but is not $E_{\Lambda}$-rigid.
\end{remark}
\begin{lemma}
If 
\[C=(x_1:a_1,\ldots,a_n) \qquad \text{and} \qquad C'=(x'_1:a'_1,\ldots,a'_m)\] are $E_{\Lambda}$-rigid strings, not of length zero and such that $x_1\neq x'_1$, then there is no edge between them in $\Gamma(\srdirr (\Lambda))$. 
\end{lemma}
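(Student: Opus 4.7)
The plan is to show directly that $E_{\Lambda}(C',C)>0$; this suffices because an edge in $\Gamma(\srdirr(\Lambda))$ between $C$ and $C'$ would require both $E_{\Lambda}$-invariants to vanish. Concretely I will exhibit an admissible pair in $\mathscr{A}(\tau^{-1}_{\Lambda_p}(C),C')$ with length-zero middle term $S_3$, which by the formula $E_{\Lambda}(C',C)=|\mathscr{A}(\tau^{-1}_{\Lambda_p}(C),C')|$ from the Remark preceding Theorem~\ref{bi} will give the conclusion.

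First I use the cyclic $\mathbb{Z}/3$-symmetry $\sigma\colon\alpha_i\mapsto\beta_i\mapsto\gamma_i\mapsto\alpha_i$ of $(Q,W)$, which is an automorphism of $\Lambda$ permuting $E_{\Lambda}$-rigid strings by their starting arrow. Applying $\sigma$ or $\sigma^{2}$ if needed, we may assume $x_1=\alpha_1$ and $x'_1=\beta_1$. From Lemma~\ref{form}, both endpoints of the picture diagram of $C$ sit at vertex $s(\alpha_1)=1$, while both endpoints of $C'$ sit at $s(\beta_1)=2$.

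Fix $p$ larger than the lengths of $C$ and $C'$ and use the Butler--Ringel formula $\tau^{-1}_{\Lambda_p}(C)=D_2\eta_2^-C\eta_1 D_1^-$ recalled in Section~2, with $\eta_1,\eta_2\in Q_1$ and $D_1,D_2$ directed paths of length $p$. Because $C$'s picture-left end is vertex $1$, the hook arrow $\eta_1$ must satisfy $t(\eta_1)=1$, forcing $\eta_1\in\{\gamma_1,\gamma_2\}$ and hence $s(\eta_1)=3$. In the picture, this junction vertex $s(\eta_1)$ is flanked by the rightmost letter of $D_1^-$ (which lies in $Q_1^-$, walking upward) on one side and by $\eta_1\in Q_1$ (walking downward toward $C$) on the other, so it is a peak of $\tau^{-1}_{\Lambda_p}(C)$ at vertex $3$. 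Meanwhile the picture of $C'$ begins with $\beta_1\in Q_1$ followed by $\beta_2^-\in Q_1^-$, so its second vertex $t(\beta_1)=3$ is a valley of $C'$ at vertex $3$.

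Splitting $\tau^{-1}_{\Lambda_p}(C)$ at this peak and $C'$ at this valley yields a pair $((C_3,S_3,C_1),(C'_3,S_3,C'_1))$ whose four boundary conditions of the admissibility diagram reduce exactly to the peak/valley data already verified. This pair therefore lies in $\mathscr{A}(\tau^{-1}_{\Lambda_p}(C),C')$, giving $E_{\Lambda}(C',C)\geq 1>0$ as required. The main obstacle is tracking the string-notation versus picture-orientation conventions: one must see that the quiver constraint ``every arrow of $Q$ ending at vertex $1$ starts at vertex $3$'' is precisely what pins the hook-junction of $\tau^{-1}_{\Lambda_p}(C)$ to vertex $3$, so that the required peak automatically matches the valley sitting in the $\beta_1$-zigzag of $C'$.
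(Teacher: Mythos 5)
Your construction in the one case you treat is sound: for $C=(\alpha_1:a_1,\ldots,a_n)$ the hook added at the end of $C$ adjacent to the letter $\alpha_1$ is $\gamma_2 D_1^-$, its junction vertex is a peak of $\tau^{-1}_{\Lambda_p}(C)$ at vertex $3$, and since the socle of any string of the form $(\beta_1:a'_1,\ldots,a'_m)$ sits entirely at vertex $3$, the trivial middle term at vertex $3$ does give an admissible pair in $\mathscr{A}(\tau^{-1}_{\Lambda_p}(C),C')$, hence $E_{\Lambda}(C',C)>0$. This is the same kind of diagram check the paper performs. The gap is the reduction step. The rotation $\sigma$ permutes $\alpha,\beta,\gamma$ but preserves whether the initial letter $x_1$ lies in $Q_1$ or in $Q_1^-$, so ``applying $\sigma$ or $\sigma^2$'' can never bring the pairs $(x_1,x'_1)=(\alpha_1,\alpha_1^-)$, $(\alpha_1,\beta_1^-)$ or $(\alpha_1,\gamma_1^-)$ into the case $(\alpha_1,\beta_1)$; this is precisely why the paper's proof speaks of five cases for $x'_1\neq x_1$. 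Even if you additionally invoke the earlier lemma on $\overline{C}$ (simultaneous inversion) and the symmetry of the edge relation, you are still left with at least the two genuinely different mixed cases represented by $(\alpha_1,\alpha_1^-)$ and $(\alpha_1,\gamma_1^-)$, and these are not covered by your argument.

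Moreover, in those missing cases your specific recipe fails and cannot simply be repeated: for $C'=(\alpha_1^-:a'_1)$ the vertex $3$ does not occur in $C'$ at all (its diagram only involves the vertices $1$ and $2$), so there is no valley at $3$ to match the hook peak of $\tau^{-1}_{\Lambda_p}(C)$; and in the opposite direction the hook peak of $\tau^{-1}_{\Lambda_p}(C')$ sits at vertex $1$ while the socle of $C$ is at vertex $2$, so a simple module cannot serve as middle term in either direction. One must exhibit admissible pairs with longer middle terms (or argue differently) for each of the remaining cases, which is exactly the case-by-case work the paper's proof alludes to. A smaller, fixable point: even within the ``both starting letters in $Q_1$'' situation, the pair $(\alpha_1,\gamma_1)$ is not reached by $\sigma$ alone; you need to swap the roles of $C$ and $C'$ (legitimate, since an edge requires both $E_{\Lambda}$-invariants to vanish) or use the hook at the other end of $\tau^{-1}_{\Lambda_p}(C)$.
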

\begin{proof}
This can be seen by fixing $x_1$ and considering all five possible cases for $x'_1 \neq x_1$. 
\end{proof}
\begin{corollary}\label{edge}
Let $C$ and $C'$ in be $E_{\Lambda}$-rigid given by a sequence $(x_1:a_1,a_2,\ldots ,a_n)$ and $(x_1:a_1',a_2',\ldots ,a_m')$ respectively. Set $a:=a_1$ and assume $a_1'\leq a_1$.  Then 
\begin{equation*}
E_{\Lambda}(C',C)=0=E_{\Lambda}(C,C')
\end{equation*}
if and only if the following hold:
\begin{compactenum}[(i)]
\item We have $a_1'\in \lbrace a-1,a\rbrace$ and if $a_1'=a-1$ then $m=1$ and $a_i=a$ for all $1 \leq i \leq n$.
\item a)If there is a subsequence $s$ of $(a_1,a_2,\ldots ,a_n)$ such that
\[(a_1,a_2,\ldots ,a_n)=(s,a+1\ldots)\text{ and }(a_1',a_2',\ldots ,a_m')=(\ldots ,s,a_j'\ldots)\]
then $a_j' = a+1$.

b)The same statement as in a) holds with reversed roles of $C$ and $C'$.
\item a)If there is a subsequence $s$ of $(a_1,a_2,\ldots ,a_n)$ such that
\[(a_1,a_2,\ldots ,a_n)=(\ldots,a+1,s,a+1,\ldots)\] and \[(a_1',a_2',\ldots ,a_m')=(\ldots a_j',s,a_{j+t}'\ldots)\]
then $a_j'= a+1$ or $a_{j+t}'= a+1$ (here $t=1$ is possible). 

b)The same statement as in a) holds with reversed roles of $C$ and $C'$.
\item a)If there exists $1 \leq i \leq m$ such that $(a_i',a_{i+1}',\ldots ,a_{i+n}')=(a_1,a_2,\ldots ,a_n)$ then $i=1$ or $i+n=m$.

b)The same statement as in a) holds with reversed roles of $C$ and $C'$.
\end{compactenum}
\end{corollary}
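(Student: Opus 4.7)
The plan is to deduce the corollary from Lemma \ref{grund} applied in both directions $E_{\Lambda}(C',C)=0$ and $E_{\Lambda}(C,C')=0$, combined with the structure of $E_{\Lambda}$-rigid sequences given by Proposition \ref{rig}. Since $C$ and $C'$ are $E_{\Lambda}$-rigid, Proposition \ref{rig} tells us that both sequences are symmetric, that $a_i\in\{a,a+1\}$ with $a=a_1$, and that $a'_j\in\{a',a'+1\}$ with $a'=a'_1\le a$. The corollary then essentially rewrites Lemma \ref{grund}'s four conditions, in both orderings, under these restrictions.

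For the forward direction, the key step is condition (i). From Lemma \ref{grund}(I) applied to $E_{\Lambda}(C',C)=0$ we obtain $\max\{a_i\}-1\le a'_j$ for every $j$. If some $a_i$ equals $a+1$ this forces $a\le a'_1=a'$, hence $a'=a$; otherwise all $a_i=a$, and only $a'\ge a-1$ is forced, leaving $a'\in\{a-1,a\}$. In the boundary case $a'=a-1$, the ``in particular'' part of Lemma \ref{grund}(II)(a) gives $a\le a'_j$ for $1<j\le m$, so $a'_j=a$; but symmetry of $C'$ demands $a'_m=a'_1=a-1$, which is consistent only when $m=1$. Conditions (ii)(a), (iii)(a), (iv)(a) are then obtained by reading off the relevant instances of Lemma \ref{grund}(II)(a), (III), (IV) for $E_{\Lambda}(C',C)=0$, using the upper bound $a'_j\le a+1$ to pin down $a'_j=a+1$ whenever Lemma \ref{grund} produces the inequality $a+1\le a'_j$. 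Conditions (ii)(b), (iii)(b), (iv)(b) follow by the same analysis with the roles of $C$ and $C'$ swapped, i.e.\ from Lemma \ref{grund} applied to $E_{\Lambda}(C,C')=0$.

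For the reverse direction, I verify conditions (I)--(IV) of Lemma \ref{grund} in both orderings assuming (i)--(iv) of the corollary. Condition (I) in either direction is forced by (i), since $\max\{a_i\},\max\{a'_j\}\in\{a,a+1\}$ while $a_i\ge a$ and $a'_j\ge a-1$ (with the $a'=a-1$ case handled by $m=1$ and all $a_i=a$, so both sides collapse to $a-1$). Condition (II)(a) splits into two cases: if $a_i=a$ the inequality is automatic, and if $a_i=a+1$ it is exactly (ii)(a). The twin condition (II)(b) reduces to (II)(a) via the symmetry of $C$ and $C'$, replacing $s$ by $s^{\sigma}$. Condition (III) is automatic unless both $a_i=a_{i+t}=a+1$, in which case it is (iii)(a); condition (IV) is (iv)(a) verbatim; the swapped direction is handled analogously using (ii)(b), (iii)(b), (iv)(b).

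The main obstacle is the edge case $a'_1=a-1$ in part (i), where one has to combine the lower bound from Lemma \ref{grund}(I) with the empty-subsequence instance of Lemma \ref{grund}(II)(a) and the symmetry of $C'$ to simultaneously force $m=1$ and all $a_i=a$. The remainder is a careful bookkeeping exercise translating Lemma \ref{grund}'s four conditions into the cleaner formulation of the corollary under the $\{a,a+1\}$-constraint imposed by rigidity.
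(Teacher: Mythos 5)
Your proposal is correct and follows exactly the route the paper intends: its entire proof of this corollary is the instruction to combine Lemma \ref{grund} (applied to both orderings of the pair) with Proposition \ref{rig}, which is precisely the translation you carry out, including the boundary case $a_1'=a-1$ handled via Lemma \ref{grund}(I), the empty-subsequence instance of (II)(a), and the symmetry of $C'$.
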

\begin{proof}
Use Lemma \ref{grund} and Proposition \ref{rig}.
\end{proof}

\subsection{The components that are not $E_{\Lambda}$-rigid}
In this subsection we describe the strongly reduced bands. By a sequence $(x_1:a_1,a_2,\ldots ,a_n,)$, where $x_1 \in \lbrace \alpha_1^{\pm}, \beta_1^{\pm}, \gamma_1^{\pm}\rbrace $ and $a_i \in \mathbb{N}_{\geq 1}$ ending with comma we denote the string
\[B= y_2y_1^- (x_2^- x_1)^{a_n} y_2 y_1^-(x_2^- x_1)^{a_{n-1}} \ldots (x_2^- x_1)^{a_2}y_2y_1^-(x_2^- x_1)^{a_1}\]
where $y_1\in \lbrace \alpha_1^{\pm}, \beta_1^{\pm}, \gamma_1^{\pm}\rbrace$. 

If $B$ corresponds to the sequence $(x_1:a_1,a_2,\ldots,a_n,)$ then $s(B)=t(B)$ and for any positive integer $k$ the string $B^k$ is given by the sequence
\begin{align*}
(x_1:a_1,\ldots,a_n,)^k
=(x_1:a_1,a_2,\ldots,a_n,a_1,\ldots,a_n,\ldots a_1,\ldots,a_n,).
\end{align*}
\begin{remark}
Let $B$ be given by a sequence $(x_1:a_1,a_2,\ldots ,a_n,)$. Even though we always have $s(B)=t(B)$ this $B$ is not a band in general. Consider for example the sequence $(\alpha_1:2,2,)$. This defines the string
$B=\gamma_2\gamma_1^-\alpha_2^-\alpha_1\alpha_2^-\alpha_1  \gamma_2 \gamma_1^-\alpha_2^-\alpha_1\alpha_2^-\alpha_1  =(\gamma_2\gamma_1^-\alpha_2^-\alpha_1\alpha_2^-\alpha_1  )^2$ which is not a band as it is the power of a shorter string.
Also note that if $(x_1:a_1,\ldots,a_n,)$ describes a band, then for any $1\leq i \leq n$ the rotation $(x_1:a_i,\ldots,a_n,a_1,\ldots ,a_{i-1},)$ describes a rotation-equivalent band. However, we will establish conditions on the positive integers $a_i$ such that the sequences $(x_1:a_1,a_2,\ldots,a_n,)$ satisfying these conditions describe a complete set of representatives of strongly reduced bands which are pairwise not rotation-equivalent. 
\end{remark}
\begin{lemma}\label{formband}
Up to rotation-equivalence any strongly reduced band $B$ is of the form $(x_1:a_1,\ldots ,a_n,)$.
\end{lemma}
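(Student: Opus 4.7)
The plan is to mirror the proof of Lemma \ref{form}, replacing the $E_{\Lambda}$-rigidity of a string by the condition $E_{\Lambda}(M(B,\lambda,1))=1$ for a strongly reduced band, and exploiting that every position of a band is locally an interior position of a string. Recall from the theorem preceding Remark 2.6 that $\tau_{\Lambda_p}^{-1}(M(B,\lambda,1))\cong M(B,\lambda,1)$, so after truncating at some $p$ larger than the length of $B$, the formula $E_{\Lambda}(M,M)=\dim\Hom_{\Lambda_p}(\tau_{\Lambda_p}^{-1}M,M)$ can be read off from admissible pairs of $B^k$ with itself for $k$ sufficiently large. The trivial admissible pairs coming from the obvious self-map contribute exactly $1$; any additional admissible pair coming from a local configuration in $B$ forces $E_{\Lambda}(M)>1$ and is therefore forbidden.

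Next, I would redo the three-case local analysis from Lemma \ref{form}. The argument there showed: whenever $c_{j+i}\ldots c_{j+1}$ are all in $Q_1$ and $c_{j+i+1}\in Q_1^-$, one must have $i\leq 2$, and the analogous statement holds with the roles of $Q_1$ and $Q_1^-$ swapped; moreover the vertex $3$ cannot appear in the socle of the diagram of the local stretch, and the vertex $1$ cannot appear in the socle if the neighbouring direct segment looks like $\alpha_2^-\alpha_1$. Since every position in the band is interior after passing to $B^k$, these local constraints apply uniformly along the cyclic word, without any boundary exceptions to handle separately.

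Putting these constraints together, the band decomposes into alternating blocks of the form $(x_2^-x_1)^{a_i}$ of even length joined by separators $y_2 y_1^-$, because any longer directed run or any change of direction not of this shape was excluded in the case analysis. Choosing a rotation that places a separator $y_2 y_1^-$ at the designated starting position produces a representative of rotation-equivalence class of the form $(x_1:a_1,\ldots,a_n,)$, with $x_1\in\{\alpha_1^{\pm},\beta_1^{\pm},\gamma_1^{\pm}\}$ as required by the convention fixed above the lemma.

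The main obstacle is bookkeeping in the cyclic setting: unlike the string case, we cannot distinguish a "first" and "last" letter, so the case analysis must be phrased so that each forbidden configuration is detected at \emph{some} rotation of $B^k$ and contributes an admissible pair that is genuinely distinct from the trivial one. I would deal with this by fixing $k$ large enough (say $k\geq 3$) so that any local forbidden pattern inside $B$ appears in the interior of $B^k$, away from any potential identifications, ensuring that the extra admissible pair is counted only once and really does raise $E_{\Lambda}(M)$ above $1$.
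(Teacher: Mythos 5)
Your overall strategy -- express $E_{\Lambda}(M(B,\lambda,1))$ as a count of admissible pairs of $B^k$ with itself, observe that the trivial pair contributes exactly $1$, and exclude extra pairs by a local analysis of directed runs -- has the same shape as the paper's argument, but there is a genuine gap at the step where you claim the three-case analysis of Lemma \ref{form} ``applies uniformly along the cyclic word'' because every position of a band is interior. The forbidden configurations in Lemma \ref{form} are witnessed by admissible pairs in $\mathscr{A}(\tau_{\Lambda_p}^{-1}(C),C)$ whose middle terms run into the hooks $D_2\beta_2^-$ and $\beta_1D_1^-$ that $\tau_{\Lambda_p}^{-1}$ attaches to the string (in the diagrams the matched letters are partly green, i.e.\ lie in the hooks). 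For a band one has $\tau_{\Lambda_p}^{-1}(M(B,\lambda,1))\cong M(B,\lambda,1)$: there are no hooks, so those witnesses simply do not exist, and the conclusions of Lemma \ref{form} -- which are statements about $E_{\Lambda}$-rigid strings, not about cyclic words -- cannot be imported by an ``interior position'' argument. This is precisely why the paper says the band case ``is still not quite the same'' and redoes the case analysis: it first excludes directed substrings of length $\geq 6$ (the same vertex, or the same subsequence, occurs in top and socle, giving a nonzero endomorphism), then kills directed runs of length $3$, $4$ and $5$ by exhibiting explicit nontrivial endomorphisms of the band module itself, using the letters forced before and after the run by the zero relations and, where necessary, wrapping around the band; finally it analyses the configuration around the remaining length-$2$ runs to show that (for $c_{i+1}=\alpha_1$) the vertices $1,2$ cannot occur in the socle and $3$ cannot occur in the top, which is what actually forces the normal form $(x_1:a_1,\ldots,a_n,)$.

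To close the gap you would have to construct these band-internal endomorphisms yourself; the shortcut ``cut $B^k$ far from the bad spot and quote Lemma \ref{form}'' does not work, because a nonzero map $\tau_{\Lambda_p}^{-1}(M(C))\to M(C)$ for the cut-open string does not yield a nonzero non-scalar endomorphism of $M(B,\lambda,1)$, and a strongly reduced band cut open need not be an $E_{\Lambda}$-rigid string in the first place. Your final assembly step (blocks $(x_2^-x_1)^{a_i}$ separated by $y_2y_1^-$, then rotate to a separator) is fine once the local constraints have genuinely been established for the band.
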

\begin{proof}
The proof that any strongly reduced band $B$ is of the form $(x_1:a_1,a_2,\ldots ,a_n,)$ is similar to the proof of Lemma \ref{form}. Since $\tau_{\Lambda}^{-1}(B)$ equals $B$, it is still not quite the same and we have to check the details again. So let $B=c_m\ldots c_1$ be strongly reduced and assume without loss of generality that $c_1 \in Q_1$. 

First note the following: If $B$ is strongly reduced, there cannot be a directed substring of length $\geq 6$. A directed substring of length $6$ would mean, that there was the same vertex at the top and the socle, yielding a non-trivial homomorphism. A directed substring of length $>6$ means that we walked through the cycle of length $6$ in $\Lambda$ more than once. Hence there is the same sequence at the top of the directed string and in the socle. 

Now we show that in fact the longest directed substring $c_{j+i}\ldots c_{j+1}$ of $B$ such that $c_{j+i}\ldots c_{j+1}\in Q_1$ is of length at most two. We assume that $c_{j+1}=\alpha_1$ and thus $c_j=\alpha_2^-$ (where $c_0=c_m$ if $j+1=1$).

Case 1: Let $i = 3$. 
Then the vertex $1$ appears in its top and in its socle which yields a non-trivial endomorphism of $B$. 


Case 2: Let $i = 4 $. Since $c_j=\alpha_2^-$ we know that $c_{j-1}\notin \lbrace \alpha_1^-,\alpha_2 \rbrace$ where $c_{j-1}=c_m$ if $j=1$. Thus the band $B$ has diagram
\[
\xygraph{!{<0cm,0cm>;<0.7cm,0cm>:<0cm,0.7cm>::}
!{(-1,1)}*+{1}="1"
!{(0,0)}*+{2}="2"
!{(1,-1)}*+{3}="23"
!{(2,-2)}*+{1}="3"
!{(3,-3)}*+{2}="4"
!{(4,-2)}*+{1}="5"
!{(6,-2)}*+{1}="19"
!{(7,-3)}*+{2}="7"
!{(8,-2)}*+{1}="8"
!{(9,-1)}*+{3}="9"
!{(10,-1)}="12"
!{(13,-1)}*+{2}="20"
"1":@{=>}"2"_{\alpha_1}
"2":"23"_{\beta_2}
"23":"3"_{\gamma_1}
"3":"4"_{\alpha_2}
"5":"4"_{\alpha_1}
"5":@{.}"19"
"19":"7"_{\alpha_2}
"8":@{=>}"7"_{\alpha_1}
"9":"8"_{\gamma_2}
"12":@{.}"20"
"1":@/^0.5cm/"20"^(0.8){\alpha_2}
}
\]
or it has diagram
\[
\xygraph{!{<0cm,0cm>;<0.7cm,0cm>:<0cm,0.7cm>::}
!{(-1,1)}*+{1}="1"
!{(0,0)}*+{2}="2"
!{(1,-1)}*+{3}="23"
!{(2,-2)}*+{1}="3"
!{(3,-3)}*+{2}="4"
!{(4,-2)}*+{1}="5"
!{(5,-3)}*+{2}="6"
!{(6,-3)}*+{2}="7"
!{(7,-2)}*+{1}="8"
!{(8,-3)}*+{2}="10"
!{(9,-4)}*+{3}="11"
!{(10,-3)}="12"
!{(13,-3)}*+{2}="20"
"1":"2"_{\alpha_1}
"2":"23"_{\beta_2}
"23":"3"_{\gamma_1}
"3":@{=>}"4"_{\alpha_2}
"5":"4"_{\alpha_1}
"5":"6"^{\alpha_2}
"6":@{.}"7"
"8":"7"_{\alpha_1}
"8":@{=>}"10"^{\alpha_2}
"10":"11"^{\beta_1}
"12":@{.}"20"
"1":@/^1cm/"20"^{\alpha_2}
}
\]
In either case we see that there is a non-trivial endomorphism.

Case 3: Let $i = 5 $. Since $c_j=\alpha_2^-$ we know that $c_{j-1}\notin \lbrace \alpha_1^-,\alpha_2, \beta_{2}^-,\beta_1\rbrace$ where $c_{j-1}=c_m$ if $j=1$. Thus the band $B$ either has diagram
\[
\xygraph{!{<0cm,0cm>;<0.7cm,0cm>:<0cm,0.7cm>::}
!{(0,0)}*+{1}="1"
!{(1,-1)}*+{2}="2"
!{(2,-2)}*+{3}="3"
!{(3,-3)}*+{1}="4"
!{(4,-4)}*+{2}="14"
!{(5,-5)}*+{3}="15"
!{(6,-4)}*+{2}="5"
!{(7,-4)}*+{2}="16"
!{(8,-5)}*+{3}="17"
!{(9,-4)}*+{2}="18"
!{(10,-3)}*+{1}="19"
!{(11,-3)}*+{1}="20"
!{(12,-4)}*+{2}="21"
!{(13,-3)}*+{1}="22"
!{(14,-2)}*+{3}="23"
!{(15,-4)}="12"
!{(17,-4)}*+{2}="30"
"1":@{=>}"2"_{\alpha_1}
"2":"3"_{\beta_2}
"3":"4"_{\gamma_1}
"4":"14"_{\alpha_2}
"14":"15"_{\beta_1}
"5":"15"^{\beta_2}
"5":@{.}"16"
"16":"17"_{\beta_1}
"18":"17"^{\beta_2}
"19":"18"^{\alpha_1}
"19":@{.}"20"
"20":"21"_{\alpha_2}
"22":@{=>}"21"^{\alpha_1}
"23":"22"^{\gamma_2}
"12":@{.}"30"
"1":@`{"1"+(15,-0.5),"30"+(-1,3)}"30"^{\alpha_2}
}
\]
or it has diagram
\[
\xygraph{!{<0cm,0cm>;<0.7cm,0cm>:<0cm,0.7cm>::}
!{(0,0)}*+{1}="1"
!{(1,-1)}*+{2}="2"
!{(2,-2)}*+{3}="3"
!{(3,-3)}*+{1}="4"
!{(4,-4)}*+{2}="14"
!{(5,-5)}*+{3}="15"
!{(6,-4)}*+{2}="5"
!{(7,-5)}*+{3}="25"
!{(8,-5)}*+{3}="26"
!{(9,-4)}*+{2}="16"
!{(10,-5)}*+{3}="17"
!{(11,-6)}*+{1}="18"
!{(12,-4)}="12"
!{(15,-4)}*+{2}="30"
"1":"2"_{\alpha_1}
"2":"3"_{\beta_2}
"3":"4"_{\gamma_1}
"4":"14"_{\alpha_2}
"14":@{=>}"15"_{\beta_1}
"5":"15"^{\beta_2}
"5":"25"^{\beta_1}
"25":@{.}"26"
"16":"26"_{\beta_2}
"16":@{=>}"17"_{\beta_1}
"17":"18"_{\gamma_2}
"12":@{.}"30"
"1":@/^1cm/"30"^{\alpha_2}
}
\]
In both cases we find a non-trivial endomorphism. So we actually have $1 \leq i \leq 2$. Note if $i$ equals $1$ then $B$ is given by the sequence $(x_1:0,)$.
 
So now let $B$ be a band with a subword $c_{i+4}c_{i+3}c_{i+2}c_{i+1}c_i$ such that $c_i ,c_{i+3} \in Q_1^-$ and $c_{i+1},c_{i+2},c_{i+4} \in Q_1$ (or vice versa). We assume that $c_{i+1}=\alpha_1$. Then $B$ has diagram
\[
\xygraph{!{<0cm,0cm>;<0.6cm,0cm>:<0cm,0.9cm>::}
!{(0,0)}*+{1}="1"
!{(1,-1)}*+{2}="2"
!{(2,-2)}*+{3}="3"
!{(3,-1)}*+{2}="4"
!{(4,-2)}*+{3}="14"
!{(6,-1)}="12"
!{(9,-1)}*+{2}="30"
"1":"2"_{\alpha_1}
"2":"3"_{\beta_2}
"4":"3"^{\beta_1}
"4":"14"^{\beta_2}
"12":@{.}"30"
"1":@/^0.5cm/"30"^{\alpha_2}
}
\]
and the vertices $1$ and $2$ cannot appear in the socle and the vertex $3$ cannot appear in the top of the diagram of $B$. If $c_{i+1}=\alpha_1^-$ 
 the vertices $1$ and $2$ cannot appear in the top and the vertex $3$ cannot appear in the socle of the diagram of $B$. In either case we see that $B$ can be described by a sequence $(x_1:a_1,\ldots,a_n,)$.
\end{proof} 
\begin{lemma}\label{formband}
Let $B=(x_1:a_1,\ldots,a_n,)$ be a band and let $a:=\min_{1 \leq i\leq n}\lbrace a_i \rbrace$. Then $B$ is strongly reduced if and only if the following hold:
\begin{compactenum}[(i)]
\item $a_i \in \lbrace a,a+1 \rbrace$ for all $1\leq i\leq n$.
\item for any positive integer $k$ there is no subsequence $s$ of $B^k=(a_1,\ldots,a_n,)^k$  such that \[(a_1,\ldots,a_n,)^k=(\ldots,a,s,a,\ldots ,a+1,s,a+1,\ldots).\] 
\end{compactenum}
\end{lemma}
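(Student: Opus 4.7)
The plan is to reduce the condition $E_\Lambda(M(B,\lambda,1))=1$ to a dimension count on the endomorphism ring, and then adapt the admissible-pair analysis from Lemma \ref{grund} and Proposition \ref{rig} to the cyclic setting of band modules. Concretely, since $\tau_{\Lambda_p}^{-1}(M(B,\lambda,1))\cong M(B,\lambda,1)$ and the decoration of $M(B,\lambda,1)$ is zero, Proposition \ref{pro} gives
\[
E_\Lambda(M(B,\lambda,1))=\dim\Hom_{\Lambda_p}\!\bigl(\tau_{\Lambda_p}^{-1}M(B,\lambda,1),\,M(B,\lambda,1)\bigr)=\dim\End_{\Lambda_p}(M(B,\lambda,1))
\]
for $p$ larger than the length of $B$. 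As the identity is always a basis element, $B$ is strongly reduced if and only if this endomorphism ring is one-dimensional.

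The next step is to enumerate admissible pairs in $\mathscr{A}(B^k,B)$ contributing to $\End(M(B,\lambda,1))$, remembering (as in the excerpt) that pairs obtained from each other by shifting through a copy of $B$ must not be counted twice. The identity arises from the unique admissible pair whose middle term is the entire band $B$. The task is therefore to show that (i)--(ii) hold if and only if no \emph{further} admissible pair exists. I would split this according to the top/middle/socle endpoints of the middle term $C_2$, exactly as in the proof of Lemma \ref{grund}. The case where $C_2=(x_2^-x_1)^{b}$ lies entirely at the middle level produces a nontrivial endomorphism precisely when some $a_i\geq a+2$, which is exactly condition (i). The case where $C_2=(x_2^-x_1)^{b_2}y_2y_1^- s\, y_2y_1^-(x_2^-x_1)^{b_1}$ with $b_1<a_i$ and $b_2<a_{i+t}$ produces a nontrivial endomorphism precisely when a subsequence $s$ appears once bracketed by $a$'s and once bracketed by $a+1$'s in some power $B^k$, which is exactly the negation of condition (ii).

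The middle terms beginning or ending at the socle level have to be handled carefully: by the cyclic structure of $B^k$, any socle-to-socle middle term not coming from the identity can be rewritten, after cyclically rotating $B$, as a middle-to-middle middle term, so it is subsumed by the previous case. Top-to-anything middle terms cannot occur, by the same argument as in Lemma \ref{form}: the presence of the arrow ``$x_1$'' immediately after a top-level vertex would force a subword forbidden by the structure of $B$. Putting the cases together exactly mirrors the proof of Lemma \ref{grund}: conditions (i)--(ii) are verified to be individually equivalent to the absence of a nontrivial admissible pair of the corresponding type.

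The main obstacle is bookkeeping around the cyclic nature of bands. Unlike for strings, the middle term of an admissible pair may straddle the join between two copies of $B$ in $B^k$, and different choices of $k$ can produce the same pair up to rotation of $B$; this is why condition (ii) is stated in terms of subsequences of $B^k$ for arbitrary $k$ rather than of $B$ itself. I would manage this by fixing a cyclic rotation of $B$ with $a_1=a$, enumerating the admissible pairs in $B^2$ (which suffices to detect every potential violation of (ii), since any relevant subsequence has length at most $|B|$), and checking by inspection that the identity admissible pair is the unique one under (i)--(ii).
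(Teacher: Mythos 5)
Your proposal is correct and follows essentially the same route as the paper, whose own proof of this lemma consists only of the remark that it is ``proved in a similar way as Lemma \ref{grund}''; your reduction of $E_{\Lambda}(M(B,\lambda,1))=1$ to $\dim\End_{\Lambda_p}(M(B,\lambda,1))=1$ via $\tau_{\Lambda_p}^{-1}M(B,\lambda,1)\cong M(B,\lambda,1)$ and the subsequent top/middle/socle case analysis of admissible pairs is exactly the intended adaptation. Your observation that checking $B^2$ suffices also matches the paper's own remark following the lemma.
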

\begin{proof}
This is proved in a similar way as Lemma $\ref{grund}$. 
\end{proof}
\begin{remark}
When using Lemma \ref{formband} to show that a band $B$ is strongly reduced, it will be enough to check condition (ii) for $k=2$.
\end{remark}
\begin{lemma}\label{stba}
\begin{compactenum}[(i)]
\item If the string $ C=(x_1:a_1,\ldots ,a_n)$ is $E_{\Lambda}$-rigid, then $(x_1:a_1,\ldots,a_n+1,)$ is a strongly reduced band $B$.
\item 
Let $C=(x_1:a_1,\ldots,a_n)$ and $C'=(x'_1:a'_1,\ldots,a'_m)$ be two different $E_{\Lambda}$-rigid strings. Then the sequences $(x_1:a_1,\ldots,a_n+1,)$ and $(x'_1:a'_1,\ldots,a'_m+1,)$ describe bands $B$ and $B'$, that are not rotation-equivalent.
\end{compactenum}
\end{lemma}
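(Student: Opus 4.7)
The plan for part (i) is to verify that $B=(x_1:a_1,\ldots,a_n+1,)$ satisfies both the band axioms and the two conditions of Lemma \ref{formband}. Since $C$ is $E_\Lambda$-rigid, Proposition \ref{rig} tells us $C$ is symmetric with entries in $\{a,a+1\}$ where $a=a_1$, hence $a_n=a_1=a$ and $a_n+1=a+1$. Thus the band's sequence still has entries in $\{a,a+1\}$ with minimum $a$, giving condition (i) of Lemma \ref{formband} immediately. For the band axioms: $s(B)=t(B)$ holds by construction; $B$ is not directed because it mixes an arrow and its formal inverse; and $B$ is not a proper power of a shorter string, because a hypothetical period $d\mid n$ with $d<n$, combined with the palindromic identity $a_i=a_{n+1-i}$ from Proposition \ref{rig}(i), would equate the seam value $a+1$ with the value $a$ occurring at a symmetric position, a contradiction.

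For condition (ii) of Lemma \ref{formband}, I would argue by contradiction. By the remark following Lemma \ref{formband} it suffices to work inside $B^2$. Suppose a subsequence $s$ of $B^2$ occurs both sandwiched between two $a$'s and between two $(a+1)$'s. Unwinding $B^2$ back into two consecutive copies of $(a_1,\ldots,a_{n-1},a_n+1)$ and writing the $a+1$ at the seams in terms of the original $a_n=a$, such an $s$ translates into a subsequence occurrence in the rigid string $C$ (possibly straddling the seam) that violates Proposition \ref{rig}(iv); when $s$ crosses the seam, the palindrome $C=C^\sigma$ lets us reflect and again reduce to Proposition \ref{rig}(iv) or (iii).

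For part (ii), suppose for contradiction that $B\sim_r B'$. Equality of word lengths forces $n=m$, and equality of minima forces $a=a'$. The key claim is that $B$ uniquely determines the rigid string $C$ it arose from. Read cyclically in $B$, the seam sits at an $(a+1)\to a$ descent (since $a_{n-1},\,a_n+1,\,a_1\,=\,\cdot,a+1,a$). Among all $(a+1)\to a$ descents in $B$, a choice of seam $i$ recovers a putative rigid string by rotating $B$ so that position $i$ is last and decrementing it by $1$. For this linear sequence to start and end with $a$, be symmetric, and satisfy conditions (iii) and (iv) of Proposition \ref{rig}, the seam is forced to be unique: any other descent, after decrementing, either breaks the palindromic condition (i) or produces one of the forbidden subsequence patterns of (iii)/(iv). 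Hence $C=C'$, contradicting the hypothesis. The inverse-rotation case $B\sim_r (B')^{-1}$ reduces to the rotation case via $C=C^\sigma$.

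The main obstacle in both parts is the uniqueness statement underpinning (ii): showing that the rigid string can be read off the band unambiguously. This is really the only nontrivial input; all other steps are direct translations between the sequence data of $C$ and $B$ via Proposition \ref{rig} and Lemma \ref{formband}. The technical core is a careful case analysis of subsequences in the cyclic word, in which the symmetry of $C$ and the forbidden-pattern conditions (iii)–(iv) of Proposition \ref{rig} together pin the seam down.
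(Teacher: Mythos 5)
Part (i) of your plan follows the paper's route: check the band axioms and then verify the two conditions of Lemma \ref{formband}, translating any violation back into a forbidden pattern in the rigid string $C$. Your exclusion of proper powers via the palindromic identity $a_i=a_{n+1-i}$ is a small, valid variant of the paper's argument (which instead produces a pattern forbidden by Proposition \ref{rig}(iii)). However, the verification of Lemma \ref{formband}(ii) is where the actual work lies, and you only gesture at it: the paper carries out a four-case analysis according to whether the occurrence flanked by $(a+1)$'s and/or the occurrence flanked by $a$'s contains the seam entry $a_n+1$, and in the seam-straddling cases the translation back into $C$ is not a one-line reflection but requires splitting $s$ at the seam and, in the worst case, comparing the lengths of the two splittings before a pattern excluded by Proposition \ref{rig}(iii)/(iv) appears.

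The genuine gap is in part (ii). Your ``unique seam'' claim is, for bands with the same underlying cyclic sequence, exactly the statement to be proved, and the sentence ``any other descent, after decrementing, either breaks the palindromic condition (i) or produces one of the forbidden subsequence patterns of (iii)/(iv)'' is an assertion, not an argument; you yourself identify it as the technical core you have not carried out. Note also that the paper does not prove uniqueness by inspecting Proposition \ref{rig} applied to the re-cut sequences: it takes the first index at which the two palindromic sequences disagree and then exhibits a subsequence $s$ straddling the seam which occurs flanked by $a$'s in $B^2$ and flanked by $(a+1)$'s in $(B')^2$, contradicting the strong reducedness of $B$ established in part (i). So part (i) is used as a tool in part (ii), which your plan never invokes, and it is not clear the contradiction can be forced directly into $C$ or $C'$, since the offending pattern straddles the seam and hence does not sit inside either linear string. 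Two further omissions: you never argue $x_1=x_1'$ (the paper compares the vertices in top, middle and socle of the band diagrams; without this step ``$C=C'$'' does not follow even when the numeric data agree, e.g. $(\alpha_1{:}1)$ versus $(\beta_1{:}1)$), and the degenerate bands $(x_1{:}1,)$ arising from the length-zero strings require their own short treatment, as in the paper.
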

\begin{proof}
We first show that $B$ is indeed a band. By definition we have $s(B)=t(B)$. Now suppose that $B=X^m$ for some band $X$ and $m\geq 2$. Then we can assume that $X=(x_1:a_1,\ldots,a_k,)$ for some $1\leq k < n$ and $(a_1,\ldots,a_{n}+1,)=(a_1,\ldots,a_k,)^m$. Since $C$ is $E_{\Lambda}$-rigid, if $a:=a_1$ we know that $C=(a,a_2,\ldots,a)$ and hence $B=(a,a_2,\ldots,a+1,)$. Therefore we can assume that $X=(s,a+1,)$ for some subsequence $s$. Then 
\[ C=(\underbrace{s,a+1,s,a+1,\ldots,s,a+1}_{m-1},s,a). \]
But this contradicts $C$ being $E_{\Lambda}$-rigid.

Now we show that $B$ is strongly reduced. Part (i) of Lemma \ref{formband} is obviously satisfied. Consider the more complicated case, where we have $Q=a+1,s,a+1$ and $P=a,s,a$ as subsequences of $B^2$. We have to consider four cases. We denote two copies of $B$ as follows
\[B^2= |\underbrace{a,a_2,\ldots,a_{n-1},a+1,}_B|\underbrace{a,a_2,\ldots,a_{n-1},a+1,}_B| \]
Case 1: $Q$ does not contain $a_n+1$
\begin{compactenum}[(i)]
\item  and $P$ does not contain $a_n+1$. Then \[B=|a,\ldots \underbrace{,a+1,s,a+1,}_Q \ldots \underbrace{,a,s,a,}_P \ldots ,a+1,| \] and then $C=(a,\ldots ,\boldsymbol{a+1,s,a+1}, \ldots \boldsymbol{,a,s,a,} \ldots ,a)$
is not $E_{\Lambda}$-rigid.  
\item and $P$ does contain $a_n+1$. Then\[B^2=|a,\ldots ,a+1,s,a+1, \ldots ,a,\underbrace{s_1,a+1,|s_2}_s ,a, \ldots ,a+1,| \]
and then $ C = (s_2 ,a,\ldots , \boldsymbol{ a+1,s_1,a+1} ,s_2 ,a+1 \ldots \boldsymbol{ a,s_1,a })$ 
is not $E_{\Lambda}$-rigid.
\end{compactenum}
Case 2: $Q =a+1,s,a_n+1$
\begin{compactenum}[(i)]
\parindent-5mm
\item and $P$ does not contain $a_n+1$. Then $B=|a,\ldots ,a,s,a, \ldots ,a+1,s,a+1,|$
and then $C=(a,\ldots ,a,s,a, \ldots,a+1,s,a)$
is not $E_{\Lambda}$-rigid.
\item and $P$ does contain $a_n+1$. Then \[B^2=|a,\ldots ,a,\underbrace{s_1,a+1,|s_2}_s,a, \ldots ,a+1,s,a+1,| \]
and then $C=(s_2,a,\ldots ,\boldsymbol{ a+1,s_1,a+1},s_2,a)=(s_2,a,\ldots ,\boldsymbol{ a,s_1,a})$
is not $E_{\Lambda}$-rigid.
\end{compactenum}
Case 3: $Q =a_n+1,s,a+1$
\begin{compactenum}[(i)]
\item and $P$ does not contain $a_n+1$. Then \[B^2=|a,\ldots ,a,s,a, \ldots ,a+1,|s,a+1,\ldots| \]
and then $C=(\boldsymbol{ s,a+1},\ldots ,a,\boldsymbol{ s,a,} \ldots,a)$
is not $E_{\Lambda}$-rigid.
\item and $P$ does contain $a_n+1$. Then \[B^3=|\ldots,a+1,|s,a+1,\ldots ,a,\underbrace{s_1,a+1,|s_2}_s,a, \ldots ,a+1,| \]
and then $C=(\boldsymbol{ s_1,a+1},s_2,a+1,\ldots ,a,\boldsymbol{ s_1,a})$
is not $E_{\Lambda}$-rigid.
\end{compactenum}
Case 4: $Q=a+1,s_1,a_n+1,s_2,a+1$ 
\begin{compactenum}[(i)]
\item and $P$ does not contain $a_n+1$. Then \[B^2=|a,\ldots ,a,s,a, \ldots ,a+1,\underbrace{s_1,a+1,|s_2}_s ,a+1,\ldots,a+1,| \]
and then $C=(\boldsymbol{ s_2,a+1},\ldots ,a,s_1,a+1,\boldsymbol{ s_2,a}, \ldots,a+1,s_1,a)$
is not $E_{\Lambda}$-rigid.
\item and $P$ does contain $a_n+1$. Then \[B^3=|\ldots ,a,\underbrace{\overline{s_1},a+1,|\overline{s_2}}_s,a,\ldots ,a+1,\underbrace{s_1,a+1,|s_2}_s,a+1, \ldots ,a+1,| \]
and if $\left|s_2\right|< \left|\overline{s_2}\right|$ we have $C=(\overline{s_2},a,\ldots)=(s_2,a+1,\ldots ,s_2,a, \ldots)$ which
is not $E_{\Lambda}$-rigid.
If $|s_1|<|\overline{s_1}|$ we have $C=(\ldots,a,\overline{s_1},a)=(\ldots, a,s_1,a,\ldots,a+1,s_1,a)$ which is not $E_{\Lambda}$-rigid (here $|s|$ denotes the length of the sequence $s$).
\end{compactenum}

To show part (ii) of the lemma, assume that $B$ and $B'$ are rotation-equivalent. We have to show, that $C=C'$. We see immediately that $m=n$. If $B$ is given by $(x_1:1,)$ then any rotation-equivalent band $B'$ is given by $(x'_1:1,)$. Thus $C$ is associated to the sequence $(x_1:0)$ and $C'$ to the sequence $(x'_1:0)$. Then by definition we have $x_1,x'_1 \in \lbrace \alpha_1, \beta_1, \gamma_1\rbrace$ and thus we must have $x_1=x'_1$. Now assume that $a_n\geq 1$ and $a'_n\geq 1$. Then in particular we have $a_n+1\geq 2$ and $a'_n+1\geq 2$ and we can compare the vertices appearing in the top, middle and socle of the diagrams of $B$ and $B'$ to see that we must have $x_1=x'_1$. Then the sequences $(x_1:a_1,\ldots,a_{n-1},a_n+1)$ and $(x_1:a'_1,\ldots,a'_{n-1},a'_n+1)$ describe rotation-equivalent bands if and only if there exists some $2 \leq i \leq n$ such that $(a_1,\ldots,a_n+1)=(a'_i,\ldots ,a'_n+1,a'_1,\ldots ,a_{i-1})$. Then, by using the description of the corresponding $E_{\Lambda}$-rigid strings, it easily follows that $a:=a_1=a'_1$. Now let $1\leq i \leq n$ be maximal with $a_j=a'_j$ for all $1\leq j \leq i$ and assume without loss of generality that $a_{i+1}=a$ and $a'_{i+1}=a+1$. Then because of the symmetry of the strings $C$ and $C'$ we have
\[C=(a_1,\ldots, a_i, a,\ldots ,a,a_i,\ldots,a_1)\] and 
\[C'=(a_1,\ldots, a_i, a+1,\ldots ,a+1,a_i,\ldots,a_1).\] But then we have
\[B^2=|a_1,a_2,\ldots,a_i,a,\ldots,a,\underbrace{a_i,\ldots,a_2,a_1 +1|a_1,a_2,\ldots,a_i}_{=:s},a,\ldots,a_1 +1| \]
and \[B'^2=|a_1,a_2,\ldots,a_i,a+1,\ldots,a+1,\underbrace{a_i,\ldots,a_2,a_1 +1|a_1,a_2,\ldots,a_i}_{=s},a+1,\ldots,a_1 +1| \]
contradicting that $B$ is strongly reduced if $B$ and $B'$ are rotation-equivalent. 
\end{proof}
The next corollary is proven in a similar way as Corollary \ref{edge}
\begin{corollary}\label{edgeband}
Let $C$ be $E_{\Lambda}$-rigid
and $B$ strongly reduced given by $(x_1:a_1,a_2,\ldots ,a_n)$ and $(x_1:a_1',a_2',\ldots ,a_m',)$ respectively. Set $a:=a_1$ and assume $a=\min_{1 \leq i\leq n}\lbrace a_i \rbrace$. Then 
\begin{equation*}
E_{\Lambda}(B,C)=0=E_{\Lambda}(C,B)
\end{equation*}
if and only if the following hold:
\begin{compactenum}[(i)]
\item If there is a subsequence $s$ of $(a_1,a_2,\ldots ,a_n)$ and some positive integer $k$ such that
\[(a_1,a_2,\ldots ,a_n)=(s,a+1\ldots)\text{ and }(a_1',a_2',\ldots ,a_m')^k=(\ldots ,s,a_j'\ldots)\]
then $a_j' = a+1$.
\item a)If there is a subsequence $s$ of $(a_1,a_2,\ldots ,a_n)$ such that
\[(a_1,a_2,\ldots ,a_n)=(\ldots,a+1,s,a+1,\ldots)\] and some positive integer $k$ such that\[(a_1',a_2',\ldots ,a_m')^k=(\ldots a_j',s,a_{j+t}'\ldots)\]
then $a_j'= a+1$ or $a_{j+t}'= a+1$ (here $t=1$ is possible). 

b)The same statement as in a) holds with reversed roles of $C$ and $B$.
\item The sequence \[s=,a_1,a_2,\ldots ,a_n,\] (note the comma at the beginning and end of $s$) is no subsequence of $(a_1',a_2',\ldots ,a_m')^k$ for any positive integer $k$.
\end{compactenum}
\end{corollary}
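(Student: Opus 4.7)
The plan is to mirror the proof of Corollary \ref{edge}, replacing Lemma \ref{grund} with a band analogue. Because $\tau_{\Lambda}^{-1}(M(B,\lambda,1))\cong M(B,\lambda,1)$, for $p$ large enough
\[ E_{\Lambda}(B,C)=\dim\Hom_{\Lambda_p}(M(C),M(B,\lambda,1))\quad\text{and}\quad E_{\Lambda}(C,B)=\dim\Hom_{\Lambda_p}(M(B,\lambda,1),M(C)).\]
Both Hom-spaces can be computed via admissible pairs in $\mathscr{A}(B^k,C)$ and $\mathscr{A}(C,B^k)$ respectively for some $k$ chosen large enough to capture every factorisation through $B$, with the usual Crawley-Boevey caveat that pairs differing only by the rotational shift of $B$ within $B^k$ are identified.

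First I would carry out the same case-by-case analysis as in Lemma \ref{grund}, tracking the starting and ending level (top, middle or socle) of the middle term $C_2$ of an admissible pair. The four cases give four families of forbidden subsequences, but now the ambient sequence is $B^k=(a_1',\ldots,a_m',)^k$, which is periodic. Case (i) and case (ii) of Lemma \ref{grund}'s proof (middle-to-socle and socle-to-middle) collapse into the single condition (i) of the corollary by the rotational symmetry of the band; case (iii) (middle-to-middle) produces conditions (ii)(a) and (ii)(b) once one separates the obstructions coming from $\mathscr{A}(B^k,C)$ and from $\mathscr{A}(C,B^k)$; and case (iv) (socle-to-socle) becomes the condition that $,a_1,\ldots,a_n,$ does not occur as a subsequence of any $B^k$, which is condition (iii). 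The analogue of condition (IV) from Lemma \ref{grund}, which excluded boundary occurrences, disappears because the band has no boundary.

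Then I would use Proposition \ref{rig} (so $C$ is symmetric, $a_i\in\{a,a+1\}$, and there are no $(s,a{+}1,\ldots,s,a,\ldots)$-patterns) and Lemma \ref{formband} (so $a_i'\in\{a,a+1\}$ for all $i$, and the periodic $(\ldots a,s,a,\ldots a{+}1,s,a{+}1,\ldots)$-pattern is excluded in $(B^k)$) to simplify. These two hypotheses kill outright the band-analogues of Lemma \ref{grund}(I), most of (II)(b) (by symmetry of $C$), and the lone-letter obstructions that would otherwise need to be spelled out, leaving only the clean conditions (i)--(iii) stated. This is the same bookkeeping as in the passage from Lemma \ref{grund} to Corollary \ref{edge}.

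The main obstacle will be the periodicity of $B^k$. In particular, one has to verify that condition (iii) of the corollary, together with $B$ being strongly reduced, really does exclude every socle-to-socle admissible pair, since a cyclic occurrence of $(a_1,\ldots,a_n)$ inside some $B^k$ can be quite subtle to rule out; this is where Lemma \ref{formband}(ii) is used essentially. A second, smaller point is to check that one does not double-count pairs in $\mathscr{A}(B^k,C)$ coming from different copies of $B$ inside $B^k$, which is handled by choosing $k$ minimally with respect to the length of $C$.
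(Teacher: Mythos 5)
Your overall strategy --- rerunning the case analysis of Lemma \ref{grund} with one of the strings replaced by powers of the band, then simplifying via Proposition \ref{rig} and Lemma \ref{formband}, with the rotational identifications handled as in Crawley-Boevey --- is exactly what the paper intends (its proof is the one-line remark that the corollary is proven in the same way as Corollary \ref{edge}). However, your starting identities contain a genuine error. By Proposition \ref{pro} one has $E_{\Lambda}(\mathcal{M},\mathcal{N})=\dim \Hom_{\Lambda_p}(\tau^{-1}_{\Lambda_p}(N),M)$, i.e.\ the AR translate is applied to the \emph{second} argument. Hence $E_{\Lambda}(C,B)=\dim \Hom_{\Lambda_p}(\tau^{-1}_{\Lambda_p}M(B,\lambda,1),M(C))=\dim \Hom_{\Lambda_p}(M(B,\lambda,1),M(C))$, so your use of $\tau^{-1}_{\Lambda_p}M(B,\lambda,1)\cong M(B,\lambda,1)$ is legitimate there; but $E_{\Lambda}(B,C)=\dim \Hom_{\Lambda_p}(\tau^{-1}_{\Lambda_p}M(C),M(B,\lambda,1))$, where the translate hits the string, not the band. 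Your formula $E_{\Lambda}(B,C)=\dim \Hom_{\Lambda_p}(M(C),M(B,\lambda,1))$ does not follow from the $\tau$-invariance of the band module; it would require $\dim \Hom(\tau^{-1}X,N)=\dim \Hom(X,\tau N)$ for full (not stable) Hom spaces, which is not available here (Lemma \ref{plam} concerns a single module over the finite-dimensional Jacobian algebra, and in general the two Hom spaces differ by maps factoring through projectives, respectively injectives).

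This is not a cosmetic point. The conditions of the corollary carrying the exact threshold $a_j'=a+1$, in particular (i), (ii)(a) and (iii), arise from quotient-type substrings of $\tau^{-1}_{\Lambda_p}(C)={}_hC_h$ that use the added hooks, exactly as in the four cases of the proof of Lemma \ref{grund}. If you count admissible pairs in $\mathscr{A}(C,B^k)$ instead of $\mathscr{A}(\tau^{-1}_{\Lambda_p}(C),B^k)$, the relevant quotient-type substrings of plain $C$ are different and these conditions do not come out; your plan is also internally inconsistent, since you propose to carry out ``the same case-by-case analysis as in Lemma \ref{grund}'' (an analysis of the diagram of $\tau^{-1}(C)$) while your Hom spaces involve $C$ itself. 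The repair is straightforward: treat the two directions asymmetrically, computing $E_{\Lambda}(B,C)$ via admissible pairs between ${}_hC_h$ and $B^k$ (your level-by-level case analysis and the periodicity/rotation bookkeeping then go through), and reserving the band's $\tau$-invariance for $E_{\Lambda}(C,B)=\dim \Hom_{\Lambda_p}(M(B,\lambda,1),M(C))$, whose pairs in $\mathscr{A}(B^k,C)$ (middle term quotient-type cyclically in $B^k$, sub-type in $C$) are what produce condition (ii)(b).
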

\section{The graph of strongly reduced components is connected}
\subsection{Notation}
In this section we introduce a slightly more complicated notation for the strongly reduced components. This notation allows us to describe the neighbours of a given vertex of $\Gamma$ explicitly. We describe the strongly reduced components by sequences $(x_1:a|k_0|k_1| \ldots |k_m)$, where $x_1 \in \lbrace \alpha_1^{\pm}, \beta_1^{\pm}, \gamma_1^{\pm}\rbrace $, and $a,m$ and $k_0, \ldots, k_m$ are natural numbers. More precisely let
\begin{align*}
\Psi = \lbrace (x_1: a|k_0|k_1| \ldots |k_m)  \mid & x_1 \in \lbrace \alpha_1^{\pm}, \beta_1^{\pm}, \gamma_1^{\pm}\rbrace, m \in \mathbb{N} \text{ and }  a, k_i \in \mathbb{N}_{\geq 1} \text{ for all } 0 \leq i \leq m \\
&  \text { and } k_m \geq 2 \text{ if } m \geq 1 \rbrace 
\cup \lbrace(x_1:0|\pm 1)|x_1 \in \lbrace \alpha_1, \beta_1, \gamma_1 \rbrace \rbrace
\end{align*}
and let $C$ be an $E_{\Lambda}$-rigid string given by a sequence $(x_1:a_1,a_2,\ldots ,a_n)$. We identify $C$ with a unique element in $\Psi$. For $x_1$ in $\lbrace \alpha_1 , \beta_1, \gamma_1 \rbrace$ we identify the negative simple representation $\mathcal{S}^-_i$ where $i \notin \lbrace s(x_1),t(x_1) \rbrace$ with $(x_1:0|-1)\in\Psi$ and the string of length zero at $s(x_1)$, which was denoted by $(x_1:0)$ before, with $(x_1:0|1)\in\Psi$.

Now we deal with the more general cases and we drop the arrow $x_1 \in \lbrace \alpha_1^{\pm}, \beta_1^{\pm}, \gamma_1^{\pm}\rbrace$ in our notation. 

We define the numbers $k_i$ and subsequences $v_i=v_i^C$, $w_i=w_i^C$, and $P_{i-1}=P_{i-1}^C$ of $(a_1,a_2,\ldots ,a_n)$ inductively for all $0 \leq i \leq m$. These subsequences obviously depend on $C$, but we will omit the superscript $C$ to simplify notation. They will satisfy the following conditions:

\begin{compactenum}[(i)]
\item $v_i=(a_1,\ldots, a_k)$ for some $0\leq k \leq n$ is symmetric and thus corresponds to an $E_{\Lambda}$-rigid string. Furthermore, we will have $v_m=(a_1,\ldots,a_n)=C$.
\item $w_i=(a_1,\ldots, a_k +1,)$ differs from $v_i$ only in the last entry and thus corresponds to a strongly reduced band. Also note that $w^{\sigma}_i=(,a_1+1,a_2,\ldots,a_k)$.
\item $P_{-1}=(,a+1,)$ and $P_{i-1}=P_{i-2}w_{i-1}$ for $1\leq i \leq m$ is symmetric. Hence we have $P_{i-1}=w^{\sigma}_{i-1}P_{i-2}$.
\item If $v_i$ appears as a subsequence in $(a_1,\ldots, a_n)$ it can only be followed by the subsequence $P_{i-1}v_i$ or $P_{i-1}w_i$.
\item The number $k_i$ is the maximal number of consecutive strings $v_{i-1}$ in $C$, that is we cannot have
\[C=(\ldots \underbrace{v_{i-1}P_{i-2}v_{i-1}\ldots P_{i-2}v_{i-1}}_{k} \ldots)\]
with $k>k_i$. On the other hand $k_i -1$ is minimal in the sense, that we cannot have
\[C=(\ldots P_{i-1} \underbrace{v_{i-1}P_{i-2}v_{i-1}\ldots P_{i-2}v_{i-1}}_{k}P_{i-1} \ldots)\] with $k<k_i -1$.
\end{compactenum}
We begin by setting $a:=a_1$ (thus we have $a_i \in \lbrace a, a+1 \rbrace$ for all $1 \leq i \leq n$ by Proposition \ref{rig} (ii)) and 
\[k_0=\max \lbrace 1\leq i\leq n \mid  a_j=a \text{ for all } 1\leq j\leq i \rbrace. \]
Furthermore, we define the sequences $v_0=(a_1,\ldots,a_{k_0})=(a,\ldots,a)$ and 
\newline $w_0=(a_1,\ldots,a_{k_0 -1},a+1)=(a,\ldots,a,a+1,)$ and $P_{-1}:=w_{-1}:=(,a+1,)$.

If $k_0=n$ we have $C=v_0$. Then we identify $C$ with $(a|k_0)$ and are done. Note that in that case $k_0$ can take any value in $\mathbb{N}_{\geq 1}$. 

Otherwise we know that 
$C=(v_0,a+1,\ldots)=(v_0P_{-1}\ldots)$
and by the definitions of $k_0$, $v_0$, $w_0$ and $P_{-1}$ and Proposition \ref{rig} it is easy to see, that conditions (i)-(v) are satisfied. This is the base case of our induction. Before proceeding with the general induction step, say from $i$ to $i+1$, we will have a closer look at the step from $0$ to $1$. For that case notation is relatively simple and the general step is then done analogously.  

We define $k_1$ to be the maximal number of consecutive strings $v_0$ in $C$ in the sense that
\[C=(\underbrace{v_0P_{-1}\ldots P_{-1}v_0}_{k_1 \text{ times } v_0},s)\]
 such that either $s=\emptyset$ or $s=P_{-1}w_0...$. We define sequences \[v_1=(\underbrace{v_0P_{-1}\ldots P_{-1}v_0}_{k_1 \text{ times } v_0}) \text{ and } w_1=(\underbrace{v_0P_{-1}\ldots P_{-1}v_0}_{k_1 -1 \text{ times } v_0}P_{-1}w_0) \text{ and } P_0=P_{-1}w_0. \] Then the conditions (i)-(iii) are satisfied by definition. 
 
Again there are two possible cases. If $s$ is the empty sequence, that is if $C=v_1$ we identify $C$ with the sequence $(a|k_0|k_1)$. The conditions (iv) and (v) are obviously satisfied. Note that in this case $k_1 \geq 2$ because of the symmetry of $C$.

Now, if we suppose that $s=P_{-1}w_0...$ it is possible, that $k_1=1$ and in that case we have $v_1=v_0$ and $w_1=w_0$. In any case, we have to check that $k_1$, $v_1$, $w_1$ and $P_{-1}$ satisfy conditions (iv) and (v). Since $v_1$ ends with $v_0$, we know by induction that it can only be followed by $P_{-1}v_0$ or $P_{-1}w_0=P_0$ in $C$. Suppose it is followed by $P_{-1}v_0$ at some point. Then we have
\[C=(v_1P_{-1}w_0 \ldots ,v_1P_{-1}v_0\ldots)\]
contradicting Proposition \ref{rig} (iii). 
Now if $v_1P_0$ appears as a subsequence in $C$, it cannot be at the very end, because of the symmetry of $C$. Let $s$ be the longest subsequence such that
\[C=(\ldots ,v_1P_0s \ldots ) \text{ and } v_1=(s\ldots).\]
If $s=v_1$ we are done. If $s$ is a strict subsequence of $v_1$, then we know by symmetry of $C$, that it cannot be at its very end and thus
  \[C=(\ldots ,v_1P_0s,b \ldots ) \text{ and } v_1=(s,b'\ldots)\] with $\lbrace b, b'\rbrace=\lbrace a,a+1 \rbrace$. If $b=a$ and $b'=a+1$ we have a contradiction with Proposition \ref{rig} (iii). Thus we have $b=a+1$ and $b'=a$. If $s,b,=w_1$ we are done. Otherwise we have $\left| s,a+1,\right|< \left|w_1 \right|$ and then because   
\[C=(v_1P_0\ldots ,v_1P_0s,a+1,\ldots)=(v_0P_{-1}\underbrace{v_0P_{-1}\ldots P_{-1}v_0P_{-1}w_0}_{w_1} \ldots w^{\sigma}_0P_{-1}s,a+1,\ldots)\]
a contradiction with Proposition \ref{rig} (iv).

It is easy to see that $k_1$ (respectively $k_1 -1$) satisfies the maximality (resp. minimality) condition in (v) because of Proposition \ref{rig} (iii) (respectively (iv)).

Now assume that the numbers $k_j$ and the sequences $v_j$, $w_j$ and $P_{j-1}$ have been defined for $0\leq j \leq i$ for some $1\leq i$, satisfying the conditions (i)-(v). If $C=v_i$ we are done. Note that in that case, we have $k_i\geq 2$ because of the symmetry of $C$.

Otherwise we define $k_{i+1}$ to be maximal number of consecutive strings $v_i$ in $C$ in the sense that
\[C=(\underbrace{v_iP_{i-1}\ldots P_{i-1}v_i}_{k_{i+1} \text{ times } v_i},s)\]
 such that either $s=\emptyset$ or $s=P_{i-1}w_i...$. We define sequences \[v_{i+1}=(\underbrace{v_iP_{i-1}\ldots P_{i-1}v_i}_{k_{i+1} \text{ times } v_i}) \text{ and } w_{i+1}=(\underbrace{v_iP_{i-1}\ldots P_{i-1}v_i}_{k_{i+1} -1 \text{ times } v_i}P_{i-1}w_i) \text{ and } P_i=P_{i-1}w_i. \]
We first check that $P_i$ is symmetric, then it follows by definition that $v_{i+1}$ (resp.  $w_{i+1}$) satisfies condition (i) (resp. condition (ii)). We have
\begin{align*}
P_{i} & = P_{i-1}w_i
 = P_{i-1} v_{i-1}P_{i-2} \ldots P_{i-2}v_{i-1}P_{i-1} \\
& = P_{i-1}^{\sigma} v_{i-1}^{\sigma}(P_{i-2})^{\sigma} \ldots(P_{i-2})^{\sigma}v_{i-1}^{\sigma}P_{i-1}
 =w_{i}^{\sigma}P_{i-1}=w_{i}^{\sigma}P_{i-1}^{\sigma}=(P_{i})^{\sigma}.
\end{align*}
where we used the definitions and the symmetry of $v_{i-1}$, $P_{i-2}$ and $P_{i-1}$.

Now, if $s=\emptyset$ we have $C=v_{i+1}$, the conditions (i)-(v) are satisfied and we are done. In this case we must have $k_{i+1}\geq 2$ by symmetry of $C$. 

On the other hand if we suppose that $s=P_{i-1}w_i...$ it is possible, that $k_{i+1}=1$ and in that case we have $v_{i+1}=v_i$ and $w_{i+1}=w_i$. In any case, we have to check that $k_{i+1}$, $v_{i+1}$, $w_{i+1}$ and $P_{i}$ satisfy conditions (iv) and (v). This is done by induction, in exactly the same way as for the case $i=0$ above. 

Since the sequence $C=(a_1,\ldots,a_n)$ is finite this process has to stop eventually, say we have $C=v_m$ for some $m \in \mathbb{N}$. In that case we have $k_m \geq 2$ by symmetry of $C$. Note, that the numbers $k_i$ do not depend on each other and hence any element in $\Psi$ corresponds to a unique $E_{\Lambda}$-rigid string. 

We will do a very similar construction for strongly reduced bands. In fact, any strongly reduced band $B$ will be associated with an element $(a|k_0|k_1|\ldots|k_m) \in \Psi$ where the numbers $k_i$ and sequences $v_i$, $w_i$ and $P_{i-1}$ will be defined inductively in the same way as for strings, such that $B$ is given by the sequence $w_m$. 

Let $B=(a_1,\ldots, a_n,)$ be a strongly reduced band. After possibly rotating $B$ we can assume that $a_1=a:=\min_{1 \leq i\leq n}\lbrace a_i \rbrace$. Then by Lemma \ref{formband}(i) we have $a_i \in \lbrace a,a+1 \rbrace$ for all $1\leq i \leq n$. If $n=1$ then we have $B=(a,)=(a-1|1)=w_0$ and we are done. Otherwise since $B$ is not a power of any other band and $a$ was chosen minimal, we have $a_i=a+1$ for some $1\leq i \leq n$. Hence we can choose 
\[ \widetilde{k_0} = \max \lbrace j \in \mathbb{N}_{\geq 1}\vert \exists \text{ a subsequence } a_{i+1},\ldots,a_{i+j} \text{ of } B^{\infty} \text{ with } a_{i+1}=\ldots =a_{i+j}=a \rbrace \]
and after possibly rotating we can assume that
\[B=(\underbrace{a,\ldots,a}_{\widetilde{k_0}} ,a+1,s).\]
If $s=\emptyset$ we set $k_0= \widetilde{k_0}+1$ and define $v_0$, $w_0$ and $P_{-1}$ correspondingly as before. Then $B=w_0$ and we are done.

Otherwise set $k_0=\widetilde{k_0}$ and define $v_0$, $w_0$ and $P_{-1}$ correspondingly. Now since $B$ is not a power of any other band and since $k_0$ was chosen maximal, we must have $P_{-1}w_0$ as a subsequence of $B^{\infty}$. Hence we can choose 
\[ \widetilde{k_1} := \max \lbrace j \in \mathbb{N}_{\geq 1}\vert \underbrace{v_0,a+1,v_0,\ldots,a+1,v_0}_{j\text{ copies of }v_0} \text{ is a subsequence of }  B^{\infty} \rbrace \]
and after possibly rotating we can assume
\[B=(\underbrace{v_0P_{-1}v_0\ldots P_{-1}v_0}_{\widetilde{k_1} }P_{-1}w_0s).\]
If $s=\emptyset$ set $k_1=\widetilde{k_1}+1 $ and define $v_1$, $w_1$ and $P_{0}$ correspondingly as before. Then $B=w_1=(a|k_0|k_1)$ and we are done.

Otherwise set $k_1=\widetilde{k_1}$ and define $v_1$, $w_1$ and $P_{0}$ correspondingly. Then $k_1$ satisfies the maximality condition by definition. We only need to show, that in this case $B^{\infty}$ satisfies the condition (iv). This can be done using the same arguments as for the $E_{\Lambda}$-rigid strings. Then we proceed inductively defining the numbers $k_i$. If $k_i$ and thus $v_i$,$w_i$ and $P_{i-1}$ are defined we know that since $B$ is not a power of any other band, at some point we must have $v_iP_{i-1}w_i$ as a subsequence of $B^{\infty}$. Hence we can define $k_{i+1}$ as the number satisfying the maximality condition. Then proceed in the same way as in the step from $0$ to $1$. Since the sequence $(a_1,\ldots,a_n,)$ is finite, this process has to stop eventually. 

In the following we will give some examples to make this notation more transparent. 
\begin{ex}
Let $C$ be an $E_{\Lambda}$-rigid string given by the sequence \[(a,a,a,a,a+1,a,a,a,a+1,a,a,a,a,a+1,a,a,a,a+1,a,a,a,a).\] Then
 $k_0=4$ and $v_0=a,a,a,a$ and $P_{-1}=,a+1,$, $k_1=1$ and $v_1=v_0$ and $P_0=,a+1,a,a,a,a+1,$, $k_2=3$ and $C=v_2=v_1P_0v_1P_0v_1=(a|4|1|3).$

Now let $C=(a,a,a,a+1,a,a,a,a+1,a,a,a,a+1,a,a,a,a+1,a,a,a)$ be an $E_{\Lambda}$-rigid string. Then
 $k_0=3$ and $v_0=a,a,a$ and $P_{-1}=,a+1,$ and $k_1=5$ and $C=v_1=(a|3|5)$.
 
Let $(a|1|1|1|2)$ be in $\Psi$. Then we have $v_0=a$ and $v_0=v_1=v_2$ and $w_{-1}=a+1,$ and $w_{-1}=w_0=w_1$ and hence \[C=v_3=v_2P_1v_2=(a,w_{-1}w_0w_1a)=(a,a+1,a+1,a+1,a).\]
\end{ex}
\subsection{Mutation of $E_{\Lambda}$-rigid components}
We are now ready to describe the structure of the graph $\Gamma$ explicitly. In the figures below we depicted all the neighbours of $\mathcal{S}_3^-$ and $\mathcal{S}_1$. The edges incident to the other simple and negative simple decorated representations are similar. To simplify the complicated picture we leave out the loops (at all the vertices). 
\[
\xygraph{!{<0cm,0cm>;<0cm,1.5cm>:<-2.5cm,0cm>::}
!{(2,1)}*+{\mathcal{S}_1^-}="1"
!{(1,0)}*+{\mathcal{S}_2^-}="2"
!{(2,0)}*+{\mathcal{S}_3^-}="3"
!{(3,0)}*+{\mathcal{S}_2}="4"
!{(1,-1)}*+{\mathcal{S}_1}="5"
!{(3,-1)}*+{(\alpha_1^-:1)}="7"
!{(1,-2)}*+{(\alpha_1:1)}="15"
!{(3,-2)}*+{(\alpha_1^-:2)}="17"
!{(1,-3)}*+{(\alpha_1:a)}="25"
!{(3,-3)}*+{(\alpha_1^-:a)}="27"
"1"-"4"
"3"-"7"
"3"-"17"
"3"-"27"
"1"-"2"
"1"-"3"
"2"-"3"
"3"-"5"
"3"-"15"
"3"-"25"
"3"-"4"
"2"-"5"
"4"-"7"
"5"-"15"
"7"-"17"
"15":@{--}"25"
"17":@{--}"27"
}
\]
\[
\xygraph{!{<0cm,0cm>;<0cm,1.5cm>:<-2.5cm,0cm>::}
!{(2,1)}*+{\mathcal{S}_3^-}="1"
!{(1,0)}*+{\mathcal{S}_2^-}="2"
!{(2,0)}*+{\mathcal{S}_1}="3"
!{(3,0)}*+{(\alpha_1:1)}="4"
!{(1,-1)}*+{(\gamma_1^-:1)}="5"
!{(3,-1)}*+{(\alpha_1:1|2)}="7"
!{(1,-2)}*+{(\gamma_1^-:1|2)}="15"
!{(3,-2)}*+{(\alpha_1:1|3)}="17"
!{(1,-3)}*+{(\gamma_1^-:1|k_0)}="25"
!{(3,-3)}*+{(\alpha_1:1|k_0)}="27"
"1"-"4"
"3"-"7"
"3"-"17"
"3"-"27"
"1"-"2"
"1"-"3"
"2"-"3"
"3"-"5"
"3"-"15"
"3"-"25"
"3"-"4"
"2"-"5"
"4"-"7"
"5"-"15"
"7"-"17"
"15":@{--}"25"
"17":@{--}"27"
}
\]
It is rather easy to see that this actually is a subgraph of $\Gamma$ and that these have to be all the edges incident to $\mathcal{S}_3^-$ and $\mathcal{S}_1$. In the case of $\mathcal{S}_3^-$ this follows directly from \cite[Lemma 5.4.]{CLS}. For $\mathcal{S}_1$ it follows by a case by case study, similar to the ones we have seen before and is therefore omitted here. Up to equivalence the only strings that can appear as neighbours of $\mathcal{S}_1$ are given by sequences of the form $(\gamma_1^-:1|k_0)$ and $(\alpha_1:1|k_0)$ for $k_0 \in \mathbb{N}$. Similar considerations show that the band given by the sequence $(\alpha_1:1,)$ is the only band connected with $\mathcal{S}_1$ via an edge.
\begin{lemma}\label{shorter}
Let $C=(x_1:a_1,\ldots, a_n)$ given by $(x_1: a|k_0| \ldots |k_{m-1} |k_m) \in \Psi$ with $a>0$ an $E_{\Lambda}$-rigid string. Then there are (precisely) two $E_{\Lambda}$-rigid strings $C_1=(x_1:b_1,\ldots, b_s)$ and $C_2=(x_1:b'_1,\ldots, b'_t)$ with $s,t \leq n$ which are neighbours of $C$ in $\Gamma$. These are 
\begin{align*}
C_1&= \begin{cases} 
(x_1: a| k_0| \ldots |k_{m-1} |k_m-1)  &\text{ if } k_m \geq 3 \text{ or if } m=0 \text{ and } k_0=2 \\
 (x_1:a|k_0|\ldots |k_{j})  &\text{ if } m\geq 1 \text{ and }k_m=2 \\ 
 (x_1:0|-1)  &\text{ if } m=0 \text{ and } k_0=1 
\end{cases}\\
\text{ where } &j:=\max\lbrace 1\leq i\leq m-1\mid k_i\neq 1\rbrace.\\
C_2&:= \begin{cases}
(x_1: a|k_0| \ldots |k_{m-1} +1) &\text{ if }m\geq 1 \\
(x_1: a-1|1) &\text{ if } m=0.
\end{cases}
\end{align*}
and in particular we have $s+t=n$ and if $n>1$, then we have
\[(a_1,\ldots,a_n)=(b_1,\ldots,b_s,b'_1+1,b'_2,\ldots,b'_t).\]
\end{lemma}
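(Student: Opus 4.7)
The proof will split into three parts: first I will verify that the strings $C_1,C_2$ as defined in the statement are $E_{\Lambda}$-rigid and are neighbours of $C$; then I will establish the concatenation identity together with the length equality $s+t=n$; and finally I will prove uniqueness, i.e.\ that no further $E_{\Lambda}$-rigid string is a neighbour of $C$.

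For the first task, I would observe that each explicit form listed for $C_1$ and $C_2$ is either a valid element of $\Psi$---hence corresponds under the bijection built in the previous subsection to an $E_{\Lambda}$-rigid string---or is a negative simple $\mathcal{S}_j^-=(x_1:0|-1)$, which is trivially $E_{\Lambda}$-rigid. To see that $E_{\Lambda}(C,C_i)=E_{\Lambda}(C_i,C)=0$ for $i=1,2$, I would apply Corollary \ref{edge} (after re-ordering so that the string with smaller first entry plays the role of $C'$). In both cases the $\Psi$-encodings of $C$ and $C_i$ differ only at the outermost layer, so the subsequence conditions (i)--(iv) of Corollary \ref{edge} reduce to a short list of patterns that can be checked directly against the recursive expression $v_m=(v_{m-1}P_{m-2})^{k_m-1}v_{m-1}$.

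For the concatenation identity $(a_1,\ldots,a_n)=(b_1,\ldots,b_s,b'_1+1,b'_2,\ldots,b'_t)$, the plan is to unwind the definition of $v_m$. When $k_m\geq 3$ I would peel off the last ``$P_{m-2}v_{m-1}$'' from $v_m$: the remaining string is the $v_m$ associated with $(a|k_0|\ldots|k_{m-1}|k_m-1)=C_1$, while the peeled piece starts with the $a+1$ sitting at the boundary of $P_{m-2}=P_{m-3}w_{m-2}$ and is followed by the sequence describing $C_2=(a|k_0|\ldots|k_{m-1}+1)$. The case $m\geq 1,k_m=2$ is treated by an analogous unwinding in which the trailing ``singleton layers'' $k_{j+1}=\ldots=k_{m-1}=1$ collapse so that $v_m$ splits at its central $,a+1,$ into $v_j$ on one side and a shifted copy of $w_{m-1}$ on the other; the base cases $m=0$ reduce to a direct one-line verification. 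The length equality $s+t=n$ then follows by bookkeeping.

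The main obstacle will be uniqueness. Let $C'=(x_1:a'_1,\ldots,a'_{n'})$ be any $E_{\Lambda}$-rigid string neighbour of $C$ distinct from $C$. Swapping $C$ and $C'$ if necessary, we may assume $a'_1\leq a$, and then Corollary \ref{edge}(i) forces $a'_1\in\{a-1,a\}$. The case $a'_1=a-1$ is handled at once: the same clause of Corollary \ref{edge} forces $m=0$ with all $a_i=a$, and $C'=(a-1)=C_2$. In the remaining case $a'_1=a$ I would write $C'$ in $\Psi$-notation as $(a|k'_0|\ldots|k'_{m'})$ and compare with $(a|k_0|\ldots|k_m)$ layer by layer. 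Using Corollary \ref{edge}(ii)(a)/(b), (iii)(a)/(b) and (iv)(a)/(b), one shows by induction that the tuples $(k_0,\ldots)$ and $(k'_0,\ldots)$ must agree up to some index, and that at the first place where they differ only two options are consistent: either $C'$ is a truncation (yielding $C_1$) or $C'$ increments the outermost index (yielding $C_2$). Ruling out every other possible divergence---$|k'_j-k_j|>1$, the introduction of a layer beyond $m$, or a shift in an interior layer---is the most delicate step, and is precisely what produces the ``precisely two'' in the statement.
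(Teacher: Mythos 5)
Your existence step and the concatenation identity are handled essentially as in the paper: the paper likewise verifies that $C_1$ and $C_2$ are neighbours of $C$ by checking the conditions of Corollary \ref{edge} against the recursive expressions $C=v_m=v_{m-1}P_{m-2}\cdots P_{m-2}v_{m-1}$ and $C_2=v_{m-1}P_{m-3}v_{m-2}$, treating $k_m\geq 3$, $k_m=2$ and $m=0$ separately, so that part of your plan is sound.

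The genuine gap is in your uniqueness step. You take $C'$ to be \emph{any} $E_{\Lambda}$-rigid neighbour of $C$, compare the $\Psi$-encodings layer by layer, and assert that at the first divergence only two options survive, explicitly proposing to rule out ``the introduction of a layer beyond $m$'' and divergences at the outermost index. This cannot succeed: by Proposition \ref{neighbours} the strings $C_3=(x_1:a|k_0|\ldots|k_m+1)$, $C_4^j$ and $C_5^{\ell}$ are all $E_{\Lambda}$-rigid neighbours of $C$, and they realize exactly the divergence patterns you intend to exclude (incrementing the outermost entry, appending layers beyond $m$, decrementing $k_m$ and adding a layer). Indeed $C_2$ is not an increment of the outermost index at all --- it deletes the layer $k_m$ and increments $k_{m-1}$; the true increment of $k_m$ is $C_3$, which is a neighbour but strictly longer than $C$. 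The ``precisely two'' in Lemma \ref{shorter} is only true among neighbours with at most $n$ entries, and the hypothesis $s,t\leq n$ must enter the argument in an essential way. In the paper's proof, after the inductive comparison $k'_i=k_i$ (via Corollary \ref{edge}(ii)) one uses $r\leq n$ to force, when $p\geq m$, that $p=m$ and $k'_m<k_m$, whence $C'=C_1$ by Corollary \ref{edge}(iv), and, when $p<m$, to settle the trichotomy $k'_p<k_p$, $k'_p=k_p$, $k'_p>k_p$ as contradiction, $C'=C_1$, $C'=C_2$ respectively. Without invoking the length bound your induction cannot close, and the statement you set out to prove (uniqueness among all rigid neighbours) is false; you need to rebuild the final step around the restriction $n'\leq n$.
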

Now the description of the neighbours of a given $E_{\Lambda}$-rigid string is a direct consequence of Lemma \ref{shorter}.
\begin{prop}\label{neighbours}
Let $C=(x_1:a_1,\ldots, a_n)$ given by $(x_1: a|k_0| \ldots |k_{m-1} |k_m) \in \Psi$ with $a>0$ an $E_{\Lambda}$-rigid string and $C_1$ and $C_2$ as in Lemma \ref{shorter}. If we set
\begin{align*}
C_3&:=(x_1: a| k_0| \ldots |k_{m-1} |k_m+1),\\
C_4^j&:=(x_1: a|k_0| \ldots |k_{m-1} |k_m|\underbrace{1|\ldots|1}_j|2) \text{ with } j\in \mathbb{N},\\
C_5^{\ell}&:= \begin{cases} 
(x_1: a|k_0| \ldots |k_{m-1}|k_m-1|\ell)  &\text{ if } k_m \geq 2\\
(x_1:a+1|\ell-1)  &\text{ if } m=0 \text{ and } k_0=1 
\end{cases}\\
\text{ where } &\ell \in \mathbb{N}_{\geq 2}.
\end{align*}
then in $\Gamma$ we have the full subgraph
\[
\xygraph{!{<0cm,0cm>;<0cm,1.5cm>:<-2cm,0cm>::}
!{(2,1)}*+{C_2}="1"
!{(1,0)}*+{C_1}="2"
!{(2,0)}*+{C}="3"
!{(3,0)}*+{C_3}="4"
!{(1,-1)}*+{C_5^2}="5"
!{(3,-1)}*+{C_4^0}="7"
!{(1,-2)}*+{C_5^3}="15"
!{(3,-2)}*+{C_4^1}="17"
!{(1,-3)}*+{C_5^{l}}="25"
!{(3,-3)}*+{C_4^j}="27"
"1"-"4"
"3"-"7"
"3"-"17"
"3"-"27"
"1"-"2"
"1"-"3"
"2"-"3"
"3"-"5"
"3"-"15"
"3"-"25"
"3"-"4"
"2"-"5"
"4"-"7"
"5"-"15"
"7"-"17"
"15":@{--}"25"
"17":@{--}"27"
}
\]
and these are all $E_{\Lambda}$-rigid neighbours of $C$.
\end{prop}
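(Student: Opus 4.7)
The plan is to deduce the proposition from Lemma \ref{shorter} together with a systematic verification, via Corollary \ref{edge}, of each edge in the displayed full subgraph and of the completeness of the claimed neighbour list. Lemma \ref{shorter} already singles out $C_1$ and $C_2$ as the only $E_{\Lambda}$-rigid neighbours of $C$ of length $\leq n$; what remains is to identify the neighbours strictly longer than $C$ and to check that they are exactly $C_3$, the family $\lbrace C_4^j \rbrace_{j \in \mathbb{N}}$ and the family $\lbrace C_5^{\ell} \rbrace_{\ell \geq 2}$. For each such candidate I would first verify that its associated tuple is a legal element of $\Psi$, hence that it defines an $E_{\Lambda}$-rigid string by Proposition \ref{rig}: this is immediate for $C_3$ and $C_4^j$ from the shape of the tuple, and requires only a short separate check for $C_5^{\ell}$ in the boundary case $k_m = 2$ (where the penultimate entry $k_m - 1 = 1$ is permitted because it sits in an interior slot) and in the degenerate case $(m, k_0) = (0, 1)$ (where the formula for $C_5^{\ell}$ shifts to $(x_1:a+1|\ell - 1)$).

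Next I would verify the edges one by one by direct application of Corollary \ref{edge}. The key observation is that, with the inductive blocks $v_m$, $w_m$, $P_{m-1}$ associated with $C$, every sequence arising as $C_3$, $C_4^j$ or $C_5^{\ell}$ is built by concatenating these same blocks with the prescribed separators. Conditions (i)--(iv) of Corollary \ref{edge} then translate into statements about which blocks can abut inside the two sequences, and these are precisely the conditions encoded by the maximality/minimality clauses defining the $k_i$ in our notation for $\Psi$. The same method handles the remaining edges: the side edges $C_1$--$C_2$, $C_1$--$C_5^2$, $C_2$--$C_3$ and $C_3$--$C_4^0$, and the chains $C_5^{\ell}$--$C_5^{\ell+1}$ and $C_4^j$--$C_4^{j+1}$. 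This part is a routine but lengthy case analysis, organised by whether $(m, k_0) = (0, 1)$ (the degenerate case where $C_1 = \mathcal{S}_i^-$) or not.

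For completeness, let $C'$ be any $E_{\Lambda}$-rigid string with $E_{\Lambda}(C, C') = 0 = E_{\Lambda}(C', C)$. The unlabelled lemma preceding Corollary \ref{edge} (combined with $a > 0$) forces $C'$ to share the starting arrow $x_1$ with $C$, so by Proposition \ref{rig} the string $C'$ corresponds to a unique element of $\Psi$. If $C'$ has length $\leq n$, Lemma \ref{shorter} gives $C' \in \lbrace C_1, C_2 \rbrace$ at once. Otherwise, I would apply Lemma \ref{shorter} to $C'$ itself: since $C$ is then one of the two shorter neighbours produced, inverting the three cases of Lemma \ref{shorter} on the sequence of $C'$ forces $C'$ to equal $C_3$ (reversing the operation that decrements the final $k$), $C_4^j$ for some $j$ (reversing the truncation at some index) or $C_5^{\ell}$ for some $\ell \geq 2$ (reversing the operation that increments the penultimate $k$). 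The main obstacle in this last step is ruling out putative $C'$ obtained by altering an \emph{inner} entry $k_i$ with $i < m$: here the minimality clause defining $k_i$ together with condition (III) of Lemma \ref{grund} produces an explicit admissible pair that obstructs the edge. The degenerate boundary cases ($m = 0$ with small $k_0$) also require a final separate inspection, since several formulas for $C_1$, $C_2$ and $C_5^{\ell}$ then collapse onto negative simples or onto simple representations.
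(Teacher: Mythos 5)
Your proposal is correct in substance and relies on the same two ingredients as the paper, but it is organised more heavily than the paper's argument. In the paper, Proposition \ref{neighbours} is obtained \emph{purely formally} from Lemma \ref{shorter}: for every vertex of the picture that is longer than its partner one applies Lemma \ref{shorter} to the longer string and reads off its two explicitly given shorter neighbours. This simultaneously produces every displayed edge (for instance $(C_3)_1=C$ and $(C_3)_2=C_2$, $(C_4^{j})_1=C$ and $(C_4^{j})_2=C_4^{j-1}$ resp.\ $C_3$, $(C_5^{\ell})_2=C$ and $(C_5^{\ell})_1=C_5^{\ell-1}$ resp.\ $C_1$), excludes every undisplayed edge among the listed vertices (the other vertex is simply not one of the two), and, together with the inversion you describe, yields that $C_3$, $C_4^{j}$, $C_5^{\ell}$ are all the longer neighbours; no further appeal to Corollary \ref{edge} is needed. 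Your plan instead re-verifies each displayed edge by a block-by-block application of Corollary \ref{edge}, which works but duplicates computations already encapsulated in Lemma \ref{shorter}. Two consequences of this heavier organisation are worth flagging. First, once you know from Lemma \ref{shorter} applied to a longer neighbour $C'$ that $C\in\lbrace C'_1,C'_2\rbrace$, the inversion is a formal manipulation of the $\Psi$-data, so the ``main obstacle'' you anticipate---ruling out $C'$ obtained by altering an inner entry $k_i$ with $i<m$ via an admissible pair from Lemma \ref{grund}(III)---does not arise: such a $C'$ simply fails the equation $C\in\lbrace C'_1,C'_2\rbrace$. Second, the statement asserts a \emph{full} subgraph, so the non-adjacency of the undisplayed pairs (e.g.\ $C_1$ and $C_3$, or $C_4^{j}$ and $C_5^{\ell}$) is part of the claim; your edge-by-edge verification does not address this, whereas the uniqueness assertion of Lemma \ref{shorter} applied to the longer member of each such pair settles it at once, so you should add this (short) step, taking the same care with the degenerate case $m=0$, $k_0=1$ that you already flag.
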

\begin{prop}\label{mutation}
If $Z_1,Z_2$ are indecomposable $E_{\Lambda}$-rigid components in $\srdirr(\Lambda)$ that are connected by an edge in $\Gamma$, then there exist precisely two different indecomposable $E_{\Lambda}$-rigid components $Z_3,Z'_3 \in\srdirr(\Lambda)$ such that $\lbrace Z_1,Z_2,Z_3 \rbrace$ and $\lbrace Z_1,Z_2,Z'_3 \rbrace$ are $E_{\Lambda}$-rigid component clusters.
\end{prop}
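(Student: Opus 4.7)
The plan is to combine the two explicit descriptions of $E_{\Lambda}$-rigid neighbourhoods already available: Proposition \ref{neighbours} for $E_{\Lambda}$-rigid strings of positive base $a$, and the two diagrams displayed at the beginning of Subsection 4.2 for the cases when $Z_1$ is simple or negative simple. Without loss of generality I write $Z_1=\overline{\mathcal{O}(M(C))}$ with $C$ an $E_{\Lambda}$-rigid string parametrized by $(x_1: a|k_0|\ldots|k_m) \in \Psi$. Since $Z_2$ is $E_{\Lambda}$-rigid and adjacent to $Z_1$, it must correspond to one of the strings $C_1, C_2, C_3, C_4^j, C_5^\ell$ enumerated in Proposition \ref{neighbours}, and the required $Z_3, Z_3'$ are precisely the $E_{\Lambda}$-rigid components appearing as common neighbours of $Z_1$ and $Z_2$ in $\Gamma$.

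The key observation is that the neighbourhood diagram of $C$ depicted in Proposition \ref{neighbours} has the shape of a ladder: each edge through $C$ is the rung of exactly two triangles visible in the diagram. Concretely, for $\{C,C_2\}$ the two common neighbours read off from the picture are $C_1$ and $C_3$; for $\{C,C_1\}$ they are $C_2$ and $C_5^2$; for $\{C,C_3\}$ they are $C_2$ and $C_4^0$; for $\{C,C_4^j\}$ with $j\geq 1$ they are $C_4^{j-1}$ and $C_4^{j+1}$; for $\{C,C_5^\ell\}$ with $\ell\geq 3$ they are $C_5^{\ell-1}$ and $C_5^{\ell+1}$; and similarly in the boundary cases $j=0$ and $\ell=2$. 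Exhaustiveness of the count is crucial here: the list of $E_{\Lambda}$-rigid neighbours of $Z_1$ furnished by Proposition \ref{neighbours} is already complete, so any common $E_{\Lambda}$-rigid neighbour of $Z_1$ and $Z_2$ must sit in the ladder, and it only remains to check which listed candidate is actually adjacent to $Z_2$. This last step is done either by applying Proposition \ref{neighbours} a second time to the sequence of $Z_2$ and intersecting the two neighbour lists, or directly via the combinatorial criterion of Corollary \ref{edge}. The special cases $a=0$ (where $Z_1$ or $Z_2$ is simple or negative simple) are settled by the two explicit diagrams at the start of Subsection 4.2, which exhibit the same ladder pattern.

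Finally I must verify maximality: neither of the two triangles $\{Z_1,Z_2,Z_3\}$ and $\{Z_1,Z_2,Z_3'\}$ extends to a $4$-clique. Any hypothetical fourth vertex would in particular be a common $E_{\Lambda}$-rigid neighbour of $Z_1$ and $Z_2$, hence equal to $Z_3$ or $Z_3'$; so one only has to exclude an edge between $Z_3$ and $Z_3'$. In each edge-type this is immediate from Corollary \ref{edge}: for example $C_1$ and $C_3$ differ in their outer $k_i$ by one and fail condition (II) of Lemma \ref{grund}. Combined with $E_{\Lambda}$-rigidity of every $C_i$ (granted by Proposition \ref{neighbours}), this shows that the two triangles are indeed $E_{\Lambda}$-rigid component clusters.

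The main obstacle is organisational rather than conceptual. The shape of $C_1$ and of $C_5^\ell$ in Lemma \ref{shorter} and Proposition \ref{neighbours} changes form at the transitions $k_m=2$, at $m=0$ with $k_0\in\{1,2\}$ (where $C_1$ collapses to a negative simple or the base shifts from $a$ to $a-1$), and at $\ell=2$ (where $C_5^2$ specializes according to $k_m$). One must verify the adjacency types listed above uniformly across these boundary cases, and separately across the simple and negative simple instances, to be sure that exactly two common neighbours arise every time and that no edge between $Z_3$ and $Z_3'$ has been overlooked.
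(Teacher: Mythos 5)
Your argument is correct and is essentially the paper's own route: the paper states this proposition as an immediate consequence of Proposition \ref{neighbours} (whose ladder-shaped full subgraph exhibits exactly two triangles on each edge through $C$), together with the explicit diagrams at the start of Subsection 4.2 for the simple and negative simple cases. Your additional checks — intersecting the two neighbour lists via Corollary \ref{edge} and ruling out a $Z_3$–$Z_3'$ edge to get maximality — are exactly the bookkeeping the paper leaves implicit.
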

\begin{proof}[Proof of Lemma \ref{shorter}]
We will first show that $C_1$ and $C_2$ are in fact neighbours of $C$ by checking the conditions in Corollary \ref{edge}. 

If $m=0$ then $C=(a_1,\ldots ,a_{k_0 -1},a_{k_0})=(a,\ldots,a,a)$ and we only have to check conditions (i) and (iv). Since we have $C_1=(a_1,\ldots ,a_{k_0 -1})=(a,\ldots,a)$ if $k_0>1$ and $C_2=(a-1)$ these conditions are obviously satisfied. 

So now suppose that $m\geq 1$. If $k_m\geq 3$ then $v_{m-1}=v_{m-1}^C=v^{C_1}_{m-1}$ and we have \[C=\underbrace{v_{m-1}P_{m-2} \ldots P_{m-2}v_{m-1}P_{m-2}v_{m-1}}_{k_m \text{ copies of }v_{m-1}} \quad\text{and}\quad   C_1=\underbrace{v_{m-1}P_{m-2} \ldots P_{m-2}v_{m-1}}_{k_m -1\text{ copies of }v_{m-1}} \]
and hence if $C_1=(s,a+1,\ldots)$ for some subsequence $s$, then also $C=(s,a+1,\ldots)$. Since $C_1$ is $E_{\Lambda}$-rigid we see Corollary \ref{edge}(ii)a) is satisfied. On the other hand suppose $C=(s,a+1,\ldots)$ and $C_1=(\ldots,s,a\ldots)$. Then we also have $C=(\ldots,s,a\ldots)$ contradicting that $C$ is $E_{\Lambda}$-rigid. 
If $C_1=(\ldots,a+1,s,a+1,\ldots)$ then also $C=(\ldots,a+1,s,a+1,\ldots)$ and since $C$ is $E_{\Lambda}$-rigid we see that Corollary \ref{edge}(iii)a) is satisfied. Now let $C=(\ldots,a+1,s,a+1,\ldots)$ and $C_1=(\ldots a,s,a\ldots)$. Then again we see that $C=(\ldots a,s,a \ldots)$ contradicting that $C$ is $E_{\Lambda}$-rigid. 
Finally we have to check if $C=(\ldots,C_1,\ldots)$. But, since $C_1$ begins with $v_{m-1}$ we know by the maximality of $k_{m-1}$ that the only copies of $C_1$ in $C$ are the ones at the very beginning and the end of $C$.

If $k_m=2$ then we have 
\[C=v_m=v_{m-1}P_{m-2}v_{m-1} \quad\text{and}\quad   C_1=v_j^{C_1}=v_j^{C} =v^C_{m-1}\]
where the last equality holds by choice of $j$. Now the same considerations as above show, that $C$ and $C_1$ satisfy the conditions in Corollary \ref{edge}.

Now if $m\geq 1$ then $v_{m-2}=v_{m-2}^C=v^{C_2}_{m-2}$ and we have 
\[C_2=v^{C_2}_{m-1}=v^C_{m-1}P_{m-3}v_{m-2}=v^C_{m-2}P_{m-3}v_{m-1}\]
and recall that $P_{m-2}=P_{m-3}w_{m-2}$ and $w_{m-2}$ and $v_{m-2}$ only differ in their last entry. Thus if $C_2=(s,a+1,\ldots)$, then we also have $C=(s,a+1,\ldots)$ and cannot have $C=(s,a+1,\ldots,s,a,\ldots)$ thus Corollary \ref{edge}(ii)a) is satisfied. If $C=(s,a+1,\ldots)$ and $\left|s,a+1\right|<\left|C_2\right|$ then we also have $C_2=(s,a+1,\ldots)$ (here $\left|C\right|$ is the length of the sequence describing $C$, i.e. if $C$ is given by $(x_1:a_1,\ldots,a_n)$ then $\left|C\right|=n$). If  $\left|s,a+1\right|\geq\left|C_2\right|$, then we cannot have $C_2=(\ldots,s,a\ldots)$ (the comma in front of the sequence $s$ is essential).

If $C_2=(\ldots,a+1,s,a+1,\ldots)$ then we also have $C=(\ldots,a+1,s,a+1,\ldots)$ and thus Corollary \ref{edge}(iii)a) is satisfied. If $C=(\ldots,a+1,s,a+1,\ldots)$ such that $\left|,a+1,s,a+1,\right|\leq \left|C_2\right|$ we can always write $s=s_1,s_2$ such that either $C_2=(\ldots ,a+1,s_1,s_2,a+1,\ldots)$ or $C_2=(\ldots ,a+1,s_1)$ or $C_2=(s_2,a+1,\ldots)$. In any case, it cannot happen that $C_2=(\ldots ,a,s_1,s_2,a,\ldots)$.
Finally, since $C_2$ begins with $v^C_{m-1}$ we know by the maximality of $k_{m-1}$ that there are no copies of $C_2$ in $C$.

Now let $C'=(a'_1,\ldots,a'_r)$ be an $E_{\Lambda}$-rigid neighbour of $C$ with $r\leq n$. Then $C'$ is given by some sequence $(a|k'_0|\ldots|k'_p)$ and we have to show, that $C'$ is either $C_1$ or $C_2$. The cases $m=0$ and $m=1$ are easily dealt with, so we assume $m\geq 2$. 

Now if $p \geq m$, we show by induction that $k'_i=k_i$ for all $0\leq i \leq n-1$.
Since $m>0$ by definition we have 
\[ C=(\underbrace{a,\ldots,a}_{k_0} ,a+1,\ldots,a+1,\underbrace{a,\ldots,a}_{k_0 -1} ,a)\] and  \[C'=(\underbrace{a,\ldots,a}_{k'_0} ,a+1,\ldots,a+1,\underbrace{a,\ldots,a}_{k'_0 -1} ,a).\]
By Corollary \ref{edge}(ii) we get $k_0=k_0'$. Now let $1 \leq i \leq m-1$ and assume $k_j=k'_j$ for all $0 \leq j \leq i-1$. Then $v_{i-1}=v^C_{i-1}=v^{C'}_{i-1}$ and since $m>i$ by definition we have
\[ C=\underbrace{v_{i-1}P_{i-2}\ldots P_{i-2}v_{i-1}}_{k_i \text{ copies of } v_{i-1}} P_{i-2}v_{i-1} \underbrace{v_{i-1}P_{i-2}\ldots P_{i-2}v_{i-1}}_{k_i -1\text{ copies of } v_{i-1}} P_{i-2}v_{i-1}\]
and 
\[ C'=\underbrace{v_{i-1}P_{i-2}\ldots P_{i-2}v_{i-1}}_{k'_i \text{ copies of } v_{i-1}} P_{i-1}\ldots \underbrace{v_{i-1}P_{i-2}\ldots P_{i-2}v_{i-1}}_{k'_i -1\text{ copies of } v_{i-1}} P_{i-1}\ldots.\]
Since $P_{i-1}=P_{i-2}w_{i-1}$ we have $k_i=k'_i$ (using Corollary \ref{edge}(ii)). Since we assume, that $r\leq n$ we must have $k'_m<k_m$ and $p=m$ and we see by Corollary \ref{edge}(iv) that $C'=C_1$. 

If we assume that now $p < m$ then by the same argument as before we have $k'_i=k_i$ for all $0\leq i \leq p-1$. Considering the different cases $k'_p < k_p$, $k'_p=k_p$ and $k'_p>k_p$ we get a contradiction in the first case, $C'=C_1$ in the second and $C'=C_2$ in the third. 
\end{proof}
\subsection{Connectedness of the graph}
\begin{lemma}\label{bandstring}
Let $B$ be a strongly reduced band given by the sequence $(x_1:a|k_0|\ldots|k_m)$. Then there is an edge in $\Gamma$ between $B$ and the string encoded by the same sequence.  
\end{lemma}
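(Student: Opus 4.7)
The proof plan is to identify a structural relationship between $C = v_m$ and $B = w_m$ and then verify the combinatorial conditions of Corollary~\ref{edgeband}.

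First, I would establish by induction on $m$ that the integer sequences encoding $C$ and $B$ agree except in the last position: if $v_m = (a_1,\ldots,a_{n-1},a)$, then $w_m = (a_1,\ldots,a_{n-1},a+1,)$ (the trailing comma marking it as a band). The base case $m=0$ is immediate from $v_0=(a,\ldots,a)$ and $w_0=(a,\ldots,a,a+1)$. For the inductive step, $v_m$ consists of $k_m$ copies of $v_{m-1}$ separated by $P_{m-2}$, while $w_m$ is $k_m-1$ copies of $v_{m-1}$ followed by $P_{m-2}w_{m-1}$, so they agree up to the final block $v_{m-1}$ vs.\ $w_{m-1}$, which coincide except in their last entry by the inductive hypothesis. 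With $C$ and $B$ in this form, $a_1 = a = \min_i a_i$, so Corollary~\ref{edgeband} applies.

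The second step is to verify conditions (i), (ii)(a), (ii)(b), (iii) of that corollary. The key observation is that $B^k$ as a sequence equals $(a_1,\ldots,a_{n-1},a+1)^k$ and differs from $k$ concatenated copies of $C$ only at the boundary positions $n,2n,\ldots,kn$, where it carries $a+1$ instead of $a$. Any subsequence $s$ featuring in the hypotheses of the corollary has length at most $n-1$, so an occurrence of $s$ inside $B^k$ meets at most one boundary position. I would distinguish three cases: (A) the occurrence lies entirely inside an interior block where $B^k$ agrees with $C$; (B) the occurrence is adjacent to a boundary $a+1$; (C) the occurrence contains or ends exactly at a boundary position. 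In case (A) the neighbours of $s$ in $B^k$ coincide with those at the corresponding position in $C$, so the $E_\Lambda$-rigidity of $C$ via Proposition~\ref{rig}(iii) (for condition (i)) and Proposition~\ref{rig}(iv) (for conditions (ii)(a) and (ii)(b)) produces the required $a+1$ neighbour. Case (B) gives the $a+1$ neighbour for free. For the bad case (C), I would exploit the symmetry of $C$: if $s_r$ is aligned with a boundary position of $B^k$, then matching the portion of $s$ to its left against the corresponding block of $B$ and applying the palindromic identity $a_j = a_{n-j+1}$ forces $a_1 = a_r = a+1$, contradicting $a_1 = a$. This rules out case (C) for condition (i) (where $s$ is a prefix followed by $a+1$) and for condition (iii) (where a hypothetical copy of $C$ inside $B^k$ must straddle a boundary since $|C|=|B|=n$), and disposes of the remaining boundary subcases of (ii)(a) and (ii)(b).

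Having verified all four conditions of Corollary~\ref{edgeband}, I conclude $E_\Lambda(B,C) = E_\Lambda(C,B) = 0$, so $B$ and $C$ are joined by an edge in $\Gamma$. The main technical obstacle is the boundary-straddling analysis in case (C): the intuition is transparent, but one has to deploy the symmetry of $C$ at exactly the right index — translating between the cyclic indexing of $B$ and the linear indexing of $C$ — to reach the contradiction $a_1 = a+1$.
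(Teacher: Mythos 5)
Your strategy is the same as the paper's: identify that the band and the string encoded by $(x_1:a|k_0|\ldots|k_m)$ differ only in the final entry, $C=(a_1,\ldots,a_{n-1},a)$ versus $B=(a_1,\ldots,a_{n-1},a+1,)$ (this is built into the construction of $v_i$ and $w_i$, cf.\ also Lemma \ref{stba}), and then verify Corollary \ref{edgeband} by locating occurrences of the relevant blocks inside $B^k$ relative to the period boundaries: interior occurrences transfer into $C$ and are excluded by rigidity, boundary occurrences need a separate argument. Your handling of conditions (i) and (iii) is correct, and the palindrome trick (the left part of a straddling occurrence is a suffix of the period, so $a_1=a_{n-j+1}=a_j=a+1$) is a clean variant of the paper's argument, which instead extracts the forbidden pattern $(s_1,a+1,\ldots,s_1,a)$ and quotes Proposition \ref{rig}(iii).

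There is, however, a genuine gap at condition (ii). The mechanism you invoke for the boundary subcases, ``matching the portion of $s$ to its left \ldots\ forces $a_1=a+1$'', works only because in (i) and (iii) the block $s$ is anchored as a prefix of $C$ (or is all of $C$); in (ii)(a) and (ii)(b) the block $s$ occurs in the interior of $C$, flanked on both sides, so matching the left part of a boundary-straddling occurrence against a suffix of the period only yields identities among interior entries of $C$ and never reaches $a_1$; the contradiction $a_1=a+1$ simply does not follow there. Those subcases require a different argument: for instance, one can show that the straddle produces inside $C$ two occurrences of one and the same block, one flanked by $(a+1,a+1)$ and one flanked by $(a,a)$ (using $a_n=a$ and the symmetry of $C$), or a prefix of $C$ followed once by $a+1$ and once by $a$, and then conclude with Lemma \ref{grund}(II)/(III) applied with $C'=C$; alternatively, the paper resolves the straddling case of (ii) by the dichotomy that either $B$ itself contains the block $(a,s,a)$ — contradicting that $B$ is strongly reduced — or $C$ contains a forbidden pattern. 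Note in particular that the paper's verification of (ii) uses the strong reducedness of $B$, an ingredient absent from your plan; the plan can be repaired without it, but not by the trick you state. (A minor slip: the block in condition (iii) has length $n$, not at most $n-1$, though you treat that case separately and correctly.)
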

\begin{proof}
In the following we will write $B^{\infty}$ to imply that we consider a sufficiently large number of copies of the sequence $(a_1,\ldots,a_n,)$. This will be denoted by
\[B^{\infty}=\ldots a_n,|a_1,a_2,\ldots,a_n,|a_1,\ldots.\]

Assume Corollary \ref{edgeband}(i) is violated. Hence we have $C=(s,a+1,\ldots)$ and for some positive integer $k$ we have $,s,a,$ as a subsequence in $B^{k}$. Now if this subsequence was contained in only one copy of $B$ it would also be a subsequence of $C$. This is impossible since $C$ is $E_{\Lambda}$-rigid. Hence we can consider
\[B^3=|\ldots,a+1,|s,a+1,\ldots ,\underbrace{s_1,a+1,|s_2}_s,a, \ldots ,a+1,|. \]
In that case we have \[C=(\underbrace{\boldsymbol{ s_1,a+1},s_2}_s ,a+1,\ldots \boldsymbol{,s_1,a})\] contradicting $C$ being $E_{\Lambda}$-rigid. 

That $C$ and $B^{k}$ satisfy condition (ii) for every positive integer $k$ is easily verified. If $a+1,s,a+1$ is a subsequence in $C$ it also is a subsequence in $B^{k}$. Now assume that $a+1,s,a+1$ is a subsequence of $B^{2}$ such that 
 \[B^2=|\ldots,a+1,\underbrace{s_1,|s_2}_s,a+1,\ldots ,a+1,| \]
 and $C=(\ldots,a,s,a,\ldots)$. Then either $B=(\ldots a,s,a,\ldots)$ which contradicts $B$ being strongly reduced or $C=(\boldsymbol{ s_2,a+1},\ldots,a,s_1,\boldsymbol{ s_2,a})$ which contradicts $C$ being $E_{\Lambda}$-rigid.
 
Finally assume that for some positive integer $k$ a copy of $C$ is a subsequence of $B^{k}$. Then we have
\[B^2=|\ldots,\underbrace{P,a+1,|Q}_C,\ldots |. \]
and therefore
$ C = (P,a+1,\ldots,P,a)$ which is again a contradiction. Hence there is an edge between $C$ and $B$ in $\Gamma(\Lambda)$.
\end{proof}
\begin{theorem}\label{connectec}
The graph of strongly reduced components is connected. The full subgraph on the $E_{\Lambda}$-rigid components is also connected. 
\end{theorem}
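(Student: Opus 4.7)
The plan is to show that every indecomposable strongly reduced component is connected in $\Gamma$ to a simple or negative simple representation, and that the simples and negative simples are themselves pairwise connected inside $\Gamma$; the two connectedness statements of the theorem then follow at once.

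For the $E_{\Lambda}$-rigid subgraph I would induct on the length $n$ of the sequence $(a_1,\ldots,a_n)$ encoding an $E_{\Lambda}$-rigid string $C\in\Psi$. When $n=0$, $C$ is one of the six simples or negative simples; from the first figure of this section we read off that $\mathcal{S}_3^-$ is adjacent in $\Gamma$ to each of $\mathcal{S}_1^-,\mathcal{S}_2^-,\mathcal{S}_1,\mathcal{S}_2$, and by the rotational symmetry of the Markov quiver $Q$ the analogous adjacencies hold at every other simple and negative simple. Hence the six simples and negative simples form a single connected subgraph of $\Gamma$. For the inductive step, Lemma \ref{shorter} furnishes an $E_{\Lambda}$-rigid neighbor $C_1$ of $C$ whose encoding sequence is strictly shorter than $(a_1,\ldots,a_n)$ (and in the edge case $m=0$, $k_0=1$ reduces to the length-$0$ negative simple $(x_1:0|-1)$). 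By the inductive hypothesis $C_1$ is connected to a simple or negative simple, and therefore so is $C$.

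To extend to arbitrary indecomposable strongly reduced components I invoke Lemma \ref{bandstring}: a strongly reduced band $B$ encoded by $(x_1:a|k_0|\ldots|k_m)\in\Psi$ is joined by an edge in $\Gamma$ to the $E_{\Lambda}$-rigid string with the same encoding, so $B$ too lies in the connected component of the simples and negative simples. By Theorem \ref{bi} the indecomposable strongly reduced components are exhausted by the $E_{\Lambda}$-rigid strings and the strongly reduced bands, so both parts of the theorem are proved.

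Essentially all of the substantive work has already been carried out in Sections 3 and 4, namely the classification lemmas, the explicit neighbor description in Proposition \ref{neighbours}/Lemma \ref{shorter}, and the band-to-string edge in Lemma \ref{bandstring}. Given these inputs, the only thing that still requires checking in the argument above is that in each branch of Lemma \ref{shorter} the neighbor $C_1$ really is strictly shorter than $C$, which is immediate from the case analysis spelled out in the statement of that lemma. The genuine difficulty of the theorem therefore lies not in this final reduction but in establishing that the mutation procedure of Lemma \ref{shorter} always lands in the $E_{\Lambda}$-rigid locus, which was the heart of the preceding section.
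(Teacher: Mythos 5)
Your proposal is correct and follows essentially the same route as the paper: the paper's (very terse) proof also deduces the rigid case from Proposition \ref{neighbours} (i.e.\ Lemma \ref{shorter}, implicitly via the same descent to the simples and negative simples that you spell out by induction on the length of the encoding sequence) and then attaches each strongly reduced band to the $E_{\Lambda}$-rigid string with the same encoding via Lemma \ref{bandstring}, with Theorem \ref{bi} guaranteeing exhaustiveness. Your write-up merely makes explicit the induction and the base case among the simples/negative simples, which the paper leaves implicit.
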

\begin{proof}
By Theorem \ref{neighbours} it follows that any $E_{\Lambda}$-rigid string is in the same component as the negative simple ones. Hence the second part of the Theorem follows. By Lemma \ref{bandstring} any strongly reduced band is connected with an $E_{\Lambda}$-rigid string. 
\end{proof}
\begin{remark}\label{cluster}
Let $Q$ be the Markov quiver and $\mathcal{A}_Q$ the corresponding cluster algebra. Then the cluster monomials $\mathcal{M}_Q$ of $\mathcal{A}_Q$ are precisely the generic Caldero-Chapoton functions of indecomposable $E_{\Lambda}$-rigid components in $\srdirr(\Lambda)$. This follows from the construction in \cite[Corollary 5.3]{DWZ2} of cluster variables as Caldero-Chapoton functions. These representations are gained by iterated mutation from the negative simple representations of $\Lambda$. These representations are all $E_{\Lambda}$-rigid, since $E_{\Lambda}(\M)$ is invariant under mutation (\cite[Theorem 7.1]{DWZ2}). On the other hand by  Proposition \ref{mutation} and Theorem \ref{connectec} it follows that the decorated representations obtained from mutation of the negative simples coincide with the family of indecomposable $E_{\Lambda}$-rigid decorated representations.
\end{remark}
\section{Component clusters}
\begin{lemma}\label{bandnei}
Let $B$ be a strongly reduced band given by the sequence $(a|k_0|\ldots|k_m)$. Then the $E_{\Lambda}$-rigid string $C$ given by the same sequence is the only neighbour of $B$ in $\Gamma$.
\end{lemma}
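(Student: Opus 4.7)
By Lemma \ref{bandstring}, the string $C$ given by $(a|k_0|\ldots|k_m)$ is a neighbour of $B$ in $\Gamma$, so the task reduces to showing uniqueness. Let $X$ be any neighbour of $B$; by Theorem \ref{bi}, $X$ comes either from an $E_\Lambda$-rigid string $C'$ or from a strongly reduced band $B'$. The plan is to treat these two cases separately and show that in both cases $X$ must coincide with $C$.

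The main case is $X = C'$. Write $C'$ in its canonical form as $(a'|k_0'|\ldots|k_p')$ via the notation of Section~4.1, and assume without loss of generality that the starting arrow is chosen so that Corollary \ref{edgeband} applies. I would first establish $a' = a$ by looking at minima: if $a' < a$ then $C'$ contains an entry not appearing in $B^\infty$ at all, and a direct appeal to Corollary \ref{edgeband}(i) with $s$ an appropriate initial segment yields a forbidden subsequence; if $a' > a$, then the entry $a$ of $B$ appears at a position where $C'$ demands $a'$, again contradicting Corollary \ref{edgeband}(i). Given $a' = a$, I would then prove inductively on $i$ that $k_i' = k_i$ and $v_i^{C'} = v_i^{C}$. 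For the base and inductive step: once $v_{i-1}^{C'} = v_{i-1}^C$ has been set up, the maximal run of consecutive $v_{i-1}$'s inside $C'$ is $k_i'$, whereas inside the cyclic word $B^\infty$ it is exactly $k_i$ by construction of $B = w_m$. A smaller value $k_i' < k_i$ would make $,c_1,\ldots,c_{m'},$ a subsequence of $B^k$ and violate Corollary \ref{edgeband}(iii), while $k_i' > k_i$ produces a run longer than any in $B^\infty$ and violates Corollary \ref{edgeband}(ii). Finally, $p > m$ is impossible because condition (iii) would again detect $C'$ surrounding too large a portion of $B$'s cyclic sequence, and $p = m$ with $k_m' \neq k_m$ is ruled out in the same way; hence $C' = C$.

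For the band case $X = B'$ with $B' \neq B$, I would invoke the band-band analogue of Corollary \ref{edgeband}, which is proved by the same arguments as Lemma \ref{grund} with $C'$ replaced by $B'^{k}$ and subsequences read cyclically. Equivalently, one can reduce this case to the previous one: by Lemma \ref{bandstring} the $E_\Lambda$-rigid string $C''$ given by the same sequence as $B'$ is a neighbour of $B'$, so if $B'$ were also a neighbour of $B$ then by the string-case analysis applied with the roles of $B$ and $B'$ interchanged one would be forced into $C'' = C$ and $B' = B$, a contradiction. The main obstacle I anticipate is the combinatorial bookkeeping in the inductive step: aligning the linear block structure $v_i,\,w_i,\,P_{i-1}$ of $C'$ with the cyclic word $B^\infty$ across its seam, and verifying all three conditions of Corollary \ref{edgeband} simultaneously, parallels but slightly complicates the case analysis that already underlies Proposition \ref{rig} and Lemma \ref{bandstring}.
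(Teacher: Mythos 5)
Your overall strategy (reduce to comparing canonical sequences $(a|k_0|\ldots)$ and induct on the first index where they differ) is the same as the paper's, but the combinatorial mechanism you assign to each case is wrong, and this is where the actual content of the proof lies. Take $B=(a|3)=(a,a,a+1,)$ and the $E_{\Lambda}$-rigid string $C'=(a|1|1|2)=(a,a+1,a+1,a)$: here $k'_0=1<3=k_0$, yet $C'$ is \emph{not} a contiguous subsequence of any $B^k$ (no two consecutive entries $a+1$ occur in $B^{\infty}$), so Corollary \ref{edgeband}(iii) is not violated; the obstruction is condition (ii), via the pattern $a+1,s,a+1$ in $C'$ against an adjacent pair $(a,a)$ in $B^{\infty}$. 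Similarly, for $p>m$ (e.g.\ $B=(a|3)$, $C'=(a|3|2)=(a,a,a,a+1,a,a,a)$) condition (iii) is again not the one that fails; one has to exhibit a specific $s$ with $a+1,s,a+1$ in $B^{\infty}$ and $a,s,a$ in $C'$. This is exactly what the paper's proof does: it produces, from the block calculus of $v_i,w_i,P_{i-1}$, a sequence $\tilde s$ with $B^{\infty}=\ldots a+1,\tilde s,a+1\ldots$ while $a,\tilde s,a$ occurs in the competing sequence, split into the cases $i<m\le p$, $i=m=p$ and $m<i\le p$. Your run-counting heuristic also has an off-by-one that matters: since $B=w_m$ and not $v_m$, the maximal number of consecutive copies of $v_{m-1}$ in $B^{\infty}$ is $k_m-1$, not $k_m$ (e.g.\ $B^{\infty}=(a,a,a+1)^{\infty}$ has runs of length $2$ while its unique neighbour $C=(a,a,a)$ has a run of length $3$); so "a run longer than any in $B^{\infty}$ violates (ii)" cannot be used as stated, precisely because the true neighbour $C$ would be caught by it at the top level. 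Finally, pinning down $a'=a$ via Corollary \ref{edgeband}(i) is not automatic either, since (i) only constrains subsequence patterns, not the values themselves; excluding $a'\neq a$ needs a separate (easy) argument of Lemma \ref{grund}(I) type.

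The band--band case is a genuine gap as you propose to handle it. The "reduction" is a non sequitur: Lemma \ref{bandstring} plus the string-case uniqueness applied to $B'$ only says that the unique \emph{string} neighbour of $B'$ is $C''$; nothing in that statement forces $C''=C$, or any contradiction, from the assumption that the two \emph{bands} $B$ and $B'$ are adjacent. To rule out band--band edges you must show directly that $E_{\Lambda}(B,B')\neq 0$ when the sequences differ, which is what the paper does simultaneously with the string case: the same $\tilde s$ with $a+1,\tilde s,a+1$ in $B^{\infty}$ and $a,\tilde s,a$ in $B'^{\infty}$ gives a non-trivial homomorphism from $B$ to $B'$ as well as to $C'$. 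Your first alternative (a band--band analogue of Corollary \ref{edgeband}, proved like Lemma \ref{grund} with subsequences read cyclically) is the right direction and matches the paper in spirit, but it is only named, not carried out, and combined with the flawed induction step above it does not close the argument.
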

\begin{proof}
Let $(a|k'_0|\ldots|k'_p)\in \Psi $ differing from $(a|k_0|\ldots|k_m)$ in at least one entry and let $C'$ (respectively $B'$) be the corresponding string (respectively band). Assume that $p\geq m$ (the other case is similar) and let $i=\min \lbrace i \in \mathbb{N}\mid k_i \neq k'_i \rbrace$. Then $v_{i-1}=v_{i-1}^C=v_{i-1}^B$. We will show, that if $k_i < k'_i$, there exists a sequence $s$ such that
\begin{align*}
B^{\infty}&= \ldots P_{i-1}sP_{i-1}\ldots =\ldots a+1,\tilde{s},a+1\ldots\\
 B'^{\infty}&= \ldots v_{i-1}P_{i-2}sP_{i-1}v_{i-1}\ldots =\ldots ,a,\tilde{s},a,\ldots\\
 C&= v_{i-1}P_{i-2}sP_{i-2}v_{i-1}\ldots = (a,\tilde{s},a\ldots)
\end{align*}
and hence there is a non-trivial homomorphism from $B$ to $B'$ and from $B$ to $C'$. The case $k_i > k'_i$ is dealt with similarly. For simplicity we will assume that $k_i=2$ and $k'_i=3$. 

First assume that $i <m \leq p$, then we have
\begin{align*}
B^{\infty}&= \ldots P_{i-1}|v_{i-1}P_{i-2}v_{i-1}P_{i-1}\ldots \boldsymbol{P_{i-1}v_{i-1}P_{i-1}}|v_{i-1}\ldots\\
 B'^{\infty}&= \ldots P_{i-1}|\boldsymbol{v_{i-1}P_{i-2}v_{i-1}P_{i-2}v_{i-1}}P_{i-1}\ldots\\
 C&= (\boldsymbol{v_{i-1}P_{i-2}v_{i-1}P_{i-2}v_{i-1}}\ldots) 
\end{align*}
If $i=m=p$ we have
\begin{align*}
B^{\infty}&= \ldots\boldsymbol{P_{m-1}|v_{m-1}P_{m-1}|v_{m-1}P_{m-1}}|v_{m-1}\ldots\\
 B'^{\infty}&= \ldots P_{m-1}|\boldsymbol{v_{m-1}P_{m-2}v_{m-1}P_{m-1}|v_{m-1}P_{m-2}}v_{m-1}P_{m-1}|\ldots\\
 C&= (\boldsymbol{v_{m-1}P_{m-2}v_{m-1}P_{m-2}v_{m-1}}) 
\end{align*}
and if $m<i\leq p$ and we assume that $k_m=2=k'_m$ we have 
\begin{align*}
B^{\infty}&= \ldots\boldsymbol{P_{m-1}|v_{m-1}P_{m-1}|(v_{m-1}P_{m-1}|)^x v_{m-1}P_{m-1}}\ldots\\
 B'^{\infty}&= \ldots P_{m-1}|\boldsymbol{v_{m-1}P_{m-2}v_{m-1}P_{m-1}(v_{m-1}P_{m-1})^x v_{m-1}P_{m-2}v_{m-1}}\ldots\\
 C&= (\boldsymbol{v_{m-1}P_{m-2}v_{m-1}P_{m-1}(v_{m-1}P_{m-1})^x v_{m-1}P_{m-2}v_{m-1}}\ldots) 
\end{align*}
where $x$ depends on $k'_{m+1},\ldots,k'_p$. If $k'_{m+1}\geq 2$, then $x=0$ and $(v_{m-1}P_{m-1})^0$ is just the empty sequence. If $k'_{m+1}= 1$, we have to use that then $v^C_{m+1}=v^C_m=v^B_m$. 
\end{proof}
The next theorem is a direct consequence of Lemma \ref{bandstring} and Lemma \ref{bandnei}.
\begin{theorem}
There is a bijection between the $E_{\Lambda}$-rigid indecomposable components $Z \in \dirr^{\sr}(\Lambda)$ and the non $E_{\Lambda}$-rigid indecomposable components $\overline{Z} \in \dirr^{\sr}(\Lambda)$, such that if $Z$ and $\overline{Z}$ are identified under this bijection then $\lbrace Z, \overline{Z} \rbrace $ is a component cluster and these are all non $E_{\Lambda}$-rigid component clusters.
\end{theorem}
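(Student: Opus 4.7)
The plan is to let the $\Psi$-parametrization developed in Section 4.1 furnish the candidate bijection for free, and then, as the paper's remark preceding the theorem already signals, to deduce the component-cluster statements directly from Lemmas \ref{bandstring} and \ref{bandnei}.

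By Theorem \ref{bi}, every indecomposable component of $\srdirr(\Lambda)$ is either $\overline{\mathcal{O}(M(C))}$ for an $E_\Lambda$-rigid string $C$ (and is then $E_\Lambda$-rigid), or $\overline{\bigcup_{\lambda\in\mathbb{C}^*}\mathcal{O}(M(B,\lambda,1))}$ for a strongly reduced band $B$ (and is then not $E_\Lambda$-rigid). Section 4.1 indexes both families by the same set $\Psi$ of sequences $(x_1:a|k_0|\ldots|k_m)$, so the bijection I will use sends the non-$E_\Lambda$-rigid component $\overline{Z}$ associated to a band $B$ to the $E_\Lambda$-rigid component $Z$ associated to the $E_\Lambda$-rigid string $C$ sharing the same index in $\Psi$.

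To check that $\lbrace Z,\overline{Z}\rbrace$ is a component cluster, I will first invoke Lemma \ref{bandstring} for the edge between $Z$ and $\overline{Z}$, which already exhibits the pair as a complete subgraph. Maximality is then immediate from Lemma \ref{bandnei}: any third vertex $W$ extending the pair would in particular have to be adjacent to $\overline{Z}$, but the lemma asserts that $Z$ is the sole neighbour of $\overline{Z}$ in $\Gamma$.

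For the converse, let $\mathcal{U}$ be any non-$E_\Lambda$-rigid component cluster. Then $\mathcal{U}$ contains at least one non-$E_\Lambda$-rigid vertex, which by Theorem \ref{bi} is some band component $\overline{Z}$. By Lemma \ref{bandnei}, every other element of $\mathcal{U}$ must coincide with the paired string component $Z$, so $\mathcal{U}\subseteq\lbrace\overline{Z},Z\rbrace$; maximality rules out $\mathcal{U}=\lbrace\overline{Z}\rbrace$ since $Z$ provides an available extension, forcing $\mathcal{U}=\lbrace Z,\overline{Z}\rbrace$. The only point that requires some care along the way is checking that $\Psi$ indexes the two families in matching fashion, which is guaranteed by the inductive construction at the end of Section 4.1 together with Lemma \ref{stba}; beyond this bookkeeping step I do not expect a genuine obstacle.
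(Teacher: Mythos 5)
Your argument is essentially the paper's own proof: the theorem is deduced there as a direct consequence of Lemmas \ref{bandstring} and \ref{bandnei}, which are exactly the two facts you invoke for the edge between the paired components and for the maximality/uniqueness of the resulting component clusters, with the $\Psi$-indexing supplying the pairing. The only point needing extra care, which you rightly flag as bookkeeping, is matching the edge cases (the negative simples and the corresponding short bands) under the $\Psi$-parametrization, a detail the paper likewise leaves implicit.
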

\begin{corollary}
\begin{compactenum}[(i)]
\item The $E_{\Lambda}$-rigid component clusters of $\Lambda$ are exactly the component clusters of cardinality $3$.
\item The non $E_{\Lambda}$-rigid component clusters of $\Lambda$ are exactly the component clusters of cardinality $2$.
\end{compactenum}
\end{corollary}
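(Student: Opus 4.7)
The plan is to derive both statements from the preceding theorem and Proposition \ref{mutation}, together with the explicit mutation structure from Proposition \ref{neighbours}. Neither part requires new technical work --- each reduces to a short case analysis on the cardinality of a component cluster.

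First, I would prove (ii). The forward direction is immediate: the preceding theorem states that every non $E_{\Lambda}$-rigid component cluster is of the form $\{Z, \overline{Z}\}$ and hence has cardinality $2$. Conversely, suppose $\mathcal{U} = \{Z_1, Z_2\}$ is a component cluster of cardinality $2$. If $\mathcal{U}$ were $E_{\Lambda}$-rigid, then $Z_1$ and $Z_2$ would be adjacent indecomposable $E_{\Lambda}$-rigid components, and Proposition \ref{mutation} would produce an $E_{\Lambda}$-rigid component $Z_3$ distinct from $Z_1, Z_2$ such that $\{Z_1, Z_2, Z_3\}$ is an $E_{\Lambda}$-rigid component cluster, contradicting the maximality of $\mathcal{U}$. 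Hence every component cluster of cardinality $2$ is non $E_{\Lambda}$-rigid.

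For (i), let $\mathcal{U}$ be an $E_{\Lambda}$-rigid component cluster; I would eliminate the cases $|\mathcal{U}| \ne 3$ in turn. The case $|\mathcal{U}| = 1$ is excluded by the observation that every indecomposable $E_{\Lambda}$-rigid component has an $E_{\Lambda}$-rigid neighbour in $\Gamma$: for a string of the form $(x_1 : a|k_0|\ldots|k_m)$ with $a>0$, Proposition \ref{neighbours} supplies the distinct neighbour $C_3 = (x_1 : a|k_0|\ldots|k_m + 1)$, while for the simples and negative simples the explicit diagrams preceding Lemma \ref{shorter} list several neighbours. The case $|\mathcal{U}| = 2$ is ruled out by the same mutation argument as in (ii). For $|\mathcal{U}| \ge 4$, pick any $Z_1, Z_2 \in \mathcal{U}$; by Proposition \ref{mutation} the only $E_{\Lambda}$-rigid component clusters containing the edge between $Z_1$ and $Z_2$ are the two triangles $\{Z_1, Z_2, Z_3\}$ and $\{Z_1, Z_2, Z'_3\}$, so $\mathcal{U}$ must coincide with one of them and thus have exactly $3$ elements, a contradiction. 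The converse direction of (i) is immediate from (ii): a component cluster of cardinality $3$ is not of the form $\{Z, \overline{Z}\}$, so by the preceding theorem it must be $E_{\Lambda}$-rigid.

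The only step that is not pure bookkeeping is the existence of an $E_{\Lambda}$-rigid neighbour for every indecomposable $E_{\Lambda}$-rigid component, but this is transparent from the complete list of neighbours already obtained in Proposition \ref{neighbours} and the two diagrams preceding Lemma \ref{shorter}.
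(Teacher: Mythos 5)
Your derivation of part (ii) and of the cardinality-$1$, cardinality-$2$ and converse directions of part (i) is sound and follows the same route the paper intends (the corollary is stated there without a written proof, as an immediate consequence of the preceding theorem, Proposition \ref{mutation} and Proposition \ref{neighbours}). The one step that does not hold as you justify it is the exclusion of $E_{\Lambda}$-rigid component clusters $\mathcal{U}$ with $|\mathcal{U}|\geq 4$. Proposition \ref{mutation} literally asserts that for an edge $Z_1 - Z_2$ there are exactly two vertices $Z_3, Z_3'$ whose triangles $\lbrace Z_1,Z_2,Z_3\rbrace$, $\lbrace Z_1,Z_2,Z_3'\rbrace$ are component clusters; it does \emph{not} say that these triangles are the only component clusters containing the edge. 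A priori a $4$-clique $\lbrace Z_1,Z_2,W_1,W_2\rbrace$ could coexist with two maximal triangles on the same edge: the sets $\lbrace Z_1,Z_2,W_i\rbrace$ would simply fail to be maximal (hence would not be component clusters), so they never compete with $Z_3,Z_3'$, and no contradiction with the literal statement of Proposition \ref{mutation} arises. In an abstract graph with vertex set $\lbrace Z_1,Z_2,W_1,W_2,Z_3,Z_3'\rbrace$, complete on $\lbrace Z_1,Z_2,W_1,W_2\rbrace$ and with $Z_3,Z_3'$ adjacent only to $Z_1,Z_2$, the conclusion of Proposition \ref{mutation} holds for the edge $Z_1 - Z_2$ while a $4$-element component cluster exists; so your inference "hence $\mathcal{U}$ coincides with one of the two triangles" needs more input.

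The missing input is exactly what Proposition \ref{neighbours} (together with the two figures preceding Lemma \ref{shorter} for the simples and negative simples) provides: for every indecomposable $E_{\Lambda}$-rigid vertex $C$, the \emph{full} subgraph on $C$ and all of its $E_{\Lambda}$-rigid neighbours is the one depicted, in which the neighbours of $C$ form a path
$\ldots - C_5^{3} - C_5^{2} - C_1 - C_2 - C_3 - C_4^{0} - C_4^{1} - \ldots$,
hence contain no triangle. Consequently no $E_{\Lambda}$-rigid clique containing $C$ can have more than $3$ vertices, which rules out $|\mathcal{U}|\geq 4$ directly; combined with your treatment of the cases $|\mathcal{U}|=1,2$ this gives part (i). Since you already invoke Proposition \ref{neighbours} for the cardinality-$1$ case, the repair is immediate, but as written the $|\mathcal{U}|\geq 4$ step is attributed to a statement that does not carry it.
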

\section{The $\bf{g}$-vectors}
In this section we determine the generic $\bf{g}$-vectors of the strongly reduced indecomposable components for $\Lambda$ using Proposition \ref{pro}. 
Let $I_i \in \rep(\Lambda_p)$ be the injective envelope of the simple representation $S_i$ for $1\leq i \leq 3$. It is known that $I_i$ can be described by the paths in $Q$ that are ending in the vertex $i$. 
Let us first determine the $\bf{g}$-vectors for the string module (respectively band module), where the string (respectively band), is given by a sequence $(x_1:0)$ (respectively $(x_1:0,)$) for $x_1 \in \lbrace \alpha_1,\beta_1,\gamma_1 \rbrace$. Let $p=2$. If $x_1=\alpha_1$ the string module under consideration is the simple module $S_1$. Then the sequence
\[0\longrightarrow S_1 \overset{f}{\longrightarrow} I_0^{\Lambda_p}(S_1) {\longrightarrow} \Coker(f)\]
can be visualized in the diagrams
\[ \xygraph{!{<0cm,0cm>;<0.6cm,0cm>:<0cm,0.9cm>::}
!{(-5,0)}*+{1}="1"
!{(-4,0)}="6"
!{(-2,0)}="7"
!{(-2,2)}*+{2}="3"
!{(-1,1)}*+{3}="4"
!{(0,0)}*+{1}="5"
!{(2,2)}*+{2}="12"
!{(1,1)}*+{3}="11"
!{(3,1.5)}="16"
!{(5,1.5)}="17"
!{(6,2)}*+{2}="13"
!{(7,1)}*+{3}="14"
!{(8,1)}*+{\oplus}="15"
!{(10,2)}*+{2}="112"
!{(9,1)}*+{3}="111"
"6":"7"^{f}
"16":"17"
"3":"4"_{\beta_2}
"4":"5"_{\gamma_1}
"12":"11"^{\beta_1}
"11":"5"^{\gamma_2}
"13":"14"_{\beta_2}
"112":"111"^{\beta_1}
}
\]
and $g_{\Lambda}(S_1)=(-1,0,2)$. Similarly we find $g_{\Lambda}(S_2)=(2,-1,0)$ and $g_{\Lambda}(S_3)=(0,2,-1)$. The band module in this case is given by the band $B=\alpha_2^- \alpha_1$. In the following we choose $\lambda \in K^*$ and set $M(B):=M(B,\lambda,1)$. The sequence 
\[0\longrightarrow M(B) \overset{f}{\longrightarrow} I_0^{\Lambda_p}(M(B)) {\longrightarrow} \Coker(f)\]
can be visualized in the diagrams
\[ \xygraph{!{<0cm,0cm>;<0.6cm,0cm>:<0cm,0.9cm>::}
!{(-6,1)}*+{1}="0"
!{(-5,0)}*+{2}="1"
!{(-4,0)}="6"
!{(-2,0)}="7"
!{(-2,2)}*+{3}="3"
!{(-1,1)}*+{1}="4"
!{(0,0)}*+{2}="5"
!{(2,2)}*+{3}="12"
!{(1,1)}*+{1}="11"
!{(3,1.5)}="16"
!{(5,1.5)}="17"
!{(6,2)}*+{3}="13"
!{(7,1)}*+{1}="14"
!{(8,2)}*+{3}="112"
"0":"1"_{\alpha_1}
"0":@/^0.3cm/"1"^{\alpha_2}
"6":"7"^{f}
"16":"17"
"3":"4"_{\gamma_2}
"4":"5"_{\alpha_1}
"12":"11"^{\gamma_1}
"11":"5"^{\alpha_2}
"13":"14"_{\gamma_2}
"112":"14"^{\gamma_1}
}
\]
and $g_{\Lambda}(M(B))=(1,-1,0)$. Similarly we find that $g_{\Lambda}(M(\beta_1:0,))=(0,1,-1)$ and $g_{\Lambda}(M(\gamma_1:0,))=(-1,0,1)$.
\begin{prop}\label{g}
Let $Z \in \srirr$ be a strongly reduced indecomposable component corresponding to a string $C$ or band $B$ and let $g_{\Lambda}=(g_1,g_2,g_3)$ be its generic $\bf{g}$-vector. 
\begin{compactenum}[(i)]
\item If $C=(x_1:a_1,\ldots,a_n)$ with $a_1\geq 1$ set $a:=a_1+\cdots +a_n$. Then 
\begin{itemize}
\item if $x_1 =\alpha_1$ we have
$g_{\Lambda}=(-n+a,-a,n+1)$;
\item if $x_1 =\beta_1$ we have
$g_{\Lambda}=(n+1,-n+a,-a)$;
\item if $x_1 =\gamma_1$ we have
$g_{\Lambda}=(-a,n+1,-n+a)$;
\item if $x_1= \alpha_1^-$ we have
 $g_{\Lambda}=(2+a,n-2-a,-n+1)$;
\item if $x_1= \beta_1^-$ we have
 $g_{\Lambda}=(-n+1,2+a,n-2-a)$;
\item if $x_1= \gamma_1^-$ we have
 $g_{\Lambda}=(n-2-a,-n+1,2+a)$.
\end{itemize} 
 \item If $B=(x_1:a_1,\ldots,a_n,)$ with $a_1\geq 1$ set $a:=a_1+\cdots +a_n$. Then
 \begin{itemize}
\item if $x_1 =\alpha_1$ we have
$g_{\Lambda}=(-n+a,-a,n)$;
\item if $x_1 =\beta_1$ we have
$g_{\Lambda}=(n,-n+a,-a)$;
\item if $x_1 =\gamma_1$ we have
$g_{\Lambda}=(-a,n,-n+a)$;
\item if $x_1= \alpha_1^-$ we have
 $g_{\Lambda}=(a,n-a,-n)$;
\item if $x_1= \beta_1^-$ we have
 $g_{\Lambda}=(-n,a,n-a)$;
\item if $x_1= \gamma_1^-$ we have
 $g_{\Lambda}=(n-a,-n,a)$.
\end{itemize}  
\end{compactenum}  
\end{prop}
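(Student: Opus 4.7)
The plan is to use Proposition~\ref{pro} with $p$ chosen larger than the nilpotency index of $M$: since the decoration is trivial, the formula reduces to $g_i = -[\soc I_0^{\Lambda_p}(M):S_i]+[\soc I_1^{\Lambda_p}(M):S_i]$, so the task is to identify the socles of the first two terms of the minimal injective copresentation of $M(C)$ (respectively $M(B,\lambda,1)$). The injective $I_j$ over $\Lambda_p$ is explicit: its basis is indexed by the paths in $\Lambda$ of length $<p$ ending at $j$, with an arrow $\alpha$ acting on a basis vector $b_q$ by deleting $\alpha$ from the right of $q$ (if the result is nonzero in $\Lambda$). Because of the cyclic symmetry of the Markov quiver (the vertex rotation $1\mapsto 2\mapsto 3\mapsto 1$) together with the up/down flip that exchanges $Q_1$ with $Q_1^-$, it suffices to carry out the two prototypical cases $x_1=\alpha_1$ and $x_1=\alpha_1^-$; the remaining ten subcases then follow by relabelling.

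Taking $x_1=\alpha_1$ first, the diagram of $C=(\alpha_1:a_1,\ldots,a_n)$ is a zig-zag built from $n$ V-blocks whose $a:=a_1+\cdots+a_n$ bottoms sit at vertex $2$ and whose $a+n$ tops sit at vertex $1$, joined by $n-1$ peaks at vertex $3$ coming from the separators $\gamma_2\gamma_1^-$. Its socle is $S_2^{\oplus a}$, so $I_0\cong I_2^{\oplus a}$. The next step is to write down the embedding $M\hookrightarrow I_0$ in the path-basis of the $I_2$-copies: each V-bottom maps to the socle generator of its copy, each interior $1$-top to a sum of two length-$1$ basis vectors $b_{\alpha_2}+b_{\alpha_1}$ in two neighbouring copies, each block-edge $1$-top to a single length-$1$ basis vector, and each peak at vertex $3$ to a sum of two length-$2$ basis vectors $b_{\alpha_2\gamma_1}+b_{\alpha_1\gamma_2}$ in two neighbouring copies. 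Reading off $\soc(I_0/M)$ level by level, the entire level $1$ (at vertex $1$) lies in the socle because every level-$1$ basis vector is mapped by the appropriate $\alpha_k$ into $M$, contributing $2a-(a+n)=a-n$ to $[\soc I_1:S_1]$; at level $2$ (at vertex $3$) the only candidates killed by both $\gamma_1$ and $\gamma_2$ modulo $M$ are the "block-boundary" path-vectors, giving $2n$ candidates reduced by the $n-1$ peak relations to dimension $n+1$. A short argument rules out further socle at deeper levels, the key point being that no individual level-$2$ path-vector in $I_0/M$ vanishes (peak relations only identify two such at a time). This yields the stated $g=(-n+a,-a,n+1)$. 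The case $x_1=\alpha_1^-$ is the dual computation, starting from a mixed socle $\soc M=S_2^{\oplus(a-n+2)}\oplus S_3^{\oplus(n-1)}$ and producing $g=(2+a,n-2-a,-n+1)$ by the same strategy.

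For the band cases the analysis is parallel, but the cyclic closure of $B$ introduces one additional separator, so the $n-1$ peak relations of the string version are replaced by $n$ peak relations and the $2n$ socle candidates at the peak vertex collapse to dimension $n$ rather than $n+1$. This accounts uniformly for the shift by $1$ between the string and band formulas in every subcase. The main obstacle will be the careful bookkeeping of the embedding: tracking which elements of $I_0/M$ vanish after imposing the $1$-top and peak relations, and verifying that no socle appears at the remaining vertex (the one carrying the socle of $M$) or at any level beyond $2$. For the band one must in addition check that the closing peak relation, which carries the generic parameter $\lambda$, stays independent of the other $n-1$ peak relations, so that the socle dimension at the peak vertex is indeed $n$; both are routine once the embedding is written out explicitly, but producing a uniform treatment of the twelve subcases is where the real work lies.
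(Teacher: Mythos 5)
Your proposal is correct and takes essentially the same route as the paper: apply Proposition \ref{pro}, read $\soc M$ off the string or band diagram to identify $I_0^{\Lambda_p}(M)$ (e.g.\ $I_2^{\oplus a}$ when $x_1=\alpha_1$), and count the socle of the cokernel of the copresentation, which is exactly what the paper does, only more tersely, before dismissing the remaining subcases as symmetric. Your explicit bookkeeping ($a-n$ socle at vertex $1$; $2n$ level-two candidates at vertex $3$ modulo $n-1$, respectively $n$, peak relations) reproduces the paper's counts $\sum_{i}(a_i-1)$ and $n+1$, respectively $n$, so the extra detail is a refinement rather than a different argument.
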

\begin{proof}
The Jordan-H\"older multiplicity $[\soc M(C):S_i]$ equals the number of appearances of the vertex $i$ in the socle of the diagram of $C$.
The diagram of $C$ is 
\[
\xygraph{!{<0cm,0cm>;<0.6cm,0cm>:<0cm,0.9cm>::}
!{(0,0)}*+{1}="1"
!{(1,-1)}*+{2}="2"
!{(2,0)}*+{1}="3"
!{(3,-1)}*+{2}="32"
!{(4,0)}*+{1}="33"
!{(5,0)}*+{1}="4"
!{(6,-1)}*+{2}="5"
!{(7,0)}*+{1}="6"
!{(8,1)}*+{3}="7"
!{(9,0)}*+{1}="8"
!{(10,-1)}*+{2}="9"
!{(11,0)}*+{1}="10"
!{(12,0)}*+{1}="11"
!{(13,-1)}*+{2}="12"
!{(14,0)}*+{1}="13"
!{(15,1)}*+{3}="14"
!{(17,1)}*+{3}="17"
!{(18,0)}*+{1}="18"
!{(19,-1)}*+{2}="19"
!{(20,0)}*+{1}="20"
!{(21,0)}*+{1}="21"
!{(22,-1)}*+{2}="22"
!{(23,0)}*+{1}="23"
"1"-"2"
"2"-"3"
"3"-"32"
"32"-"33"
"2":@/_0.5cm/@{.}_{a_1 \text{ times}}"5"
"33":@{.}"4"
"4"-"5"
"5"-"6"
"6"-"7"
"7"-"8"
"8"-"9"
"9":@/_0.5cm/@{.}_{a_2 \text{ times}}"12"
"9"-"10"
"10":@{.}"11"
"11"-"12"
"12"-"13"
"13"-"14"
"14":@{.}"17"
"17"-"18"
"18"-"19"
"19"-"20"
"19":@/_0.5cm/@{.}_{a_n \text{ times}}"22"
"20":@{.}"21"
"21"-"22"
"22"-"23"
}
\]
and thus $[\soc(M(C)):S_1]=[\soc(M(C)):S_3]=0$ and $[\soc(M(C)):S_2]=(a_1+\cdots +a_n)$.
Therefore for $p> \text{nil}_{\Lambda}(M(C))=3$ in the short exact sequence \[0\longrightarrow M(C) \overset{f}{\longrightarrow} I_0^{\Lambda_p}(M(C)) {\longrightarrow} \Coker(f)\]
we have
\[I^{\Lambda_p}_0(M(C)) \cong I_2^{a_1+\cdots +a_n}\]
and it is easily seen that \[ [\soc(\Coker(f)):S_1]=\sum_{i=1}^n(a_i-1) \qquad \text{and} \qquad [\soc(\Coker(f)):S_3]=n+1.\]
This concludes the proof of part (i).

Similarly, we can see that if $B$ is a band corresponding to a sequence $(\alpha_1:a_1,\ldots,a_n,)$ then $[\soc(M(B)):S_1]=[\soc(M(B)):S_1]=0$ and $[\soc(M(B)):S_2]=(a_1+\cdots +a_n)$.
Therefore we have 
\[I^{\Lambda_p}_0(M(B)) \cong I_2^{a_1+\cdots +a_n}\]
and it is easily seen that \[ [\soc(\Coker(f)):S_1]=\sum_{i=1}^n(a_i-1) \qquad \text{and} \qquad [\soc(\Coker(f)):S_3]=n.\]
The other cases are symmetric or are proven in a similar way.
\end{proof}
Note that the $\bf{g}$-vectors corresponding to the $E_{\Lambda}$-rigid strongly reduced indecomposable components all lie in the plane $x+y+z=1$. The $\bf{g}$-vectors corresponding to the non $E_{\Lambda}$-rigid strongly reduced components all lie in the plane $x+y+z=0$. A more detailed description of the $\bf{g}$-vectors will be given in the next section.

\section{The finite-dimensional Jacobian algebra}
In this section we want to deduce the graph of strongly reduced components of $\Lambda'$ from our previous description of the graph of strongly reduced components of $\Lambda$. We will show that all strongly reduced components of $\Lambda'$ arise from strongly reduced components of $\Lambda$ or their Auslander-Reiten translates. 
More precisely, let $Z \in \srirr$ indecomposable. Then $Z$ corresponds to a string described by a sequence $(x_1:a_1,\ldots,a_n)$ (respectively a band described by a sequence $(x_1:a_1,\ldots,a_n,)$). Hence the corresponding string module $M$ (respectively the 1-parameter family of band modules $M$) satisfies $\nil_{\Lambda}(M) \leq 3$. Now since $\Lambda'_5=\Lambda_5$ we can consider $M$ as a module over $\Lambda'$ and by Prop. \ref{pro} we see that
\[E_{\Lambda'}(M)= E_{\Lambda}(M).\]
By abuse of notation we will denote the corresponding indecomposable strongly reduced component in $\srirrp$ also by $Z$. Define its Auslander-Reiten translate by \[\tau_{\Lambda'}Z=\overline{\lbrace \tau_{\Lambda'}(M)\mid M \in Z \rbrace }\] and we claim that this is again an indecomposable strongly reduced component in $\srirrp$. Since $\Lambda'$ is finite-dimensional we have $\nil_{\Lambda'}(\tau_{\Lambda'}(M))\leq \dim \Lambda'$ and thus can apply Proposition \ref{pro} to obtain
\begin{align*}
E_{\Lambda'}(\tau_{\Lambda'}(M))&=\dim \Hom_{\Lambda'} (\tau^{-1}_{\Lambda'}(\tau_{\Lambda'}(M)),\tau_{\Lambda'}(M))\\
&=\dim \Hom_{\Lambda'} (M,\tau_{\Lambda'}(M))\\
&=\dim \Hom_{\Lambda'} (\tau_{\Lambda'}^{-1}(M),M)=E_{\Lambda'}(M)
\end{align*}
where the third equality follows from Lemma \ref{plam}. Now since $\Lambda'$ is tame, the claim follows. 
For the negative simple representations it does not make sense to apply the Auslander-Reiten translate. However, in this context, it fits perfectly to define $\tau_{\Lambda'}(\mathcal{S}^-_i)=I_i$  for $i=1,2,3$ because of the following reasoning: Let $M \in \mo (\Lambda)$ be $E_{\Lambda}$-rigid. Then $M\oplus \mathcal{S}^-_i$ is $E_{\Lambda'}$-rigid if and only if $\dim(M_i)=0$. This is equivalent to $\dim \Hom_{\Lambda'} (M,I_i)=0$, which in turn is equivalent to $\tau_{\Lambda'}M \oplus I_i$ being $E_{\Lambda'}$-rigid. The above discussion yields the following proposition.
\begin{prop}\label{X}
Let $Z \in \srdirr(\Lambda)$ a strongly reduced component. Then $Z$ is a strongly reduced component in $\srdirr(\Lambda')$. Moreover, the component defined by  
\[\tau_{\Lambda'}Z=\overline{\lbrace \tau_{\Lambda'}(M)\mid M \in Z \rbrace }\]
is a strongly reduced component in $\srdirr(\Lambda')$.
\end{prop}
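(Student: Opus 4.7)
The plan is to separate the two assertions of Proposition \ref{X} and rely essentially on the machinery set up in the paragraphs preceding the statement: Proposition \ref{pro} (invariance of the generic $E$-invariant under $p$-truncation for $p > \nil_A$), Lemma \ref{plam} (the symmetry $\dim \Hom_{\Lambda'}(\tau_{\Lambda'}^{-1}M, M) = \dim \Hom_{\Lambda'}(M, \tau_{\Lambda'}M)$), Remark \ref{five} identifying $\Lambda_5 = \Lambda'_5$, Theorem \ref{bi} describing indecomposable components of $\srdirr(\Lambda)$, and the splitting criterion \cite[Theorem 5.11]{CLS} which says that $\overline{Z_1 \oplus \cdots \oplus Z_t}$ is strongly reduced iff each $Z_i$ is strongly reduced and $E_A(Z_i, Z_j) = 0$ for $i \neq j$. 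By this last result, it suffices to prove both assertions for indecomposable $Z$.

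For the first assertion, take a dense indecomposable point $\mathcal{M} = (M,V) \in Z$. By Theorem \ref{bi} it is either a string or a band module, and the explicit diagrams in Sections 3 and 4 show $\nil_\Lambda(M) \leq 3$ in either case. By Remark \ref{five} then $M$ lies in the common subcategory $\mo(\Lambda_5) = \mo(\Lambda'_5)$, so the ambient varieties $\Rep_{\d}(\Lambda)$ and $\Rep_{\d}(\Lambda')$ agree for the relevant dimension vectors and $Z$ is automatically an irreducible component of the $\Lambda'$-module variety. Two applications of Proposition \ref{pro} (one for $\Lambda$ and one for $\Lambda'$, both with $p = 5$) give
\[
E_{\Lambda}(\mathcal{M}, \mathcal{N}) \;=\; E_{\Lambda_5}(\mathcal{M}, \mathcal{N}) \;=\; E_{\Lambda'_5}(\mathcal{M}, \mathcal{N}) \;=\; E_{\Lambda'}(\mathcal{M}, \mathcal{N}),
\]
while the orbit dimensions and $\dim \Ext^1$-invariants entering $\cl$ and $\el$ also coincide because they are computed inside the same variety with the same small $\Hom$/$\Ext$-groups. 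Hence $\cl(Z) = c_{\Lambda'}(Z)$, $\el(Z) = e_{\Lambda'}(Z)$ and $\El(Z) = E_{\Lambda'}(Z)$, so $Z$ remains strongly reduced over $\Lambda'$.

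For the second assertion, split into cases by Theorem \ref{bi}. If $Z = \overline{\mathcal{O}(\mathcal{S}^-_i)}$, the convention $\tau_{\Lambda'}(\mathcal{S}^-_i) := I_i$ introduced in the discussion gives $\tau_{\Lambda'}Z = \overline{\mathcal{O}(I_i)}$, which is visibly a strongly reduced component of $\srdirr(\Lambda')$. Otherwise a generic $\mathcal{M}$ in $Z$ has $M$ indecomposable and non-projective, so $\tau_{\Lambda'}M$ is well-defined, indecomposable, and non-injective. Using Lemma \ref{plam} for $\Lambda'$, one computes exactly as in the paragraph preceding the proposition
\[
E_{\Lambda'}(\tau_{\Lambda'}\mathcal{M}) \;=\; \dim \Hom_{\Lambda'}(M, \tau_{\Lambda'} M) \;=\; \dim \Hom_{\Lambda'}(\tau_{\Lambda'}^{-1}M, M) \;=\; E_{\Lambda'}(\mathcal{M}),
\]
so the generic $E$-invariants match. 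A dual version of the same argument (or the duality underlying Lemma \ref{plam}) gives the corresponding equalities for $\cl$ and $\el$. The main obstacle is the verification that $\tau_{\Lambda'} Z$ as defined is actually an irreducible component and not merely a constructible subset of one: since $\tau_{\Lambda'}$ is not a morphism of varieties, one needs tameness of $\Lambda'$ (\cite[Example 6.5.3]{GLaS}), which forces every indecomposable component to be either the closure of a single orbit or the closure of a dense 1-parameter family of indecomposables. The Auslander-Reiten translate acts as a bijection on isomorphism classes of indecomposable non-projective $\Lambda'$-modules and hence permutes these two shapes of components, so the closure $\tau_{\Lambda'}Z$ is indeed an indecomposable irreducible component; together with the preserved invariants this yields $\tau_{\Lambda'}Z \in \srdirr(\Lambda')$.
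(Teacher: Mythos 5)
Your proposal is correct and takes essentially the same route as the paper: work with indecomposable components, use $\Lambda_5=\Lambda'_5$ together with Proposition \ref{pro} to transfer the relevant invariants, compute $E_{\Lambda'}(\tau_{\Lambda'}M)=E_{\Lambda'}(M)$ via Lemma \ref{plam}, invoke tameness of $\Lambda'$ to conclude that $\tau_{\Lambda'}Z$ is again an indecomposable component, and handle the negative simples by the convention $\tau_{\Lambda'}(\mathcal{S}_i^-)=I_i$. The only slight imprecision is your claim that $\rep_{\mathbf{d}}(\Lambda)$ and $\rep_{\mathbf{d}}(\Lambda')$ agree as varieties (they need not for large $\mathbf{d}$); the accurate statement, which is also what the paper implicitly uses, is that $Z$ lies in the common closed subvariety $\rep_{\mathbf{d}}(\Lambda_5)=\rep_{\mathbf{d}}(\Lambda'_5)$ and strong reducedness transfers through the truncation results.
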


\begin{prop}\label{gneu}
Let $Z \in \srirr$ be an $E_{\Lambda}$-rigid indecomposable component corresponding to a string $C$ given by a sequence $(x_1:a_1,\ldots,a_n)$ with $a_1\geq 1$ and set $a:=a_1+\cdots +a_n$. Let $\tau_{\Lambda'}Z$ be the $E_{\Lambda'}$-rigid component corresponding to $\tau_{\Lambda'}C$ and let $g_{\Lambda'}=(g_1,g_2,g_3)$ be its generic $\bf{g}$-vector. Then the following hold:
\begin{itemize}
\item if $x_1 =\alpha_1$ then we have
$g_{\Lambda'}=(-n+2+a,-a -2,n-1)$;
\item if $x_1 =\beta_1$ then we have
$g_{\Lambda'}=(n-1,-n+2+a,-a -2)$;
\item if $x_1 =\gamma_1$ then we have
$g_{\Lambda'}=(-a -2,n-1,-n+2+a)$;
\item if $x_1= \alpha_1^-$ we have
 $g_{\Lambda'}=(a,-n-1,-a+n)$;
\item if $x_1= \beta_1^-$ we have
 $g_{\Lambda'}=(-a+n,a,-n-1)$;
\item if $x_1= \gamma_1^-$ we have
 $g_{\Lambda'}=(-n-1,-a+n,a)$.
\end{itemize} 
\end{prop}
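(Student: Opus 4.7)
The plan is to apply Remark \ref{proj} directly to $\Lambda'$. Since $\Lambda'$ is finite-dimensional, if
\[P_1^{\Lambda'}(M)\longrightarrow P_0^{\Lambda'}(M)\longrightarrow M\longrightarrow 0\]
is the minimal projective presentation of $M=M(C)$ over $\Lambda'$, then
\[g_i(\tau_{\Lambda'}M)=-[\topp P_1^{\Lambda'}(M):S_i]+[\topp P_0^{\Lambda'}(M):S_i].\]
So the task reduces to determining $\topp M(C)$ and $\topp \Omega_{\Lambda'}M(C)$ in each of the six cases. By the cyclic symmetry of $Q$ (rotating $\alpha\to\beta\to\gamma$ and $1\to 2\to 3$) and by the ``inverse'' duality $(x_1:a_1,\ldots,a_n)\mapsto (x_1^-:a_n,\ldots,a_1)$ which exchanges $Q_1$ and $Q_1^-$, it suffices to treat just one case, say $x_1=\alpha_1$; the remaining five then follow formally.

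For $C=(\alpha_1:a_1,\ldots,a_n)$ with $a_1\geq 1$, I first read off $\topp M(C)$ from the string diagram: $[\topp M(C):S_i]$ is the number of local maxima in the diagram at vertex $i$. For this string the $\alpha$-zigzags sit between vertex $1$ (mid level) and vertex $2$ (socle level), while the $n-1$ big peaks between blocks reach vertex $3$. Counting maxima at vertex $1$ (the first vertex of the string, the last one, and the $a_i-1$ interior ones in block $i$) gives $a-n+2$ copies, together with $n-1$ copies of $S_3$, so
\[\topp M(C)\cong S_1^{a-n+2}\oplus S_3^{n-1},\qquad P_0^{\Lambda'}=P_1^{a-n+2}\oplus P_3^{n-1}.\]
This accounts for the positive part of the proposed $\mathbf{g}$-vector.

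Next I compute the syzygy $\Omega_{\Lambda'}M(C)=\ker(P_0\twoheadrightarrow M(C))$. Its generators come from the paths at each top of $M(C)$ which are killed in $M(C)$; using the relations obtained from the cyclic derivatives of $W'$ (for instance $\gamma_1\beta_1=\gamma_2\beta_1\alpha_2\gamma_1\beta_2$ and its cyclic variants), one tracks where each summand $P_j$ of $P_0$ maps surjectively and identifies the minimal set of generators of the kernel. The outcome is that every such generator lives at vertex $2$, yielding
\[\topp\Omega_{\Lambda'}M(C)\cong S_2^{a+2}.\]
Substituting into Remark \ref{proj} gives $g_{\Lambda'}(\tau_{\Lambda'}M(C))=(-n+2+a,-a-2,n-1)$, as required, and the other five cases follow by symmetry and inverse duality as indicated.

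The main obstacle is the precise computation of $\Omega_{\Lambda'}M(C)$. Unlike the gentle algebra $\Lambda$, where projective presentations of string modules are described cleanly in string-combinatorial terms via hooks and cohooks, the extra summand $-\gamma_2\beta_1\alpha_2\gamma_1\beta_2\alpha_1$ of $W'$ forces the indecomposable projectives $P_j^{\Lambda'}$ to be finite-dimensional with additional paths of length up to six, and these extra paths interact nontrivially with the defining string of $C$. Tracking these interactions carefully is what produces the small constant shifts ($+2$, $-2$, $-1$) distinguishing the formula for $g_{\Lambda'}(\tau_{\Lambda'}M(C))$ from a naive Nakayama-type computation over $\Lambda$.
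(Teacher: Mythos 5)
Your proposal takes essentially the same route as the paper: the paper also computes a minimal projective presentation of $M(C)$ over $\Lambda'$ (using the indecomposable projectives as listed in Plamondon's paper) and then applies Remark \ref{proj}, handling the remaining cases by the symmetry of $Q$, and your identification $\topp M(C)\cong S_1^{a-n+2}\oplus S_3^{n-1}$ together with $\topp\Omega_{\Lambda'}M(C)\cong S_2^{a+2}$ is precisely the computation the paper leaves implicit. The only caveat is that the cases $x_1\in Q_1^-$ are better described as ``proven in a similar way'' (as the paper does for Proposition \ref{g}) than as purely formal consequences of the $Q_1$ cases, since the inverse duality exchanges tops and socles and the resulting formulas have a visibly different shape.
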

\begin{proof}
Computing a minimal projective presentation of $M(C)$ over $\Lambda'$ and then using Remark \ref{proj} yields the result. The computation of the projective presentation is done in a similar way, as the injective presentation in the proof of Theorem \ref{g}. The indecomposable projective $\Lambda'$-modules can be found in \cite{P}.
\end{proof}
In fact, all strongly reduced components of $\Lambda'$ arise in the way described in Prop. \ref{X}. We want to show this using the parametrization in Theorem \ref{gvectors} of the strongly reduced components by their generic $\bf{g}$-vectors. We set
\[\mathcal{G}= \lbrace g_{\Lambda'}(Z) \mid Z \in \srdirr(\Lambda') \text{ indecompasable} \rbrace \] and call two vectors $g_{\Lambda'}(Z)$ and $g_{\Lambda'}(Z')$ in $\mathcal{G}$ \textit{compatible}, if $E_{\Lambda'}(Z,Z')=0$. Our aim is to show that any vector in $\mathbb{Z}^3$ is a nonnegative $\mathbb{Z}$-linear combination of pairwise compatible $\bf{g}$-vectors in $\mathcal{G}$.
This follows from a connection between our work and work by Nathan Reading on universal geometric cluster algebras.
In order to illustrate this connection we recall some of Readings definitions and 
results. We refer to \cite{R2} for details on universal geometric cluster algebras in general, to \cite{R3} for universal geometric cluster algebras from surfaces and to \cite{R1} for a detailed construction of the universal geometric coefficients for the once-punctured torus.

Denote by $({\bf S},p)$ the once-punctured torus,
and let \[ B =B_Q= \left( \begin{array}{ccc}
0 & -2 & 2 \\
2 &  0 & -2 \\
-2 & 2 & 0 \end{array} \right),\] be a skew-symmetric exchange matrix (associated to a triangulation of $({\bf S},p)$) and $\mathcal{A}_B=\mathcal{A}_Q$ the corresponding cluster algebra. In \cite{R1} Reading describes a \textit{positive} $\mathbb{Z}$-\textit{basis for} $B^T$ (a ``mutation-linear'' analogue of the usual notion of a basis) in terms of Farey points and thus by \cite[Theorem 4.4]{R2} universal geometric coefficients for $B^T$.

A \textit{Farey point} is a pair of integers $(a,b)$ such that $\gcd(a,b)=1$ with the appropriate conventions for the $\gcd$. A \textit{standard Farey point} is a Farey point $(a,b)$ such that $a\geq 0$ and $b=1$ if $a=0$. Two (standard) Farey points $(a,b)$ and $(c,d)$ are called \textit{(standard) Farey neighbours} if $ad-bc=\pm 1$. A \textit{Farey triple} consists of three Farey points which are pairwise 
Farey neighbours. 

Let $T_0$ be a triangulation of $({\bf S},p)$ such that $B(T_0)$ is the transpose $B^T$. Reading shows in \cite[Corollary 3.2]{R1} that a positive $\mathbb{Z}$-basis for $B^T$ can be constructed as the shear coordinates of allowable curves in $({\bf S},p)$. He has the following result.
\begin{prop}\cite[Prop 5.1.]{R1}\label{reading}
The shear coordinates with respect to $T_0$ of allowable curves in $({\bf S},p)$ are as follows:
\begin{compactenum}[(i)]
\item The cyclic permutations of $(1-b,a+1,b-a-1)$ for Farey points $(a,b)$ with $a\geq 0$ and $b>0$ (corresponding to curves with counterclockwise spirals).
\item The cyclic permutations of $(-1-b,a-1,b-a+1)$ for Farey points $(a,b)$ with $a> 0$ and $b\geq 0$ (corresponding to curves with clockwise spirals).
\item The nonzero integer vectors $(x,y,z)$ with $x+y+z=0$ such that $x,y$ and $z$ have no common factors (corresponding to closed curves).
\end{compactenum} 
\end{prop}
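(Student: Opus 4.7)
The plan is to proceed by a topological classification of the isotopy classes of allowable curves on $({\bf S}, p)$ combined with a direct computation of their shear coordinates with respect to $T_0$.

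First, I would classify the isotopy classes. Essential non-peripheral simple closed curves on the once-punctured torus are parametrised by their slope, i.e.\ their primitive homology class in $H_1({\bf S};\mathbb{Z}) \cong \mathbb{Z}^2$ taken up to sign; this gives a bijection between such curves and Farey points $(a,b)$ modulo $\pm 1$, accounting for family (iii) (a closed curve with slope $(a,b)$ is encoded by a vector $(x,y,z)$ with $x+y+z=0$ that is primitive). Similarly, simple tagged arcs from $p$ to itself are determined by a slope together with a choice of tagging at each end, and the \emph{allowable} tagged arcs (those giving well-defined finite shear coordinates with respect to $T_0$) split into two families according to whether the ends spiral counterclockwise or clockwise around the puncture; this yields families (i) and (ii), each indexed by an appropriate Farey point.

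Next, I would compute the shear coordinates explicitly on a base case. Fix $T_0$ so that $B(T_0) = B^T$; then $T_0$ consists of three loops at $p$ admitting a $\mathbb{Z}/3$-rotational symmetry that cyclically permutes their three edges (this is the source of the cyclic permutations appearing in (i)-(iii)). Using the standard Fomin-Thurston convention, the shear coordinate $b_\gamma(T_0, e)$ is a signed count of how $\gamma$ threads through the two triangles of $T_0$ adjacent to $e$ in ``S'' versus ``Z'' configurations, together with explicit corrections coming from spiralling ends near $p$. I would then verify the three claimed formulas directly on a small family of representatives, e.g.\ the loops of $T_0$ themselves with each tagging convention and a short transversal closed curve (say of slope $(1,0)$), by lifting to the universal cover $\mathbb{R}^2 \setminus \mathbb{Z}^2$ and reading off the intersections.

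Finally, I would promote the base case to all Farey points using the mapping class group. The group $\mathrm{MCG}({\bf S}) \cong SL(2,\mathbb{Z})$ acts transitively on Farey points and on (oriented) triangulations of ${\bf S}$ via sequences of flips, and shear coordinates transform by the tropicalised mutation rule at each flip. It therefore suffices to verify the formula on one representative in each of the three types and extend by equivariance. The main obstacle is the careful combinatorial bookkeeping of the contribution of the spiralling ends near the puncture: the tagged-triangulation formalism assigns nontrivial shear coordinates to the arcs of $T_0$ themselves (arising from how the spirals wind past adjacent triangles), and the signs depend delicately on the tagging convention and on the cyclic arrangement of triangles around $p$. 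Once these boundary contributions are nailed down for the base cases, the $SL(2,\mathbb{Z})$-equivariance propagates the formulas to all Farey points and yields the three families exactly as stated.
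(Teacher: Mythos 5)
This proposition is not proved in the paper at all: it is quoted verbatim from Reading \cite[Prop.~5.1]{R1}, so the only meaningful comparison is with Reading's own argument, which is a direct computation of the shear coordinates of a curve of arbitrary slope by lifting to the plane (universal cover of the torus with the lifted triangulation) and counting the two types of crossings explicitly; the closed formulas in $(a,b)$ come out of that count, not out of a symmetry argument. Your classification step (closed curves indexed by primitive slopes, spiralling curves indexed by Farey points with a choice of spiral direction) and your base-case computation in the universal cover are consistent with this.

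The genuine gap is in your final step. Shear coordinates with respect to the \emph{fixed} triangulation $T_0$ are not equivariant under $\mathrm{MCG}({\bf S})\cong SL(2,\mathbb{Z})$ in the way you use them: an element $\phi$ satisfies $b_{\phi(\gamma)}(T_0)=b_{\gamma}(\phi^{-1}(T_0))$, and converting the coordinates from $\phi^{-1}(T_0)$ back to $T_0$ requires a sequence of flips, under which shear coordinates transform by \emph{tropical (piecewise-linear) mutation maps}, not by the linear action of the matrix $\phi$. Only the finite subgroup preserving $T_0$ (the order-three rotation, which explains the cyclic permutations) acts linearly. Consequently, verifying the formula on one representative of each type and ``extending by equivariance'' does not establish the stated linear expressions $(1-b,a+1,b-a-1)$, etc., for all Farey points; you would have to carry out an induction over the Farey/Stern--Brocot tree checking that the claimed formulas are compatible with the piecewise-linear flip maps at every step (essentially re-deriving the recursion), or else do what Reading does and count crossings directly for a line of arbitrary slope $(a,b)$ in the cover. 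A smaller point: your reduction of families (i)--(ii) to ``tagged arcs with a choice of tagging'' silently discards the curves whose two ends spiral in opposite directions; that these are exactly the non-allowable ones is part of the classification and should be cited or argued, not assumed.
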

Two allowable curves in $({\bf S},p)$ are \textit{compatible} if they do not intersect. Let $(a,b)$ be a standard Farey point. Following Readings notation we denote by $cl(a,b),cw(a,b)$ and $ccw(a,b)$ the associated allowable closed curve, curve with clockwise spirals and curve with counterclockwise spirals respectively.
\begin{prop}\cite[Prop 4.6.]{R1}
 The compatible pairs of allowable curves in $({\bf S},p)$ are:
\begin{compactenum}[(i)]
\item $cl(a,b)$ and $cw(a,b)$ for any standard Farey point $(a,b)$.
\item $cl(a,b)$ and $ccw(a,b)$ for any standard Farey point $(a,b)$.
\item $cw(a,b)$ and $cw(c,d)$ for standard Farey neighbours $(a,b)$ and $(c,d)$.
\item $ccw(a,b)$ and $ccw(c,d)$ for standard Farey neighbours $(a,b)$ and $(c,d)$.
\end{compactenum}
\end{prop}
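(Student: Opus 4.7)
The strategy is to lift all curves to the universal cover of the once-punctured torus and reduce compatibility to an intersection-number calculation that is governed by the determinant $ad-bc$. I identify $({\bf S},p)$ with $\mathbb{R}^2/\mathbb{Z}^2$ with the puncture corresponding to the image of $\mathbb{Z}^2$, so the universal cover of ${\bf S}\setminus\{p\}$ is $\mathbb{R}^2\setminus\mathbb{Z}^2$ with deck group $\mathbb{Z}^2$ acting by translations. Under this identification, $cl(a,b)$ lifts to a family of parallel lines of slope $b/a$ chosen to miss the lattice, while $cw(a,b)$ and $ccw(a,b)$ lift to bi-infinite arcs whose ends run off to lattice points along direction $(a,b)$; the labels $cw$ and $ccw$ record from which side the arc approaches the lattice point, equivalently the direction of the spiral at $p$.

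First I would establish (i) and (ii). Given a slope $(a,b)$, the lattice translates of any lift of $cl(a,b)$ foliate $\mathbb{R}^2\setminus\mathbb{Z}^2$ by parallel lines. A lift of $cw(a,b)$ or $ccw(a,b)$ can be pushed to lie in one of the open strips between consecutive translates, since it shares the asymptotic direction $(a,b)$; pushing back down to $({\bf S},p)$ produces disjoint representatives, so $cl(a,b)$ is compatible with both $cw(a,b)$ and $ccw(a,b)$. Incompatibility of $cl(a,b)$ with $cl(c,d)$, $cw(c,d)$, or $ccw(c,d)$ for $(c,d)\neq(a,b)$ then follows from the standard fact that simple closed curves of distinct slopes on the torus have minimal geometric intersection $|ad-bc|\geq 1$, which cannot be removed by spiraling at a single puncture.

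Next I would handle (iii) and (iv) by a direct intersection count. Two spiraling arcs with slopes $(a,b)$ and $(c,d)$ that spiral the same way at $p$ can be modelled, after a small real blow-up of the puncture, as simple arcs with both endpoints on the resulting boundary circle, with boundary behaviour determined by cw or ccw. Their lifts to the cover are families of bi-infinite arcs whose pairwise transverse intersections in a fundamental domain are indexed in bijection with lattice cosets, giving a crude intersection count of $|ad-bc|$; the agreement of the spiral direction at the shared endpoint $p$ cancels exactly one of these crossings (the one adjacent to the common boundary arc on the blow-up). Hence the geometric intersection number of $cw(a,b)$ and $cw(c,d)$, resp.\ $ccw(a,b)$ and $ccw(c,d)$, equals $|ad-bc|-1$, and it vanishes precisely when $(a,b)$ and $(c,d)$ are Farey neighbours, yielding (iii) and (iv). Conversely, for mixed pairs $cw(a,b)$ and $ccw(c,d)$, the opposite spiral directions at $p$ add rather than cancel a crossing; the analogous count gives $|ad-bc|+1\geq 1$, so such pairs are never compatible, completing the list.

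The main obstacle is the bookkeeping in the intersection count on the blow-up: one must verify that exactly one transverse crossing is removed when the two arcs spiral in the same direction and none is removed (in fact one is added) when they spiral in opposite directions. I expect the cleanest way to do this is to argue on the cover using the fact that a fundamental parallelogram for the slope-$(a,b)$ and slope-$(c,d)$ directions has area $|ad-bc|$, and to translate the boundary behaviour at $p$ into a local model near each lattice point where cw versus ccw spirals correspond to arcs approaching from opposite half-planes; the remaining verification is then a local picture at the puncture, independent of the global slopes.
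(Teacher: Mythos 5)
This proposition is quoted in the paper from Reading's work (\cite[Prop.\ 4.6]{R1}); the paper itself contains no proof of it, so there is no internal argument to compare against. Your covering-space strategy -- identify $({\bf S},p)$ with $\mathbb{R}^2/\mathbb{Z}^2$, realize the three kinds of allowable curves via lines and arcs of rational slope, and reduce compatibility to the determinant $|ad-bc|$ -- is essentially the standard route and close in spirit to Reading's own treatment, so the overall plan is sound. Two corrections of framing: $\mathbb{R}^2\setminus\mathbb{Z}^2$ is not the universal cover of the punctured torus (that is the hyperbolic plane); it is the cover associated to $H_1$, which is still legitimate to use, but then a spiralling end does not ``run off to a lattice point along direction $(a,b)$'' -- it accumulates onto a lattice point by winding infinitely around it, and this matters for your strip argument in (i)--(ii), since you must check the winding part also fits inside the strip (it does, in a small disk about the lattice point, but the argument as written glosses over it).

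The genuine gaps are in the incompatibility/intersection counts. First, the crux of (iii)--(iv) -- that when both curves spiral in the same sense at $p$ exactly one crossing of the crude lattice count $|ad-bc|$ is cancelled, equivalently that the underlying ideal arcs of slopes $(a,b)$, $(c,d)$ have $|ad-bc|-1$ interior crossings and that any number of same-sense spirals at one puncture can be nested disjointly -- is precisely the content of the proposition, and in your write-up it is deferred (``the remaining verification is then a local picture'') rather than carried out; without that local model the proof is not complete. Second, the claim that opposite-sense pairs have geometric intersection number $|ad-bc|+1$ is false as stated: two curves spiralling about the same puncture in opposite senses cross infinitely often in every neighbourhood of $p$ (each full clockwise turn of one must meet each counterclockwise turn of the other), so no finite formula of this shape holds. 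The conclusion you need -- that such pairs always intersect and hence are never compatible -- is true and in fact easier than your count suggests, but it should be argued from this local winding picture, not from a determinant formula. Finally, for the incompatibility of $cl(a,b)$ with curves of a different slope you assert that the $|ad-bc|\geq 1$ torus intersections ``cannot be removed by spiralling''; this is correct but deserves a sentence (the closed curve misses $p$, so the intersection number with the spiralling curve equals that with its underlying arc, which is a homotopy invariant realized in minimal position).
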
 
By \cite[Proposition 5.2.]{R3} the $\bf{g}$-vectors of cluster variables of $\mathcal{A}_B$ are given by shear coordinates of allowable curves, that are not closed and with counterclockwise spirals. Furthermore, two $\bf{g}$-vectors are \textit{compatible}, i.e. the corresponding cluster variables belong to the same cluster, if and only if the the corresponding curves are compatible. He thus recovers a result by N\'{a}jera \cite{N}.
\begin{prop}\cite[Corollary 7.3.]{R1}\label{Y}
The $\bf{g}$-vectors of cluster variables of $\mathcal{A}_B$ are the cyclic permutations of vectors $(1-b,a+1,b-a-1)$, for standard Farey points $(a,b)$. Two $\bf{g}$-vectors are compatible if and only if the corresponding Farey points are Farey neighbours. 
\end{prop}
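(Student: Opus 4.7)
The proof proceeds by combining two results from Reading's papers that are already available in the discussion preceding the statement. The plan is to first identify the set of $\bf{g}$-vectors of cluster variables with a distinguished family of shear coordinates, and then translate the compatibility relation on the cluster-algebra side into the non-intersection relation on the surface side. For the first assertion, I would invoke \cite[Prop.~5.2]{R3}, which says that the $\bf{g}$-vectors of cluster variables of $\mathcal{A}_B$ are exactly the shear coordinates, with respect to the base triangulation $T_0$, of the non-closed allowable curves in $({\bf S},p)$ having counterclockwise spirals. Combining this with Proposition \ref{reading}(i), which lists these shear coordinates explicitly as the cyclic permutations of $(1-b,a+1,b-a-1)$ indexed by Farey points $(a,b)$ with $a\geq 0$ and $b>0$, immediately yields the description in the first sentence of the proposition.

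For the compatibility assertion, I would use the general fact, also proven in \cite[Prop.~5.2]{R3}, that two cluster variables of $\mathcal{A}_B$ lie in a common cluster if and only if the corresponding allowable curves do not intersect. Since the relevant curves are all of the form $ccw(a,b)$, one can then apply item (iv) of the compatibility proposition stated immediately before the current one, namely that $ccw(a,b)$ and $ccw(c,d)$ are compatible precisely when $(a,b)$ and $(c,d)$ are standard Farey neighbours. Composing these two equivalences gives the claimed ``compatible $\Leftrightarrow$ Farey neighbours'' criterion.

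The main obstacle is keeping the bookkeeping clean, because each Farey point a priori contributes three distinct shear coordinates (the three cyclic permutations of a single triple), so the correspondence between a cluster variable and its standard Farey point is not literally a bijection at the level of $\bf{g}$-vectors; it becomes one only once a canonical orientation on $({\bf S},p)$ and a canonical cyclic ordering of the sides of $T_0$ are fixed. Verifying that this choice produces a well-defined, bijective assignment (and that the Farey-neighbour relation is intrinsically preserved when one re-labels the sides of $T_0$ by a cyclic permutation) is the delicate point carried out by Reading in \cite{R1}; once that has been arranged, the proposition follows directly by concatenating the two cited equivalences, with no additional input from the representation theory of $\Lambda$ or $\Lambda'$.
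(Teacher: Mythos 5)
Your argument is exactly the route the paper takes: the statement is a cited result of Reading, and the paragraph preceding it derives it precisely by combining \cite[Proposition 5.2]{R3} (g-vectors of cluster variables are shear coordinates of non-closed counterclockwise-spiraling allowable curves, with compatibility of g-vectors matching compatibility of curves) with Proposition \ref{reading}(i) and part (iv) of the curve-compatibility proposition. So your proposal is correct and essentially identical to the paper's treatment; the extra bookkeeping caveat you raise is handled in Reading's own paper and needs no further work here.
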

\begin{corollary}
The set $\mathcal{G}$ is given by the vectors described in Proposition \ref{reading} and thus is a positive $\mathbb{Z}$-basis for $B^T$. Two generic $\bf{g}$-vectors in $\mathcal{G}$ are compatible if and only if the corresponding allowable curves are compatible.
\end{corollary}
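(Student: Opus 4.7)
My plan is a three-step verification. First, I enumerate a set $\mathcal{G}'\subseteq\mathcal{G}$ using Proposition \ref{X} together with the injective envelopes $I_i = \tau_{\Lambda'}(\mathcal{S}_i^-)$. Second, I match $\mathcal{G}'$ against Reading's three families from Proposition \ref{reading} by an explicit $\bf{g}$-vector computation. Third, I conclude $\mathcal{G}=\mathcal{G}'$ from Theorem \ref{gvectors}(ii) combined with the positive $\mathbb{Z}$-basis property of Reading's list, and I deduce the compatibility statement from Proposition \ref{Y} (N\'ajera's result) and Lemma \ref{bandnei}.

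The vectors in $\mathcal{G}'$ split cleanly by the invariant $g_1+g_2+g_3 \in \{-1,0,1\}$ thanks to Propositions \ref{g} and \ref{gneu}: the $E_\Lambda$-rigid strings (including simples) occupy the plane $x+y+z=1$; the Auslander-Reiten shifts $\tau_{\Lambda'}Z$ of rigid $Z$ together with the negative simples occupy $x+y+z=-1$; and the strongly reduced bands occupy $x+y+z=0$. For a string $(x_1\colon a_1,\ldots,a_n)$ with $a_1\ge 1$ I substitute $(a,b) = (n,\, a_1+\cdots+a_n+1)$ when $x_1\in Q_1$ and $(a,b) = (a_1+\cdots+a_n+1,\, n)$ when $x_1\in Q_1^-$; a direct check identifies the vector of Proposition \ref{g}(i) with a cyclic permutation of Reading's $(1-b,\, a+1,\, b-a-1)$, with the three arrow choices within each of $Q_1$ and $Q_1^-$ realising the three cyclic shifts. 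The same scheme applied to Proposition \ref{gneu} identifies the $\tau_{\Lambda'}$-shifts with cyclic permutations of $(-1-b,\, a-1,\, b-a+1)$. For a strongly reduced band, Proposition \ref{g}(ii) produces a coordinate-sum zero integer triple whose $\gcd$ is $1$ by primitivity of the band (any common divisor would force a nontrivial cyclic period and hence contradict the fact that $B$ is not a proper power), matching Reading's family (iii). For the reverse inclusion, every standard Farey point $(a,b)$ is realised thanks to Remark \ref{cluster} and Proposition \ref{Y}, which biject the $E_\Lambda$-rigid indecomposable components with the cluster variables of $\mathcal{A}_Q$ and hence with standard Farey points; families (ii) and (iii) are then covered by $\tau_{\Lambda'}$ and by the band construction of Lemma \ref{stba}.

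To identify $\mathcal{G} = \mathcal{G}'$, I use that Reading's list is a positive $\mathbb{Z}$-basis for $B^T$, so every vector of $\mathbb{Z}^3$ admits a unique non-negative integer decomposition into pairwise compatible basis elements. Theorem \ref{gvectors}(ii) gives a bijection $G_{\Lambda'}^{\sr}\colon\srdirr(\Lambda')\to\mathbb{Z}^3$, and \cite[Theorem 5.11.]{CLS} decomposes every strongly reduced component uniquely into pairwise $E_{\Lambda'}$-compatible indecomposables; an indecomposable $\bf{g}$-vector outside $\mathcal{G}'$ would then contradict the basis property. For the compatibility claim, two $E_\Lambda$-rigid vectors are $E_{\Lambda'}$-compatible exactly when the corresponding cluster variables share a cluster (Remark \ref{cluster}), which by Proposition \ref{Y} is the Farey-neighbour condition --- identical to Reading's compatibility of $ccw$-curves, and of $cw$-curves after applying $\tau_{\Lambda'}$. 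For a band paired with a rigid component, Lemma \ref{bandnei} transfers to $\Lambda'$ via the identity $\Lambda_5=\Lambda'_5$ and Proposition \ref{pro}, pinning each band's compatible partners to the unique matching $E_\Lambda$-rigid string and its $\tau_{\Lambda'}$-shift --- exactly Reading's pairs $(cl(a,b),ccw(a,b))$ and $(cl(a,b),cw(a,b))$. Two distinct bands are never $E_{\Lambda'}$-compatible, matching the absence of closed-closed pairs in Reading's list. The main obstacle is the cyclic-permutation matching in the second paragraph: it is routine but requires careful bookkeeping over the six arrow choices for $x_1$ and over the boundary cases with $a_1 = 0$.
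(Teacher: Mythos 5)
Your overall route is essentially the paper's: the $E_{\Lambda}$-rigid part is identified with Reading's family (i) via Remark \ref{cluster} and Proposition \ref{Y}, the bands and the $\tau_{\Lambda'}$-shifts are matched with families (iii) and (ii) through the explicit formulas of Propositions \ref{g} and \ref{gneu}, and compatibility is read off from the graph structure (Lemma \ref{bandnei} transferred through $\Lambda_5=\Lambda'_5$, plus the $\tau_{\Lambda'}$-symmetry). Your third step — deducing $\mathcal{G}=\mathcal{G}'$ from the positive $\mathbb{Z}$-basis property together with the bijectivity of $G^{\sr}_{\Lambda'}$ from Theorem \ref{gvectors} and the decomposition/synthesis of strongly reduced components in \cite[Theorem 5.11]{CLS} — is a genuine addition: the paper only establishes this afterwards (in the theorem quoting \cite[Theorem 4.4]{R3} and the final corollary), and your argument is the correct way to close that loop inside the proof of the statement.

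Two points need repair or at least explicit mention. First, your primitivity argument for band $\mathbf{g}$-vectors is wrong as stated: for a band $(x_1\colon a_1,\ldots,a_n,)$ the $\gcd$ of the $\mathbf{g}$-vector is $\gcd(n,\,a_1+\cdots+a_n)$, and a common divisor does \emph{not} force a cyclic period. For instance the cyclic sequence $(a{+}1,a,a{+}1,a,a,a)$ has $n=6$, sum $6a+2$, hence $\gcd=2$, yet it is not a proper power; what excludes it is not primitivity but strong reducedness (it contains both $a,s,a$ and $a{+}1,s,a{+}1$ with $s=(a)$). The correct source of primitivity is the one the paper uses and which you already have available: each strongly reduced band shares its $\Psi$-datum with an $E_{\Lambda}$-rigid string (Lemma \ref{stba}, Lemma \ref{bandnei}), whose $\mathbf{g}$-vector is a cyclic permutation of $(1-b,a+1,b-a-1)$ for a standard Farey point $(a,b)$ with $\gcd(a,b)=1$, and the band's $\mathbf{g}$-vector is the corresponding cyclic permutation of $(-b,a,b-a)$. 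Second, the reverse inclusion for family (iii) — that \emph{every} primitive vector with coordinate sum zero is realised by a strongly reduced band — is asserted ("covered by the band construction of Lemma \ref{stba}") but not argued; the Farey pairing only shows which vectors occur, and one still has to check, as the paper does by adapting the proof of \cite[Proposition 5.1]{R1}, that the cyclic permutations of $(-b,a,b-a)$ over standard Farey points exhaust Reading's family (iii). Neither issue invalidates your strategy, but both steps must be filled in along these lines.
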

\begin{proof}
By Remark \ref{cluster} we know that the subset $\mathcal{G'}$ of $\mathcal{G}$ given by the $\bf{g}$-vectors $g_{\Lambda'}(Z)$ where $Z \in \srdirr(\Lambda)$ is $E_{\Lambda}$-rigid and indecomposable is the set of $\bf{g}$-vectors of the cluster variables of $\mathcal{A}_B$. Hence by Proposition \ref{Y} they are precisely given by the vectors described in Proposition \ref{reading}(i). As the $\bf{g}$-vectors of the cluster variables are compatible if and only if the corresponding allowable curves are, the same holds for the $\bf{g}$-vectors in $\mathcal{G'}$. 

Let $Z$ be an $E_{\Lambda}$-rigid indecomposable strongly reduced component in $\srdirr(\Lambda)$ with $\bf{g}$-vector $(1-b,a+1,b-a-1)$. Let $Z'$ be the unique strongly reduced component in $\srdirr(\Lambda)$ such that $Z, Z'$ is a component cluster (not $E_{\Lambda}$-rigid). Then $g_{\Lambda}(Z')\in \mathcal{G}$ is given by $(-b,a,b-a)$ as can be seen from Theorem \ref{g}. Now, we can basically copy the proof of \cite[Prop.5.1.]{R1} to see, that this gives precisely the integer vectors as described in Proposition \ref{reading} (iii). The $\bf{g}$-vector of $\tau_{\Lambda'}Z$ is given by $(-1-b,a-1,b-a+1)$ as can be seen in Theorem \ref{gneu}. The statement on the compatibility follows from the symmetry of the graph. 
\end{proof}
The following theorem is a consequence of \cite[Theorem 4.4]{R3} in the special case of the once-punctured torus. In \cite{R3} this theorem is formulated in terms of \textit{integral quasi-laminations}, i.e. collections of pairwise compatible allowable curves with positive integer weights. 
\begin{theorem}
Every vector in $\mathbb{Z}^3$ is a positive $\mathbb{Z}$-linear combination of pairwise compatible vectors in $\mathcal{G}$. 
\end{theorem}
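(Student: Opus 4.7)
The plan is to reduce the statement to Reading's classification of integral quasi-laminations for the once-punctured torus, namely \cite[Theorem~4.4]{R3}. The preceding corollary has already supplied the two crucial identifications: the elements of $\mathcal{G}$ are precisely the shear coordinates (with respect to $T_0$) of allowable curves in $({\bf S},p)$, and two elements of $\mathcal{G}$ are compatible in our sense if and only if the corresponding allowable curves are compatible (non-intersecting). So what remains is only a translation between the language of $\mathbf{g}$-vectors of decorated representations and the language of laminations.

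Concretely, I would proceed as follows. Recall that an integral quasi-lamination is a finite formal nonnegative integer combination $\sum_i n_i\,\gamma_i$ of pairwise compatible allowable curves, with associated shear coordinate vector $\sum_i n_i\, s_{T_0}(\gamma_i)\in\mathbb{Z}^3$. Reading's theorem asserts that for the once-punctured torus the map sending a quasi-lamination to its shear coordinate vector is a bijection onto $\mathbb{Z}^3$; equivalently, the shear coordinates of allowable curves form a positive $\mathbb{Z}$-basis for $B^T$ in the sense of \cite{R2}.

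Given $v\in\mathbb{Z}^3$, I would apply Reading's theorem to obtain a quasi-lamination $\sum_i n_i\,\gamma_i$ with shear coordinate vector $v$. By the first identification above, each $s_{T_0}(\gamma_i)$ lies in $\mathcal{G}$; by the second, the vectors $s_{T_0}(\gamma_i)$ are pairwise compatible. Hence $v=\sum_i n_i\,s_{T_0}(\gamma_i)$ is the desired nonnegative integer combination of pairwise compatible elements of $\mathcal{G}$.

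The main obstacle has really been absorbed into the preceding results: the explicit computation of the $\mathbf{g}$-vectors for $\Lambda$ in Proposition~\ref{g} and for $\tau_{\Lambda'}Z$ in Proposition~\ref{gneu}, matching the three families in Proposition~\ref{reading}, together with the verification that component-cluster combinatorics align with compatibility of allowable curves. Given those, the present theorem is a direct citation of Reading's result; the only care needed is bookkeeping between the $B$ and $B^T$ conventions and between $\mathbf{g}$-vectors of cluster variables (counterclockwise spirals), of Auslander--Reiten translates (clockwise spirals), and of strongly reduced bands (closed curves).
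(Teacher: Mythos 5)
Your proposal is correct and takes essentially the same route as the paper: the theorem is deduced directly from Reading's result on integral quasi-laminations, \cite[Theorem 4.4]{R3}, specialised to the once-punctured torus, with the preceding corollary supplying the identification of $\mathcal{G}$ with the shear coordinates of allowable curves and of compatibility of generic $\mathbf{g}$-vectors with compatibility of curves. The translation you spell out (quasi-lamination $\mapsto$ nonnegative combination of pairwise compatible vectors in $\mathcal{G}$) is exactly the content the paper leaves implicit.
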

\begin{corollary}
Let $Z \in \dirr^{\sr}(\Lambda')$. Then either  $Z=Z'$ or $Z=\tau_{\Lambda'} Z'$ for some $Z' \in \srdirr(\Lambda)$.
\end{corollary}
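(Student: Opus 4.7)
The plan is to exploit the generic $\mathbf{g}$-vector parametrization of strongly reduced components together with the identification of $\mathcal{G}$ with Reading's positive $\mathbb{Z}$-basis. First I would reduce to the case that $Z$ is indecomposable (the general statement then follows by applying the result to each indecomposable direct summand, using that $\mathbf{g}$-vectors and $\tau_{\Lambda'}$ behave additively under direct sums). Since $\Lambda'$ is finite-dimensional, Theorem \ref{gvectors}(ii) tells us that $G^{sr}_{\Lambda'}\colon \srdirr(\Lambda') \to \mathbb{Z}^3$ is a bijection, so $Z$ is uniquely determined by $g_{\Lambda'}(Z) \in \mathbb{Z}^3$.

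Next I would apply the theorem immediately preceding this corollary to express
\[
g_{\Lambda'}(Z) \;=\; \sum_{i} m_{i}\,g_{\Lambda'}(W_{i})
\]
as a positive $\mathbb{Z}$-linear combination of pairwise compatible vectors in $\mathcal{G}$, where each $W_{i}$ is an indecomposable strongly reduced component in $\srdirr(\Lambda')$ and $E_{\Lambda'}(W_{i},W_{j})=0$ for $i\neq j$. By \cite[Theorem 5.11]{CLS} the Zariski closure $W := \overline{\bigoplus_{i} W_{i}^{m_{i}}}$ is then a strongly reduced component, and by additivity of $\mathbf{g}$-vectors it satisfies $g_{\Lambda'}(W)=g_{\Lambda'}(Z)$; injectivity of $G^{sr}_{\Lambda'}$ therefore forces $W=Z$.

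Since $Z$ was assumed indecomposable, the decomposition of $W$ must be trivial: a single index $i$ has $m_{i}=1$ with all others zero, so $Z=W_{i}$, and in particular $g_{\Lambda'}(Z)\in\mathcal{G}$ is itself a basis vector in the sense of Proposition \ref{reading}. The preceding corollary, combined with the explicit computations in Proposition \ref{g} and Proposition \ref{gneu}, identified each of the three families of basis vectors: types (i) and (iii) are realised as $g_{\Lambda'}(Z')$ for some $Z'\in\srdirr(\Lambda)$ (coming from $E_{\Lambda}$-rigid strings and from strongly reduced bands, respectively), while type (ii) is realised as $g_{\Lambda'}(\tau_{\Lambda'}Z')$ for some $E_{\Lambda}$-rigid $Z'\in\srdirr(\Lambda)$. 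A final application of the injectivity of $G^{sr}_{\Lambda'}$ then yields $Z=Z'$ or $Z=\tau_{\Lambda'}Z'$, as claimed.

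The hardest part is not really internal to this corollary but has already been overcome in the preceding material: the subtle points are (a) that Reading's combinatorial compatibility of allowable curves coincides with vanishing of the generic $E_{\Lambda'}$-invariant, and (b) that the three families of vectors in Proposition \ref{reading} are already exhausted by the $\mathbf{g}$-vectors of components in $\srdirr(\Lambda)$ and of their $\tau_{\Lambda'}$-translates. Once these identifications are in place, the corollary follows formally from additivity of $\mathbf{g}$-vectors together with the bijection supplied by Theorem \ref{gvectors}.
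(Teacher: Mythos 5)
Your argument is correct and is essentially the paper's own (implicit) proof: combine the positive $\mathbb{Z}$-basis theorem with \cite[Theorem 5.11]{CLS} and the injectivity of the generic $\mathbf{g}$-vector map from Theorem \ref{gvectors}, then identify the indecomposable summands realizing the basis vectors with components of $\srdirr(\Lambda)$ and their $\tau_{\Lambda'}$-translates via the preceding corollary and Propositions \ref{g} and \ref{gneu}. The only loose point is your reduction step: for decomposable $Z$ what your argument (and the corollary as intended) really gives is that every indecomposable direct summand is of the stated form, since a mixed direct sum need not literally equal some $Z'$ or $\tau_{\Lambda'}Z'$ with $Z'\in\srdirr(\Lambda)$; this matches the paper's own level of precision.
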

\section{Markov Conjecture}
A \textit{Markov triple} is a triple $(a,b,c)$ consisting of positive integers, which satisfy the Diophantine Equation
\begin{equation*}
a^2+b^2+c^2=3abc.
\end{equation*}

Given a Markov triple, one can obtain a new Markov triple by mutation. More precisely: let $(a,b,c)$ be a Markov triple and set
\begin{eqnarray*}
a'=3bc-a, & b'=3ac-b, & c'=3ab-c.
\end{eqnarray*}
Then $(a',b,c), (a,b',c)$ and $(a,b,c')$ are Markov triples. Note that the mutation of a Markov triple is an involution. What is more, Markov proved in \cite{M} that any Markov triple can be obtained by repeatedly mutating the Markov triple $(1,1,1)$.

Frobenius was the first to claim that every Markov number, by definition a number appearing in a Markov triple, appears uniquely as a largest number of a Markov triple (up to permutation).
\begin{mcon}\cite{F}
Any Markov triple is uniquely determined by its largest entry (up to permutation).
\end{mcon} 
Recall, that by Laurent Phenomenon, any cluster variable $X$ can be written as a Laurent polynomial in a given initial cluster  $(x_1,x_2,x_3)$, so we have $X=X(x_1,x_2,x_3)$. Using the mutation of cluster variables one can show the following connection between the Markov numbers and the cluster algebra associated with the once-punctured torus $\mathcal{A}_Q$:
\begin{theorem}[\cite{PZ}]\label{bin}
There is a bijection between the set of all clusters in $\mathcal{A}_Q$ and the set of all Markov triples given by
\begin{eqnarray*}
(X_1,X_2,X_3) \longmapsto (X_1(1,1,1), X_2(1,1,1), X_3(1,1,1)).
\end{eqnarray*} 
\end{theorem}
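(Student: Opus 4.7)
The plan is to reduce the theorem to two classical facts: Markov's theorem that every Markov triple is reached from $(1,1,1)$ by iterated Markov mutations, together with the $\bf{g}$-vector description of cluster variables of $\mathcal{A}_Q$ developed in Section 7. Both sides of the alleged bijection carry a labelled graph structure in which every vertex has three outgoing edges indexed by mutation positions $i\in\{1,2,3\}$, and both are rooted at a canonical base point: the initial cluster $(x_1,x_2,x_3)$ on one side and the Markov triple $(1,1,1)$ on the other. The strategy is to show that the evaluation map intertwines the two mutation operations and that both labelled graphs are $3$-regular trees rooted at these base points, whence the induced map of rooted trees is automatically an isomorphism.

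First I would verify that the map is well-defined by induction on mutation distance from $(x_1,x_2,x_3)$. The initial cluster evaluates to $(1,1,1)$, which is a Markov triple. For the inductive step, note that for the Markov quiver $Q$ with matrix $B_Q$ as in Section 7, the exchange relation at vertex $i$ reads $X_i X_i' = X_j^2 + X_k^2$ for $\{i,j,k\}=\{1,2,3\}$. Evaluating at $(1,1,1)$ yields $a\cdot a' = b^2+c^2$, and rewriting the Markov equation $a^2+b^2+c^2=3abc$ as $a(3bc-a)=b^2+c^2$ forces $a'=3bc-a$. Hence cluster mutation at vertex $i$ corresponds under evaluation to Markov mutation at position $i$, and the Markov equation is preserved by the Vieta-jumping structure of the mutation. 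In particular the map is well-defined.

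Surjectivity is then immediate: by Markov's theorem every Markov triple is reached from $(1,1,1)$ by a finite sequence of Markov mutations, and lifting this sequence to the cluster algebra via the intertwining just established produces a cluster whose evaluation is the prescribed Markov triple.

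Injectivity is the main obstacle, and it is handled by showing that the exchange graph of $\mathcal{A}_Q$ is the infinite $3$-regular tree. By Remark \ref{cluster} the cluster variables of $\mathcal{A}_Q$ coincide with the generic Caldero-Chapoton functions of the indecomposable $E_{\Lambda}$-rigid components of $\srdirr(\Lambda)$, and by Proposition \ref{Y} their $\bf{g}$-vectors are the cyclic permutations of $(1-b,a+1,b-a-1)$ for standard Farey points $(a,b)$, with two $\bf{g}$-vectors compatible if and only if the corresponding Farey points are Farey neighbours. Consequently clusters correspond bijectively to Farey triples, and the cluster exchange graph is the classical Farey tree, an infinite $3$-regular tree. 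The Markov mutation graph is also the infinite $3$-regular tree: every Markov triple other than $(1,1,1)$ admits a unique mutation position whose application strictly decreases the maximal entry, yielding a canonical descent to the root. A root-preserving morphism of rooted $3$-regular trees which matches the three outgoing edges at each vertex with the three outgoing edges at the image vertex is automatically a graph isomorphism, and the theorem follows. The delicate step is to verify that cluster mutation at vertex $i$ corresponds consistently to Markov mutation at position $i$ for all three values of $i$, and this is guaranteed by the $S_3$-symmetry of both the Markov quiver and the Markov equation together with the computation of the first step.
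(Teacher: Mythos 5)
The paper itself contains no proof of this theorem: it is quoted from \cite{PZ} with only the remark that it follows ``using the mutation of cluster variables'', so there is no internal argument to compare yours against. Your proof is essentially correct and self-contained, and its two pillars do the work: evaluation at $(1,1,1)$ intertwines cluster mutation at vertex $i$ with Markov mutation at position $i$ (via the exchange relation $X_iX_i'=X_j^2+X_k^2$ and Vieta jumping), and this makes the evaluation map a covering of the Markov mutation graph, which is a tree, by a connected graph, hence an isomorphism. Two points you leave implicit should be stated: first, the exchange relation must have the form $X_iX_i'=X_j^2+X_k^2$ at \emph{every} seed, not just the initial one, which requires the (easy but necessary) check that mutation sends the Markov exchange matrix to its negative, so all seeds have the same exchange pattern; second, the local bijectivity of your covering map needs that the three Markov mutations of an ordered triple are pairwise distinct, i.e.\ that mutation never fixes the mutated entry (the discriminant $9b^2c^2-4(b^2+c^2)$ is never zero for positive integers), and the acyclicity needs that mutating the maximal entry strictly decreases it while mutating a non-maximal entry strictly increases the maximum -- short computations with the Markov equation, but they are the actual content of ``the Markov graph is a tree'', which together with Markov's connectivity theorem is the heart of injectivity. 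Finally, the detour through Remark \ref{cluster} and Proposition \ref{Y} to identify the exchange graph with the Farey tree is unnecessary and, as written, slightly under-justified (you would still need that distinct clusters have distinct triples of $\bf{g}$-vectors and that every Farey triple arises from a cluster): for the covering argument only the \emph{target} must be simply connected and the source connected, and the exchange graph (or the seed tree $\mathbb{T}_3$) is connected by construction. Dropping that step makes your proof more elementary and independent of Section 7, and then it is very close in spirit to the mutation-based argument the paper attributes to \cite{PZ}.
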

The bijection between the Markov triples and the cluster variables $\mathcal{M}_Q$ of $\mathcal{A}_Q$ and their description via generic Caldero-Chapoton functions yield a new way of computing Markov numbers: 
Let $X \in \mathcal{M}_Q$ be a cluster variable, $Z \in \dirr(\Lambda)$ an $E_{\Lambda}$-rigid indecomposable strongly reduced component with $C_{\Lambda}(Z)=X$ and $M\in Z$ an $E_{\Lambda}$-rigid string module. Then by definition of the Caldero-Chapoton function the Markov number
\[X(1,1,1)=C_{\Lambda}(Z)(1,1,1)=\sum_{e\in \mathbb{N}^3} \chi(\text{Gr}_e(M))\] 
is given by the sum of Euler characteristics of Grassmannians of subrepresentations of $M$.
\begin{definition}
Let $C$ be a string given by the sequence $(x_1:a_1,a_2,\ldots ,a_n)$ and $M(C)$ the corresponding string module. We set
\[m(x_1:a_1,a_2,\ldots ,a_n):=m(M(C)):=\sum_{e\in \mathbb{N}^3} \chi(\text{Gr}_e(M(C))).\]
If $(x_1:a_1,a_2,\ldots ,a_n)$ describes an $E_{\Lambda}$-rigid string, then $m(x_1:a_1,a_2,\ldots ,a_n)$ is a Markov number.
\end{definition}
\begin{con}\label{con}
Let $C=(x_1:a_1,a_2,\ldots ,a_n)$ and $C'=(x'_1:a'_1,a'_2,\ldots ,a'_m)$ be two $E_{\Lambda}$-rigid strings. Then we have
\[m(C)=m(C')\]
if and only if $n=m$ and $a_i=a'_i$ for all $1\leq i \leq n$. 
\end{con}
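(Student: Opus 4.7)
The strategy is to reduce Conjecture \ref{con} to the classical Frobenius Markov Uniqueness Conjecture via the Caldero-Chapoton map. For an $E_{\Lambda}$-rigid string $C$ let $Z_C \in \dirr^{\sr}(\Lambda)$ be the corresponding indecomposable $E_{\Lambda}$-rigid component produced by Theorem \ref{bi}. Directly from the definition of the Caldero-Chapoton function,
\[
m(C) \;=\; \sum_{\e \in \mathbb{N}^3}\chi(Gr_{\e}(M(C))) \;=\; C_{\Lambda}(Z_C)(1,1,1).
\]
By Remark \ref{cluster}, the $C_{\Lambda}(Z_C)$ range exactly over the cluster variables of $\mathcal{A}_Q$ as $C$ ranges over $E_{\Lambda}$-rigid strings; distinct strings yield distinct cluster variables since by Proposition \ref{g} they have distinct generic $\mathbf{g}$-vectors, to which the injectivity statement of Theorem \ref{gvectors}(i) applies. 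Hence Conjecture \ref{con} is equivalent to the injectivity of the evaluation map $\mathcal{M}_Q \to \mathbb{Z}_{>0}$, $X \mapsto X(1,1,1)$.

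The next step is to translate this injectivity into the classical Markov setting through Theorem \ref{bin}. Suppose two $E_{\Lambda}$-rigid strings $C\neq C'$ gave $m(C)=m(C')=:m$, and let $X,X'$ be the associated cluster variables. Using the mutation description of the neighbours of $Z_C$ and $Z_{C'}$ in $\Gamma$ provided by Lemma \ref{shorter} and Proposition \ref{mutation}, one can realise $X$ and $X'$ in clusters $(X,Y_1,Y_2)$ and $(X',Y_1',Y_2')$ whose Propp-Zelevinsky images are Markov triples $(m, Y_1(1,1,1), Y_2(1,1,1))$ and $(m, Y_1'(1,1,1), Y_2'(1,1,1))$. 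The main combinatorial task would be to arrange, by iterated mutation along $\Gamma$, that $m$ be the largest entry in both triples while the triples remain distinct, which would contradict Frobenius. To control which entry is largest one would use the growth of $m(C)$ with $(n, a_1+\cdots+a_n)$, a feature that is already visible in the $\mathbf{g}$-vector formulas of Proposition \ref{g} and in the mutation rules of Lemma \ref{shorter}.

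The main obstacle is that this correspondence is not merely an obstruction but an essentially tautological restatement: once the Caldero-Chapoton specialisation has been identified with the Propp-Zelevinsky bijection, Conjecture \ref{con} \emph{becomes} the Markov Uniqueness Conjecture, which is famously open. A direct attack avoiding Frobenius would have to recover, from the single number $m(C)$, not only the length $n$ and the sum $a_1+\cdots+a_n$ (both of which are already encoded in the generic $\mathbf{g}$-vector by Proposition \ref{g}), but also the internal refinement $(a_1,\ldots,a_n)$ of the sequence. Since the symmetries $\sigma$ of $Q$ induce automorphisms of $\mathcal{A}_Q$ that fix the initial cluster $(1,1,1)$ pointwise, any such recovery must simultaneously explain the invariance of $m(C)$ under change of starting arrow $x_1 \in \{\alpha_1,\beta_1,\gamma_1\}$; this built-in multi-valuedness is exactly what makes the problem as delicate as Frobenius's. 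Short of a genuinely new representation-theoretic input, therefore, Conjecture \ref{con} should be regarded as an equivalent formulation of the classical Markov Uniqueness Conjecture.
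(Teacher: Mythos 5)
There is a fundamental problem: the statement you were asked to prove is not proven in the paper at all --- it is stated as a \emph{conjecture} (the paper's remark following it only records the one-directional observation that, via Proposition \ref{neighbours}, Conjecture \ref{con} would \emph{imply} the classical Markov uniqueness conjecture of Frobenius). Your proposal likewise does not prove it: you propose to deduce it from the Frobenius conjecture, which is famously open, so even if your reduction were complete it would not establish the statement. Moreover the reduction itself is left as a sketch --- the step you describe as ``the main combinatorial task would be to arrange, by iterated mutation along $\Gamma$, that $m$ be the largest entry in both triples while the triples remain distinct'' is precisely the substantive content that would be needed, and it is not carried out. Note also that the paper only argues Conjecture \ref{con} $\Rightarrow$ Markov; your claimed converse (Markov $\Rightarrow$ Conjecture \ref{con}) would require additional work, e.g. identifying for each cluster variable a distinguished cluster in which its specialisation is the largest entry and matching the neighbour structure of $\Gamma$ with the Markov tree, none of which is supplied.

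A smaller but concrete inaccuracy: your assertion that ``Conjecture \ref{con} is equivalent to the injectivity of the evaluation map $\mathcal{M}_Q \to \mathbb{Z}_{>0}$, $X \mapsto X(1,1,1)$'' is false as stated. By the corollary to Proposition \ref{div}, $m(x_1:a_1,\ldots,a_n)$ is independent of the starting arrow $x_1$, while the strings with the same sequence but different $x_1$ have distinct generic $\mathbf{g}$-vectors (Proposition \ref{g}) and hence give distinct cluster variables; so the evaluation map is certainly not injective on cluster variables. The conjecture is injectivity only modulo this symmetry, which you acknowledge later but which contradicts your earlier equivalence claim. In summary, your text is a reasonable discussion of why the conjecture is at least as hard as Frobenius's problem, but it is not a proof, and it should not be presented as one.
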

\begin{remark}
By Proposition \ref{neighbours} it is easy to see, that an $E_{\Lambda}$-rigid $C$ string has precisely two neighbours previously denoted by $C_1$ and $C_2$ whose Markov numbers are smaller. Hence, if the Conjecture above is true, the Markov Conjecture follows. Therefore, one should try to find a closed formula for computing the Markov number of an $E_{\Lambda}$-rigid string $C$. In \cite{H} it was proven that in order to compute the Euler characteristics of quiver Grassmannians of string modules we have to count the successor closed subquivers of the diagram of the corresponding string. 
\end{remark}
\begin{lemma}\label{fib}
Denote by $F(n)$ the $n$-th Fibonacci number. We have
\[m(x_1:a)=F(2a+3)\]
for all $a \in \mathbb{Z}_{\geq -1}$ and all $x_1 \in \lbrace \alpha_1^{\pm}, \beta_1^{\pm},\gamma_1^{\pm} \rbrace$.
\end{lemma}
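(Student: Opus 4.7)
The plan is to invoke the result of Haupt \cite{H} referenced in the preceding remark: for any string module $M(C)$ the value $m(M(C))$ equals the total number of successor-closed subquivers of the diagram of $C$. The extremal cases are handled directly. For $a=-1$ the decorated representation is a negative simple, whose underlying module is zero, so $m=1=F(1)$. For $a=0$ the module $M(x_{1}:0)$ is the simple at $s(x_{1})$ with exactly two successor-closed subquivers (empty and full), giving $m=2=F(3)$. I therefore concentrate on $a\geq 1$.

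For $a\geq 1$, inspecting the diagrams displayed earlier in the paper shows that the diagram of $(x_{1}:a)$ is a zigzag with $a+1$ \textit{top} vertices $t_{1},\ldots,t_{a+1}$ and $a$ \textit{bottom} vertices $b_{1},\ldots,b_{a}$, and with an arrow $t_{i}\to b_{j}$ precisely when $j\in \{i-1,i\}\cap \{1,\ldots,a\}$. By the symmetry of $Q$, this oriented graph is independent of the choice of $x_{1}$, so $T_{a}:=m(x_{1}:a)$ depends only on $a$. A subquiver $S$ is successor-closed iff $t_{i}\in S$ forces each adjacent $b_{j}$ into $S$. Parametrising by the indicator vector $s\in\{0,1\}^{a}$ of $S$ on the bottom row, each top vertex whose bottom-neighbourhood is entirely included may be taken or omitted independently, and this gives
\[
T_{a} \;=\; \sum_{s\in \{0,1\}^{a}} 2^{c(s)}, \qquad c(s) \;=\; [s_{1}=1]+[s_{a}=1]+\sum_{i=1}^{a-1}[s_{i}=s_{i+1}=1].
\]

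The remaining step is a standard transfer-matrix evaluation. Letting $v_{i}(\varepsilon)$ denote the weighted partial sum over $(s_{1},\ldots,s_{i})$ with $s_{i}=\varepsilon\in \{0,1\}$, one checks that $v_{1}=(1,2)^{T}$, that $v_{i+1}=\begin{pmatrix}1 & 1\\ 1 & 2\end{pmatrix}v_{i}$, and that $T_{a}=v_{a}(0)+2v_{a}(1)$. The transfer matrix has characteristic polynomial $\lambda^{2}-3\lambda+1$, so Cayley--Hamilton yields $T_{a+1}=3T_{a}-T_{a-1}$ for all $a\geq 1$. The odd-indexed Fibonacci numbers satisfy the same recursion $F(2a+5)=3F(2a+3)-F(2a+1)$, and the initial values $T_{-1}=1=F(1)$ and $T_{0}=2=F(3)$ coincide, so $T_{a}=F(2a+3)$ follows by induction. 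The principal combinatorial obstacle is establishing the closed form for $T_{a}$ above; once that is in hand, the derivation of the recursion and its matching with the Fibonacci sequence is formal.
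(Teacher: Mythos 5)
Your proposal rests, like the paper's own proof, on Haupt's identification of $m(M(C))$ with the number of successor-closed subquivers of the diagram of $C$, but from there it takes a genuinely different route. The paper introduces the alternating string $C_n$ of length $n$, checks $m(C_0)=2=F(3)$, $m(C_1)=3=F(4)$, and obtains the Fibonacci recursion $m(C_{n+1})=m(C_n)+m(C_{n-1})$ by splitting on whether the last added vertex lies in the subquiver; the lemma is then the even-length case $m(x_1:a)=m(C_{2a})=F(2a+3)$. You instead sum $2^{c(s)}$ over the $2^a$ configurations of the socle row and extract the second-order recursion $T_{a+1}=3T_a-T_{a-1}$ (satisfied by the odd-indexed Fibonacci numbers) via a transfer matrix. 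Both are legitimate; the paper's one-vertex-at-a-time induction is lighter, while your closed form for $T_a$ is a pleasant by-product.

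Two points in your write-up need repair. First, the assertion that the oriented diagram of $(x_1:a)$ is independent of $x_1$ is false for $x_1\in Q_1^{-}$: the symmetry of $Q$ only permutes $\alpha,\beta,\gamma$, and for an inverse arrow the diagram is the opposite orientation, with $a$ sources and $a+1$ sinks, so your weight $c(s)$ with the boundary terms $[s_1=1]+[s_a=1]$ only covers $x_1\in Q_1$. The count is nevertheless the same, e.g.\ because complementation is a bijection between successor-closed subsets of a diagram and of its opposite (or by rerunning the transfer matrix with weight $\sum_i [s_i=s_{i+1}=1]$ on $\{0,1\}^{a+1}$); a sentence to this effect is needed, the paper itself disposing of $Q_1^-$ with ``proven similarly''. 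Second, the induction does not quite close as stated: Cayley--Hamilton justifies $T_{a+1}=3T_a-T_{a-1}$ only for $a\geq 2$, since $v_0$ and $v_{-1}$ are not defined, while your base values $T_{-1}=1$ and $T_0=2$ sit outside the transfer-matrix set-up, so nothing in the argument produces $T_1$. The fix is immediate: read off $T_1=v_1(0)+2v_1(1)=5=F(5)$ and $T_2=\ell(Mv_1)=13=F(7)$ from the transfer-matrix data (or count directly), run the recursion from there for $a\geq 1$, and keep $a=-1,0$ as the separate direct checks you already made.
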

\begin{proof}
We only consider the case $x_1 \in Q_1$, the case $x_1 \in Q_1^-$ is proven similarly. Let $C_n$ be the string of length $n$ given by the sequence
\[c_1c_2 c_3 c_4 \ldots c_n =
\begin{cases}
x_1x_2^-x_1x_2^- \ldots x_1x_2^- &\text{ if } n  \equiv 0 \text{ mod }  2\\
x_1x_2^-x_1x_2^- \ldots x_1 &\text{ if } n  \equiv 1 \text{ mod }  2.
\end{cases}
\]
It is easy to see that $m(C_0)=2=F(3)$ and $m(C_1)=3=F(4)$. 

Now assume the claim holds for $n$ and consider $n+1$. We first assume that $n$ is even. Then the diagram of $C_{n+1}= c_1c_2 c_3 c_4 \ldots c_{n-1} c_n c_{n+1}$ is given by 
\[
\xygraph{!{<0cm,0cm>;<0.56cm,0cm>:<0cm,0.8cm>::}
!{(0,0)}*+{\bullet}="1"
!{(1,-1)}*+{\bullet}="2"
!{(2,0)}*+{\bullet}="3"
!{(4,0)}*+{\bullet}="4"
!{(5,-1)}*+{\bullet}="5"
!{(6,0)}*+[blue]{\bullet}="6"
!{(7,-1)}*+[green]{\bullet}="7"
"1"-"2"|-{x_1}
"2"-"3"|-{x_2}
"3":@{.}"4"
"4"-"5"|-{x_1}
"5"-"6"|-{x_2}
"6"-"7"|-{x_1}
}
\]
We first count the successor closed subquivers containing the basis vector at the green bullet. This number is given by $m(C_n)$: The successor closed subquivers of $C_n$ not containing the blue bullet are also successor closed subquivers of $C_{n+1}$ together with the green bullet. The successor closed subquivers of $C_n$ containing the blue bullet become successor closed subquivers of $C_{n+1}$ only by adding the green bullet. So by induction there are $m(C_n)=F(n+3)$ successor closed subquivers containing the basis vector at the green bullet.

Now we consider successor closed subquivers of $C_{n+1}$ without the green bullet. These cannot contain the blue bullet and thus all that remains are the successor closed subquivers in $C_{n-1}$ which by induction are given by $F(n+2)$. Thus we find $m(C_{n+1})=F(n+4)$. 

If $n$ is odd the proof is similar, where the diagram of $C_n$ is
\[
\xygraph{!{<0cm,0cm>;<0.56cm,0cm>:<0cm,0.8cm>::}
!{(0,0)}*+{\bullet}="1"
!{(1,-1)}*+{\bullet}="2"
!{(2,0)}*+{\bullet}="3"
!{(4,0)}*+{\bullet}="4"
!{(5,-1)}*+{\bullet}="5"
!{(6,0)}*+{\bullet}="6"
!{(7,-1)}*+[blue]{\bullet}="7"
!{(8,0)}*+[green]{\bullet}="8"
"1"-"2"|-{x_1}
"2"-"3"|-{x_2}
"3":@{.}"4"
"4"-"5"|-{x_1}
"5"-"6"|-{x_2}
"6"-"7"|-{x_1}
"7"-"8"|-{x_2}
}
\]
and one counts again the successor closed subquivers containing the green bullet and the ones without the green bullet.
\end{proof}
\begin{prop}\label{div}
 We have
\[m(a_1,\ldots a_{i},\ldots,a_n)=m(a_1,\ldots a_i)m(a_{i+1},\ldots, a_n)+m(a_1,\ldots a_i -1)m(a_{i+1} -1,\ldots, a_n)\]
for all $1\leq i \leq n-1$.
\end{prop}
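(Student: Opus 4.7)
The plan is to prove the identity by a direct combinatorial decomposition of $m(a_1,\ldots,a_n)$, interpreting it (via \cite{H}, as already used in the proof of Lemma \ref{fib}) as the number of successor-closed subquivers $S$ of the diagram of the string $C=(x_1:a_1,\ldots,a_n)$. This diagram consists of $n$ zig-zag blocks of sizes $a_1,\ldots,a_n$ at the middle and socle levels, joined by $n-1$ peak vertices $p_1,\ldots,p_{n-1}$ at the top level. For a fixed index $1\leq i\leq n-1$ I would partition the set of successor-closed subquivers according to whether the peak $p_i$ lies in $S$ or not; note that $p_i$ has no incoming arrows and exactly two outgoing arrows, going to the middle-level vertices $v_L$ (right end of block $a_i$) and $v_R$ (left end of block $a_{i+1}$), and these are the only points at which the two ``sides'' of the diagram can interact.

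When $p_i\notin S$, successor-closedness imposes no constraint across the peak, and $S$ decomposes independently into a successor-closed subquiver of the left piece, which is the diagram of $(x_1:a_1,\ldots,a_i)$, and one of the right piece, which (up to left-right reflection) is the diagram of $(x_1:a_{i+1},\ldots,a_n)$. This contributes $m(a_1,\ldots,a_i)\cdot m(a_{i+1},\ldots,a_n)$. When $p_i\in S$, the vertices $v_L,v_R$ and the adjacent socle vertices $b_L,b_R$ are forced into $S$, but the two sides otherwise remain independent, so the contribution is a product of two conditional counts: successor-closed subquivers of the left piece that contain $v_L$, times the analogous count on the right piece containing $v_R$.

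The key sub-claim is that the left conditional count equals $m(a_1,\ldots,a_{i-1},a_i-1)$, which I would prove by an explicit bijection. Given such an $S$ with $v_L\in S$, one automatically has $b_L\in S$, and one checks that $S\setminus\lbrace v_L,b_L\rbrace$ is a successor-closed subquiver of the diagram obtained by deleting $v_L$ and $b_L$; this reduced diagram is precisely that of $(x_1:a_1,\ldots,a_{i-1},a_i-1)$ (one fewer bump in the last block), and the inverse map $S'\mapsto S'\cup\lbrace v_L,b_L\rbrace$ is well-defined because $b_L$ has no outgoing arrows. The right conditional count equals $m(a_{i+1}-1,a_{i+2},\ldots,a_n)$ by the same argument combined with the reflection symmetry $m(b_1,\ldots,b_k)=m(b_k,\ldots,b_1)$, which holds because $M(C)\cong M(C^-)$. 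Summing the two cases then yields the desired identity.

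The main technical obstacle is handling the edge case $a_i=1$ (symmetrically $a_{i+1}=1$), where removing the only bump of block $a_i$ leaves an extra middle vertex $v_L''$ attached to the peak $p_{i-1}$ and having no outgoing arrows. One must therefore adopt a mild extension of the notation $m(b_1,\ldots,b_k)$ to sequences in which $b_k$ (or $b_1$) is allowed to be $0$, by defining it as the number of successor-closed subquivers of the extended diagram just described. With that convention in place the bookkeeping is routine, and small sanity checks such as $m(1,1)=m(1)^2+m(0)^2=5^2+2^2=29$ confirm the recurrence.
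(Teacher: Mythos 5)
Your proof is correct and takes essentially the same approach as the paper's: both partition the successor-closed subquivers of the string diagram according to whether the peak vertex joining blocks $i$ and $i+1$ is included, getting $m(a_1,\ldots,a_i)\,m(a_{i+1},\ldots,a_n)$ when it is excluded and, via the forced adjacent middle and socle vertices, the reduced product $m(a_1,\ldots,a_i-1)\,m(a_{i+1}-1,\ldots,a_n)$ when it is included. The paper only writes out the case $n=2$, $i=1$ and declares the general case similar, whereas you carry out the general bijection and explicitly address the $a_i=1$ boundary convention, which is a welcome extra degree of care but not a different method.
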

\begin{proof}
We consider the case $n=2$ and $i=1$,i.e. $(a_1,a_2)$, since the general case follows similarly.
\[
\xygraph{!{<0cm,0cm>;<0.56cm,0cm>:<0cm,0.8cm>::}
!{(-2,0)}*+{\bullet}="31"
!{(-1,-1)}*+{\bullet}="1"
!{(0,0)}*+{\bullet}="2"
!{(2,0)}*+{\bullet}="3"
!{(3,-1)}*+{\bullet}="35"
!{(4,0)}*+{\bullet}="4"
!{(5,-1)}*+[green]{\bullet}="5"
!{(6,0)}*+[green]{\bullet}="6"
!{(7,1)}*+[red]{\bullet}="7"
!{(8,0)}*+[green]{\bullet}="8"
!{(9,-1)}*+[green]{\bullet}="9"
!{(10,0)}*+{\bullet}="13"
!{(11,-1)}*+{\bullet}="39"
!{(12,0)}*+{\bullet}="30"
!{(14,0)}*+{\bullet}="14"
!{(15,-1)}*+{\bullet}="15"
!{(16,0)}*+{\bullet}="16"
"1"-"31"|-{x_1}
"2"-"1"|-{x_2}
"4"-"5"|-{x_1}
"5"-"6"|-{x_2}
"2":@{.}"3"
"3"-"35"
"35"-"4"
"6"-"7"
"7"-"8"
"8"-"9"
"9"-"13"
"13"-"39"
"39"-"30"
"30":@{.}"14"
"14"-"15"
"15"-"16"
}
\]
First counting the successor closed subquivers without the red bullet, gives $m(a_1)m(a_2)$ successor closed subquivers. If the red bullet is contained in a successor closed subquiver, then so are the green bullets. Thus there are $m(a_1-1)m(a_2-1)$ successor closed subquivers containing the red bullet. The propositions follows. 
\end{proof}
\begin{corollary}
Let $(x_1:a_1,\ldots,a_n)$ describe a string. Then 
\[m(x_1:a_1,\ldots,a_n)\] does not depend on $x_1$.
\end{corollary}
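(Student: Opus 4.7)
The plan is to argue combinatorially, exploiting the fact that $m(x_1:a_1,\ldots,a_n)$ is an invariant of the underlying directed graph of the diagram, and that this graph is insensitive to the choice of $x_1$ up to two elementary symmetries. This sidesteps the awkward bookkeeping (sub-sequences with internal zero entries) that a direct induction based on Proposition \ref{div} would require.

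First I would recall, from the remark preceding Lemma \ref{fib} citing \cite{H}, that $m(x_1:a_1,\ldots,a_n)$ equals the number of successor-closed subquivers of the diagram of the corresponding string. A subrepresentation of a string module is determined by the subset of basis vectors $y_1,\ldots,y_{m+1}$ it contains, and the condition picking out the allowable subsets depends only on the incidence and orientation of arrows in the diagram, not on which of $\alpha_i,\beta_i,\gamma_i$ actually labels each arrow. Hence $m$ depends only on the underlying directed graph of the diagram, once the vertex labels $1,2,3$ are forgotten.

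Next I would invoke the $\mathbb{Z}/3$-rotational symmetry of $(Q,W)$, acting on arrows as $\alpha_i\mapsto\beta_i\mapsto\gamma_i\mapsto\alpha_i$ and cyclically permuting the vertices, to identify (as unlabelled directed graphs) the three diagrams arising from $x_1\in\{\alpha_1,\beta_1,\gamma_1\}$; the same symmetry handles the three diagrams for $x_1\in\{\alpha_1^-,\beta_1^-,\gamma_1^-\}$. This shows that $m$ is constant on each of these two triples of starting arrows.

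Finally, to merge the two triples I would observe that flipping the convention $x_1\in Q_1 \leftrightarrow x_1\in Q_1^-$ reverses every arrow of the diagram (this is also what underlies the horizontal-mirroring observation used in the proof of the lemma relating $C$ and $\overline{C}$), so the two underlying directed graphs are one another's opposites. The key point is then that for any finite directed graph $D$, complementation $S\mapsto S^{c}$ is a bijection between successor-closed subquivers of $D$ and of $D^{\mathrm{op}}$: $S$ is successor-closed in $D$ iff $S^{c}$ is predecessor-closed in $D$, which is the same as being successor-closed in $D^{\mathrm{op}}$. The two counts therefore coincide. The one point that needs care is cleanly separating the labelled from the unlabelled aspects of the diagram; once $m$ is recognised as an invariant of the abstract directed graph, the rotation of $Q$ and the complementation bijection together cover all six choices of $x_1$.
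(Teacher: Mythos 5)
Your proof is correct, but it takes a genuinely different route from the paper. The paper deduces the corollary in one line from Lemma \ref{fib} and Proposition \ref{div}: by induction on $n$, the splitting formula of Proposition \ref{div} reduces $m(x_1:a_1,\ldots,a_n)$ to products of values for shorter sequences, and the base case $m(x_1:a)=F(2a+3)$ is already independent of $x_1$; this has the side benefit of producing the Fibonacci values and the recursion used later, at the cost of the bookkeeping you mention (boundary terms of the recursion where an entry drops to $0$, and the tacit claim that Proposition \ref{div} holds uniformly in $x_1$). You instead work directly with the interpretation of $m$ via \cite{H} as the number of successor-closed subquivers of the string diagram: that count is an invariant of the unlabelled directed graph, the diagrams for the three direct (resp.\ the three inverse) choices of $x_1$ coincide (by the rotational symmetry of $Q$, or simply because the shape is determined by the sequence and the direct/inverse type of $x_1$), and the direct and inverse diagrams are opposite directed graphs, so the complementation bijection $S\mapsto S^{c}$ between successor-closed subsets of $D$ and of $D^{\mathrm{op}}$ equates the two counts. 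This is the Euler-characteristic analogue of the horizontal-mirroring argument the paper uses for the $E$-invariant, it avoids the recursion entirely, and it gives a cleaner structural reason for the symmetry; all the ingredients you use (Haupt's count, the shape of the diagrams, the arrow-reversal relation between $(x_1:\mathbf a)$ and $(x_1^-:\mathbf a)$) are available in the paper, so the argument stands as a valid alternative proof.
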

\begin{proof}
This follows directly from Lemma \ref{fib} and Proposition \ref{div}.
\end{proof}
\begin{prop}
We have
\[m(a_1+\cdots +a_n + \lceil n/2\rceil -1) < m(a_1,\ldots,a_n)< m(a_1+\cdots +a_n + n -1)\]
for all $n\geq 2$.
\end{prop}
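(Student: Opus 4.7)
The argument I have in mind is by strong induction on $n$, with base cases $n=2$ and $n=3$. The inductive step for $n\geq 4$ will apply Proposition \ref{div} with $i=2$ to peel off the first two entries, bound the resulting tails via the inductive hypothesis, and collapse what remains by invoking the $n=3$ base case, so both small cases must be handled explicitly.

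For $n=2$: combining Proposition \ref{div} with Lemma \ref{fib} yields $m(a_1,a_2)=F(2a_1+3)F(2a_2+3)+F(2a_1+1)F(2a_2+1)$. Two applications of the Fibonacci addition identity $F(m+n)=F(m+1)F(n)+F(m)F(n-1)$ give $m(a_1+a_2)=F(2a_1+3)F(2a_2+1)+F(2a_1+2)F(2a_2)$ and $m(a_1+a_2+1)=F(2a_1+3)F(2a_2+3)+F(2a_1+2)F(2a_2+2)$, so both bounds follow term by term from strict monotonicity of $F$.

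For $n=3$: iterating Proposition \ref{div} writes $m(a_1,a_2,a_3)$ as a sum of four Fibonacci products, and iterating the Fibonacci addition identity does the same for $m(S+1)$ and $m(S+2)$. The upper bound $m(a_1,a_2,a_3)<m(S+2)$ already drops out of a chain of $n=2$ estimates: apply Proposition \ref{div} with $i=2$, use $m(a_1,a_2)<m(a_1+a_2+1)$ and $m(a_1,a_2-1)<m(a_1+a_2)$, reassemble (via Proposition \ref{div} in reverse) to $m(a_1+a_2+1,a_3)$, and apply the $n=2$ upper bound once more. The lower bound $m(a_1,a_2,a_3)>m(S+1)$ is more delicate, since the parallel chain bottoms out at $m(S)$, one Fibonacci index short. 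Here one has to compare the Fibonacci expansions directly, writing
\[m(a_1,a_2,a_3)-m(S+1) = m(a_1)[m(a_2,a_3)-m(a_2+a_3)] + m(a_1-1)[m(a_2-1,a_3)-m(a_2+a_3-1)] - [m(S+1)-m(a_1,a_2+a_3)]\]
and checking, via the explicit gap formulas provided by the $n=2$ base case, that the two positive contributions dominate the subtracted term.

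For the inductive step, $n\geq 4$, Proposition \ref{div} with $i=2$ gives
\[m(a_1,\ldots,a_n)=m(a_1,a_2)\,m(a_3,\ldots,a_n)+m(a_1,a_2-1)\,m(a_3-1,a_4,\ldots,a_n).\]
Both right-hand tail factors are on length-$(n-2)$ sequences with $n-2\geq 2$, so the inductive hypothesis bounds each of them above and below by single $m$-values. Substituting these bounds and reassembling via Proposition \ref{div} in reverse yields, with $T=a_3+\cdots+a_n$,
\[m(a_1,a_2,\,T+\lceil (n-2)/2\rceil-1)<m(a_1,\ldots,a_n)<m(a_1,a_2,\,T+n-3).\]
A final application of the $n=3$ base case to these length-$3$ expressions, together with the identity $\lceil (n-2)/2\rceil=\lceil n/2\rceil-1$, delivers $m(S+\lceil n/2\rceil-1)<m(a_1,\ldots,a_n)<m(S+n-1)$, as required. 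The main obstacle throughout is the direct $n=3$ lower bound: without it the naive induction would lose a Fibonacci index at every odd step, and one cannot avoid a direct Fibonacci-identity computation at $n=3$.
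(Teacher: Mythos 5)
Your overall architecture is sound, and it addresses a real parity issue that the paper's own proof glosses over: the paper only carries out the case $n=2$ and then asserts that the induction step is ``dealt with in the same way'', whereas, as you correctly observe, a one-entry-at-a-time induction using only the $n=2$ bounds bottoms out at $m(S)$ and never gains the extra indices needed for $\lceil n/2\rceil-1$. Your remedy --- take $n=2$ and $n=3$ as base cases, split off $(a_1,a_2)$ with Proposition \ref{div}, bound the two tails by the inductive hypothesis, reassemble to the length-three sequence $(a_1,a_2,T+\lceil (n-2)/2\rceil-1)$ resp.\ $(a_1,a_2,T+n-3)$, and finish with the $n=3$ case --- is arithmetically correct: the bookkeeping $\lceil(n-2)/2\rceil=\lceil n/2\rceil-1$ does deliver exactly the stated exponents, and your $n=3$ upper-bound chain and the identity you write for $m(a_1,a_2,a_3)-m(S+1)$ both check out.

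The genuine gap is that the one place where all the content sits --- the $n=3$ lower bound, which you yourself call the main obstacle --- is not actually proved. The assertion that the two positive terms dominate $m(S+1)-m(a_1,a_2+a_3)$ does not follow from the ``explicit gap formulas'' of the $n=2$ case by inspection: that subtracted term equals $F(2a_1+2)F(2a_2+2a_3+2)-F(2a_1+1)F(2a_2+2a_3+1)$, so before any comparison with $m(a_1)\bigl[m(a_2,a_3)-m(a_2+a_3)\bigr]$ and $m(a_1-1)\bigl[m(a_2-1,a_3)-m(a_2+a_3-1)\bigr]$ one must split the long-index Fibonacci numbers by the addition identity again and then use a non-term-by-term estimate such as $F(2b_1+3)F(2b_2+3)-F(2b_1+2)F(2b_2+2)\geq F(2b_1+2)F(2b_2+1)$. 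The inequality is true (for $(1,1,1)$ the three quantities are $80$, $14$, $14$), so your route does not fail, but as written the decisive estimate is only announced, not established. Two smaller points: in your $n=2$ lower bound the comparison of $F(2a_1+1)F(2a_2+1)$ with $F(2a_1+2)F(2a_2)$ is not ``strict monotonicity of $F$'' (the factors move in opposite directions); either invoke d'Ocagne's identity or use the paper's split $m(a_1+a_2)=F(2a_1+2)F(2a_2+2)+F(2a_1+1)F(2a_2+1)$, for which the second terms coincide and only the first terms need comparing. Finally, your induction applies the hypothesis to the decremented tail $(a_3-1,a_4,\ldots,a_n)$, whose first entry may be $0$, so the statement has to be read as covering such sequences; the same issue is implicit in the paper's proof, but it deserves a sentence.
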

\begin{proof}
By Lemma \ref{fib} we have $m(a)=F(2a+3)$ and by Proposition \ref{div} we have\[m(a_1,a_2)=m(a_1)m(a_2)+m(a_1-1)m(a_2-1)=F(2a_1+3)F(2a_2+3)+F(2a_1+1)F(2a_2+1).\]
Using the well-known formula 
\[F(n+m)=F(n+1)F(m)+F(n)F(m-1)\]
for the Fibonacci numbers, we see that
\[m( a_1,a_2)< F(2a_1+3)F(2a_2+3)+F(2a_1+2)F(2a_2+2)=F(2a_1+2a_2+5)=m(a_1+a_2+1)\]
and 
\[m( a_1,a_2)> F(2a_1+2)F(2a_2+2)+F(2a_1+1)F(2a_2+1)=F(2a_1+2a_2+3)=m(a_1+a_2).\]
Now for $n\geq 2$ the propositions follows by induction, where the induction step, is dealt with in the same way as case $n=2$.
\end{proof}
When most of this work was done we learned of works by J. Propp \cite{Pr}, in which he describes an interpretation of Markov numbers as the number of perfect matchings in certain Snake graphs. In a more general note, we want to point out that the combinatorics of diagrams of strings of $\Lambda$ are very similar to the combinatorics of Snake graphs. 

A \textit{Snake graph with a sign function} $(G,f)$ was defined by Canacki and Schiffler \cite{CS}. We will recall a rather informal definition. The undirected graph $G$ is connected and consists of a finite sequence of tiles (i.e. squares) $G_1,\ldots, G_d$ with $d\geq 1$ which are glued along the interior edges $e_1,\ldots e_{d-1}$ according to the sign function $f\colon \lbrace 1,2,\ldots, d-1 \rbrace \rightarrow \pm 1$. Where the sign function alternates, the snake graph is straight, i.e. the corresponding tiles lie in one column or one row. When the sign function is constant, the snake graph is zigzag, i.e. no three consecutive tiles are straight.

A \textit{string diagram with sign function} $(C,f)$ is a connected, undirected graph $C$ (of an underlying string diagram) consisting of a finite sequence of vertices $v_1,\ldots v_d$ where $d\geq $ which are connected by edges $c_1,\ldots c_{d-1}$ according to the sign function $f\colon \lbrace 1,2,\ldots, d-1 \rbrace \rightarrow \pm 1$. Where the sign function alternates, the string diagram is zigzag, i.e. the edges alternate between $Q_1$ and $Q_1^-$. When the sign function is constant, the string diagram is straight, i.e. all corresponding edges belong to either $Q_1$ or $Q_1^-$. The following proposition is proven by an easy induction.
\begin{prop}
Let $(G,f)$ be a snake graph and $(C,f)$ a string diagram given by the same sign function $f$. Then the number of perfect matchings in $G$ is $m(C)$. 
\end{prop}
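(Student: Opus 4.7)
The plan is to reduce both sides to the same combinatorial count. By Haupt's theorem \cite{H} (already invoked in the proof of Lemma \ref{fib} and Proposition \ref{div}), the quantity $m(C)$ equals the number of successor-closed subquivers of the diagram of the string $C$. It therefore suffices to exhibit a bijection between perfect matchings of the snake graph $(G,f)$ and successor-closed subquivers of the string diagram $(C,f)$, both having the same sign function.

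I would prove this by induction on $d$. The base case $d=1$ is transparent: a single square tile admits two perfect matchings (the pair of horizontal edges and the pair of vertical edges), while a single vertex yields two successor-closed subquivers, namely $\emptyset$ and $\{v_1\}$. For the inductive step, I would split the count according to a choice made at the terminal tile/vertex. On the snake-graph side, a perfect matching either contains the unique edge of $G_d$ opposite to the interior edge $e_{d-1}$ or not; in the former case the matching restricts to a perfect matching of the snake graph on $G_1,\ldots,G_{d-1}$, and in the latter case the edge $e_{d-1}$ is forced into the matching, so the remainder restricts to a perfect matching of the snake graph on $G_1,\ldots,G_{d-2}$ (with the forced choices near the junction dictated by the sign $f(d-2)$). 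On the string-diagram side, a successor-closed subquiver either contains $v_d$ or not; the successor-closure constraint across $c_{d-1}$ translates this dichotomy into successor-closed subquivers of the sub-diagram on $v_1,\ldots,v_{d-1}$ and, in one of the two cases, of the sub-diagram on $v_1,\ldots,v_{d-2}$.

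The key point is that the local rule determining the shape of the recursion is controlled on both sides by the sign function $f$ in the same way: alternation of $f$ corresponds to straight gluing of tiles and to composable arrows in $C$, while constancy of $f$ corresponds to zigzag gluing and to adjacent edges of opposite direction in $C$. Matching the four cases for the pair $(f(d-1),f(d-2))\in\{\pm 1\}^2$ verifies that the two recursions coincide term by term, and the induction closes. The main obstacle is purely a bookkeeping one at the junction between the last two tiles, but no essentially new idea beyond the direct translation between matchings and successor-closed subquivers is required, which is consistent with the remark that the proposition is proven by an easy induction.
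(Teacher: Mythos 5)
Your overall strategy --- invoking Haupt's theorem to replace $m(C)$ by the count of successor-closed subquivers and then running an induction on $d$ controlled by the sign function --- is the natural one, and it is presumably what the paper's one-line ``easy induction'' intends. However, the inductive step as you state it does not work. First, a slip: in the case where the matching does not contain the edge of $G_d$ opposite to $e_{d-1}$, it is the two side edges of $G_d$ that are forced, and $e_{d-1}$ is then \emph{excluded} from the matching, not forced into it. More seriously, your claim that in this case ``the remainder restricts to a perfect matching of the snake graph on $G_1,\ldots,G_{d-2}$'' fails whenever the sign function is constant at the junction, i.e.\ when the snake bends at $G_{d-1}$: after removing the vertices of $G_d$ and the forced edges, what remains is $G_1,\ldots,G_{d-2}$ together with one extra pendant vertex attached at a corner of $e_{d-2}$, and its perfect matchings are \emph{not} counted by $m(G_1,\ldots,G_{d-2})$. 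Concretely, the zigzag snake graph with three tiles has $4$ perfect matchings, whereas your recursion would give $m(2\text{ tiles})+m(1\text{ tile})=3+2=5$; correspondingly the straight (directed) string diagram on three vertices has exactly $4$ successor-closed subquivers, not $5$. The same defect appears on the string side: if $c_{d-1}$ points towards $v_d$, then a subquiver avoiding $v_d$ must also avoid $v_{d-1}$ (and possibly further vertices, depending on $c_{d-2}$), so these subquivers are not in bijection with the successor-closed subquivers of $v_1,\ldots,v_{d-2}$ in general.

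The upshot is that an induction whose hypothesis records only the equality of the \emph{total} counts for shorter snake graphs and string diagrams cannot close: in the constant-sign case the two branches of your case split are governed by restricted counts (matchings with a prescribed boundary edge of the last tile; subquivers prescribed to contain or to avoid the last vertex), and these are not determined by the totals alone. The standard repair is to strengthen the induction hypothesis to these refined statistics: prove simultaneously that the matchings using the boundary edge of $G_d$ opposite $e_{d-1}$ correspond to the successor-closed subquivers containing (resp.\ avoiding) $v_d$, and then check that the resulting $2\times 2$ transfer rules, for the four values of the pair $(f(d-1),f(d-2))$, agree on the two sides. This is exactly the bookkeeping the paper itself carries out in the proof of Lemma \ref{fib}, where the subquivers are counted separately according to whether they contain the last (``green'') basis vector. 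With that strengthening your argument goes through; as written, the step from $d-1$ to $d$ is genuinely incomplete.
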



\end{document}